  \newcommand{\adj}{\dashv}
  \newcommand{\iso}{\mathrel{\cong}}
  \newcommand{\we}{\mathrel\sim}
  \newcommand{\ho}{\operatorname{ho}}
  \newcommand{\op}{{\mathord\mathrm{op}}}
  \renewcommand{\lim}{\operatorname{lim}}
  \newcommand{\colim}{\operatorname{colim}}
  \newcommand{\bd}{\partial}
  \newcommand{\id}[1][]{\operatorname{id}_{#1}}
  \newcommand{\simp}[1]{\mathord\Delta^{#1}}
  \newcommand{\uvar}{\mathord{\relbar}}
  \renewcommand{\tilde}{\widetilde}
  \renewcommand{\hat}{\widehat}
  \newcommand{\set}[2]{\left\{#1\mathrel{}\middle|\mathrel{}#2\right\}}
  \renewcommand{\emptyset}{\varnothing}
  \newcommand{\bdL}{\bd_L}
  \newcommand{\bdR}{\bd_R}
  \newcommand{\bdT}{\bd_T}
  \newcommand{\bdB}{\bd_B}
  \DeclareRobustCommand{\sqcdot}{\mathbin{\mathpalette\morphic@sqcdot\relax}}
\newcommand{\morphic@sqcdot}[2]{%
  \sbox\z@{$\m@th#1\centerdot$}%
  \ht\z@=.33333\ht\z@
  \vcenter{\box\z@}%
}
  \newcommand{\bk}{\sqcdot}
  \newcommand{\cat}[1]{\mathscr{#1}}
  \newcommand{\ncat}[1]{\mathsf{#1}}
  \newcommand{\C}{\cat{C}}
  \newcommand{\PsTop}{\ncat{PsTop}}
  \newcommand{\DTop}{\ncat{DTop}}
  \newcommand{\Frm}{\ncat{Frm}}
  \newcommand{\Sob}{\ncat{Sob}}
  \newcommand{\Loc}{\ncat{Loc}}
  \renewcommand{\op}[1]{\operatorname{#1}}
  \newcommand{\from}{\colon}
  \newcommand{\ito}{\hookrightarrow}
  \newcommand{\cto}{\rightarrowtail}
  \newcommand{\fto}{\twoheadrightarrow}
  \newcommand{\afto}{\mathrel{\ensurestackMath{\stackon[-1pt]{\fto}{\mathsmaller{\mathsmaller\we}}}}}
  \renewcommand{\iff}{if and only if}
  \newcommand{\wrt}{with respect to}
  \newcommand{\llp}{left lifting property}
  \newcommand{\rlp}{right lifting property}
  \declaretheorem[style=definition,within=section]{definition}
    \declaretheorem[style=definition,within=section]{notation}
  \declaretheorem[style=plain,numberlike=definition]{corollary}
  \declaretheorem[style=plain,numberlike=definition]{lemma}
  \declaretheorem[style=plain,numberlike=definition]{proposition}
  \declaretheorem[style=plain,numberlike=definition]{theorem}
  \declaretheorem[style=plain,numbered=no,name=Theorem]{theorem*}
 \Crefname{notation}{Notation}{Notations}
  \Crefname{corollary}{Corollary}{Corollaries}
  \Crefname{definition}{Definition}{Definitions}
  \Crefname{lemma}{Lemma}{Lemmas}
  \Crefname{proposition}{Proposition}{Propositions}
  \Crefname{remark}{Remark}{Remarks}
  \Crefname{theorem}{Theorem}{Theorems}
  \newlist{axioms}{enumerate}{1}
  \Crefname{axioms}{}{}
  \newenvironment{tikzeq*}
  {
    \begingroup
    \begin{equation*}
    \begin{tikzpicture}[baseline=(current bounding box.center)]
  }
  {
    \end{tikzpicture}
    \end{equation*}
    \endgroup
    \ignorespacesafterend
  }
  \tikzset
  {
    diagram/.style=
    {
      matrix of math nodes,
      column sep={4.3em,between origins},
      row sep={4em,between origins},
      text height=1.5ex,
      text depth=.25ex
    },
    over/.style={preaction={draw=white,-,line width=6pt}},
    every to/.style={font=\footnotesize},
    inj/.style={right hook->},
    surj/.style={-{Latex[open]}},
    cof/.style={>->},
    fib/.style={->>},
  }
  \DeclareFontFamily{U}{mathx}{\hyphenchar\font45}
  \DeclareFontShape{U}{mathx}{m}{n}{
    <5> <6> <7> <8> <9> <10>
    <10.95> <12> <14.4> <17.28> <20.74> <24.88>
    mathx10}{}
  \DeclareSymbolFont{mathx}{U}{mathx}{m}{n}
  \DeclareFontFamily{U}{mathb}{\hyphenchar\font45}
  \DeclareFontShape{U}{mathb}{m}{n}{
    <5> <6> <7> <8> <9> <10>
    <10.95> <12> <14.4> <17.28> <20.74> <24.88>
    mathb10}{}
  \DeclareSymbolFont{mathb}{U}{mathb}{m}{n}
  \DeclareMathAccent{\widebar}{0}{mathx}{"73}
  \DeclareMathSymbol{\Rsh}{\mathrel}{mathb}{"E9}
  \DeclareFontFamily{U}{MnSymbolA}{}
  \DeclareFontShape{U}{MnSymbolA}{m}{n}{
    <-6> MnSymbolA5
    <6-7> MnSymbolA6
    <7-8> MnSymbolA7
    <8-9> MnSymbolA8
    <9-10> MnSymbolA9
    <10-12> MnSymbolA10
    <12-> MnSymbolA12}{}
  \DeclareSymbolFont{MnSyA}{U}{MnSymbolA}{m}{n}
  \DeclareMathSymbol{\twoheaddownarrow}{\mathrel}{MnSyA}{27}
  \newcommand{\MSC}[1]{%
    \let\thempfn\relax
    \footnotetext[0]{2020 Mathematics Subject Classification: #1.}
  }
\newcommand{\Set}{\mathsf{Set}}
\newcommand{\Top}{\ncat{Top}}
\newcommand{\dom}{\operatorname{dom}} 
\newcommand{\adh}{\operatorname{adh}}
\newcommand{\const}{\operatorname{const}}
\newcommand{\inc}{\operatorname{inc}}
\newcommand{\pr}{\operatorname{pr}}
\title{Synthetic approach to the Quillen model structure on topological spaces}
\author{Sterling Ebel \and Krzysztof Kapulkin}
\date{\today}
\begin{document}

\maketitle

\begin{abstract}
    We provide an axiomatic treatment of Quillen's construction of the model structure on topological spaces to make it applicable to a wider range of settings, including $\Delta$-generated spaces and pseudotopological spaces.
    We use this axiomatization to construct a model structure on the category of locales.
\end{abstract}

\section*{Introduction}

The construction of Quillen's model structure on the category $\Top$ of topological spaces is a cornerstone result of modern homotopy theory \cite[{\S}II.3]{quillen:homotopical-algebra}.
The importance of many other model structures, like the Kan--Quillen model structure on simplicial sets or the Thomason model structure on the category of small categories \cite{thomason}, is justified by their equivalence with Quillen's structure on topological spaces.
Numerous accounts of it have been given since, including \cite{hovey:book}, \cite{hirschhorn:book}, \cite[\S17.2]{may-ponto:more-concise}, and \cite{hirschhorn:model-structure-on-top}, among others, further underscoring its significance.

Although the category of (all) topological spaces is perhaps most commonly studied, quite often one works with either subcategories of topological spaces (e.g., compactly generated weakly Hausdorff spaces or $\Delta$-generated spaces) or larger categories containing $\Top$ as a subcategory (e.g., pseudotopological spaces).
The former are natural choices for a convenient category of spaces, i.e., a subcategory of $\Top$ with nice categorical properties, including cartesian closure.
The latter, being a common generalization of both graphs and spaces, finds applications in topological data analysis when quantifying to what extent a space can be recovered by sampling only finitely many of its points \cite{rieser:pseudotopology,rieser:cech}.

The purpose of this paper is to axiomatize the requirements on various categories of interest that are required to make a more modern version of Quillen's argument work.
Such a modern treatment differs from Quillen's argument in a variety of ways, including: isolating the pushout-product construction, utilizing characterization of the class of weak equivalences in terms of an ``up to homotopy'' lifting property, and identifying the key role of the subcategory of topological cubes. 
Our axiomatization first introduces the notion of a \emph{category with intervals} (\cref{S-axioms}), a framework in which one can speak of homotopies, defined using the topological interval, and their basic properties.
We then introduce the framework of a \emph{Q-structure} (\cref{Q-axioms}), which is the structure on a category that permits Quillen's construction (\cref{SerreModelStructure}).
Moreover, in a \emph{good} Q-structure, weak equivalences can be defined in terms of isomorphisms on the sets of connected components and all higher homotopy groups (\cref{WEClassicDef}).

Several examples of good Q-structures are identified and give rise to model structures on: topological spaces (recovering the model structure originally constructed in \cite{quillen:homotopical-algebra}), compactly generated weakly Hausdorff spaces (recovering the model structure of \cite{hovey:book}), $\Delta$-generated spaces (recovering the model structure of \cite{haraguchi}), sober spaces (\cref{Ex:topological_spaces}), pseudotopological spaces (\cref{ex:pseudotop}, recovering the model structure of \cite{rieser:pseudotopology}), and locales (\cref{ex:locales}).
This in particular settles an open question of constructing a model structure on the category of locales.

This paper is organized as follows.
We begin in \cref{BasicNotions} by introducing the notion of a category with intervals (\cref{S-axioms}) and developing basic homotopy theory therein.
We then proceed to introduce (good) Q-structures in \cref{sec:q-structure} and define cofibrations and fibrations.
We conclude our proof of the existence of a model structure in \cref{SerreSection} before turning our attention to examples of Q-structures in \cref{Ex:topological_spaces,ex:pseudotop,ex:locales}.

\textbf{Acknowledgement.}
We thank Karol Szumi{\l}o for many helpful conversation about the Quillen model structure on topological spaces.
We thank the anonymous referee for several comments that greatly improved the presentation of these results.

The work of S.E.~was supported by an Undergraduate Student Research Award from the Natural Sciences and Engineering Research Council.
This material is based upon work supported by the National Science Foundation under Grant No.~DMS-1928930 while K.K.~participated in a program hosted by the Mathematical Sciences Research Institute in Berkeley, California, during the 2022–23 academic year.

\section{Basic notions in a nice category of spaces}\label{BasicNotions}

In this section, we introduce the notion of a category with intervals (\cref{S-axioms}) and show that it is sufficient to develop basics of the theory of homotopies.

Throughout the paper, we write $\Top$ for the category of (all) topological spaces and continuous maps.
For notational convenience, the unit interval $[0,1]$ will be denoted $I$.

Let $\C$ denote a bicomplete category, to be thought of as a ``nice" category of spaces. As such, we will refer to objects in $\C$ as spaces, and morphisms in $\C$ as (continuous) maps. Let $\emptyset$ and $*$ denote the initial and terminal object of $\C$, respectively. Let $\Box$ be the full subcategory of $\ncat{Top}$ consisting of all $n$-cube and their boundaries, for $n \geq 0$. Let $\Box_{\leq 2}$ denote the full subcategory of $\Box$ whose objects are $*$, $I$, $I^2$, $\emptyset$, $\bd I$, $\bd I^2$.

Let $e_k \from * \to I$ be the the appropriate endpoint inclusion for $k \in \{0,1\}$. Denote the left, top, right, and bottom edge inclusions $I \ito I^2$ by $\bdL = e_0 \times \id[I]$, $\bdT=\id[I] \times e_1$, $\bdR = e_1 \times \id[I]$, and $\bdB = \id[I] \times e_0$ respectively. By embedding, we shall mean a faithful functor throughout this paper.

\begin{definition}\label{S-axioms}
A \textit{category with intervals}\footnote{Note that \cite{morel-voevodsky} introduces the notion of a \emph{site with interval}. Our notion differs from theirs, as it is strongly based on the properties of the topological interval, whereas that of \cite{morel-voevodsky} is abstract and specific to the category of sheaves.} is a bicomplete category $\C$ along with an embedding $\iota \from \Box_{\leq 2} \ito \C$ such that:
\begin{enumerate}
\item[(S1)] $\iota\emptyset$ is initial and $\iota*$ is terminal;
\item[(S2)] $\iota I^2 \iso \iota I \times \iota I$, where the product is taken in $\C$;
\item[(S3)] $\iota$ preserves a pushout of the form
\[\begin{tikzcd}
	{*} & I \\
	I & I
	\arrow["{e_0}", from=1-1, to=1-2]
	\arrow["{e_1}"', from=1-1, to=2-1]
	\arrow["a"', from=2-1, to=2-2]
	\arrow["\lrcorner"{anchor=center, pos=0.125, rotate=180}, draw=none, from=2-2, to=1-1]
	\arrow["b", from=1-2, to=2-2]
\end{tikzcd}\]
such that $a(0)=0$ and $b(1)=1$, and $X \times \uvar$ preserves this pushout for all $X \in \C$;
\item[(S4)] $\emptyset \times X \iso \emptyset$ for all $X \in \C$.
\end{enumerate}
\end{definition}

The axioms given above are fairly rigid in structure; as such, it is difficult to give a ``non-canonical'' example. We give several examples in \cref{Ex:topological_spaces,ex:pseudotop,ex:locales}, which will moreover be of good Q-structure (cf. \cref{Q-axioms}). All of these examples are given by restricting a functor $\Top \to \C$ to $\Box$. We are unaware of any examples of categories which admit two distinct embeddings of $\Box$ defining two distinct categories with intervals.

We require that the pushout in (S3) be of maps in $\Box_{\leq 2}$ to use relationships internal to $\Box_{\leq 2}$ to develop a basic theory of homotopies. 
Necessarily, $a$ and $b$ are injective, hence homeomorphisms onto their image. Unless the distinction is necessary, we will identify elements in $\Box_{\leq 2}$ with their image in $\C$. For the rest of this section, fix an embedding $\iota$ making $(\C,\iota)$ into a category with intervals.

\subsection*{Homotopies}

In this subsection, working in an arbitrary category with intervals (\cref{S-axioms}), we develop a notion of homotopy, leading to the proof (\cref{Bicategory}) that any such category carries a natural structure of a 2-category.

\begin{definition}\label{HomotopyDef}
Let $X$ and $Y$ be spaces and $f,g \from X \to Y$ be maps. A \textit{homotopy} from $f$ to $g$ is given by a map $H \from X \times I \to Y$ such that the following commutes:
\[\begin{tikzcd}
	X & {X \times I} & X \\
	& Y
	\arrow["{e_1}"', from=1-3, to=1-2]
	\arrow["{e_0}", from=1-1, to=1-2]
	\arrow["H"{description}, from=1-2, to=2-2]
	\arrow["f"', from=1-1, to=2-2]
	\arrow["g", from=1-3, to=2-2]
\end{tikzcd}\]
If there exists a homotopy from $f$ to $g$ then $f$ \textit{is homotopic to} $g$, denoted by $f \sim g$ or $H \from f \sim g$.
\end{definition}

In particular, we will be using $X \times I$ as a cylinder object for $X$, in which case our notion of homotopy is that of a \textit{left homotopy}. In \cref{sec:q-structure}, we will assume further that $\uvar \times I$ admits a right adjoint $(\uvar)^I$, giving path spaces $X^I$; this gives an equivalent notion of \textit{right homotopy}, which we will not explicitly use.

Let $f,g \from X \to Y$, $u \from W \to X$, and $v \from Y \to X$ be maps, and $H$ a homotopy from $f$ to $g$. For a space $A$, let $\pi_A \from A \times I \to A$ denote the projection. We will follow these notational conventions for homotopies:
\begin{enumerate}
\item[(1)] $Hu = H(u \times \id[I])$, which gives a homotopy from $fu$ to $gu$;
\item[(2)] $\const_f=f\pi_X=\pi_Y(f \times \id[I])$.
\end{enumerate}
Homotopies are subject to the following identities, which can easily be verified:
\begin{enumerate}
\item[(1)] $(u \times \id[I])e_k=e_k u$;
\item[(2)] $\pi_X e_k=\id[X]$;
\item[(3)] $v \circ \const_f \circ (u \times \id[X]) = \const_{vfu}$.
\end{enumerate}

\begin{definition}
Let $f,g \from X \to Y$ be maps, and suppose $i \from A \to X$ is a map such that $fi=gi$. A \textit{homotopy relative to }$A$ is a homotopy $H \from X \times I \to Y$ from $f$ to $g$ such that the following commutes:
\begin{center}
\begin{tikzcd}
	{A \times I} & A \\
	{X \times I} & Y
	\arrow["{i \times \id[I]}"', hook, from=1-1, to=2-1]
	\arrow["H"', from=2-1, to=2-2]
	\arrow["{fi=gi}", from=1-2, to=2-2]
	\arrow["{\pi_A}", from=1-1, to=1-2] 
\end{tikzcd}
\end{center}
Explicitly, $Hi=\const_{fi}$.
If such a homotopy exists, it is denoted $f \sim g$ rel $A$.
\end{definition}

\begin{definition}
Let $f,g,h \from X \to Y$ be maps. Given a homotopy $H$ from $f$ to $g$ and a homotopy $K$ from $g$ to $h$, their \textit{track composite} $H \bk K$ is the induced map in the pushout
\[\begin{tikzcd}
	X && {X \times I} \\
	{X \times I} && {X \times I} \\
	&&& Y
	\arrow["{e_0}", from=1-1, to=1-3]
	\arrow["{e_1}"', from=1-1, to=2-1]
	\arrow["{\id[X] \times a}"', from=2-1, to=2-3]
	\arrow["\lrcorner"{anchor=center, pos=0.125, rotate=180}, draw=none, from=2-3, to=1-1]
	\arrow["{\id[X] \times b}", from=1-3, to=2-3]
	\arrow["H"', bend right, from=2-1, to=3-4]
	\arrow["K", bend left, from=1-3, to=3-4]
	\arrow["{H\bk K}"{description}, dashed, from=2-3, to=3-4]
\end{tikzcd}\]
Let $-\id[I] \from I \to I$ be the map $t \mapsto 1-t$. The \textit{inverse homotopy} of $H$, denoted $-H$, is $H(\id[X] \times (-\id[I]))$:
\[\begin{tikzcd}
	{X \times I} && {X \times I} \\
	&& Y
	\arrow["{\id[X] \times (-\id[I])}", from=1-1, to=1-3]
	\arrow["H", from=1-3, to=2-3]
	\arrow["{-H}"', from=1-1, to=2-3]
\end{tikzcd}\]
\end{definition}

Since $(\id[X] \times a)e_0=e_0$ and $(\id[X] \times b)e_1=e_1$ by (S3), $H \bk K$ is a homotopy from $f$ to $h$. Similarly, $-H$ gives a homotopy from $g$ to $f$.
Given maps $q \from W \to X$ and $r \from Y \to Z$, we have $q(H \bk K)r = qHr \bk qKr$.
Thus, if $fi=gi=hi$ for $i \from A \to X$, then $(H \bk K)i=Hi \bk Ki = \const_{fi}$, and similarly $-Hi=\const_{fi}$, showing that composition and inverse homotopies preserve relativity.

\begin{definition}
Let $H,K \from X \times I \to Y$ be homotopies from $f$ to $g$. Then $H$ and $K$ are \textit{homotopic rel endpoints} if there is a map $\alpha \from X \times I^2 \to Y$ such that $\alpha \bdL = H$, $\alpha \bdT = \const_g$, $\alpha \bdR = K$, and $\alpha \bdB=\const_f$.
\end{definition}

Analogous comments to the above hold for homotopies rel endpoints. Thus, constant homotopies, inverse homotopies, and track composites show respectively that (relative) homotopy is reflexive, symmetric, and transitive. In particular, we obtain the following:

\begin{corollary}
Homotopy defines an equivalence relation on $\C(X,Y)$ for every pair of objects $X,Y \in \C$. Similarly, for any fixed pair of maps $f,g \from X \to Y$, homotopy rel endpoints defines an equivalence relation on the set of homotopies from $f$ to $g$. \qed
\end{corollary}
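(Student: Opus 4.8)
The plan is to handle both assertions at once, reading everything off the three constructions already in place — the constant homotopy $\const_f$, the inverse homotopy $-H$, and the track composite $H \bk K$ — together with the elementary identities recorded alongside them.

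For homotopy on $\C(X,Y)$ there is nothing to do beyond assembling these. Reflexivity: $\const_f = f\pi_X$ is a homotopy from $f$ to $f$, since $\const_f e_k = f\pi_X e_k = f$ by the identity $\pi_X e_k = \id[X]$. Symmetry: if $H \from f \sim g$, then $-H$ is a homotopy from $g$ to $f$, as already observed. Transitivity: if $H \from f \sim g$ and $K \from g \sim h$, then $H \bk K$ is a homotopy from $f$ to $h$; this uses (S3), which guarantees that $\iota$ preserves the relevant pushout, so that $H \bk K$ is genuinely a homotopy with domain $X \times I$.

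For homotopy rel endpoints the point is that the defining datum for such a homotopy between $H,K \from X \times I \to Y$ — a map $\alpha \from X \times I^2 \to Y$ with $\alpha\bdL = H$, $\alpha\bdR = K$, $\alpha\bdB = \const_f$, $\alpha\bdT = \const_g$ — is, under the identification $X \times I^2 \iso (X \times I) \times I$ supplied by (S2) and associativity of $\times$, the same shape of datum as a homotopy between two maps out of $X \times I$, with the $\bdL$--$\bdR$ coordinate in the role of the homotopy parameter. Since (S3) quantifies over all objects of $\C$, it applies to $X \times I$, so the constant, inverse, and track-composite constructions can be rerun one dimension up: from $\alpha \from H \sim K$ rel endpoints and $\gamma \from K \sim L$ rel endpoints one produces candidate $2$-homotopies witnessing $H \sim H$, $K \sim H$, and $H \sim L$ rel endpoints, respectively. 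It then only remains to verify the two constraints in the orthogonal ($\bdB$--$\bdT$) direction — that these candidates restrict to $\const_f$ along $\bdB$ and to $\const_g$ along $\bdT$ — which is exactly naturality of the three constructions under the precomposition restricting the orthogonal coordinate to an endpoint. That naturality is the content of the identities $q(H \bk K)r = qHr \bk qKr$, the corresponding identity for $-H$, and $v \circ \const_f \circ (u \times \id) = \const_{vfu}$ already in the text, together with the fact that $\const_f \bk \const_f = \const_f$ (a special case of track composition preserving relativity, taken relative to the whole space).

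The one genuinely fiddly aspect is bookkeeping: in the rel-endpoints case $I^2$ carries two distinguished directions with distinct roles, and one must keep the reparametrizing maps $a,b$ of (S3) acting only in the homotopy-parameter direction while the $\const_f$ and $\const_g$ constraints sit in the orthogonal direction. Once the directions are tracked carefully, each verification reduces to an identity already listed, so no new ideas are required.
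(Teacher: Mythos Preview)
Your proposal is correct and matches the paper's approach exactly: the paper states this corollary with a bare \qed, relying on the sentence ``Analogous comments to the above hold for homotopies rel endpoints'' and the preceding discussion of $\const_f$, $-H$, and $H \bk K$ (including the identity $(H \bk K)i = \const_{fi}$ for relativity). You have simply spelled out what the paper leaves implicit, including the coordinate bookkeeping the paper does not mention.
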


\begin{lemma}\label{HomotopyCompWellDefined}
Let $H$ and $H'$ be homotopies from $f$ to $g$, and $K$ and $K'$ be homotopies from $g$ to $h$. If $H \sim H'$ rel endpoints and $K \sim K'$ rel endpoints, then $H\bk K \sim H'\bk K'$ rel endpoints. That is, track composition is well-defined up to homotopy rel endpoints.
\end{lemma}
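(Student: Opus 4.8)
The plan is to realize the required homotopy rel endpoints as a single track composite, built not from $H, H', K, K'$ directly but from the two given rel-endpoints homotopies. Let $\alpha \from X \times I^2 \to Y$ be the homotopy rel endpoints witnessing $H \sim H'$, so $\alpha\bdL = H$, $\alpha\bdT = \const_g$, $\alpha\bdR = H'$, and $\alpha\bdB = \const_f$; and let $\beta \from X \times I^2 \to Y$ be the one witnessing $K \sim K'$, so $\beta\bdL = K$, $\beta\bdT = \const_h$, $\beta\bdR = K'$, and $\beta\bdB = \const_g$. Using (S2) and associativity of products to read $X \times I^2$ as $(X \times I) \times I$, and taking the last coordinate as the homotopy parameter, the equations $\alpha\bdB = \const_f$ and $\alpha\bdT = \const_g$ say exactly that $\alpha$ is a homotopy of maps $X \times I \to Y$ from $f\pi_X$ to $g\pi_X$; likewise $\beta$ is a homotopy from $g\pi_X$ to $h\pi_X$. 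Their endpoints therefore coincide, so the track composite $\alpha \bk \beta \from (X \times I) \times I \to Y$ is defined (its last coordinate now being the pushout interval of (S3)), and it is a homotopy from $f\pi_X$ to $h\pi_X$.

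I would then show that, read as a map $\gamma \from X \times I^2 \to Y$ — with the first $I$-factor as the ``comparison'' coordinate and the pushout coordinate as the track-composite coordinate — the map $\gamma := \alpha \bk \beta$ is the desired homotopy rel endpoints from $H \bk K$ to $H' \bk K'$. This is a check of the four boundary restrictions. Setting the comparison coordinate to $0$ turns $\gamma$ into a map $X \times I \to Y$ (with the pushout interval in the remaining slot) whose composite with $\id[X] \times a$ is $\alpha\bdL = H$ and whose composite with $\id[X] \times b$ is $\beta\bdL = K$; by the universal property of the pushout defining $H \bk K$ this forces $\gamma\bdL = H \bk K$. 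Symmetrically $\gamma\bdR = H' \bk K'$, via $\alpha\bdR = H'$ and $\beta\bdR = K'$. Setting the track-composite coordinate to its endpoint $0$ lands in the image of $a$ (since $a(0) = 0$), where $\gamma$ factors through $\alpha\bdB = \const_f$, so $\gamma\bdB = \const_f$; and setting it to $1$ lands in the image of $b$ (since $b(1) = 1$), where $\gamma$ factors through $\beta\bdT = \const_h$, so $\gamma\bdT = \const_h$. These last two use the compatibilities $(\id[X] \times a)e_0 = e_0$ and $(\id[X] \times b)e_1 = e_1$ from (S3), applied with $X \times I$ in place of $X$, exactly as in the construction of the track composite.

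The substantive difficulty is purely organizational: one must keep straight the several copies of $I$ inside $X \times I^2$ — the comparison direction, the track-composite direction, and, for $\alpha$ and $\beta$, the original homotopy direction that becomes the track-composite direction after composing — and be careful that the isomorphism $(X \times I) \times I \iso X \times I^2$ places the pushout interval of (S3) in the correct factor. Once these conventions are pinned down, the argument uses nothing beyond the definitions of track composite and of homotopy rel endpoints together with the pushout identities already recorded after (S3). As a bonus, specializing $H' = H$ or $K' = K$ recovers the one-sided statements that track-composing with a fixed homotopy on either side preserves homotopy rel endpoints.
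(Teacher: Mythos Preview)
Your proposal is correct and is essentially the same as the paper's proof: the paper also glues $\alpha$ and $\beta$ along the second interval coordinate (via the pushout with legs $\id[X] \times \bdT$ and $\id[X] \times \bdB$), which is exactly your track composite $\alpha \bk \beta$ of maps $(X \times I) \times I \to Y$. Your boundary checks match the paper's implicit verification, and your careful bookkeeping of which $I$ is which is the only real content here.
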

\begin{proof}
By the assumptions, we may choose a map $\alpha \from X \times I^2 \to Y$ such that $\alpha \bdL=H$, $\alpha \bdT = \const_g$, $\alpha \bdR= H'$, and $\alpha \bdB = \const_f$; likewise, we may choose $\beta \from X \times I^2 \to Y$ with $\beta \bdL=K$, $\beta \bdT = \const_h$, $\beta \bdR=K'$, and $\beta \bdB=\const_g$. Then the induced map in the following pushout
\[\begin{tikzcd}
	{X \times I} && {X \times I \times I} \\
	{X \times I \times I} && {X \times I \times I} \\
	&&& Y
 	\arrow["{\id[X] \times \bdT}"', from=1-1, to=2-1]
	\arrow["{\id[X] \times \bdB}", from=1-1, to=1-3]
	\arrow[from=2-1, to=2-3]
	\arrow[from=1-3, to=2-3]
	\arrow["{\zeta}"{description}, dashed, from=2-3, to=3-4]
	\arrow["\beta", bend left, from=1-3, to=3-4]
	\arrow["\alpha"', bend right=15, from=2-1, to=3-4]
\end{tikzcd}\]
gives a homotopy rel endpoints from $H \bk K$ to $H' \bk K'$. 
\end{proof}

\begin{lemma}\label{ConstantHomotopyIdentity}
For any homotopy $H$ from $f$ to $g$, $H \bk \const_g \sim H$ rel endpoints and $\const_f \bk H \sim H$ rel endpoints.
\end{lemma}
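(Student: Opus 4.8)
The plan is to realize $H \bk \const_g$ as $H$ precomposed with a self-reparametrization of the interval, and then to deform that reparametrization back to $\id[I]$ using convexity of $I$ (the argument for $\const_f \bk H$ being entirely symmetric).

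First I would use (S3) to produce a morphism $\psi \from I \to I$ of $\Box_{\leq 2}$ with $\psi a = \id[I]$ and $\psi b = \const_1$, where $\const_1 \from I \to I$ denotes the constant map at $1$. Such a $\psi$ exists because the square in (S3) is also a pushout in $\Box_{\leq 2}$ (it is a pushout in $\Top$ between objects of the full subcategory $\Box_{\leq 2}$), and the cocone whose $a$-leg is $\id[I]$ and whose $b$-leg is $\const_1$ is compatible, since $\id[I] \circ e_1$ and $\const_1 \circ e_0$ both pick out $1 \in I$. Since $H e_1 = g$ we have $H \circ (\id[X] \times \const_1) = g\pi_X = \const_g$, and clearly $H \circ (\id[X] \times \id[I]) = H$. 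Now $H \bk \const_g$ and $H \circ (\id[X] \times \psi)$ are both maps $X \times I \to Y$ out of the object that (S3) presents as the pushout of $\id[X] \times e_0$ and $\id[X] \times e_1$ — here using that $X \times \uvar$ preserves the pushout of (S3) — and each has composite $H$ along the coprojection $\id[X] \times a$ and composite $\const_g$ along the coprojection $\id[X] \times b$ (for $H \circ (\id[X]\times\psi)$ this uses $\psi a = \id[I]$ and $\psi b = \const_1$). By uniqueness of maps out of a pushout, $H \bk \const_g = H \circ (\id[X] \times \psi)$.

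Evaluating $\psi a = \id[I]$ at $0$ and $\psi b = \const_1$ at $1$ gives $\psi(0) = 0$ and $\psi(1) = 1$, so the straight-line homotopy $\Phi \from I^2 \to I$, $\Phi(s,t) = (1-s)\psi(t) + st$, is a well-defined morphism of $\Box_{\leq 2}$ with $\Phi\bdL = \psi$, $\Phi\bdR = \id[I]$, $\Phi\bdT = \const_1$, and $\Phi\bdB = \const_0$ (the constant map at $0$). Setting $\alpha := H \circ (\id[X] \times \Phi) \from X \times I^2 \to Y$, where $I^2$ is read as $I \times I$ via (S2), the previous paragraph gives $\alpha\bdL = H \circ (\id[X] \times \psi) = H \bk \const_g$, while $\alpha\bdR = H$, $\alpha\bdT = H \circ (\id[X] \times \const_1) = \const_g$, and $\alpha\bdB = H \circ (\id[X] \times \const_0) = \const_f$; hence $\alpha$ witnesses $H \bk \const_g \sim H$ rel endpoints. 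For $\const_f \bk H \sim H$ rel endpoints I would run the mirror argument: take $\chi \from I \to I$ in $\Box_{\leq 2}$ with $\chi a = \const_0$ and $\chi b = \id[I]$, so that $\const_f \bk H = H \circ (\id[X] \times \chi)$ by the same uniqueness argument and $\chi(0) = 0$, $\chi(1) = 1$, and conclude with $\alpha' := H \circ (\id[X] \times \Psi)$ for $\Psi(s,t) = (1-s)\chi(t) + st$.

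I expect the only real subtlety to be the uniqueness step in the second paragraph: one must keep straight that $H \bk \const_g$ and $H \circ (\id[X] \times \psi)$ share a domain, namely the object $X \times I$ that (S3) identifies with the pushout $X \times (I \cup_* I)$, and that equality of their composites along $\id[X] \times a$ and $\id[X] \times b$ genuinely forces them equal. Everything downstream — the straight-line homotopies, the four boundary identities, and the check that $\psi$, $\chi$, $\Phi$, $\Psi$ are honest morphisms of $\Box_{\leq 2}$ so that $\iota$ applies and the displayed equalities hold in $\C$ — is routine bookkeeping.
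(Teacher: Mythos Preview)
Your proof is correct and is essentially the same as the paper's: the paper writes the reparametrization explicitly as $\Gamma(s,t)=st+(1-s)a^{-1}(\min(t,a(1)))$, which is precisely your $\Phi(s,t)=(1-s)\psi(t)+st$ once one observes that $\psi(t)=a^{-1}(\min(t,a(1)))$, and then sets $\alpha=H(\id[X]\times\Gamma)$. The only cosmetic difference is that you isolate the identity $H\bk\const_g=H\circ(\id[X]\times\psi)$ as a separate step via the pushout uniqueness, whereas the paper folds this into the verification that $\alpha\bdL=H\bk\const_g$.
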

\begin{proof}
Let $\Gamma \from I^2 \to I$ be given by $\Gamma(s,t)= st+(1-s)a^{-1}(\min(t,a(1)))$. Then the map $\alpha = H\Gamma$ gives the required homotopy. To see this, note that $\Gamma(e_0 \times a)=\id[I]$ and $\Gamma(e_0 \times b) = e_1$, so $\alpha \bdL = H \bk \const_g$, and clearly $\alpha \bdR = H$, $\alpha \bdB = \const_f$, and $\alpha \bdT = \const_g$.
\end{proof}

\begin{lemma}\label{HomotopyInverseComposition}
For any homotopy $H$ from $f$ to $g$, $H\bk (-H) \sim \const_f$ rel endpoints and $-H \bk H \sim \const_g$ rel endpoints.
\end{lemma}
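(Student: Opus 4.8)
The plan is to follow the pattern of the proof of \cref{ConstantHomotopyIdentity}: realise the required homotopy rel endpoints as $H$ precomposed with a suitable continuous reparametrisation of the square, transported into $\C$ along $\iota$. Concretely, the goal is a map $\alpha \from X \times I^2 \to Y$ with $\alpha\bdL = H \bk (-H)$ and $\alpha\bdR = \alpha\bdT = \alpha\bdB = \const_f$.

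First I would recognise $H \bk (-H)$ itself as a reparametrisation of $H$. By (S3) and the fact that $X \times \uvar$ preserves the pushout there, the square in the definition of the track composite is a pushout in $\C$, so $H \bk (-H)$ is the unique map $X \times I \to Y$ with $(H \bk (-H))(\id[X] \times a) = H$ and $(H \bk (-H))(\id[X] \times b) = -H$. Since $a(0) = 0$, $b(1) = 1$ and $a(1) = b(0)$, the maps $a$ and $b$ are homeomorphisms onto closed subintervals of $I$ covering it, so there is a continuous map $\lambda \from I \to I$ determined by $\lambda a = \id[I]$ and $\lambda b = -\id[I]$ — the ``tent'', with $\lambda(0) = \lambda(1) = 0$. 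Regarding $\lambda$ as a morphism in $\Box_{\leq 2}$ and applying the functor $\iota$, functoriality gives $H(\id[X] \times \lambda)(\id[X] \times a) = H(\id[X] \times \lambda a) = H$ and $H(\id[X] \times \lambda)(\id[X] \times b) = H(\id[X] \times (-\id[I])) = -H$, so by the universal property of the pushout $H \bk (-H) = H(\id[X] \times \lambda)$.

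Next I would choose a continuous map $\Phi \from I^2 \to I$ with $\Phi\bdL = \lambda$ and with $\Phi\bdR$, $\Phi\bdT$, $\Phi\bdB$ all equal to the constant map at $0$; for instance $\Phi(s,t) = (1-s)\lambda(t)$ works, precisely because $\lambda$ vanishes at both endpoints of $I$. Setting $\alpha = H(\id[X] \times \Phi)$, one finds $\alpha\bdL = H(\id[X] \times \lambda) = H \bk (-H)$ by the previous step, while each of $\alpha\bdR$, $\alpha\bdT$, $\alpha\bdB$ is $H$ precomposed with $\id[X] \times e_0$ and a projection, hence equals $(He_0)\pi_X = f\pi_X = \const_f$. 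Since $H \bk (-H)$ and $\const_f$ are both homotopies from $f$ to $f$, this $\alpha$ exhibits $H \bk (-H) \sim \const_f$ rel endpoints. For the second identity I would apply the first to the homotopy $-H$, which runs from $g$ to $f$: this yields $(-H) \bk (-(-H)) \sim \const_g$ rel endpoints, and since $-\id[I]$ is an involution in $\Box_{\leq 2}$ we have $-(-H) = H$, so $(-H) \bk H \sim \const_g$ rel endpoints.

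The step that needs care is the first one: one has to confirm that the square defining $\bk$ really is $X \times \uvar$ applied to the pushout in (S3) (so it is a pushout), that $a(I) \cup b(I) = I$ so that $\lambda$ is defined on all of $I$, and that the identities $\lambda a = \id[I]$, $\lambda b = -\id[I]$ and $(-\id[I])(-\id[I]) = \id[I]$, which hold in the full subcategory $\Box_{\leq 2}$ of $\Top$, carry over to $\C$ along $\iota$. Once this is in place the remaining edge computations are routine, exactly as in the proofs of \cref{ConstantHomotopyIdentity,HomotopyCompWellDefined}.
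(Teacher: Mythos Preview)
Your proof is correct and follows essentially the same approach as the paper: both construct the tent map $\lambda=\gamma$ with $\gamma a=\id[I]$ and $\gamma b=-\id[I]$, identify $H\bk(-H)=H(\id[X]\times\gamma)$ via the pushout, and then precompose $H$ with a null-homotopy of $\gamma$ in $I^2$. The only difference is the explicit formula for that null-homotopy (you use the linear $(1-s)\lambda(t)$, the paper uses $\min(\gamma(t),1-s)$), and your derivation of the second identity from the first via $H\mapsto -H$ is a clean alternative to the paper's ``analogous''.
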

\begin{proof}
Let $\gamma \from I \to I$ be given by $\gamma(t)=a^{-1}(t)$ for $t \leq a(1)$ and $\gamma(t) = 1-b^{-1}(t)$ for $t \geq b(0)$. Define $\Gamma \from I^2 \to I$ by
$$
\Gamma(s,t)=\begin{cases}
\min(a^{-1}(t), 1-s) & :\ t \leq u(1)\text{;}\\
\min(1-b^{-1}(t), 1-s) & :\ t \geq v(0)\text{.}
\end{cases}
$$
Then $\Gamma$ is continuous by the pasting lemma, $\Gamma \bdL = \gamma$, and $\Gamma \bdT= \Gamma \bdB = \Gamma \bdR = 0$.
Note that $\gamma a = \id[I]$ and $\gamma b = -\id[I]$, so $H(\id[X] \times \gamma a)=H$ and $H(\id[X] \times \gamma b)=-H$, hence $H(\id[X] \times \gamma)=H \bk (-H)$. Thus, $H(\id[X] \times \Gamma) \from X \times I^2 \to Y$ gives a homotopy from $H \bk(-H)$ to $\const_f$ rel endpoints. The other part is analogous.
\end{proof}

\begin{lemma}\label{HomotopyCompAssociative}
For any homotopies $H$ from $f_1$ to $f_2$, $J$ from $f_2$ to $f_3$, and $K$ from $f_3$ to $f_4$. Then $(H \bk J) \bk K \sim H \bk (J \bk K)$ rel endpoints. That is, track composition of homotopies is associative up to homotopy rel endpoints.
\end{lemma}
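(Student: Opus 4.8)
The plan is to realize both bracketings of the triple track composite as reparametrizations of the single homotopy $M := (H\bk J)\bk K$, and then to interpolate the two reparametrizations — the strategy already used for \cref{ConstantHomotopyIdentity,HomotopyInverseComposition}. First observe that $M$ is indeed a homotopy from $f_1$ to $f_4$, by the discussion following \cref{HomotopyDef}. Unwinding the defining pushouts of the track composite (these exist and behave as needed by (S3), which guarantees $X\times\uvar$ preserves the relevant pushout), and writing $a^2 = a\circ a$, $ab = a\circ b$, $ba = b\circ a$, $b^2 = b\circ b$, one gets
\[
  M\circ(\id[X]\times a^2) = H, \qquad M\circ(\id[X]\times ab) = J, \qquad M\circ(\id[X]\times b) = K,
\]
together with the parallel identities for $H\bk(J\bk K)$ having $a$, $ba$, $b^2$ in place of $a^2$, $ab$, $b$.

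Next I construct a suitable self-homeomorphism of $I$. Since $a$ and $b$ are homeomorphisms onto their images and the defining square of (S3) commutes, $a\,e_1 = b\,e_0$, so these images are $[0,m]$ and $[m,1]$ for $m = a(1) = b(0)$, which lies in $(0,1)$ by injectivity of $a$ and $b$. Hence $a^2$, $ab$, $b$ have images $[0,a(m)]$, $[a(m),m]$, $[m,1]$, while $a$, $ba$, $b^2$ have images $[0,m]$, $[m,b(m)]$, $[b(m),1]$ — two subdivisions of $I$ into three subintervals. I then define $\rho\from I\to I$ piecewise on $[0,m]$, $[m,b(m)]$, $[b(m),1]$ by the evident formulas so that $\rho\circ a = a^2$, $\rho\circ ba = ab$, and $\rho\circ b^2 = b$; the three prescriptions agree at the breakpoints $m$ and $b(m)$, so by the pasting lemma $\rho$ is a well-defined continuous map, and it is visibly an increasing bijection of $I$ fixing $0$ and $1$, hence a homeomorphism.

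Because the pushout legs $\id[X]\times a$ and $\id[X]\times b$ are jointly epic — and hence, iterating, so are $\id[X]\times a$, $\id[X]\times ba$, $\id[X]\times b^2$ — the identities above together with $\rho\circ a = a^2$, $\rho\circ ba = ab$, $\rho\circ b^2 = b$ give
\[
  H\bk(J\bk K) = M\circ(\id[X]\times\rho),
\]
since both sides become $H$, $J$, $K$ after precomposing with $\id[X]\times a$, $\id[X]\times ba$, $\id[X]\times b^2$ respectively. Finally, let $\Gamma\from I^2\to I$ be the linear homotopy $\Gamma(s,t) = (1-s)t + s\,\rho(t)$, which is a morphism of $\Box_{\leq 2}$ since it is a convex combination of $t$ and $\rho(t)$, and set $\alpha := M\circ(\id[X]\times\Gamma)\from X\times I^2\to Y$. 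Then $\Gamma(0,t) = t$ gives $\alpha\bdL = M = (H\bk J)\bk K$; $\Gamma(1,t) = \rho(t)$ gives $\alpha\bdR = M\circ(\id[X]\times\rho) = H\bk(J\bk K)$; and, using $\rho(0) = 0$ and $\rho(1) = 1$, the maps $\Gamma\bdB$ and $\Gamma\bdT$ are the constant maps at $0$ and $1$, so $\id[X]\times(\Gamma\bdB)$ and $\id[X]\times(\Gamma\bdT)$ are the endpoint inclusions $X\to X\times I$ composed with $\pi_X$, whence $\alpha\bdB = f_1\pi_X = \const_{f_1}$ and $\alpha\bdT = f_4\pi_X = \const_{f_4}$ (as $M$ is a homotopy from $f_1$ to $f_4$). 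Thus $\alpha$ witnesses $(H\bk J)\bk K\sim H\bk(J\bk K)$ rel endpoints.

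I expect the only non-formal step to be the construction of $\rho$ and the verification of $\rho\circ a = a^2$, $\rho\circ ba = ab$, $\rho\circ b^2 = b$ — equivalently, the observation that the two bracketings of the concatenation differ by one self-homeomorphism of $I$ — including the continuity check at the breakpoints $m$ and $b(m)$. Everything else is bookkeeping with the pushout universal property of (S3) and with the single explicit map $\Gamma$.
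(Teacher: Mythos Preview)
Your proof is correct and takes essentially the same approach as the paper: your reparametrization $\rho$ is exactly the paper's map $\gamma$ (both satisfy $\gamma a = a^2$, $\gamma ba = ab$, $\gamma b^2 = b$, which determines them piecewise as $a$, $aba^{-1}b^{-1}$, $b^{-1}$), your $\Gamma$ is the same linear interpolation, and your $\alpha$ is the same composite. The only cosmetic difference is that you verify $H\bk(J\bk K) = M\circ(\id[X]\times\rho)$ up front via joint epimorphicity of the pushout legs, whereas the paper postpones this and checks it at the end as the identification of $\alpha\bdR$.
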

\begin{proof}
Let $\Omega = a(1)=b(0)$. As before, note that $a^{-1}$ is defined and continuous on $[0,\Omega]$, as is $b^{-1}$ on $[\Omega, 1]$. Let $\gamma \from I \to I$ be given by
$$
\gamma(t) = \begin{cases}
b^{-1}(t)         & \text{if } t \in [b\Omega,1]\text{;}\\
aba^{-1}b^{-1}(t) & \text{if } t \in [\Omega, b\Omega]\text{;}\\
a(t)              & \text{if } t \in [0, \Omega]\text{.}
\end{cases}
$$
By the pasting lemma, $\gamma$ is continuous, and $\gamma a = aa$, $\gamma ba = ab$, and $\gamma bb = b$. Let $\Gamma \from I^2 \to I$ be defined by $\Gamma(s,t)=(1-s)t+s\gamma(t)$. Then $\Gamma \bdB=e_0$, $\Gamma \bdT= e_1$, $\Gamma \bdL= \id[I]$, and $\Gamma \bdR = \gamma$. Let $\alpha = ((H \bk J) \bk K) \circ (\id[X]\times \Gamma)$. Clearly $\alpha \bdL= (H \bk J) \bk K$, $\alpha \bdB = \const_{f_1}$, and $\alpha\bdT = \const_{f_4}$. Moreover, $(\alpha \bdR)a= ((H \bk J) \bk K)aa=H$, and similarly $(\alpha \bdR)ba=J$ and $(\alpha \bdR)bb=K$. Thus, both $(\alpha \bdR)$ and $H \bk (J \bk K)$ fit in the following diagram, so by uniqueness they are equal, giving the required homotopy.
\[\begin{tikzcd}
	X & {X \times I} \\
	{X \times I} & {X \times I} \\
	&& Y
	\arrow[dashed, from=2-2, to=3-3]
	\arrow["b", from=1-2, to=2-2]
	\arrow["a"', from=2-1, to=2-2]
	\arrow["{e_0}", from=1-1, to=1-2]
	\arrow["{e_1}"', from=1-1, to=2-1]
	\arrow["H"', bend right, from=2-1, to=3-3]
	\arrow["{J \bk K}", bend left, from=1-2, to=3-3]
	\arrow["\lrcorner"{anchor=center, pos=0.125, rotate=180}, draw=none, from=2-2, to=1-1]
\end{tikzcd}\]
\end{proof}

\begin{lemma}\label{SquareComposition}
Let $f,g_0,g_1,h \from X \to Y$ be maps and $A \from f \sim g_0$, $B \from g_0 \sim h$, $C \from f \sim g_1$, and $D \from g_1 \sim h$ be homotopies. If there is a map $\alpha \from X \times I^2 \to Y$ with $\alpha \bdB=A$, $\alpha\bdR=B$, $\alpha \bdL=C$, and $\alpha \bdT=D$, then $A \bk B \sim C \bk D$ rel endpoints.
\end{lemma}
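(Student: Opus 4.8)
The plan is to produce the required homotopy rel endpoints $\beta \from X \times I^2 \to Y$ as a composite $\alpha \circ (\id[X] \times \Phi)$ for a suitable reparametrization $\Phi \from I^2 \to I^2$ of the square. Geometrically this is transparent: up to reparametrizing, $A \bk B$ and $C \bk D$ are the two ways of travelling around the boundary of $\alpha$'s square — bottom edge then right edge, versus left edge then top edge — and these are homotopic rel endpoints inside $I^2$ because the square is convex.

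Concretely, I would first fix the maps $a, b \from I \to I$ witnessing the pushout of (S3), put $\Omega = a(1) = b(0)$, and recall that $a$ and $b$ restrict to homeomorphisms $I \iso [0,\Omega]$ and $I \iso [\Omega,1]$. Define $\sigma, \tau \from I \to I^2$ by
\[
  \sigma(t) = \bigl(a^{-1}(\min(t,\Omega)),\, b^{-1}(\max(t,\Omega))\bigr), \qquad
  \tau(t) = \bigl(b^{-1}(\max(t,\Omega)),\, a^{-1}(\min(t,\Omega))\bigr).
\]
These are continuous, hence morphisms of $\Box_{\leq 2}$, so they live in $\C$, and the key identities — immediate from the formulas — are $\sigma a = \bdB$, $\sigma b = \bdR$, $\tau a = \bdL$, and $\tau b = \bdT$ as maps $I \to I^2$. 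It follows that $\alpha \circ (\id[X] \times \sigma)$ satisfies $\alpha(\id[X]\times\sigma)(\id[X]\times a) = \alpha\bdB = A$ and $\alpha(\id[X]\times\sigma)(\id[X]\times b) = \alpha\bdR = B$, so by the uniqueness in the universal property of the pushout defining the track composite we obtain $\alpha \circ (\id[X] \times \sigma) = A \bk B$; symmetrically $\alpha \circ (\id[X] \times \tau) = C \bk D$.

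Next I would let $\Phi \from I^2 \to I^2$ be the straight-line homotopy $\Phi(s,t) = (1-s)\,\sigma(t) + s\,\tau(t)$, which takes values in $I^2$ by convexity and is continuous, hence again a morphism of $\Box_{\leq 2}$. By construction $\Phi\bdL = \sigma$ and $\Phi\bdR = \tau$, while $\Phi\bdB$ and $\Phi\bdT$ are the constant maps at the corners $\bdB e_0 = (0,0)$ and $\bdR e_1 = (1,1)$. Setting $\beta = \alpha \circ (\id[X] \times \Phi)$, we get $\beta\bdL = \alpha(\id[X]\times\sigma) = A \bk B$ and $\beta\bdR = \alpha(\id[X]\times\tau) = C \bk D$, while $\beta\bdB$ is the constant homotopy at $Ae_0 = f$ and $\beta\bdT$ is the constant homotopy at $Be_1 = h$ (using $\alpha\bdB = A$ and $\alpha\bdR = B$). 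Hence $\beta$ witnesses $A \bk B \sim C \bk D$ rel endpoints.

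The only genuine subtlety I anticipate is the bookkeeping of parametrizations: $\sigma$ and $\tau$ must be built from the specific reparametrizing maps $a, b$ of (S3) so that the strict equalities $\sigma a = \bdB$, $\sigma b = \bdR$ (and the $\tau$-analogues) hold, as these are exactly what allow one to identify $\alpha \circ (\id[X] \times \sigma)$ with $A \bk B$ on the nose and not merely up to homotopy. The remaining work is a routine check of boundary values, together with the convexity of $I^2$, which makes the straight-line homotopy between $\sigma$ and $\tau$ available as a map in $\Box_{\leq 2}$.
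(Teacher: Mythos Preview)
Your proposal is correct and essentially identical to the paper's proof: your $\sigma$ and $\tau$ are exactly the paper's $\gamma_0$ and $\gamma_1$ (written with $\min/\max$ rather than piecewise), and your straight-line homotopy $\Phi$ is the paper's $\Gamma$. The only cosmetic difference is that you spell out the identification of $\alpha(\id[X]\times\sigma)$ with $A\bk B$ via the universal property a bit more explicitly.
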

\begin{proof}
Let $\gamma_0,\gamma_1 \from I \to I^2$ be given by
$$
\gamma_0(x)=\begin{cases}
(a^{-1}(x),0) & \text{if } x \leq a(1)\\
(1,b^{-1}(x)) & \text{if } x \geq b(0)
\end{cases} \qquad \text{and} \qquad \gamma_1(x)=\begin{cases}
(0,a^{-1}(x)) & \text{if } x \leq a(1)\\
(b^{-1}(x),1) & \text{if } x \geq b(0)
\end{cases}
$$
Then $\gamma_0 a = \bdB$, $\gamma_0 b = \bdR$, $\gamma_1 a = \bdL$, and $\gamma_1 b = \bdT$. Let $\Gamma \from I^2 \to I^2$ be given by $\Gamma(s,t)=s\gamma_1(t)+(1-s)\gamma_0(t)$. Since $\Gamma\bdB=e_0 \times e_0$ and $\Gamma\bdT=e_1 \times e_1$, $\alpha(\id[X] \times \Gamma)$ gives a homotopy rel endpoints from $A \bk B$ to $C \bk D$.
\end{proof}

\begin{lemma}\cite{vogt:note}*{Lemma 1}\label{HtpyCompWellDefined}
Let $u, v \from X \to Y$ and $f,g \from Y \to Z$ be maps. Given a homotopy $H$ from $u$ to $v$ and a homotopy $K$ from $f$ to $g$, $fH \bk Kv \sim Ku \bk gH$ rel endpoints.
\end{lemma}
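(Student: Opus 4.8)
The plan is to realize the two track composites $fH \bk Kv$ and $Ku \bk gH$ as the two boundary paths of a single square $X \times I^2 \to Z$, and then apply \cref{SquareComposition}. The square in question is the ``double homotopy'' obtained by feeding $H$ into $K$.

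First I would set $\Phi = K \circ (H \times \id[I]) \from X \times I \times I \to Z$, viewing the first interval factor as carrying $H$ and the second as carrying $K$. Using $He_0 = u$, $He_1 = v$, $Ke_0 = f$, $Ke_1 = g$, together with the identity $(v \times \id[I])e_k = e_k v$ recorded after \cref{HomotopyDef}, a short computation identifies the four edges of $\Phi$:
\[
\Phi \bdB = fH, \qquad \Phi \bdR = Kv, \qquad \Phi \bdT = gH, \qquad \Phi \bdL = Ku.
\]
In particular $fH \from fu \sim fv$, $Kv \from fv \sim gv$, $Ku \from fu \sim gu$, and $gH \from gu \sim gv$, so these homotopies have matching endpoints and both $fH \bk Kv$ and $Ku \bk gH$ are defined, each being a homotopy from $fu$ to $gv$.

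Then I would invoke \cref{SquareComposition}, taking the corner maps to be $fu, fv, gu, gv$, the homotopies to be $A = fH$, $B = Kv$, $C = Ku$, $D = gH$, and the filling square to be $\alpha = \Phi$. Its conclusion $A \bk B \sim C \bk D$ rel endpoints is exactly the desired $fH \bk Kv \sim Ku \bk gH$ rel endpoints.

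I do not anticipate a genuine obstacle: the only delicate point is the bookkeeping of orientations, i.e.\ checking that the ``bottom-then-right'' path of $\Phi$ is $fH \bk Kv$ while the ``left-then-top'' path is $Ku \bk gH$, and keeping the order of the factors in $H \times \id[I]$ consistent with the edge labels $\bdL, \bdT, \bdR, \bdB$ fixed at the start of \cref{BasicNotions}. Once the four edges of $\Phi$ are correctly identified, the statement follows immediately from \cref{SquareComposition}.
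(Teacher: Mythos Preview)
Your proposal is correct and is essentially identical to the paper's proof: the paper also defines $\alpha = K(H \times \id[I])$, computes the four edges as $\alpha\bdL = Ku$, $\alpha\bdT = gH$, $\alpha\bdR = Kv$, $\alpha\bdB = fH$, and then invokes \cref{SquareComposition} directly.
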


\begin{proof}
Let $\alpha \from X \times I \times I \to Z$ be $K (H \times \id[I])$. Then $\alpha \bdL=K(H \times \id[I])(e_0 \times \id[I]) = K(He_0) = Ku$, and similarly $\alpha \bdT= gH$, $\alpha\bdR=Kv$, and  $\alpha \bdB=fH$. The result follows from \cref{SquareComposition}.
\end{proof}

\begin{theorem}\label{Bicategory}
If $\C$ satisfies the assumptions in \cref{S-axioms}, then $\C$ admits a 2-category structure, where each pair of spaces $A$ and $B$ in $\C$ are assigned the groupoid $\C(A,B)$, whose objects are maps $A \to B$ and morphisms are homotopy classes of homotopies rel endpoints.
\end{theorem}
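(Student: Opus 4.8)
The plan is to write down explicit data for the $2$-category --- its hom-groupoids, horizontal composition, and identity cells --- and then verify the axioms by assembling the lemmas proved above, the only nonformal ingredient being Vogt's lemma, \cref{HtpyCompWellDefined}. First, for $A,B\in\C$ I would take $\C(A,B)$ to have the maps $A\to B$ as objects, and as morphisms $f\to g$ the classes of homotopies $f\sim g$ under homotopy rel endpoints; this is an equivalence relation as noted above, and track composites and inverse homotopies preserve relativity, so they descend to such classes. Vertical composition of $[H]\from f\to g$ with $[K]\from g\to h$ is $[H\bk K]$: this is well defined by \cref{HomotopyCompWellDefined}, associative by \cref{HomotopyCompAssociative}, and unital with units $[\const_f]$ by \cref{ConstantHomotopyIdentity}, while $[-H]$ is a two-sided inverse of $[H]$ by \cref{HomotopyInverseComposition}. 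Hence each $\C(A,B)$ is a groupoid.

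Next I would give horizontal composition. On $1$-cells it is composition in $\C$, which is strictly associative and unital with the identity maps as units, so the underlying $1$-category is $\C$ itself. On $2$-cells, given $u,v\from X\to Y$, $f,g\from Y\to Z$, $H\from u\sim v$, and $K\from f\sim g$, I would declare the horizontal composite of $[K]$ and $[H]$ to be the class of the homotopy $fH\bk Kv\from fu\sim gv$; by \cref{HtpyCompWellDefined} this agrees with the class of $Ku\bk gH$, so the two obvious candidate definitions coincide. To see that it depends only on $[H]$ and $[K]$, I would first note that whiskering a homotopy rel endpoints on either side by a fixed map is again a homotopy rel endpoints --- one post- or pre-composes the witnessing map $X\times I^2\to Y$, using that constant homotopies are carried to constant homotopies, for instance $\const_f(u\times\id[I])=\const_{fu}$ --- and then invoke \cref{HomotopyCompWellDefined} once more.

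It then remains to check that horizontal composition is a functor $\C(B,C)\times\C(A,B)\to\C(A,C)$ and that the $2$-category axioms hold. Preservation of identity $2$-cells follows because $f\const_u=\const_{fu}$ and $\const_f(u\times\id[I])=\const_{fu}$, so the composite in question is $[\const_{fu}\bk\const_{fu}]=[\const_{fu}]$ by \cref{ConstantHomotopyIdentity}. For the interchange law I would take vertically composable $2$-cells represented by $H\from u_1\sim u_2$ and $H'\from u_2\sim u_3$ in $\C(X,Y)$, and by $K\from f_1\sim f_2$ and $K'\from f_2\sim f_3$ in $\C(Y,Z)$; expanding both composites with the distributivity $q(H\bk K)r=qHr\bk qKr$ of whiskering over track composition and reassociating by \cref{HomotopyCompAssociative}, one side is represented by $f_1H\bk Ku_2\bk f_2H'\bk K'u_3$ and the other by $f_1H\bk f_1H'\bk Ku_3\bk K'u_3$, so by well-definedness of track composition (\cref{HomotopyCompWellDefined}) it suffices to prove $f_1H'\bk Ku_3\sim Ku_2\bk f_2H'$ rel endpoints --- which is exactly the instance of \cref{HtpyCompWellDefined} with these names. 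The remaining coherences are then automatic: the $1$-categorical ones hold strictly in $\C$, and strict associativity and unitality of horizontal $2$-cell composition hold because the two sides of each are track composites, in the same order, of the same whiskered homotopies --- the whiskering operations compose on the nose, being built from composition and products in $\C$ --- hence agree after a reassociation (\cref{HomotopyCompAssociative}) or the deletion of a constant factor (\cref{ConstantHomotopyIdentity}), which are equalities of rel-endpoints classes.

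The step I expect to be the real obstacle is the interchange law, and even that, as indicated, reduces cleanly to \cref{HtpyCompWellDefined}; so I anticipate that the bulk of the remaining work is the mechanical verification that the various whiskering operations are mutually strictly compatible, which is what allows every $2$-categorical constraint to be taken as an identity rather than merely a coherent isomorphism.
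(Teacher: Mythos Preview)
Your proposal is correct and follows essentially the same approach as the paper: the hom-groupoids are built from track composition using \cref{HomotopyCompWellDefined,ConstantHomotopyIdentity,HomotopyInverseComposition,HomotopyCompAssociative}, and horizontal composition of $2$-cells is defined as $[fH\bk Kv]=[Ku\bk gH]$ via \cref{HtpyCompWellDefined}. You actually supply more detail than the paper's proof, which is content to cite \cref{HtpyCompWellDefined} for the well-definedness of horizontal composition and leaves the remaining $2$-category axioms (in particular interchange) implicit.
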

\begin{proof}
The composition in $\C(A,B)$ is given by track composition as in \cref{HomotopyDef}. \cref{HomotopyCompWellDefined,ConstantHomotopyIdentity,HomotopyInverseComposition,HomotopyCompAssociative} assert that this is well-defined and associative, that each map $f \from A \to B$ has an identity homotopy $\const_f$, and then each class of homotopies $[H]$ has an inverse $[-H]$. Thus, $\C(A,B)$ is a groupoid for every pair of spaces $A$ and $B$. For each space $A$, the functor $I_A \from [0] \to \C(A,A)$ maps the unique morphism in $[0]$ to $[\const_{\id[A]} \from \id[A] \sim \id[A]]$. The composition functors $c_{A,B,C} \from \C(A,B) \times \C(B,C) \to \C(A,C)$ act on objects by sending a pair of maps $(f \from A \to B,g \from B \to C)$ to $gf \from A \to C$, and by \cref{HtpyCompWellDefined}, there is a canonical, well-defined choice for a pair of homotopy classes $([H\from u \sim v],[K \from f \sim g])$, namely mapping them to $[fH \bk Kv]= [Ku \bk gH]$.
\end{proof}

We now turn our attention to defining homotopy equivalences.

\begin{definition} 
A map $f \from X \to Y$ is a \textit{homotopy equivalence} if there exists $g \from Y \to X$ such that $gf \sim \id[X]$ and $fg \sim \id[Y]$. If $gf = \id[X]$ and $fg \sim \id[Y]$ rel $X$, then $g$ is a \textit{deformation retraction}.
\end{definition}

\begin{definition}\label{ClSphDef} 
Let $\ho\C$ denote the category whose objects are spaces in $\C$ and whose maps are homotopy classes of continuous maps. By \cref{HtpyCompWellDefined}, composition of homotopy classes is well defined. An isomorphism in this category is a class of homotopy equivalences. There is a canonical quotient functor $\Pi \from \C \to \ho\C$, which maps $f \from X \to Y$ to $[f] \from X \to Y$.
\end{definition}

We record two lemmas about homotopy equivalences for future use.

\begin{lemma}\label{HEReflectionLemma}
A map $f \from X \to Y$ is a homotopy equivalence \iff\ $\Pi f$ is an isomorphism in $\ho\C$.
\end{lemma}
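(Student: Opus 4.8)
The plan is to observe that this lemma is a direct consequence of the definitions once one unpacks what it means to be an isomorphism in $\ho\C$. The key is that, by construction (\cref{ClSphDef}), the objects of $\ho\C$ are literally the spaces of $\C$, its morphisms $X \to Y$ are literally the homotopy classes of continuous maps $X \to Y$, composition is given by $[g]\circ[f] = [g f]$ (well-defined by \cref{HtpyCompWellDefined}), and the identity of $X$ in $\ho\C$ is $[\id[X]]$. In particular, two continuous maps satisfy $u \sim v$ \iff\ $[u] = [v]$ in $\ho\C$, and every morphism of $\ho\C$ with source $Y$ and target $X$ is of the form $[g]$ for some actual continuous map $g \from Y \to X$.

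First I would spell out one direction: if $f$ is a homotopy equivalence, pick $g \from Y \to X$ with $g f \sim \id[X]$ and $f g \sim \id[Y]$; then $[g]\circ[f] = [g f] = [\id[X]] = \id[X]$ and $[f]\circ[g] = [f g] = [\id[Y]] = \id[Y]$ in $\ho\C$, so $\Pi f = [f]$ is an isomorphism with inverse $[g]$. Conversely, if $\Pi f = [f]$ is an isomorphism, then its inverse, being a morphism $Y \to X$ in $\ho\C$, has the form $[g]$ for some continuous $g \from Y \to X$; unwinding $[g]\circ[f] = \id[X]$ and $[f]\circ[g] = \id[Y]$ gives $[g f] = [\id[X]]$ and $[f g] = [\id[Y]]$, i.e.\ $g f \sim \id[X]$ and $f g \sim \id[Y]$, so $f$ is a homotopy equivalence.

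There is essentially no obstacle here: the only point requiring a moment's care is that every morphism of $\ho\C$ is represented by a genuine continuous map, which is immediate from the explicit description of $\ho\C$ as a quotient of $\C$ rather than from some abstract universal property. One could therefore legitimately leave this lemma with a \qed, as in the excerpt; the above is the one-line verification that justifies doing so.
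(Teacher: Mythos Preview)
Your proposal is correct and matches the paper's treatment: the paper states the lemma with a bare \qed, indicating it is immediate from the definition of $\ho\C$ in \cref{ClSphDef}, and your argument is precisely the routine unpacking of that definition that justifies the \qed.
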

\begin{proof}
If $f$ has a homotopy inverse $g \from Y \to X$, then since $gf \sim \id[X]$ and $fg \sim \id[Y]$, we have $[g] [f] = [\id[X]]$ and $[f] [g] = [\id[Y]]$ so $\Pi f = [f]$ is an isomorphism. Conversely, if $\Pi f = [f]$ is an isomorphism, choose a representative $g \from Y \to X$ for $[f]^{-1}$. Then $[gf]=[g][f] = [\id[X]]$, so $gf \sim \id[X]$ and likewise $fg \sim \id[Y]$ so $f$ is a homotopy equivalence.
\end{proof}

\begin{lemma}\label{HEProperties}
The following properties hold for homotopy equivalences:
\begin{itemize}
\item[(1)] Homotopy equivalences are closed under 2-out-of-6.
\item[(2)] A retract of a homotopy equivalence in the arrow category is again a homotopy equivalence.
\item[(3)] If $f \from X \to Y$ and $f' \from X' \to Y'$ are homotopy equivalences, then so is $f \times f'$.
\item[(4)] Assuming $\uvar \times I$ preserves coproducts, then if $\{f_k\}_{k \in K}$ is a family of homotopy equivalences, so is $\coprod_{k\in K} f_k$.
\end{itemize}
\end{lemma}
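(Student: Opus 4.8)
The plan is to reduce all four statements, via \cref{HEReflectionLemma}, to the corresponding elementary facts about isomorphisms in the homotopy category $\ho\C$ of \cref{ClSphDef}, using that $\Pi \from \C \to \ho\C$ is a functor and that, by \cref{HEReflectionLemma}, a map is a homotopy equivalence exactly when $\Pi$ sends it to an isomorphism.

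For (1): in any category, if $hg$ and $gf$ are isomorphisms for composable $f,g,h$, then $g$ is split epic (via $f(gf)^{-1}$) and split monic (via $(hg)^{-1}h$), hence invertible, whence $f = g^{-1}(gf)$, $h = (hg)g^{-1}$, and $hgf$ are isomorphisms too; applying this to $\Pi f, \Pi g, \Pi h$ gives (1). For (2): $\Pi$ induces a functor $\arr\C \to \arr{\ho\C}$ carrying a retract diagram to a retract diagram, and in any arrow category a retract of an isomorphism is an isomorphism (with the usual retraction data $r,s,r',s'$ satisfying $rs = \id$, $r's' = \id$, $fs = s'f'$, $f'r = r'f$, and $f$ invertible, the map $rf^{-1}s'$ is a two-sided inverse of the retract $f'$); combined with \cref{HEReflectionLemma} this gives (2).

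For (3) and (4) I would show that $\Pi$ preserves binary products and — under the stated hypothesis — arbitrary coproducts, and then invoke that a (co)product of isomorphisms is an isomorphism. That $\Pi(X\times Y)$ is a product of $\Pi X$ and $\Pi Y$ in $\ho\C$: the pairing $\langle u,v\rangle$ gives existence of copairings of homotopy classes, and if $\pr_X w \sim u$ via $P$ and $\pr_Y w \sim v$ via $Q$ then $\langle P,Q\rangle \from Z\times I \to X\times Y$ exhibits $w \sim \langle u,v\rangle$, giving uniqueness. Dually, $\Pi(\coprod_k X_k)$ is a coproduct in $\ho\C$: given homotopies $P_k \from X_k\times I \to Z$ between $w\iota_k$ and $w'\iota_k$, precomposing the induced map $\langle P_k\rangle_k$ with the isomorphism $(\coprod_k X_k)\times I \iso \coprod_k (X_k\times I)$ — which exists precisely because $\uvar\times I$ preserves coproducts — produces a homotopy $w \sim w'$. (Alternatively, (3) and (4) can be proved by directly verifying that $g\times g'$, resp. $\coprod_k g_k$, is a homotopy inverse of $f\times f'$, resp. $\coprod_k f_k$, the required homotopies being $H\times H'$, resp. $\coprod_k H_k$, precomposed with the same comparison maps.)

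None of this is deep; the only point demanding a little care — the ``main obstacle'', such as it is — is checking the endpoint conditions for the glued homotopies, i.e.\ that the comparison maps $(\coprod_k X_k)\times I \iso \coprod_k (X_k\times I)$ and $(X\times X')\times I \to (X\times I)\times(X'\times I)$ carry $e_0$ and $e_1$ to the endpoint inclusions on each summand, resp.\ factor; this is immediate from the naturality of $e_0, e_1$ together with the relevant universal properties. It is also worth flagging in the write-up exactly where the hypothesis of (4) enters: without $\uvar\times I$ preserving coproducts there is no map out of $(\coprod_k X_k)\times I$ assembled from the $H_k$.
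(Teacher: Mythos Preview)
Your argument is correct. For (1) and (2) you do exactly what the paper does: invoke \cref{HEReflectionLemma} and the corresponding closure properties of isomorphisms in any category.

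For (3) and (4) your primary route differs from the paper's. You argue that $\Pi$ preserves binary products and (under the hypothesis) arbitrary coproducts, and then appeal to the fact that a (co)product of isomorphisms is an isomorphism. The paper instead takes the direct approach you list as an alternative: it simply writes down the homotopy inverse $g\times g'$ (resp.\ $\coprod_k g_k$) and the homotopy $(H,H')$ (resp.\ $\coprod_k H_k$ precomposed with the comparison isomorphism), checking the endpoint conditions by hand. Your detour through $\ho\C$ buys a small extra fact --- that these (co)products exist in $\ho\C$ and are computed as in $\C$ --- at the cost of a few more lines; the paper's version is terser but yields only the statement at hand. Either is fine here, and since you already sketch the direct variant, your write-up would benefit from simply leading with it.
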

\begin{proof}
(1) and (2) are immediate from \cref{HEReflectionLemma}, since isomorphisms satisfy these in any category and homotopy equivalences are precisely the maps inverted by $\Pi$.\\
For (3), choose homotopy inverses $g \from Y \to X$ and $g' \from Y' \to X'$, as well as homotopies $H$ from $gf$ to $\id[X]$ and $H'$ from $g'f'$ for $\id[X']$. Then a homotopy from $gf \times g'f' = (g \times g')(f \times f')$ to $\id[X \times X']$ is given by $(H,H')$. Similar reasoning gives a homotopy from $(f \times f')(g \times g')$ to $\id[Y \times Y']$, so $f \times f'$ is a homotopy equivalence.\\
Lastly for (4), given families $\{g_k\}_{k \in K}$ of homotopy inverses, $\{H_k\}_{k\in K}$ and $\{J_k\}_{k\in K}$ homotopies from $g_kf_k$ and $f_kg_k$ to the respective identities, $\coprod H_k$ and $\coprod J_k$ give homotopies from $\coprod g_k \circ \coprod f_k$ and $\coprod f_k \circ \coprod g_k$ to the respective identities since $(\coprod \dom(f_k)) \times I \iso \coprod (\dom(f_k) \times I)$.
\end{proof}

\subsection*{Pushout-products}

In this final subsection, we collect the requisite background on the Leibniz construction.
Specifically, we recall the definitions of pushout product (\cref{def:pushout-product}) and pullback power (\cref{def:pullback-power}) and prove their basic properties in the category of topological spaces.

\begin{definition} \label{def:pushout-product}
Let $f \from X \to Y$ and $g \from A \to B$ be maps in $\C$. Their \textit{pushout product}, denoted $f \hat{\times} g$, is the factorization of $f \times g \from X \times A \to Y \times B$ through the pushout in the following diagram:
\[\begin{tikzcd}
	{X \times A} && {Y \times A} \\
	{X \times B} && P \\
	&&& {Y \times B}
	\arrow["{f \times \id[A]}", from=1-1, to=1-3]
	\arrow["{\id[X] \times g}"', from=1-1, to=2-1]
	\arrow["{\id[X] \times g}"{description}, from=1-3, to=3-4, bend left]
	\arrow["{f \times \id[A]}"{description}, from=2-1, to=3-4, bend right]
	\arrow["{f \hat{\times} g}"{description}, dashed, from=2-3, to=3-4]
	\arrow[from=2-1, to=2-3]
	\arrow[from=1-3, to=2-3]
	\arrow["\lrcorner"{anchor=center, pos=0.125, rotate=180}, draw=none, from=2-3, to=1-1]
\end{tikzcd}\]
Note that since $\times$ is symmetric, $f \hat{\times} g \iso g \hat{\times} f$, and that $\hat{\times}$ is associative up to isomorphism.
\end{definition}

\begin{lemma}\label{PushProdArrowCategory}
If $f \from A \to B$ and $f' \from A' \to B'$ are isomorphic in the arrow category, as are $g \from C \to D$ and $g' \from C' \to D'$, then $f \hat{\times} g$ and $f' \hat{\times} g'$ are isomorphic in the arrow category.
\end{lemma}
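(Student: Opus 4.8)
The idea is that $f \hat{\times} g$ is assembled entirely from binary products and a single pushout, and that both of these operations are functorial, so that an isomorphism of the input arrows is transported to an isomorphism of the output arrows. Concretely, an isomorphism $f \iso f'$ in $\arr\C$ unpacks to a pair of isomorphisms $\varphi_0$ on domains and $\varphi_1$ on codomains with $f'\varphi_0 = \varphi_1 f$, and similarly $g \iso g'$ gives isomorphisms $\psi_0,\psi_1$ with $g'\psi_0 = \psi_1 g$. Applying the bifunctor $\times \from \C \times \C \to \C$ to these data yields isomorphisms between each of the four corners ($A\times C$, $B\times C$, $A\times D$, $B\times D$, in the notation of \cref{def:pushout-product} with $X,Y,A,B$ replaced by $A,B,C,D$) of the square defining $f \hat{\times} g$ and the corresponding corners of the square defining $f' \hat{\times} g'$, all compatible with the structure maps $f \times \id[C]$, $\id[A] \times g$ and their primed counterparts, by bifunctoriality of $\times$.

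Next, the span $B\times C \leftarrow A\times C \to A\times D$ computing $P$ is thereby isomorphic, as a diagram of shape $\bullet \leftarrow \bullet \to \bullet$, to the span computing $P'$. Since colimits are functorial, the induced comparison map $\chi \from P \to P'$ is an isomorphism. Moreover $\chi$ is compatible with the cocone legs into $B\times D$, respectively $B'\times D'$, and hence with the factorizations $f \hat{\times} g$ and $f' \hat{\times} g'$: the square
\[\begin{tikzcd}
	P & {P'} \\
	{B \times D} & {B' \times D'}
	\arrow["\chi", from=1-1, to=1-2]
	\arrow["{f \hat{\times} g}"', from=1-1, to=2-1]
	\arrow["{f' \hat{\times} g'}", from=1-2, to=2-2]
	\arrow["{\varphi_1 \times \psi_1}"', from=2-1, to=2-2]
\end{tikzcd}\]
commutes because both composites out of $P$ restrict correctly along the two cocone legs, so they agree by the uniqueness clause of the pushout's universal property. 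Since the horizontal maps are isomorphisms, this exhibits $f \hat{\times} g \iso f' \hat{\times} g'$ in $\arr\C$.

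There is no real obstacle here; the only point meriting care is the verification that $\chi$ commutes with the downward maps, which is exactly the uniqueness part of the pushout universal property applied to the two evident cocones on the span defining $P$. Equivalently, one may observe that all the constructions involved — forming $X\times A$, $Y\times A$, $X\times B$, the pushout $P$, and the canonical map to $Y\times B$ — are functorial in the pair $(f,g)$, so that $\hat{\times}$ extends to a functor $\arr\C \times \arr\C \to \arr\C$, and functors preserve isomorphisms.
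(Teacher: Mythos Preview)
Your proposal is correct and is precisely the approach the paper takes: the paper's proof is the one-line remark that naturally isomorphic diagrams induce an isomorphism between their colimits commuting with the colimit legs, and your argument is simply this standard diagram chase spelled out in detail.
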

\begin{proof}
This is a standard diagram chase, given that naturally isomorphic diagrams induce an isomorphism between their colimits which commutes with the colimit legs.
\end{proof}

A functor $\otimes \from \cat{A} \times \cat{B} \to \cat{C}$ is \textit{divisible on the right} if, for every $B \in \cat{B}$, the functor $\uvar \otimes B \from \cat{A} \to \cat{C}$ admits a right adjoint. When the functor $\uvar \times A \from \C \to \C$ admits a right adjoint, $A$ is an \textit{exponentiable} space. The right adjoint will be denoted $(\uvar)^A \from \C \to \C$, and its action on a morphism $f \from X \to Y$ will be denoted $f_* \from X^A \to Y^A$. A map $g \from A \to B$ between exponentiable spaces induces a natural transformation $g^* \from (\uvar)^B \Rightarrow (\uvar)^A$.

\begin{definition} \label{def:pullback-power}
Let $f \from X \to Y$ and $g \from A \to B$ be maps in $\C$ with $A$ and $B$ both exponentiable. Their \textit{pullback power}, denoted $f \triangleright g$, is the factorization of $f_*g^*=g^*f_*$ in the following diagram:
\[\begin{tikzcd}
	{X^B} \\
	& P & {Y^B} \\
	& {X^A} & {Y^A}
	\arrow["{f_*}", bend left, from=1-1, to=2-3]
	\arrow["{f \triangleright  g}"{description}, dashed, from=1-1, to=2-2]
	\arrow["{g^*}"', bend right, from=1-1, to=3-2]
	\arrow[from=2-2, to=2-3]
	\arrow[from=2-2, to=3-2]
	\arrow["{f_*}"', from=3-2, to=3-3]
	\arrow["{g^*}", from=2-3, to=3-3]
	\arrow["\lrcorner"{anchor=center, pos=0.125}, draw=none, from=2-2, to=3-3]
\end{tikzcd}\]
Note that, unlike the pushout product, in general $f \triangleright g \neq g \triangleright f$.
\end{definition}

The following statement holds in general for \textit{closed} monoidal products in a given category. Since we will not be taking $\C$ to be cartesian closed, we need to ensure that we are only exponentiating spaces that we have assumed or shown to be exponentiable.

\begin{lemma}\label{PushProdAndPullPowerLemma}
Let $i \from A \to B$ be a map between exponentiable spaces. Then $f \hat{\times} i$ has the \llp\ \wrt\ $g$ \iff\ $f$ has the \llp\ \wrt\ $g \triangleright i$
\end{lemma}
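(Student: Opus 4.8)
The plan is to unwind both lifting problems into the same diagram using the pushout–product/pullback–power adjunction and tensor–hom adjunctions at the level of hom-sets. First I would recall that, since $A$ and $B$ are exponentiable, the functors $\uvar \times A$ and $\uvar \times B$ admit right adjoints $(\uvar)^A$ and $(\uvar)^B$. A lifting problem of $f \pp i$ against $g$ is a commutative square whose left edge is $f \pp i \from P \to Y \times B$ (the Leibniz product of $f \from X \to Y$ and $i \from A \to B$) and whose right edge is $g \from Z \to W$; a diagonal filler is a map $Y \times B \to Z$ making the two triangles commute. Dually, a lifting problem of $f$ against $g \triangleright i$ has left edge $f \from X \to Y$ and right edge $g \triangleright i \from Z^B \to Q$ (the Leibniz power), and a filler is a map $Y \to Z^B$.

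Next I would set up the translation. The key point is that, because $P$ is a pushout of $X \times A \leftarrow X \times A \rightarrow X \times B$ along $f \times \id[A]$ and $\id[X] \times i$, a map $P \to Z$ is the same data as a pair of maps $X \times B \to Z$ and $Y \times A \to Z$ agreeing on $X \times A$; transposing across the adjunctions $\uvar \times A \adj (\uvar)^A$ and $\uvar \times B \adj (\uvar)^B$, this is the same as a pair $X \to Z^B$ and $Y \to Z^A$ agreeing after composing into $X \to Z^A$, i.e.\ exactly a map $X \to Q$, where $Q$ is the pullback $Z^B \times_{W^B} W^A$ appearing in \cref{def:pullback-power}. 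Carrying the target side along as well, one checks that commutative squares
\[
\begin{tikzcd}
P \ar[r] \ar[d, "f \pp i"'] & Z \ar[d, "g"] \\
Y \times B \ar[r] & W
\end{tikzcd}
\qquad\text{correspond bijectively to}\qquad
\begin{tikzcd}
X \ar[r] \ar[d, "f"'] & Z^B \ar[d, "g \triangleright i"] \\
Y \ar[r] & Q
\end{tikzcd}
\]
and, crucially, that this bijection carries diagonal fillers to diagonal fillers: a filler $Y \times B \to Z$ of the left square transposes to a filler $Y \to Z^B$ of the right square, and vice versa, because transposition is natural in all variables and both the pushout defining $P$ and the pullback defining $Q$ are constructed so that the comparison maps match up. Tracking which legs of the (co)limits the transposed maps restrict along is what makes the triangles commute on both sides. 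Having established a bijection between the two lifting problems that preserves and reflects solutions, the stated equivalence of lifting properties is immediate: $f \pp i$ has the \llp\ \wrt\ $g$ exactly when every square of the first form has a filler, which happens exactly when every square of the second form has a filler, i.e.\ when $f$ has the \llp\ \wrt\ $g \triangleright i$.

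The main obstacle is bookkeeping rather than anything conceptual: one must be careful that only exponentiable spaces are ever exponentiated (this is why the hypothesis is on $i \from A \to B$ and not on $f$ or $g$), and one must verify that the pushout defining $f \pp i$ is sent by $\uvar^{(\text{target})}$-transposition precisely to the pullback defining $g \triangleright i$ — equivalently, that the transpose of the universal map out of the pushout is the universal map into the pullback. This is the usual "Leibniz adjunction" argument; since $\uvar \times A$ and $\uvar \times B$ are left adjoints they preserve the relevant pushouts, and the comparison is then forced by the universal properties. I would present it as a diagram chase, invoking \cref{PushProdArrowCategory} if needed to replace the diagrams by isomorphic ones, and otherwise leave the routine naturality verifications to the reader.
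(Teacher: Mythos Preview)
Your argument is correct and is precisely the content of the reference the paper invokes: the paper's proof simply cites \cite{joyal:quasicategories}*{Proposition D.1.18}, observing that $\uvar \times \uvar \from \C \times \C_{exp} \to \C$ is divisible on the right (where $\C_{exp}$ is the full subcategory of exponentiable objects), and your unfolding of the Leibniz adjunction is exactly what that proposition encodes. Two small typos to fix: the span defining $P$ should be $X \times B \leftarrow X \times A \to Y \times A$, and the pullback $Q$ in your notation is $W^B \times_{W^A} Z^A$, not $Z^B \times_{W^B} W^A$.
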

\begin{proof}
This follows from \cite{joyal:quasicategories}*{Proposition D.1.18}, since the functor $\uvar \times \uvar \from \C \times \C_{exp} \to \C$ is divisible on the right, where $\C_{exp}$ denotes the full subcategory of $\C$ consisting of exponentiable spaces. 
\end{proof}

\begin{lemma}\label{PushProdIdentity1}
Let $A$ be a space, and $!_A \from \emptyset \to A$ the unique map. For any map $f \from X \to Y$ and a space $A$ we have $!_A \hat{\times} f \iso \id[A] \times f$.
\end{lemma}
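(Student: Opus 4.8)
The plan is to unwind \cref{def:pushout-product} and observe that axioms (S4) and (S1) force the defining pushout to collapse. Instantiating the definition with first map $!_A \from \emptyset \to A$ and second map $f \from X \to Y$, the map $!_A \hat{\times} f$ is the comparison map out of the pushout $P$ of the span $A \times X \xleftarrow{\,!_A \times \id[X]\,} \emptyset \times X \xrightarrow{\,\id[\emptyset] \times f\,} \emptyset \times Y$; by construction of the pushout product, $!_A \hat{\times} f \from P \to A \times Y$ is the unique map whose composite with the coprojection $j \from A \times X \to P$ is $\id[A] \times f$ and whose composite with the coprojection $\emptyset \times Y \to P$ is $!_A \times \id[Y]$.

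First I would note that by (S4) both $\emptyset \times X$ and $\emptyset \times Y$ are isomorphic to $\emptyset$, hence initial by (S1); in particular the leg $\id[\emptyset] \times f$ of the pushout square is the unique map between initial objects, and so is an isomorphism. Since a pushout of an isomorphism is again an isomorphism, the coprojection opposite to this leg, namely $j \from A \times X \to P$, is an isomorphism. It then only remains to identify the induced map in the arrow category: by the characterization of $!_A \hat{\times} f$ recalled above, the composite $(!_A \hat{\times} f) \circ j$ is exactly $\id[A] \times f$, so the pair $(j,\, \id[A \times Y])$ exhibits the desired isomorphism $\id[A] \times f \iso !_A \hat{\times} f$ in the arrow category.

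There is no genuine obstacle here; the statement is a direct consequence of the axioms. The only points requiring a little care are the bookkeeping of which tensor factor plays the role of ``$X$'' versus ``$A$'' in \cref{def:pushout-product}, and the (immediate) observation that the two maps $\emptyset \to \emptyset$ coincide, so that $\id[\emptyset] \times f$ really is the isomorphism claimed. Note that (S4) is essential (it is what makes $\emptyset \times X$ initial), whereas (S2), (S3), and any preservation properties of $\uvar \times \uvar$ are not needed, since $P$ is computed directly as a pushout in $\C$.
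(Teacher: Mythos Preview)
Your proof is correct and is exactly the content of the paper's one-line proof ``Immediate from (S4)''; you have simply unwound what ``immediate'' means, correctly identifying that (S4) forces the span defining the pushout to have an initial vertex, so that one coprojection is an isomorphism and the comparison map is identified with $\id[A]\times f$. One minor remark: since $\emptyset$ is by convention already the initial object of $\C$, the appeal to (S1) is not strictly needed once (S4) gives $\emptyset\times X\iso\emptyset$.
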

\begin{proof}
Immediate from (S4).
\end{proof}

\begin{lemma}\label{PushProdIdentities}
The following identities hold for the pushout-product in the arrow category $\Top^\to$, where $!_X \from \emptyset \to X$ and $i \from \bd I \to I$ is the endpoint inclusions:
\begin{itemize}
\item[(1)] $e_0 \hat{\times} e_0 \iso !_I \hat{\times} e_0$
\item[(2)] $i \hat{\times} e_0 \iso !_I \hat{\times} e_0$
\end{itemize}
\end{lemma}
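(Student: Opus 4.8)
The plan is to use \cref{PushProdIdentity1} to dispatch both right-hand sides at once and then reduce the two identities to a single elementary fact about the square $I^2$. By that lemma $!_I \hat{\times} e_0 \iso \id[I] \times e_0$, and $\id[I] \times e_0 \from I \times * \to I \times I$ is precisely the inclusion of the bottom edge $I \times \{0\} \ito I^2$ (under the identification $I \times * \iso I$). So it is enough to show that each of $e_0 \hat{\times} e_0$ and $i \hat{\times} e_0$ is isomorphic, in $\Top^\to$, to this bottom-edge inclusion.

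First I would compute the pushout-products in $\Top$. For (1): the domain of $e_0 \hat{\times} e_0$ is the pushout of $I \xleftarrow{e_0} * \xrightarrow{e_0} I$, and the induced map into $I \times I$ has components $t \mapsto (t,0)$ and $s \mapsto (0,s)$; a routine check shows it is a continuous injection out of a compact space whose image is exactly the bent interval $L = (I \times \{0\}) \cup (\{0\} \times I)$, hence a closed embedding, so $e_0 \hat{\times} e_0$ is isomorphic in $\Top^\to$ to the inclusion $L \ito I^2$. For (2): the domain of $i \hat{\times} e_0$ is the pushout gluing the two components of $\bd I \times I$ onto the copy $I \times * \iso I \times \{0\}$ along the endpoints $0$ and $1$, and the same reasoning identifies $i \hat{\times} e_0$, up to isomorphism in $\Top^\to$, with the inclusion of the three-sided arc $U = (\{0\} \times I) \cup (I \times \{0\}) \cup (\{1\} \times I)$ into $I^2$.

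It then remains to produce, for each $A \in \{L, U\}$, a self-homeomorphism $h \from I^2 \to I^2$ with $h(I \times \{0\}) = A$; indeed, $h$ together with its restriction $I \times \{0\} \to A$ is then an isomorphism in $\Top^\to$ from the bottom-edge inclusion to the inclusion $A \ito I^2$, the relevant square commuting automatically because its two vertical maps are subspace inclusions. To build $h$, first note that $I \times \{0\}$, $L$ and $U$ are proper closed arcs of the circle $\bd I^2$, and that any two proper closed arcs of a circle are interchanged by a self-homeomorphism of it: glue a homeomorphism between the two arcs to a homeomorphism between their complementary closed arcs inducing the same bijection of endpoints, using the pasting lemma and the fact that a continuous bijection from a compact space to a Hausdorff space is a homeomorphism. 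Applying this to $I \times \{0\}$ and $A$ gives $h_0 \from \bd I^2 \to \bd I^2$ with $h_0(I \times \{0\}) = A$; extend $h_0$ over $I^2$ by coning from the center $c = (\tfrac{1}{2}, \tfrac{1}{2})$ --- the map $\bd I^2 \times I \to I^2$, $(\omega, s) \mapsto (1-s)c + s\omega$, is a quotient collapsing $\bd I^2 \times \{0\}$ to $c$, so $h\bigl((1-s)c + s\omega\bigr) = (1-s)c + s\, h_0(\omega)$ is a well-defined continuous bijection $I^2 \to I^2$, hence a homeomorphism, and $h|_{\bd I^2} = h_0$. The one step that calls for care is this last one --- the geometrically evident but mildly fiddly fact that $L$ and $U$ are ambiently equivalent inside $I^2$ to the straight edge $I \times \{0\}$ --- and even it is entirely routine.
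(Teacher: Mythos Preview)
Your proof is correct and follows the same overall strategy as the paper: reduce via \cref{PushProdIdentity1} to the bottom-edge inclusion, identify $e_0 \hat{\times} e_0$ and $i \hat{\times} e_0$ with the inclusions of $L$ and $U$ into $I^2$, and then exhibit a self-homeomorphism of $I^2$ carrying $I\times\{0\}$ onto the relevant subset. The difference lies only in how that homeomorphism is produced. The paper writes down an explicit piecewise-linear formula (a composite $gf$ with $f$ defined on four triangular regions), whereas you argue geometrically: build a self-homeomorphism of the boundary circle $\bd I^2$ exchanging the two arcs, and extend it by coning from the center. Your approach is cleaner and makes transparent why the construction works for any proper closed arc of $\bd I^2$ at once (so (1) and (2) are handled uniformly), at the cost of not giving an explicit formula; the paper's approach is more hands-on and immediately checkable but treats (2) only by saying ``similar''.
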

\begin{proof}
For (1), it follows from \cref{PushProdIdentity1} that we must find an automorphism $\phi \from I\times I \to I \times I$ that restricts to an isomorphism $I \times \{0\} \iso I \times \{0\} \cup \{0\} \times I$, i.e. such that the following commutes:
\[\begin{tikzcd}
	{I \times \{0\}} && {I \times \{0\} \cup \{0\} \times I} \\
	{I \times I} && {I\times I} 
	\arrow[hook, from=1-1, to=2-1]
	\arrow[hook, from=1-3, to=2-3]
	\arrow["{\phi|_{I \times \{0\}}}", from=1-1, to=1-3]
	\arrow["\phi"', from=2-1, to=2-3]
\end{tikzcd}\]
One such $\phi$ is given by the composite $gf$, where $g(x,y)=(\frac{1+x}{2},y)$ and
$$
f(x,y)=\begin{cases}
(x,2y) & \text{if } y \leq x/2\text{;}\\
(2(x-y),x) & \text{if } x/2 \leq y \leq x\text{;}\\
(2(x-y),y) & \text{if } x \leq y \leq 2x\text{;}\\
(-y,2x) & \text{if } y \geq 2x\text{.}
\end{cases}
$$
The proof of (2) is similar.
\end{proof}

Note that, since the pushout-product is well-defined up to isomorphism in the arrow category by \cref{PushProdArrowCategory}, the isomorphism in (1) holds regardless of what pushout we take for $I \coprod_{*} I$, even though we chose a specific pushout for the calculation in (1). In particular, since we have taken $I$ for the pushout $I \coprod_* I$ in $\C$, and $\Box_{\leq 2}$ to be a full subcategory, we may use $(1)$ in any category with intervals.

\section{Q-structures: fibrations and cofibrations} \label{sec:q-structure}

In this and the following section, we will show that the axioms of a Q-structure given in \cref{Q-axioms} are sufficient for $\C$ to admit a model structure defined analogously to the standard model structure on $\Top$.
If $\C$ satisfies the additional requirements for being a good Q-structure, given in (Q7) and (Q8), then the weak equivalences of the model structure are precisely the ones which induce isomorphisms on path-connected components and all homotopy groups, as defined in \cref{HomotopyGroupDef}.

We begin by introducing (relative) cell-complexes.

\begin{definition}
Let $\mathscr{J}$ be a class of maps. A $\mathscr{J}$\textit{-cell complex} is a transfinite composition of pushouts of coproducts of maps in $\mathscr{J}$, as in the following diagram.
\begin{center}
\begin{tikzcd}
	{\coprod \text{dom}(j_\alpha)} & {\coprod \text{cod}(j_\alpha)} \\
	{X_0} & {X_1} & {X_2} & \cdots & {\colim X_k} \\
	& {\coprod \text{dom}(j_\beta)} & {\coprod \text{cod}(j_\beta)}
	\arrow[from=1-1, to=2-1]
	\arrow[from=2-1, to=2-2]
	\arrow["p_1", from=1-2, to=2-2]
	\arrow["\lrcorner"{anchor=center, pos=0.125, rotate=180}, draw=none, from=2-2, to=1-1]
        \arrow["\lrcorner"{anchor=center, pos=0.125, rotate=270}, draw=none, from=2-3, to=3-2]
	\arrow["{\coprod j_\alpha}", from=1-1, to=1-2]
	\arrow["{\coprod j_\beta}"', from=3-2, to=3-3]
	\arrow[from=3-2, to=2-2]
	\arrow["p_2"', from=3-3, to=2-3]
	\arrow[from=2-2, to=2-3]
	\arrow[from=2-3, to=2-4]
	\arrow[from=2-4, to=2-5]
\end{tikzcd}
\end{center}
The collection of all $\mathscr{J}$-cell complexes will be denoted $cell(\mathscr{J})$, and $cof(\mathscr{J})$ will denote the closure of $cell(\mathscr{J})$ under retracts. Let $rlp(\mathscr{J})$ denote the class of maps which have the right lifting property against $\mathscr{J}$. A standard proof shows that $rlp(\mathscr{J}) = rlp(cof(\mathscr{J}))$ \cite{hirschhorn:book}*{Proposition 10.3.2}.
\end{definition}

\begin{notation} \label{notationIandJ}
Let $\mathcal{I} = \{i_n \from \bd I^n \ito I^n\}_{n \geq 0}$ and $\mathcal{J} = \{j_n \from I^n \times \{0\} \ito I^n \times I\}_{n \geq 0}$ be the subspace inclusions in $\Top$. 
\end{notation}

As a slight abuse of notation, let $e_k \from* \ito \bd I$ for $k=0,1$ be the endpoint inclusions.

\begin{definition}\label{Q-axioms}
Let $\iota \from \Box \to \C$ be an embedding of $\Box$ into a bicomplete category $\C$. The pair $(\C, \iota)$ is a \textit{Q-structure} if the following holds:
\begin{enumerate}
\item[(Q1)] $\Box_{\leq 2} \ito \Box \xrightarrow{\iota} \C$ makes $\C$ a category with intervals.
\item[(Q2)] $X \times \bd I$ is a coproduct $X \coprod X$ in $\C$, with inclusions $\id[X] \times e_k$ for all $X \in \C$.
\item[(Q3)] $\mathcal{I} \hat{\times} \mathcal{I} \subseteq cof(\mathcal{I})$ and $\mathcal{I} \hat{\times} \mathcal{J} \subseteq cof(\mathcal{J})$.
\item[(Q4)] $\mathcal{J} \subseteq cof(\mathcal{I})$ and $\{\emptyset \to \bd I^n\} \subseteq cof(\mathcal{I})$.
\item[(Q5)] $\bd I^n$ and $I^n$ are exponentiable in $\C$ for all $n \geq 0$.
\item[(Q6)] $\bd I^{n}$ and $I^{n}$ are small relative to maps in $cell(\mathcal{I})$ and $cell(\mathcal{J})$ respectively, for all $n \geq 0$.
\end{enumerate}
The pair $(\C,\iota)$ is a \textit{good Q-structure} if, in addition,
\begin{enumerate}
\item[(Q7)] $I^n$ is a product of $\prod_{1 \leq k \leq n} I$ for all positive integers $n$.
\item[(Q8)] There are pushouts of the following form in $\C$ for all $n \geq 1$:
\[\begin{tikzcd}
	{\bd I^n} & {\bd I^n \times I} && {\bd I^{n-1}} & {I^{n-1}} \\
	{*} & {I^n} && {*} & {\bd I^n}
	\arrow["c"', from=2-1, to=2-2]
	\arrow["\lrcorner"{anchor=center, pos=0.125, rotate=180}, draw=none, from=2-2, to=1-1]
	\arrow["\rho", from=1-2, to=2-2]
	\arrow["{!}"', from=1-1, to=2-1]
	\arrow["{e_1}", from=1-1, to=1-2]
	\arrow["{!}"', from=1-4, to=2-4]
	\arrow["{i_n}", hook, from=1-4, to=1-5]
	\arrow["{(*)}"', from=2-4, to=2-5]
	\arrow["\sigma", from=1-5, to=2-5]
\end{tikzcd}\]
such that $\rho e_0=i_n$.
\end{enumerate}
\end{definition}

The axioms listed above are key properties used to construct the Quillen model structure on $\Top$, restated for an arbitrary $\C$. We will use the axioms as follows: (Q1) allows us to make use of the homotopy theory developed in \cref{BasicNotions}; (Q2) is used in lifting homotopies along certain maps, with prescribed endpoints; (Q3) and (Q5) allow us to obtain a weak version of the pushout-product axiom; lastly, (Q4) and (Q6) are used in applying the small object argument. For good Q-structures, (Q7) will be used to define homotopy groups (cf. \cref{HomotopyGroupDef}) and (Q8) are some familiar quotient identities from $\Top$, namely $S^{n-1} \times I/S^{n-1} \times \{0\} \iso D^n$ and $D^n/S^{n-1} \iso S^n$.

It follows from (Q5) that $\uvar \times I^n$, as a left adjoint, preserves colimits for all $n \geq 0$. (Q6) is sufficient for $\C$ to admit a small object argument on $\mathcal{I}$ and $\mathcal{J}$, which generates two weak factorization systems in $\C$. For $\mathcal{K}$ equal to either $\mathcal{I}$ or $\mathcal{J}$, one has for the left class $cof(\mathcal{K})$ and for the right class $rlp(\mathcal{K})$ of the respective weak factorization system.

In \cref{Ex:topological_spaces,ex:pseudotop,ex:locales}, we discuss several examples of good Q-structures.
We do not know of any examples of Q-structures that are not good, however we do not expect the axioms (Q1)--(Q6) to imply (Q7)--(Q8), since the latter require the existence of certain pushouts squares which are otherwise not assumed to exist.
We chose to isolate the axioms (Q1)--(Q6) to underscore the fact that they are the only ones required to establish a model structure, while (Q7)--(Q8) are used to characterize the class of weak equivalences.

We can now define the classes of maps that will form a model structure on $\C$.

\begin{definition}\label{CofFibDef} \leavevmode
  \begin{enumerate}
      \item[(1)] A \textit{cofibration} is a map in $cof(\mathcal{I})$.
      \item[(2)] A \textit{fibration} is a map in $rlp(\mathcal{J})$.
      \item[(3)] A \textit{trivial cofibration} is a map in $cof(\mathcal{J})$.
      \item[(4)] A \textit{trivial fibration} is a map in $rlp(\mathcal{I})$.
  \end{enumerate} 
\end{definition}

Using the small object argument, we obtain the expected factorizations.

\begin{lemma} \label{existence-of-factorizations}
    Every map in $\C$ can be factored as:
    \begin{itemize}
        \item[(1)] a cofibration followed by a trivial fibration; and
        \item[(2)] a trivial cofibration followed by a fibration. \qed
    \end{itemize}
\end{lemma}

\begin{definition}\label{WEDef}
Let $f \from X \to Y \in \C$ and $n$ a nonnegative integer. Then
\begin{enumerate}
\item[(1)] $f$ is \emph{n-compressible} if any square of the form
\[\begin{tikzcd}
	{\bd I^n} & X \\
	{I^n} & Y
	\arrow["{i_n}"', hook, from=1-1, to=2-1]
	\arrow["u", from=1-1, to=1-2]
	\arrow["v", from=2-1, to=2-2]
	\arrow["f", from=1-2, to=2-2]
\end{tikzcd}\]
admits a diagonal map $h \from I^n \to X$ such that $hi_n = u$ and $fh \sim v\text{ rel }\bd I^n$.
\item[(2)] $f$ is \emph{n-connected} if it is $k$-compressible for all $k \leq n$. 
\item[(3)] $f$ is a \emph{weak equivalence} if it is $n$-connected for all nonnegative integers.
\end{enumerate}
\end{definition}

A map $f$ has the \textit{weak right lifting property} against $i \from A \to B$ if it has the lifting property as in (1) against $i$ --- given maps such that $fu = vi$ there is a filler $h$ satisfying $hi=u$ and $fh \sim v$ rel $A$.

\begin{theorem}\label{SerreModelStructure}
Any Q-structure $(\C,\iota)$ induces a model structure on $\C$ whose cofibrations and fibrations are as defined in \cref{CofFibDef} and weak equivalences are as defined in \cref{WEDef}.
\end{theorem}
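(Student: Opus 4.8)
The plan is to verify the axioms of a closed model category in the sense of Quillen, following the classical blueprint for $\Top$ but substituting the abstract inputs (Q1)–(Q6) for the point-set facts about cubes. The two weak factorization systems $(cof(\mathcal{I}), rlp(\mathcal{I}))$ and $(cof(\mathcal{J}), rlp(\mathcal{J}))$ exist by (Q6) and the small object argument, and they supply the factorization axiom \emph{provided} one can identify $rlp(\mathcal{I})$ with (trivial fibrations $=$) fibrations that are weak equivalences, and $cof(\mathcal{J})$ with (trivial cofibrations $=$) cofibrations that are weak equivalences. So the real content is: (a) $rlp(\mathcal{I}) = rlp(\mathcal{J}) \cap \mathcal{W}$; (b) $cof(\mathcal{J}) = cof(\mathcal{I}) \cap \mathcal{W}$; and (c) the 2-out-of-3 property for $\mathcal{W}$. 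Given (a) and (b), the lifting axiom and factorization axiom are formal consequences of the two weak factorization systems, and the retract and limit axioms are immediate (weak equivalences are closed under retracts because the compression lifting condition in \cref{WEDef} is stated diagrammatically, hence retract-stable; bicompleteness is assumed).

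The order I would carry this out: First, establish that every trivial fibration (map in $rlp(\mathcal{I})$) is a weak equivalence. Given a lifting square against $i_n$, the $\rlp$ \wrt\ $i_n$ yields an \emph{honest} diagonal $h$ with $hi_n = u$ and $fh = v$, so in particular $fh \sim v$ rel $\bd I^n$; this shows $rlp(\mathcal{I}) \subseteq \mathcal{W}$, and $rlp(\mathcal{I}) \subseteq rlp(\mathcal{J})$ follows from (Q4) since $\mathcal{J} \subseteq cof(\mathcal{I})$. Conversely, a fibration that is a weak equivalence has the \emph{weak} \rlp\ \wrt\ each $i_n$ by definition of $n$-connectedness; the standard argument upgrades this to the honest \rlp\ by using the relative homotopy $fh \sim v$ rel $\bd I^n$ together with the fibration property against $\mathcal{J}$ to ``straighten'' the lift — this uses (Q2)/(Q4) to manipulate the cylinder $I^n \times I$ and its relevant subcomplexes, and is where (Q8)-type pushout descriptions of $\bd I^n$ may be invoked to recognize the gluing as a $\mathcal{J}$-cell attachment. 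Next, trivial cofibrations are weak equivalences: a map in $cell(\mathcal{J})$ is built from pushouts of $j_n$, and one shows each $j_n$ is a weak equivalence (it is, in fact, a deformation retract inclusion built from the interval structure, using \cref{HomotopyInverseComposition} and the homotopy calculus of \cref{BasicNotions}), then that weak equivalences are closed under the pushouts, transfinite composites, and retracts appearing in a cell complex — the pushout-stability is the delicate point and relies on (Q3) (the pushout-product conditions controlling how $\mathcal{I}$ interacts with $\mathcal{J}$) to compare lifting squares before and after attachment.

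The reverse inclusion in (b) — that a cofibration which is a weak equivalence lies in $cof(\mathcal{J})$ — is handled by the classical retract trick: factor such an $f = q \circ j$ with $j \in cof(\mathcal{J})$ and $q \in rlp(\mathcal{J})$ a fibration; since $f$ and $j$ are weak equivalences, so is $q$ by 2-out-of-3, hence $q \in rlp(\mathcal{I})$ by (a); then $f \in cof(\mathcal{I})$ lifts against $q$, exhibiting $f$ as a retract of $j$, so $f \in cof(\mathcal{J})$. This reduces everything to (a), to the forward direction of (b), and to 2-out-of-3. I expect the \textbf{main obstacle} to be proving 2-out-of-3 for $\mathcal{W}$: unlike the situation where weak equivalences are defined as maps inducing isomorphisms on homotopy groups (where 2-out-of-3 is trivial), here $\mathcal{W}$ is defined by a compression/lifting condition, and composing or cancelling such conditions requires genuine homotopy-theoretic work — building the needed diagonal fillers by pasting homotopies, using the track-composition calculus (\cref{HomotopyCompAssociative,SquareComposition,HtpyCompWellDefined}) and the exponentiability (Q5) to transpose lifting problems, and very likely using (Q3) again to control pushout-products of the $i_n$ against cylinders. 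In the good case one would additionally reconcile this definition of $\mathcal{W}$ with the homotopy-group description promised after \cref{HomotopyGroupDef}, but that is a separate theorem; for \cref{SerreModelStructure} itself, the compression definition is the operative one throughout.
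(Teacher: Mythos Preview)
Your overall architecture is correct and matches the paper's: establish the two weak factorization systems via (Q6) and the small object argument, then identify $rlp(\mathcal{I})$ with fibrations that are weak equivalences and $cof(\mathcal{J})$ with cofibrations that are weak equivalences, and prove 2-out-of-3. Your treatment of (a) is essentially the paper's \cref{TrivialFibLemma} and \cref{SOA1}, and your retract argument for the reverse inclusion in (b) is exactly the paper's \cref{SOA2}. The differences are in how you handle 2-out-of-3 and the forward direction of (b).

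For the forward direction of (b), you propose a cell-by-cell argument: each $j_n$ is a deformation retract, and then weak equivalences are closed under the colimits building a $\mathcal{J}$-cell complex. This is workable (pushouts of deformation-retract inclusions are again such, using cocontinuity of $-\times I$), but the paper takes a shorter route: since every object is fibrant (\cref{AllFibrant}), a trivial cofibration $f\from X\to Y$ lifts against $X\to *$ to give a retraction $p$ with $pf=\id[X]$, and a second lift (against $f\pp i_1$, which is in $cof(\mathcal{J})$ by \cref{PushProdLemma}) produces a homotopy $fp\sim\id[Y]$; so $f$ is a homotopy equivalence, hence a weak equivalence by \cref{HEAreWE}. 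No induction over cells is needed. (Minor point: (Q8) is part of a \emph{good} Q-structure and plays no role in \cref{SerreModelStructure}; the straightening in (a) uses only $i_n\pp j_0\in cof(\mathcal{J})$, which is (Q3).)

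For 2-out-of-3 you correctly flag the obstacle, but your proposed cure---direct manipulation of compression fillers via track composition---is likely inadequate for the cancellation cases (e.g.\ $gf,\ g\in\mathcal{W}\Rightarrow f\in\mathcal{W}$), where one must somehow ``invert'' the weak lifting property of $g$. The paper sidesteps this entirely with \cref{WECofibrantObj}: $f$ is a weak equivalence \iff\ $f_*\from[A,X]\to[A,Y]$ is a bijection for every cofibrant $A$. The forward direction uses that weak equivalences have the weak \rlp\ against all cofibrations (not only the generators); the converse uses cofibrant replacement, a Yoneda argument in $\ho\C_{cof}$, and \cref{HEAreWE}. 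Once this characterization is in hand, 2-out-of-3 (\cref{WE2OutOf3}) is immediate from 2-out-of-3 for bijections. This is the key lemma your plan is missing, and it is what makes the argument go through cleanly without homotopy groups (which are unavailable in a general Q-structure).
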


We will defer the proof of this to \cref{SerreSection}.

\begin{definition}
A space $X$ is \textit{fibrant} if the map $X \to *$ is a fibration. Dually, $X$ is \textit{cofibrant} if the map $\emptyset \to X$ is a cofibration.
\end{definition}

\begin{lemma}\label{AllFibrant}
Every space is fibrant.
\end{lemma}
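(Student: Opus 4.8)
The goal is to show that for every space $X$, the map $X \to *$ has the right lifting property with respect to every $j_n \from I^n \times \{0\} \ito I^n \times I$. Since the target is terminal, such a lifting problem amounts to the following: given any map $u \from I^n \times \{0\} \to X$, we must extend it along $j_n$ to a map $I^n \times I \to X$. The plan is to produce this extension directly, using the retraction of $I^n \times I$ onto $I^n \times \{0\}$ available inside $\Box$, transported into $\C$ via $\iota$.

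First I would observe that in $\Top$ there is a retraction $r_n \from I^n \times I \to I^n \times \{0\}$ of the inclusion $j_n$, namely $r_n = \id[I^n] \times (\text{constant at } 0)$, equivalently the projection $I^n \times I \to I^n$ followed by the identification $I^n \iso I^n \times \{0\}$. The point is that $r_n$ is a morphism in $\Box$ (it is a continuous map between a cube and a cube), so $\iota$ carries it to a map $\iota r_n \from \iota(I^n \times I) \to \iota(I^n \times \{0\})$ in $\C$, and functoriality of $\iota$ gives $\iota r_n \circ \iota j_n = \iota(\id)= \id$. Hence, given $u \from \iota(I^n\times\{0\}) \to X$, the composite $u \circ \iota r_n \from \iota(I^n\times I)\to X$ satisfies $u \circ \iota r_n \circ \iota j_n = u$, which is exactly the desired lift. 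Since $\iota$ is full and faithful and we have been identifying objects of $\Box_{\leq 2}$ (and more generally $\Box$) with their images in $\C$, this is just the statement that $X \to *$ lifts against $j_n$.

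One subtlety I would address: the lifting problem for $X \to *$ against $j_n$ has, as its bottom edge, the unique map $I^n\times I \to *$, and as its top edge an arbitrary $u \from I^n \times \{0\}\to X$; the square commutes automatically because $*$ is terminal. So the only content is the extension, and the retraction $\iota r_n$ supplies it uniformly in $u$. I would also note that no special hypotheses beyond (Q1) (to have $\iota$ defined on all of $\Box$) are needed here — the product structure (Q2), (Q7) and the pushouts of (Q8) play no role. In particular this works in any Q-structure.

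I do not anticipate a serious obstacle; the only thing to be careful about is the bookkeeping around the identification of $I^n \times \{0\}$ with $I^n$ and the fact that $j_n$ and its retraction genuinely live in the full subcategory $\Box \subseteq \Top$ so that $\iota$ applies. Concretely I would write: the inclusion $j_n \from I^n \times\{0\}\ito I^n\times I$ admits a retraction $r_n$ in $\Box$, so $\iota j_n$ admits the retraction $\iota r_n$ in $\C$; given any $u \from I^n\times\{0\}\to X$ in $\C$, the map $u\circ \iota r_n \from I^n\times I \to X$ restricts along $\iota j_n$ to $u$, which is the required lift. Therefore $X\to *\in rlp(\mathcal J)$, i.e.\ $X$ is fibrant. \qed
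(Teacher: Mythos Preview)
Your proof is correct and takes essentially the same approach as the paper: the paper's lift ``$f\pi_{I^n}$'' is exactly your $u\circ\iota r_n$, with $\pi_{I^n}$ being (the $\iota$-image of) the topological projection $I^{n+1}\to I^n$, which is your retraction $r_n$. Your version is in fact more careful than the paper's one-line proof, since you make explicit that the retraction lives in $\Box$ and is therefore available in $\C$ via $\iota$ without invoking (Q7).
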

\begin{proof}
Given the following square:
\begin{center}
\begin{tikzcd}
	{I^n} & X \\
	{I^n \times I} & {*}
	\arrow["{j_n}"', from=1-1, to=2-1]
	\arrow["{!}"', from=2-1, to=2-2]
	\arrow["f", from=1-1, to=1-2]
	\arrow["{!}", from=1-2, to=2-2]
\end{tikzcd}
\end{center}
The map $f \pi_{I^n}$ gives a lift.
\end{proof}

\begin{lemma}\label{TrivialFibLemma}
Every trivial fibration is both a fibration and a weak equivalence.
\end{lemma}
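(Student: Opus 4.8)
The plan is to prove the two assertions separately; each is a short consequence of the axioms together with the definitions of fibration, trivial fibration, and weak equivalence, so I do not expect a genuine obstacle — the whole proof is essentially an unwinding of definitions.

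\textbf{Trivial fibration $\Rightarrow$ fibration.} Let $f$ be a trivial fibration, i.e.\ $f \in rlp(\mathcal{I})$. By the standard identity recorded right after the definition of cell complexes, $rlp(\mathcal{I}) = rlp(cof(\mathcal{I}))$. Now (Q4) gives $\mathcal{J} \subseteq cof(\mathcal{I})$, and since passing to the right lifting class reverses inclusions, we get $rlp(cof(\mathcal{I})) \subseteq rlp(\mathcal{J})$. Hence $f \in rlp(\mathcal{J})$, i.e.\ $f$ is a fibration.

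\textbf{Trivial fibration $\Rightarrow$ weak equivalence.} Here I would check that $f \from X \to Y$ is $n$-compressible for every $n \geq 0$. Given a commuting square with $i_n \from \bd I^n \ito I^n$ on the left and $f$ on the right, the fact that $f$ has the \rlp\ against $i_n$ supplies an honest diagonal $h \from I^n \to X$ with $h i_n = u$ and $f h = v$. In particular $f h = v$, so the constant homotopy $\const_v = v\pi_{I^n}$ is a homotopy from $fh$ to $v$; moreover $\const_v(i_n \times \id[I]) = v i_n \pi_{\bd I^n} = \const_{v i_n} = \const_{(fh)i_n}$, so this homotopy is rel $\bd I^n$. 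Thus $fh \sim v$ rel $\bd I^n$, the square admits the required diagonal, and $f$ is $n$-compressible. Since $n$ was arbitrary, $f$ is a weak equivalence.

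The only point needing any care is the verification that the constant homotopy restricts to the constant homotopy on $\bd I^n$, which is immediate from the identities for $\const$ listed after \cref{HomotopyDef}; everything else is formal.
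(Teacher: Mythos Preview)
Your proof is correct and follows essentially the same approach as the paper: a strict lift against $i_n$ already witnesses $n$-compressibility (the paper simply remarks that the lift is strict rather than spelling out the constant homotopy), and the fibration claim is deduced from $\mathcal{J} \subseteq cof(\mathcal{I})$ via (Q4) exactly as you do.
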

\begin{proof}
Clearly if $f$ has the \rlp\ \wrt\ $\mathcal{I}$ it is a weak equivalence, since every square of the form
\begin{center}
\begin{tikzcd}
	{\bd I^n} & X \\
	{I^n} & Y
	\arrow["{i_n}"', hook, from=1-1, to=2-1]
	\arrow["u", from=1-1, to=1-2]
	\arrow["v", from=2-1, to=2-2]
	\arrow["f", from=1-2, to=2-2]
\end{tikzcd}
\end{center}
admits a strict lift, not just one up to homotopy on the lower triangle. Moreover, by (Q4), since $f$ has the \rlp\ \wrt\ all maps in $cof(\mathcal{I})$, it has the right lifting property \wrt\ $\mathcal{J}$, thus is a fibration.
\end{proof}

\begin{proposition}\label{PushProdLemma}
The following identities hold in $\C$:
\begin{enumerate}
\item[(1)] $cof(\mathcal{I}) \hat{\times} \mathcal{I} \subseteq cof(\mathcal{I})$
\item[(2)] $cof(\mathcal{I}) \hat{\times} \mathcal{J} \subseteq cof(\mathcal{J})$
\item[(3)] $\mathcal{I} \hat{\times} cof(\mathcal{J}) \subseteq cof(\mathcal{J})$
\end{enumerate}
\end{proposition}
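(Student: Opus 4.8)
The plan is to reduce all three containments to a single formal observation obtained by applying the pushout--product/pullback--power adjunction (\cref{PushProdAndPullPowerLemma}) twice, and then to feed in the generating cases from (Q3). Before doing so, recall two standing facts. First, by (Q6) the small object argument applies to $\mathcal{I}$ and to $\mathcal{J}$, so for $\mathcal{K}\in\{\mathcal{I},\mathcal{J}\}$ the pair $(cof(\mathcal{K}),rlp(\mathcal{K}))$ is a weak factorization system; in particular a map lies in $cof(\mathcal{K})$ \iff\ it has the \llp\ against every map in $rlp(\mathcal{K})$, and $rlp(\mathcal{K})=rlp(cof(\mathcal{K}))$. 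Second, every map in $\mathcal{I}$ and in $\mathcal{J}$ has exponentiable domain and codomain by (Q5) (recall $I^n\times\{0\}\cong I^n$ and $I^n\times I\cong I^{n+1}$, and exponentiability is invariant under isomorphism). Hence \cref{PushProdAndPullPowerLemma} is available whenever the distinguished variable is drawn from $\mathcal{I}$ or $\mathcal{J}$.

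The key observation is the following: if $i$ is a map between exponentiable spaces and $\mathcal{K},\mathcal{M}\in\{\mathcal{I},\mathcal{J}\}$ satisfy $\{\,j\hat{\times}i : j\in\mathcal{K}\,\}\subseteq cof(\mathcal{M})$, then $\{\,f\hat{\times}i : f\in cof(\mathcal{K})\,\}\subseteq cof(\mathcal{M})$. To prove it, fix $f\in cof(\mathcal{K})$ and an arbitrary $g\in rlp(\mathcal{M})$; by the first standing fact it is enough to show $f\hat{\times}i$ has the \llp\ against $g$. By \cref{PushProdAndPullPowerLemma} this is equivalent to $f$ having the \llp\ against $g\triangleright i$, and since $f$ has the \llp\ against all of $rlp(\mathcal{K})$ it suffices to check $g\triangleright i\in rlp(\mathcal{K})$, i.e.\ that each $j\in\mathcal{K}$ has the \llp\ against $g\triangleright i$. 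Applying \cref{PushProdAndPullPowerLemma} a second time, this is equivalent to $j\hat{\times}i$ having the \llp\ against $g$; and since $j\hat{\times}i\in cof(\mathcal{M})$ by hypothesis while $g\in rlp(\mathcal{M})=rlp(cof(\mathcal{M}))$, this holds. This proves the observation.

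It then remains to invoke (Q3) --- together with the symmetry and arrow-category isomorphism invariance of $\hat{\times}$ (\cref{def:pushout-product,PushProdArrowCategory}) --- in the three relevant instances. For (1): for each $i\in\mathcal{I}$ we have $\{j\hat{\times}i : j\in\mathcal{I}\}\subseteq\mathcal{I}\hat{\times}\mathcal{I}\subseteq cof(\mathcal{I})$ by (Q3), so the observation with $\mathcal{K}=\mathcal{M}=\mathcal{I}$ gives $cof(\mathcal{I})\hat{\times}i\subseteq cof(\mathcal{I})$; ranging over $i$ yields $cof(\mathcal{I})\hat{\times}\mathcal{I}\subseteq cof(\mathcal{I})$. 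For (2): for each $i\in\mathcal{J}$ we have $\{j\hat{\times}i : j\in\mathcal{I}\}\subseteq\mathcal{I}\hat{\times}\mathcal{J}\subseteq cof(\mathcal{J})$ by (Q3), so the observation with $\mathcal{K}=\mathcal{I}$, $\mathcal{M}=\mathcal{J}$ gives $cof(\mathcal{I})\hat{\times}\mathcal{J}\subseteq cof(\mathcal{J})$. For (3): since $\hat{\times}$ is symmetric it is equivalent to prove $cof(\mathcal{J})\hat{\times}\mathcal{I}\subseteq cof(\mathcal{J})$, and for each $i\in\mathcal{I}$ symmetry and (Q3) give $\{j\hat{\times}i : j\in\mathcal{J}\}\cong\{i\hat{\times}j : j\in\mathcal{J}\}\subseteq\mathcal{I}\hat{\times}\mathcal{J}\subseteq cof(\mathcal{J})$, so the observation with $\mathcal{K}=\mathcal{M}=\mathcal{J}$ gives $cof(\mathcal{J})\hat{\times}i\subseteq cof(\mathcal{J})$.

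The argument is entirely formal once \cref{PushProdAndPullPowerLemma} and (Q3) are in hand; the only points that require care are bookkeeping ones. One must ensure that every pullback--power $g\triangleright i$ appearing in the argument is formed along a map $i$ with exponentiable domain and codomain --- which is exactly why the distinguished variable of $\hat{\times}$ is always kept in $\mathcal{I}$ or $\mathcal{J}$ (so that (Q5) applies) rather than in $cof(\mathcal{I})$ or $cof(\mathcal{J})$ --- and, for part (3), one must transpose the two factors of the pushout--product before the observation applies, since the input (Q3) controls $\mathcal{I}\hat{\times}\mathcal{J}$ rather than $\mathcal{J}\hat{\times}\mathcal{I}$ directly.
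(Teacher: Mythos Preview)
Your proof is correct and follows essentially the same approach as the paper: both arguments apply \cref{PushProdAndPullPowerLemma} twice to deduce from (Q3) that $rlp(\mathcal{M})\triangleright i\subseteq rlp(\mathcal{K})$ whenever $i$ lies in the appropriate generating set, and then conclude that $cof(\mathcal{K})\hat{\times} i\subseteq cof(\mathcal{M})$. You have simply abstracted the paper's argument into a reusable ``key observation'' and been more explicit about the role of (Q5) and symmetry, which the paper leaves implicit under ``the other parts are analogous.''
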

\begin{proof}
Since $\mathcal{I} \hat{\times} \mathcal{I} \subseteq cof(\mathcal{I})$ by (Q3), $\mathcal{I} \hat{\times} \mathcal{I}$ has the \llp\ \wrt\ $rlp(\mathcal{I})$, so $\mathcal{I}$ has the \llp\ \wrt\ $rlp(\mathcal{I}) \triangleright \mathcal{I}$ by \cref{PushProdAndPullPowerLemma}. Thus, $cof(\mathcal{I})$ has the \llp\ \wrt\ $rlp(\mathcal{I}) \triangleright \mathcal{I}$ hence $cof(\mathcal{I}) \hat{\times} \mathcal{I} \subseteq cof(\mathcal{I})$ by \cref{PushProdAndPullPowerLemma} again. The other parts are analogous.
\end{proof}

For closed monoidal products $\otimes$, we can obtain a stronger result that $cof(\mathcal{I}) \hat{\otimes} cof(\mathcal{J}) \subseteq cof(\mathcal{K})$ whenever $\mathcal{I} \hat{\otimes} \mathcal{J} \subseteq \mathcal{K}$. In this case, maps in $cof(\mathcal{I})$ might not be between exponentiable spaces, so we cannot apply the same reasoning again. Thus, some care will be needed when applying \cref{PushProdLemma} to ensure that at least one map is in either $\mathcal{I}$ or $\mathcal{J}$.

\begin{definition}\label{OpenBoxInclusion}
The \textit{open box inclusion} is the induced map in the following pushout on the right
\[\begin{tikzcd}
	{*} & I && {*} & I \\
	I & I && I & I \\
	&& {I^2} &&& {I^2}
	\arrow["{e_0}", from=1-1, to=1-2]
	\arrow["{e_1}"', from=1-1, to=2-1]
	\arrow[from=2-1, to=2-2]
	\arrow[from=1-2, to=2-2]
	\arrow["\lrcorner"{anchor=center, pos=0.125, rotate=180}, draw=none, from=2-2, to=1-1]
	\arrow["{[\bdL,\bdT]}"{description}, dashed, from=2-2, to=3-3]
	\arrow["\bdT", bend left, from=1-2, to=3-3]
	\arrow["\bdL"', bend right, from=2-1, to=3-3]
	\arrow["{e_1}"', from=1-4, to=2-4]
	\arrow[from=2-4, to=2-5]
	\arrow["{e_0}", from=1-4, to=1-5]
	\arrow[from=1-5, to=2-5]
	\arrow["\lrcorner"{anchor=center, pos=0.125, rotate=180}, draw=none, from=2-5, to=1-4]
	\arrow["{[[\bdL,\bdT],-\bdR]}"{description, pos=0.3}, dashed, from=2-5, to=3-6]
	\arrow["{-\bdR}", bend left, from=1-5, to=3-6]
	\arrow["{[\bdL,\bdT]}"', bend right, from=2-4, to=3-6]
\end{tikzcd}\]
where $[-, -]$ denotes the induced map from the pushout.
\end{definition}

By \cref{PushProdArrowCategory} and \cref{PushProdIdentities}, we recognize the left pushout as $[\bdL,\bdT] \iso e_0 \hat{\times} e_0 \iso !_I \hat{\times} e_0 \iso \id[I] \times e_0 = j_1$, so by the same reasoning, $[[\bdL,\bdT],-\bdR] \iso j_1$ in the arrow category. When we wish to make the distinction, we will use $\sqcap$ for $I$ in the above right pushout and $\inc \from \sqcap \ito I^2$ for $[[\bdL,\bdT],-\bdR]$. Geometrically, it should be thought of as the left, top, and right edges of the square, which we will use to ``fill out" homotopies. By \cref{PushProdLemma}, given a cofibration $i \from A \cto X$ we have $i \hat{\times} \inc$ is a trivial cofibration. To verify that two maps from $\sqcap$ in this context are equal, one of which is a composition including $\inc$, it suffices to check that they agree after precomposing with $b$, $aa$, and $ab$ (cf.~\cref{S-axioms}).

\begin{lemma}\label{GradLemma}
Let $f \from X \to Y$ be a homotopy equivalence. Then there is a map $g \from Y \to X$, along with homotopies $H$ from $\id[X]$ to $gf$ and $K$ from $\id[Y]$ to $fg$ such that there is a map $\alpha \from X \times I^2 \to Y$ such that $\alpha \bdL=fH$, $\alpha \bdT=\const_{fgf}$, $\alpha \bdR=Kf$, and $\alpha\bdB=\const_f$.
\end{lemma}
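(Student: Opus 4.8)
The plan is to recognize the required map $\alpha$ as, literally, a witness that two homotopies are homotopic rel endpoints. For a homotopy inverse $g$ of $f$ and homotopies $H \from \id[X] \sim gf$ and $K \from \id[Y] \sim fg$, both $fH$ and $Kf$ are homotopies $f \sim fgf$, and a map $\alpha \from X \times I^2 \to Y$ with $\alpha\bdL = fH$, $\alpha\bdT = \const_{fgf}$, $\alpha\bdR = Kf$, and $\alpha\bdB = \const_f$ is exactly a homotopy rel endpoints from $fH$ to $Kf$ (unwind the definition, or apply \cref{SquareComposition}). So it suffices to choose $g$, $H$, and $K$ with $fH \sim Kf$ rel endpoints. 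In the language of \cref{Bicategory} this is the statement that a homotopy equivalence, viewed as an equivalence in the $2$-category $\C$, can be promoted to an adjoint equivalence, and I would run the usual argument for that while retaining enough data to read off $\alpha$.

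Concretely, first fix \emph{any} homotopy inverse $g \from Y \to X$ together with a homotopy $H \from \id[X] \sim gf$ (pass to the inverse homotopy of the homotopy $gf \sim \id[X]$ provided by the definition of homotopy equivalence) and a homotopy $K_0 \from \id[Y] \sim fg$ (likewise). I would keep $g$ and $H$ and replace $K_0$ by the corrected homotopy
\[
  K \;:=\; K_0 \bk (fHg) \bk (-fgK_0),
\]
which is a track composite $\id[Y] \sim fg$: indeed $K_0 \from \id[Y] \sim fg$, then $fHg \from fg \sim fgfg$, then $-fgK_0 \from fgfg \sim fg$. Throughout I use that $\bk$ is unital, associative, and has inverses up to homotopy rel endpoints (\cref{ConstantHomotopyIdentity,HomotopyCompAssociative,HomotopyInverseComposition}), so the precise bracketing is immaterial.

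It remains to check that $fH \sim Kf$ rel endpoints. Precomposing with $f$ and distributing over $\bk$ gives $Kf = (K_0 f) \bk (fHgf) \bk (-fgK_0 f)$, so after cancelling the trailing inverse factor (using \cref{HomotopyInverseComposition,ConstantHomotopyIdentity,HomotopyCompWellDefined} and transitivity of homotopy rel endpoints) it suffices to show
\[
  (K_0 f) \bk (fHgf) \;\sim\; (fH) \bk (fgK_0 f) \qquad \text{rel endpoints}.
\]
Applying \cref{HtpyCompWellDefined} to the homotopies $fH \from f \sim fgf$ and $K_0 \from \id[Y] \sim fg$ yields $(fH) \bk (K_0 fgf) \sim (K_0 f) \bk (fgfH)$ rel endpoints; two further applications of \cref{HtpyCompWellDefined}, to the pairs $(H,H)$ and $(K_0 f, K_0)$, each followed by cancelling a leading factor, give $fgfH \sim fHgf$ and $K_0 fgf \sim fgK_0 f$ rel endpoints, and then \cref{HomotopyCompWellDefined} glues everything together to yield the displayed equation. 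There is no conceptual content here beyond \cref{Bicategory} and \cref{HtpyCompWellDefined}; the only genuine obstacle is the bookkeeping --- making the right substitutions into \cref{HtpyCompWellDefined} and managing the repeated ``cancel a leading or trailing factor up to homotopy rel endpoints'' steps.
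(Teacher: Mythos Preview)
Your proposal is correct and follows the same approach as the paper: both identify the required $\alpha$ as exactly the coherence datum in promoting the equivalence $f$ to an adjoint equivalence in the $2$-category of \cref{Bicategory}. The paper's proof is the one-line appeal to that general $2$-categorical fact, whereas you spell out the standard correction $K = K_0 \bk (fHg) \bk (-fgK_0)$ and verify the triangle identity directly from \cref{HtpyCompWellDefined}; this is just the usual proof of that fact rewritten in the paper's notation, so the two arguments coincide in substance.
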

\begin{proof}
This is the statement that any equivalence can be promoted to an adjoint equivalence, applied to the 2-category $\C$ as given in \cref{Bicategory}.
\end{proof}

\begin{theorem}\label{HEAreWE}
Homotopy equivalences are weak equivalences.
\end{theorem}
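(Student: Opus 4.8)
The plan is to verify directly that $f$ is $n$-compressible for every $n \geq 0$, which by \cref{WEDef} is precisely the statement that $f$ is a weak equivalence. So fix $n$ and a commuting square with $u \from \bd I^n \to X$, $v \from I^n \to Y$ and $fu = vi_n$. The first step is to run $f$ through \cref{GradLemma} to replace it by an adjoint equivalence: this furnishes $g \from Y \to X$, homotopies $H \from \id[X] \sim gf$ and $K \from \id[Y] \sim fg$, and a square $\alpha \from X \times I^2 \to Y$ with $\alpha\bdL = fH$, $\alpha\bdT = \const_{fgf}$, $\alpha\bdR = Kf$, $\alpha\bdB = \const_f$. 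Feeding $\alpha$ into \cref{SquareComposition} and then cancelling the constant homotopies with \cref{ConstantHomotopyIdentity} yields the triangle identity $Kf \sim fH$ rel endpoints; this coherence --- which a bare homotopy inverse does not provide --- is what makes the argument close up.

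Next I would produce the strict part of the compression. By \cref{PushProdLemma} the pushout--product $i_n \hat{\times} j_0$ lies in $cof(\mathcal{J})$, so --- every space being fibrant by \cref{AllFibrant} --- it has the left lifting property against $X \to *$. Applying this to the pair consisting of $gv \from I^n \to X$ and the homotopy $-(Hu)$ from $gvi_n = gfu$ to $u$ produces $\Phi \from I^n \times I \to X$ with $\Phi e_0 = gv$ and $\Phi(i_n \times \id[I]) = -(Hu)$; then $h = \Phi e_1$ satisfies $hi_n = u$. Now I would build a (not yet relative) homotopy $fh \sim v$ and check that its restriction to $\bd I^n$ is null. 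The natural candidate is the track composite $G = Kv \bk f\Phi \from v \sim fh$. Since precomposition and postcomposition distribute over $\bk$, the restriction $G(i_n \times \id[I])$ simplifies to $\bigl((Kf) \bk (-(fH))\bigr)(u \times \id[I])$; the triangle identity together with \cref{HomotopyCompWellDefined,HomotopyInverseComposition,ConstantHomotopyIdentity} then shows this is homotopic rel endpoints to $\const_{fu}$.

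The last and most delicate step is to promote $G$ to a homotopy rel $\bd I^n$. Let $\beta \from \bd I^n \times I^2 \to Y$ witness $G(i_n \times \id[I]) \sim \const_{fu}$ rel endpoints. I would glue $G$, $\beta$, $\const_v$ and $\const_{fh}$ along their common faces into one map out of the subspace $(I^n \times I \times \{0\}) \cup (I^n \times \bd I \times I) \cup (\bd I^n \times I \times I)$ of $I^n \times I \times I$. Up to a coordinate flip of $I^2$, the inclusion of this subspace is the pushout--product $i_n \hat{\times} \inc$, which by \cref{PushProdArrowCategory} is isomorphic in the arrow category to $i_n \hat{\times} j_1$; the latter lies in $\mathcal{I} \hat{\times} \mathcal{J} \subseteq cof(\mathcal{J})$ by (Q3), so --- by \cref{AllFibrant} again --- the subspace inclusion admits a filler against $Y \to *$. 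Restricting any such filler to the face $I^n \times I \times \{1\}$ gives a homotopy $v \sim fh$ rel $\bd I^n$, which completes the verification that $f$ is $n$-compressible.

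I expect this gluing to be the main obstacle: one must see that the prescribed faces of $I^n \times I \times I$ agree on their overlaps --- this is exactly where the relativity of $\beta$ (and of $\Phi$, via $hi_n = u$) is used --- and identify the resulting subspace inclusion as a pushout--product covered by (Q3). Everything else is track--composite bookkeeping of the kind already carried out around \cref{Bicategory}, plus the routine observation that $cof(\mathcal{I})$-maps enjoy the homotopy extension property because every space is fibrant.
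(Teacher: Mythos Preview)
Your argument is correct and follows the same skeleton as the paper's: invoke \cref{GradLemma}, use one lifting against $i_n \hat{\times} e_k \in cof(\mathcal{J})$ and fibrancy of $X$ to produce a candidate filler $h$ with $hi_n = u$, and then use a second lifting against $i_n \hat{\times} \inc$ and fibrancy of $Y$ to upgrade an unrelativised homotopy $fh \sim v$ to one rel $\bd I^n$.

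The execution differs in one place. The paper feeds the coherence square $\alpha u$ from \cref{GradLemma} \emph{directly} into the second lifting as the $\bd I^n \times I^2$ component, together with $fJ$, $\const_{fgv}$, and $-Kv$ on the three remaining sides; restricting the resulting filler to $\bdB$ immediately gives the relative homotopy. You instead first extract the triangle identity $Kf \sim fH$ from $\alpha$ via \cref{SquareComposition}, form the explicit track composite $G = Kv \bk f\Phi$, verify by track algebra that $G(i_n \times \id[I]) \sim \const_{fu}$ rel endpoints, and only then run the second lifting to promote $G$. That final step is exactly \cref{RelativeHomotopyLemma}, which the paper isolates later as a standalone lemma; you have anticipated and inlined it here. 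The paper's route is shorter---it bypasses the track bookkeeping entirely---while yours makes more transparent \emph{why} the adjoint-equivalence upgrade is needed: without the triangle identity the boundary composite $((Kf)\bk(-fH))u$ would not collapse.
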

\begin{proof}
Let $f \from X \to Y$ be a homotopy equivalence and $g$, $H$, $K$, and $\alpha$ be as in the statement of \cref{GradLemma}. Suppose that for a nonnegative integer $n$ the following diagram commutes
\begin{center}
\begin{tikzcd}
	{\bd I^n} & X \\
	{I^n} & Y
	\arrow["i_n"', hook, from=1-1, to=2-1]
	\arrow["u", from=1-1, to=1-2]
	\arrow["f", from=1-2, to=2-2]
	\arrow["v"', from=2-1, to=2-2]
\end{tikzcd}
\end{center}
By \cref{PushProdLemma}, $i_n \hat{\times} e_1 \in cof(\mathcal{J})$, so there is a lift $J \from I^n \times I \to X$ since $X$ is fibrant:
\[\begin{tikzcd}
	{\bd I^n \times I \coprod_{\bd I^n \times \{1\}} I^{n} \times \{1\}} && X \\
	{I^n \times I}
	\arrow["i_n \hat{\times} e_1"', hook, from=1-1, to=2-1]
	\arrow["{[Hu, gv]}", from=1-1, to=1-3]
	\arrow["J"', dashed, from=2-1, to=1-3]
\end{tikzcd}\]
Let $w = Je_0 \from I^n \to X$. Since $He_0 = \id[X]$, we have $wi_n = Je_0i_n = J(i_n \times \id[I])e_0 = Hue_0 = u$. By the lifting triangle above, $fJi_n = fHu$, and by the initial square, $Kvi_n = Kfu$. Taking the convention for $\inc \from \sqcap \ito I^2$ as in \cref{OpenBoxInclusion}, the following identities hold:
\begin{align*} 
\alpha u \circ \inc \circ b &= \alpha u \circ -\bdR = -Kfu = -Kvi_n\text{;}\\
\alpha u \circ \inc \circ aa &= \alpha u \bdL = fHu = fJi_n\text{;}\\
\alpha u \circ \inc \circ ab &= \alpha u \bdT = \const_{fgfu}=\const_{fgv}i_n \text{,}
\end{align*}
hence the top map in the diagram below is defined, so it admits a lift $\tilde{\alpha} \from I^n \times I^2 \to Y$ by \cref{PushProdLemma}
\[\begin{tikzcd}
	{\bd I^n \times I^2 \coprod_{\bd I^n \times \sqcap} I^n \times \sqcap} &&&&& Y \\
	{I^n \times I^2}
	\arrow["{i_n \hat{\times} \inc}"', from=1-1, to=2-1]
	\arrow["{\tilde{\alpha}}"', dashed, from=2-1, to=1-6]
	\arrow["{[\alpha u,\ [[fJ,\ \const_{fgv}],\ -Kv]]}", from=1-1, to=1-6]
\end{tikzcd}\]
The composite $\tilde{\alpha} \bdB$ gives a homotopy from $fw$ to $v$ rel $\bd I^n$, since $\tilde{\alpha} \bdB i_n = \tilde{\alpha}(i_n \times \id[I^2]) \bdB = \alpha u\bdB= \const_{fu}= \const_{vi_n}$.
\end{proof}

\section{Quillen model structure}\label{SerreSection}

In this section, we put all the pieces together and prove \cref{SerreModelStructure}, while also characterizing weak equivalences in terms of homotopy groups (\cref{WEClassicDef}), which are introduced in \cref{HomotopyGroupDef}.

\begin{lemma}
For a weak equivalence $f \from X \to Y$ and a cofibration $i \from A \cto B$, if the following square commutes:
\[\begin{tikzcd}
	A & X \\
	B & Y
	\arrow["i"', tail, from=1-1, to=2-1]
	\arrow["f", from=1-2, to=2-2]
	\arrow["u", from=1-1, to=1-2]
	\arrow["v"', from=2-1, to=2-2]
\end{tikzcd}\]
then there is a filler $h \from B \to X$ such that $hi=u$ and $fh \sim v$ rel $A$.
\end{lemma}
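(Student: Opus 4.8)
The plan is to recast the statement as a lifting property of the mapping path space of $f$, so that the reduction from an arbitrary cofibration to the generators $\mathcal{I}$ becomes automatic. Since $I$ is exponentiable by (Q5), the object $Y^I$ exists; form the mapping path space $P_f := X \pull_Y Y^I$, the pullback of $f\from X\to Y$ along evaluation at $0$, together with the induced factorization $X\to P_f\xrightarrow{p}Y$ of $f$, where $X\to P_f$ picks out constant paths and $p$ evaluates at $1$. Unwinding the exponential adjunction, a weak filler $h$ for the given square (that is, $hi=u$ and $fh\sim v$ rel $A$) is the same data as an honest diagonal lift $B\to P_f$ for the square with $p$ and $i$ whose top edge is $(u,c_Y fu)\from A\to P_f$, where $c_Y\from Y\to Y^I$ names constant paths. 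Hence it suffices to prove $p\in rlp(\mathcal{I})$, since $rlp(\mathcal{I})=rlp(cof(\mathcal{I}))$.

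So I would fix a lifting problem of $p$ against $i_n$. It unwinds to data $u_0\from\bd I^n\to X$, $v\from I^n\to Y$, and a homotopy $\hat\gamma_0\from fu_0\sim vi_n$, and the task is to produce $h\from I^n\to X$ with $hi_n=u_0$ together with a homotopy $\hat\gamma\from fh\sim v$ satisfying $\hat\gamma i_n=\hat\gamma_0$ on the nose. First I would straighten the square: since $i_n\hat{\times}e_0\in cof(\mathcal{J})$ by \cref{PushProdLemma} and every object is fibrant by \cref{AllFibrant}, the map with components $v$ on the $I^n$-part and $-\hat\gamma_0$ on the $\bd I^n\times I$-collar extends along $i_n\hat{\times}e_0$ to some $\Phi\from I^n\times I\to Y$ with $\Phi e_0=v$ and $\Phi i_n=-\hat\gamma_0$; setting $v':=\Phi e_1$ we get $v'i_n=fu_0$, so the square $(u_0,v')$ over $f$ commutes strictly. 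As $f$ is a weak equivalence it is $n$-compressible, so this square admits a map $h\from I^n\to X$ with $hi_n=u_0$ and a homotopy $\psi\from fh\sim v'$ rel $\bd I^n$.

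It remains to splice $\psi$ and $\Phi$ into a homotopy $fh\sim v$ that restricts to $\hat\gamma_0$ along $i_n$. The track composite $\psi\bk(-\Phi)$ is a homotopy $fh\sim v$, but its restriction along $i_n$ is $\const_{fu_0}\bk\hat\gamma_0$, which by \cref{ConstantHomotopyIdentity} is only homotopic rel endpoints to $\hat\gamma_0$. To reparametrize away the spurious constant stretch I would use the trivial cofibration $i_n\hat{\times}\inc$ attached to the open box inclusion $\inc\from\sqcap\ito I^2$ (which omits the face $\bdB$), a trivial cofibration because $i_n$ is a cofibration. Define a map on the pushout $\bd I^n\times I^2\cup_{\bd I^n\times\sqcap}I^n\times\sqcap$ whose restriction to $\bd I^n\times I^2$ is $\Lambda:=\hat\gamma_0\circ(\id\times\pr_1)$ and whose restriction to $I^n\times\sqcap$ is $\psi$ on the $\bdL$-face, $-\Phi$ on the $\bdT$-face, and $\const_v$ on the $\bdR$-face; a routine check using $\psi i_n=\const_{fu_0}$ and $(-\Phi)i_n=\hat\gamma_0$ shows these agree on the shared faces and corners. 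Extending along $i_n\hat{\times}\inc$ (possible since $Y$ is fibrant) gives $\bar\Lambda\from I^n\times I^2\to Y$, and $\hat\gamma:=\bar\Lambda\bdB$ then satisfies $\hat\gamma e_0=fh$, $\hat\gamma e_1=v$, and $\hat\gamma i_n=\hat\gamma_0$. The pair $(h,\hat\gamma)$ --- equivalently, the induced map $I^n\to P_f$ --- solves the lifting problem, so $p\in rlp(\mathcal{I})$, and the lemma follows from the first paragraph.

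The main obstacle is this last splicing step. A weak equivalence only delivers compressing homotopies that are relative to $\bd I^n$, so any track composite built from such a homotopy unavoidably carries a constant segment on the boundary; the open-box gluing is precisely what lets us absorb that segment without disturbing the two endpoint maps $fh$ and $v$, which is what pins the diagonal lift into $P_f$ down on the nose.
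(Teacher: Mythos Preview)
Your proof is correct and takes a genuinely different route from the paper. The paper argues directly that the class of maps having the weak right lifting property against $f$ is closed under coproducts, pushouts, transfinite composition, and retracts, so that this class contains all of $cof(\mathcal{I})$ once it contains $\mathcal{I}$ (which is the definition of weak equivalence); the paper spells out only the pushout step and notes that the relativity of the homotopies must be tracked by hand through each colimit. You instead package the weak lifting condition as a strict lifting against the path fibration $p\from P_f\to Y$, using exponentiability of $I$ from (Q5), and then verify $p\in rlp(\mathcal{I})$ by a single open-box filling against each generator $i_n$; the passage to arbitrary cofibrations is then the formal identity $rlp(\mathcal{I})=rlp(cof(\mathcal{I}))$. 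Your approach is cleaner in that it replaces four closure checks by one generator check plus a standard fact, and it isolates exactly where the ``rel $A$'' condition enters (the choice of top edge $(u,\const_{fu})$ into $P_f$). The paper's approach, by contrast, does not invoke the exponential $Y^I$ and makes the interaction between relative homotopies and colimits explicit, which is useful pedagogically and would survive in settings where only (Q1)--(Q4) hold. One small point: you should state that $\pr_1$ is projection onto the first interval factor so that $\Lambda\bdB=\hat\gamma_0$ on the nose; otherwise the final verification $\hat\gamma i_n=\hat\gamma_0$ is ambiguous.
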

\begin{proof}
This is analogous to the proof that the left class of a weak factorization system is closed under coproducts, pushouts, transfinite composition, and retracts, except special care is needed to check that relativity is preserved in each part. To get the induced homotopy on the colimits, one needs to use relativity and cocontinuity of $\uvar \times I$. We will only give the proof for pushouts. Suppose that the square on the left below is a pushout and that $f$ has the weak \rlp\ (cf. \cref{WEDef}) against $i$
\[\begin{tikzcd}
	A & {A'} && {A'} & X \\
	B & {B'} && {B'} & Y
	\arrow["i"', from=1-1, to=2-1]
	\arrow["\alpha", from=1-1, to=1-2]
	\arrow["\beta"', from=2-1, to=2-2]
	\arrow["{i'}", from=1-2, to=2-2]
	\arrow["{i'}"', from=1-4, to=2-4]
	\arrow["u", from=1-4, to=1-5]
	\arrow["v"', from=2-4, to=2-5]
	\arrow["f", from=1-5, to=2-5]
	\arrow["\lrcorner"{anchor=center, pos=0.125, rotate=180}, draw=none, from=2-2, to=1-1]
\end{tikzcd}\]
Given the square on the right above, we can combine the two squares to get a diagonal filler $h \from B \to X$ with $hi=u\alpha$ and a homotopy $H \from B \times I \to Y$ from $fh$ to $v\beta$ rel $A$. This induces a map $h' \from B' \to X$ in the diagram on the left and a homotopy $H' \from B' \times I \to Y$ on the right by cocontinuity of $\uvar \times I$
\[\begin{tikzcd}
	A & {A'} && {A \times I} && {A'\times I} \\
	B & {B'} && {B \times I} && {B' \times I} \\
	&& X &&&& Y
	\arrow["i"', from=1-1, to=2-1]
	\arrow["{i'}", from=1-2, to=2-2]
	\arrow["\alpha", from=1-1, to=1-2]
	\arrow["\beta"', from=2-1, to=2-2]
	\arrow["{h'}"{description}, dashed, from=2-2, to=3-3]
	\arrow["h"', from=2-1, to=3-3, bend right]
	\arrow["u", from=1-2, to=3-3, bend left]
	\arrow["{\beta \times \id[I]}"', from=2-4, to=2-6]
	\arrow["{i' \times \id[I]}"', from=1-6, to=2-6]
	\arrow["{\alpha \times \id[I]}", from=1-4, to=1-6]
	\arrow["{i \times \id[I]}"', from=1-4, to=2-4]
	\arrow["{H'}"{description}, dashed, from=2-6, to=3-7]
	\arrow["H"', from=2-4, to=3-7, bend right]
	\arrow["{\const_{fu}}", from=1-6, to=3-7, bend left]
\end{tikzcd}\]
since $Hi=\const_{fhi}=\const_{fu\alpha}=\const_{fu}\alpha$ by relativity of $H$. By commutativity, $H'i'=\const_{fu}$ so $H'$ is relative to $A'$, and one can verify through the pushout on the right that $H'e_0=fh'$ and $H'e_1=v$.
\end{proof}

\begin{definition}
Let $\ho\C_{cof}$ denote the full subcategory of $\ho\C$ (\cref{ClSphDef}) consisting of cofibrant spaces.
\end{definition}

\begin{theorem}\label{WECofibrantObj}
A map $f \from X \to Y$ is a weak equivalence \iff
\[f_* \from [A,X] \to [A,Y]\tag{$*$}\]
is a bijection for all cofibrant spaces $A$.
\end{theorem}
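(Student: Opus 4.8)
The plan is to prove the two implications separately; the forward direction falls out of the preceding lemma, and the reverse direction carries the weight.

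For the forward implication, assume $f$ is a weak equivalence. For surjectivity of $f_*$, given $g \colon A \to Y$ with $A$ cofibrant, I would apply the preceding lemma to the square with left leg the cofibration $\emptyset \to A$, top leg $\emptyset \to X$, and bottom leg $g$; the resulting filler $h$ satisfies $fh \sim g$, so $f_*[h]=[g]$. For injectivity, suppose $h_0,h_1 \colon A \to X$ and $K \colon fh_0 \sim fh_1$. I would first note that $A \times \bd I \to A \times I$ is the pushout-product $(\emptyset \to A)\hat{\times} i_1$, with domain $A \times \bd I \iso A \coprod A$ by (S4) and (Q2), hence a cofibration by \cref{PushProdLemma}(1); then applying the preceding lemma to the square with this cofibration on the left, $[h_0,h_1] \colon A \times \bd I \to X$ on top, and $K$ on the bottom produces a homotopy $h_0 \sim h_1$, so $[h_0]=[h_1]$.

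For the reverse implication, assume $f_*$ is bijective for every cofibrant $A$. The idea is to reduce to a homotopy equivalence between cofibrant spaces. Using the small object argument from (Q6), I would factor $\emptyset \to X$ as $\emptyset \to X' \xrightarrow{q_X} X$ with $\emptyset \to X'$ a cofibration and $q_X$ a trivial fibration, and likewise $\emptyset \to Y' \xrightarrow{q_Y} Y$; since $q_Y \in rlp(\mathcal{I})$ and $\emptyset \to X'$ is a cofibration, a lift of $f q_X$ through $q_Y$ yields $f' \colon X' \to Y'$ with $q_Y f' = f q_X$. By \cref{TrivialFibLemma}, $q_X$ and $q_Y$ are weak equivalences, so by the forward implication they induce bijections on $[A,-]$ for all cofibrant $A$; together with the hypothesis on $f$ and the identity $f_*(q_X)_* = (q_Y)_* f'_*$, this forces $f'_*$ to be bijective on $[A,-]$ for all cofibrant $A$. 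But $X'$ and $Y'$ are themselves cofibrant, so this says precisely that $[f']$ induces bijections on all hom-sets of $\ho\C_{cof}$ out of it; by the Yoneda lemma $[f']$ is an isomorphism in $\ho\C_{cof}$, hence in $\ho\C$, so $f'$ is a homotopy equivalence by \cref{HEReflectionLemma}, and therefore a weak equivalence by \cref{HEAreWE}.

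It then remains to transfer $n$-compressibility from $f'$ to $f$. Given a square with $u \colon \bd I^n \to X$, $v \colon I^n \to Y$ and $fu = v i_n$: since $\emptyset \to \bd I^n$ is a cofibration by (Q4) and $q_X \in rlp(\mathcal{I})$, I would lift $u$ through $q_X$ to $\tilde u \colon \bd I^n \to X'$; the square with top $f'\tilde u$, bottom $v$, right $q_Y$ then commutes since $q_Y f'\tilde u = f q_X \tilde u = fu = v i_n$, so it has a lift $\tilde v \colon I^n \to Y'$ with $\tilde v i_n = f'\tilde u$ and $q_Y \tilde v = v$; applying $n$-compressibility of $f'$ to the strictly commuting square $(\tilde u,\tilde v)$ gives $\tilde h$ with $\tilde h i_n = \tilde u$ and $f'\tilde h \sim \tilde v$ rel $\bd I^n$, and then $h := q_X \tilde h$ satisfies $h i_n = u$ and $fh = q_Y f'\tilde h \sim q_Y \tilde v = v$ rel $\bd I^n$ (post-composition with $q_Y$ preserving homotopies rel $\bd I^n$). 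Hence $f$ is $n$-compressible for every $n$, i.e.\ a weak equivalence. I expect the reverse direction to be the main obstacle: a direct obstruction-theoretic compression argument is possible but needs delicate control of the boundary homotopies rel $\bd I^n$, and the route above avoids this only by trading it for the cofibrant-replacement and transfer bookkeeping together with an appeal to \cref{HEAreWE}.
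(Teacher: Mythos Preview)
Your proposal is correct and follows essentially the same route as the paper: the forward direction uses the preceding weak-lifting lemma against $\emptyset \to A$ and against $(\emptyset \to A)\hat{\times} i_1$ exactly as you describe, and the reverse direction proceeds via cofibrant replacement of $X$ and $Y$, a Yoneda argument in $\ho\C_{cof}$ to see the replacement of $f$ is a homotopy equivalence (hence a weak equivalence by \cref{HEAreWE}), and then the same transfer of $n$-compressibility back to $f$ using lifts of $u$ and $v$ along the trivial fibrations. The only cosmetic difference is notation ($X',Y',q_X,q_Y,f'$ versus $\tilde X,\tilde Y,\tilde f$); the logical structure matches step for step.
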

\begin{proof}
Suppose first that $f$ is a weak equivalence, and let $A$ be a cofibrant space. By the above lemma, for any $[\beta] \in [A,Y]$, there is a filler $\alpha \from A \to X$ in the diagram
\[\begin{tikzcd}
	\varnothing & X \\
	A & Y
	\arrow["f", from=1-2, to=2-2]
	\arrow["{!}", from=1-1, to=1-2]
	\arrow["{!}"', from=1-1, to=2-1]
	\arrow["\beta"', from=2-1, to=2-2]
\end{tikzcd}\]
such that $f \alpha \sim \beta$, hence $f_*$ is surjective. If $f_*[\alpha] = f_*[\beta]$ for $[\alpha], [\beta] \in [A,X]$, then there is a homotopy $H \from A \times I \to Y$ from $f \alpha$ to $f \beta$. Since $A$ is cofibrant, $!_A \hat{\times} i_1=\id[A] \times i_1 \from A \times \partial I \to A \times I$ is a cofibration by \cref{PushProdLemma} and \cref{PushProdIdentity1}. By (Q2), $A \times \partial I$ is a coproduct $A \coprod A$, so the square
\[\begin{tikzcd}
	{A \times \bd I} & X \\
	{A \times I} & Y
	\arrow["{!_A \hat{\times} i_1}"', from=1-1, to=2-1]
	\arrow["{\alpha + \beta}", from=1-1, to=1-2]
	\arrow["H"', from=2-1, to=2-2]
	\arrow["f", from=1-2, to=2-2]
\end{tikzcd}\]
admits a filler $K \from A \times I \to X$ such that the upper triangle commutes, so $K$ is a homotopy from $\alpha$ to $\beta$, hence $f_*$ is injective. 

Suppose instead that $f_* \from [A,X] \to [A,Y]$ is a bijection for all cofibrant spaces $A$.
Using \cref{existence-of-factorizations}, factor $\emptyset \to X$ as a cofibration followed by a trivial fibration, which is a weak equivalence and a fibration by \cref{TrivialFibLemma}: $\emptyset \cto \tilde{X} \afto X$. Similarly, factor $\emptyset \to Y$ into $\emptyset \cto \tilde{Y} \afto Y$. Then there is a lift $\tilde{f} \from \tilde{X} \to \tilde{Y}$ in the following diagram
\[\begin{tikzcd}
	\emptyset & {\tilde{Y}} \\
	{\tilde{X}} & Y
	\arrow[tail, from=1-1, to=2-1]
	\arrow["{^{\small\sim}}"{marking}, two heads, from=1-2, to=2-2]
	\arrow[from=2-1, to=2-2]
	\arrow[tail, from=1-1, to=1-2]
        \arrow["{\tilde{f}}"{description}, dashed, from=2-1, to=1-2]
\end{tikzcd}\]
where the bottom arrow is the composite $\tilde{X} \afto X \xrightarrow{f} Y$. Since we have already shown that weak equivalences satisfy $(*)$, it follows that $\tilde{f}$ does as well, by the 2-out-of-3 property for isomorphisms, since the following diagram commutes
\[\begin{tikzcd}
	{\tilde{X}} & X \\
	{\tilde{Y}} & Y
	\arrow["{\tilde{f}}"', from=1-1, to=2-1]
	\arrow["{_{_{\small\sim}}}", two heads, from=2-1, to=2-2]
	\arrow["{_{_{\small\sim}}}", two heads, from=1-1, to=1-2]
	\arrow["f", from=1-2, to=2-2]
\end{tikzcd}\]
Then $[\tilde{f}]_* \from \ho\C_{cof}(\uvar,\tilde{X}) \Rightarrow \ho\C_{cof}(\uvar, \tilde{Y})$ is a natural isomorphism, so it follows from the Yoneda Lemma that $[\tilde{f}]$ is an isomorphism in $\ho\C_{cof}$, hence a homotopy equivalence and a weak equivalence by \cref{HEAreWE}. Suppose now that the following diagram commutes:
 \[\begin{tikzcd}
	{\bd I^n} & X \\
	{I^n} & Y
	\arrow["{i_n}"', from=1-1, to=2-1]
	\arrow["u", from=1-1, to=1-2]
	\arrow["v"', from=2-1, to=2-2]
	\arrow["f", from=1-2, to=2-2]
\end{tikzcd}\]
Since $\bd I^n$ is cofibrant by (Q4), we may choose lifts for the following squares:
\[\begin{tikzcd}
	\emptyset & {\tilde{X}} && {\bd I^n} & {\tilde{Y}} \\
	{\bd I^n} & X && {I^n} & Y
	\arrow["{^{\small\sim}}"{marking}, two heads, from=1-2, to=2-2]
	\arrow["u"', from=2-1, to=2-2]
	\arrow[tail, from=1-1, to=2-1]
	\arrow[from=1-1, to=1-2]
	\arrow["{i_n}"', tail, from=1-4, to=2-4]
	\arrow["{^{\small\sim}}"{marking}, two heads, from=1-5, to=2-5]
	\arrow["{\tilde{f}\tilde{u}}", from=1-4, to=1-5]
	\arrow["v"', from=2-4, to=2-5]
	\arrow["{\tilde{u}}"{description}, dashed, from=2-1, to=1-2]
	\arrow["{\tilde{v}}"{description}, dashed, from=2-4, to=1-5]
\end{tikzcd}\]
Then we obtain a map $\tilde{h} \from I^n \to \tilde{X}$ such that $\tilde{h} i_n = \tilde{u}$ and $\tilde{f}\circ\tilde{h} \sim \tilde{v}$ rel $\bd I^n$ in the following diagram
\[\begin{tikzcd}
	{\bd I^n} & {\tilde{X}} & X \\
	{I^n} & {\tilde{Y}} & Y
	\arrow["{i_n}"', from=1-1, to=2-1]
	\arrow["{\tilde{u}}", from=1-1, to=1-2]
	\arrow["{\tilde{v}}"', from=2-1, to=2-2]
	\arrow["{\tilde{f}}", from=1-2, to=2-2]
	\arrow[two heads, from=2-2, to=2-3]
	\arrow[two heads, from=1-2, to=1-3]
	\arrow["f", from=1-3, to=2-3]
\end{tikzcd}\]
Composing $\tilde{h}$ with $\tilde{X} \afto X$ gives the required map.
\end{proof}

\begin{proposition}\label{WE2OutOf3}
Weak equivalences satisfy the 2-out-of-3 property.
\end{proposition}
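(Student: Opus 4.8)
The plan is to reduce this to the representability-style characterization of weak equivalences already established in \cref{WECofibrantObj}, rather than to argue directly from the compressibility conditions of \cref{WEDef}. So fix a composable pair $f \from X \to Y$ and $g \from Y \to Z$, and assume that two of the three maps $f$, $g$, $gf$ are weak equivalences; we must show the third is as well.

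The key observation is that for each fixed cofibrant space $A$, passing to homotopy classes of maps out of $A$ is functorial: by \cref{ClSphDef} (with well-definedness of composition supplied by \cref{HtpyCompWellDefined}), the assignment $W \mapsto [A,W]$ extends to a functor $\C \to \Set$, so we obtain functions
\[ f_* \from [A,X] \to [A,Y], \qquad g_* \from [A,Y] \to [A,Z], \qquad (gf)_* \from [A,X] \to [A,Z] \]
with $(gf)_* = g_* \circ f_*$. Now invoke the elementary fact that bijections of sets satisfy the 2-out-of-3 property: if two of $f_*$, $g_*$, $g_*\circ f_*$ are bijections, then so is the third. By \cref{WECofibrantObj}, our hypothesis says that for every cofibrant $A$, two of $f_*$, $g_*$, $(gf)_*$ are bijections; hence for every cofibrant $A$ the remaining one of $f_*$, $g_*$, $(gf)_*$ is a bijection too. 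Applying the converse direction of \cref{WECofibrantObj} to that remaining map shows it is a weak equivalence, which completes the proof.

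I do not expect any genuine obstacle here: the entire content is packaged into \cref{WECofibrantObj}, and what remains is bookkeeping about which two of the three hypotheses are in force (three symmetric cases, all handled uniformly by the set-level 2-out-of-3 statement). For contrast, a direct argument from \cref{WEDef} would require manipulating the ``lift up to homotopy rel $\bd I^n$'' diagrams for all three maps simultaneously and gluing homotopies by hand, which is exactly the kind of calculation that routing through \cref{WECofibrantObj} is designed to avoid; so the only thing to be careful about is that \cref{WECofibrantObj} is stated as a biconditional and thus usable in both directions, which it is.
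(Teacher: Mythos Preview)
Your proof is correct and is exactly the paper's argument: the paper's one-line proof says it is immediate from \cref{WECofibrantObj} and the 2-out-of-3 property for isomorphisms, which is precisely what you spelled out. Your additional commentary about avoiding a direct argument from \cref{WEDef} is accurate but unnecessary for the write-up.
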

\begin{proof}
This is immediate from \cref{WECofibrantObj} and the 2-out-of-3 property for isomorphisms.
\end{proof}

\begin{theorem}\label{SOA1}
A map $f \from X \to Y$ is a trivial fibration \iff\ $f$ is a weak equivalence and a fibration.
\end{theorem}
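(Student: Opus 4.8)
The forward implication is immediate from \cref{TrivialFibLemma}, so the work is the converse: a fibration $f \from X \to Y$ that is also a weak equivalence should have the right lifting property against $\mathcal I$. The plan is to take an ``up to homotopy'' lift supplied by the weak equivalence hypothesis and straighten it into an honest lift using the fibration hypothesis. Given a commuting square with $i_n \from \bd I^n \ito I^n$ on the left, top arrow $u$, and bottom arrow $v$, I would first invoke $n$-compressibility of $f$ (\cref{WEDef}) to obtain a map $h_0 \from I^n \to X$ with $h_0 i_n = u$ together with a homotopy $H \from I^n \times I \to Y$ satisfying $H e_0 = f h_0$, $H e_1 = v$, and $H(i_n \times \id[I]) = \const_{f u} = \const_{v i_n}$.

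The crucial observation is that, after the canonical identification $I^0 \iso *$, the endpoint inclusion $e_0 \from * \to I$ is precisely the map $j_0 \in \mathcal J$. Consequently the pushout product $i_n \hat{\times} e_0 = i_n \hat{\times} j_0$ — which unwinds to the inclusion of the box $\bd I^n \times I \cup_{\bd I^n \times \{0\}} I^n \times \{0\}$ into $I^n \times I$ — lies in $\mathcal I \hat{\times} \mathcal J \subseteq cof(\mathcal J)$ by (Q3). Since $f$ is a fibration, $f \in rlp(\mathcal J) = rlp(cof(\mathcal J))$, so $f$ has the right lifting property against $i_n \hat{\times} e_0$.

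I would then feed into this lifting property the square whose top arrow is $[\const_u, h_0] \from \bd I^n \times I \cup_{\bd I^n \times \{0\}} I^n \times \{0\} \to X$ (well defined because $\const_u$ and $h_0$ both restrict to $u$ on $\bd I^n \times \{0\}$) and whose bottom arrow is $H$; commutativity holds since $f h_0 = H e_0$ on the $I^n \times \{0\}$ part and $f \const_u = \const_{f u} = H(i_n \times \id[I])$ on the $\bd I^n \times I$ part. A diagonal filler $\tilde H \from I^n \times I \to X$ then satisfies $f \tilde H = H$, $\tilde H e_0 = h_0$, and $\tilde H(i_n \times \id[I]) = \const_u$. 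Setting $h := \tilde H e_1 \from I^n \to X$, one gets $h i_n = \tilde H(i_n \times \id[I]) e_1 = \const_u e_1 = u$ and $f h = H e_1 = v$, so $h$ is a strict diagonal filler for the original square. Hence $f \in rlp(\mathcal I)$, i.e.\ $f$ is a trivial fibration.

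I do not expect a genuine obstacle: the single idea is recognizing that $i_n \hat{\times} e_0$ is a trivial cofibration (via $e_0 = j_0$ and (Q3)), after which the argument is a short diagram chase. The only thing requiring a little care is the bookkeeping with the homotopy conventions — tracking which endpoint is which and checking that the filler produced by the lifting property restricts correctly over $\bd I^n$ — but this is routine given the identities recorded after \cref{HomotopyDef}.
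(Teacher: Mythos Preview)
Your proposal is correct and is essentially identical to the paper's proof: both take the approximate lift $h_0$ (the paper's $w$) from $n$-compressibility, then straighten it by lifting the square $[\const_u, h_0]$ against $H$ along the trivial cofibration $i_n \hat\times j_0$, and read off the strict lift as $\tilde H e_1$. The only cosmetic difference is that you cite (Q3) directly for $i_n \hat\times j_0 \in cof(\mathcal J)$, whereas the paper invokes \cref{PushProdLemma}; both are fine since $i_n \in \mathcal I$.
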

\begin{proof}
The forward direction is given in \cref{TrivialFibLemma}. Suppose instead that $f$ is a weak equivalence and a fibration. Given a square of the form
\begin{center}
\begin{tikzcd}
	{\bd I^n} & X \\
	{I^n} & Y
	\arrow["{i_n}"', hook, from=1-1, to=2-1]
	\arrow["u", from=1-1, to=1-2]
	\arrow["v", from=2-1, to=2-2]
	\arrow["f", from=1-2, to=2-2]
\end{tikzcd}
\end{center}
Choose diagonal filler $w \from I^n \to X$ such that $wi_n=u$ and a homotopy $H \from I^n \times I \to Y$ from $fw$ to $v$ relative to $\bd I^n$. Since $i_n \hat{\times} j_0 \in cof(\mathcal{J})$ by \cref{PushProdLemma}, we may choose a lift $\tilde{H} \from I^n \times I \to X$ for the following square
\begin{center}
\begin{tikzcd}
	{\bd I^n \times I \coprod_{\bd I^n \times \{0\}} I^n \times \{0\}} && X \\
	{I^n \times I} && Y
	\arrow["{i_n \hat{\times} j_0}"', hook, from=1-1, to=2-1]
	\arrow["{[\const_u, w]}", from=1-1, to=1-3]
	\arrow["f", from=1-3, to=2-3]
	\arrow["H"', from=2-1, to=2-3]
	\arrow["{\tilde{H}}"{description}, dashed, from=2-1, to=1-3]
\end{tikzcd}
\end{center}
Commutativity of this square comes from $He_0=fw$ and $H$ being relative to $\bd I^n$. Let $\tilde{w}=\tilde{H}e_1$. Then $\tilde{w}i_n=\tilde{H}i_ne_1=\const_ue_1=u$ and $f\tilde{w}=He_1=v$, so $\tilde{w}$ is the required lift.
\end{proof}

\begin{theorem}\label{SOA2}
A map $f \from X \to Y$ is a trivial cofibration \iff\ $f$ is a weak equivalence and a cofibration.
\end{theorem}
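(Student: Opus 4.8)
The plan is to prove the two implications separately, establishing the forward implication first since the reverse one feeds on it.

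For the forward implication, suppose $f \from X \to Y$ lies in $cof(\mathcal{J})$. That $f$ is a cofibration is immediate: by (Q4) we have $\mathcal{J} \subseteq cof(\mathcal{I})$, and since $cof(\mathcal{I})$ is the left class of a weak factorization system it is closed under coproducts, pushouts, transfinite composition and retracts, so $cof(\mathcal{J}) \subseteq cof(\mathcal{I})$. To see that $f$ is a weak equivalence I would show it is in fact a homotopy equivalence and then invoke \cref{HEAreWE}. Since $f$ has the left lifting property against every fibration (these are the maps in $rlp(\mathcal{J})$) and $X \to *$ is a fibration by \cref{AllFibrant}, lifting $\id[X]$ along $f$ against $X \to *$ yields a retraction $r \from Y \to X$ with $rf = \id[X]$. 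Next, by \cref{PushProdLemma}(3) and the symmetry of $\hat{\times}$, the pushout product $f \hat{\times} i_1 \from (X \times I) \cup_{X \times \bd I} (Y \times \bd I) \to Y \times I$ again lies in $cof(\mathcal{J})$; the maps $\const_f \from X \times I \to Y$ and $[\id[Y], fr] \from Y \times \bd I \to Y$ agree on each copy of $X$ (both restrict to $f$, using $rf = \id[X]$), so they glue to a map out of the source of $f \hat{\times} i_1$, and lifting it against the fibration $Y \to *$ produces $H \from Y \times I \to Y$ with $He_0 = \id[Y]$ and $He_1 = fr$. Hence $\id[Y] \sim fr$, and together with $rf = \id[X]$ this makes $f$ a homotopy equivalence, so a weak equivalence.

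For the reverse implication, suppose $f$ is a cofibration and a weak equivalence. Using the small object argument on $\mathcal{J}$, available by (Q6), factor $f = p \circ i$ with $i \in cell(\mathcal{J}) \subseteq cof(\mathcal{J})$ and $p \in rlp(\mathcal{J})$, so $p$ is a fibration. By the forward implication $i$ is a weak equivalence, hence so is $p$ by 2-out-of-3 (\cref{WE2OutOf3}), and therefore $p$ is a trivial fibration by \cref{SOA1}. Since $f$ is a cofibration, the square with top edge $i$, bottom edge $\id[Y]$, left edge $f$ and right edge $p$ admits a diagonal $r \from Y \to Z$ with $rf = i$ and $pr = \id[Y]$; this exhibits $f$ as a retract of $i$ in the arrow category, via the morphisms $(\id[X], r)$ and $(\id[X], p)$. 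As $cof(\mathcal{J})$ is closed under retracts, $f \in cof(\mathcal{J})$, that is, $f$ is a trivial cofibration.

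I expect the only delicate point to be the construction of $H$ in the forward implication: one has to recognize $f \hat{\times} i_1$ as a trivial cofibration through \cref{PushProdLemma}, and then check carefully that the data $(\const_f, [\id[Y], fr])$ genuinely glue over $X \times \bd I$ before lifting against $Y \to *$. Everything else is a formal consequence of the two weak factorization systems coming from (Q6) together with \cref{AllFibrant}, \cref{HEAreWE}, \cref{WE2OutOf3} and \cref{SOA1}; in particular the reverse implication is just the standard retract argument, which goes through once the forward implication has supplied the implication ``trivial cofibration $\Rightarrow$ weak equivalence''.
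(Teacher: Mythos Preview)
Your proposal is correct and follows essentially the same route as the paper: in the forward direction you use fibrancy of $X$ to split $f$, then lift along the trivial cofibration $f \hat{\times} i_1$ (via \cref{PushProdLemma}(3)) against the fibration $Y \to *$ to produce the homotopy $\id[Y] \sim fr$, and for the converse you run the standard retract argument through the $(cof(\mathcal{J}),rlp(\mathcal{J}))$ factorization together with \cref{WE2OutOf3} and \cref{SOA1}. The paper's proof is identical in structure, only differing in notation (it names the retraction $p$ rather than $r$) and in leaving the retract argument implicit.
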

\begin{proof}
It follows from (Q4) that $cof(\mathcal{J}) \subseteq cof(\mathcal{I})$, so every trivial cofibration is a cofibration. Suppose $f$ is a trivial cofibration. Since $X$ is fibrant, we obtain a lift $p \from Y \to X$ in the following diagram
\begin{center}
\begin{tikzcd}
	X & X \\
	Y & {*}
	\arrow["f"', from=1-1, to=2-1]
	\arrow["{!}", from=1-2, to=2-2]
	\arrow["{!}"', from=2-1, to=2-2]
	\arrow["{\text{id}_X}", from=1-1, to=1-2]
        \arrow["{p}"{description}, dashed, from=2-1, to=1-2]
\end{tikzcd}
\end{center} 
such that $pf = \id[X]$. We show that $fp \sim \id[Y]$, from which it will follow from \cref{HEAreWE} that $f$ is a weak equivalence. Since $f \hat{\times} i_1 \in cof(\mathcal{J})$ by \cref{PushProdLemma} and $Y \times \bd I$ is a coproduct $Y \coprod Y$ by (Q2), we obtain a lift $H \from Y \times I \to Y$ in the following
\begin{center}
\begin{tikzcd}
	{X \times I\coprod_{X \times \bd I} (Y \times \bd I)} &&& Y \\
	{Y \times I} &&& {*}
	\arrow["{!}", from=1-4, to=2-4]
	\arrow["{!}"', from=2-1, to=2-4]
	\arrow["f \hat{\times} i_1"', from=1-1, to=2-1, hook]
	\arrow["{[\const_f,\ \text{id}_Y + fp]}", from=1-1, to=1-4]
	\arrow["{H}"{description}, dashed, from=2-1, to=1-4]
\end{tikzcd}
\end{center}
By commutativity, $H$ is a homotopy from $fp$ to $\id[Y]$.

The converse follows immediately by the retract argument.
\end{proof}

We are now ready to show that the three classes given in \cref{CofFibDef} and \cref{WEDef} define a model structure on $\C$.

\begin{proof}[Proof of \cref{SerreModelStructure}]
By (Q6), $\C$ admits two weak factorization systems. The first, cofibrantly generated by $\mathcal{I}$ has cofibrations for its left class by definition and, by \cref{SOA1}, the intersection of fibrations and weak equivalences for its right class. The second, cofibrantly generated by $\mathcal{J}$ has, by \cref{SOA2}, the intersection of cofibrations and weak equivalences for its left class and fibrations for its right class by definition. Moreover, by \cref{WE2OutOf3}, weak equivalences satisfy the 2-out-of-3 property.
\end{proof}

\subsection*{Homotopy groups in a good Q-structure}

For the remainder of this section, we assume that $(\C, \iota)$ is a good Q-structure. For a space $X \in \C$, a \textit{point} $x_0$ in $X$ is a map $x_0 \from * \to X$; if there is a map $* \to X$, then $X$ \textit{admits points}. We will denote the composite $X \to * \xrightarrow{y_0} Y$ by $\const_{y_0}$ or just $y_0$. Let $\C_*$ be the slice category under $*$: its objects are pairs $(X, x_0 \from * \to X)$ and a morphism $f \from (X,x_0) \to (Y,y_0)$ is a based map $f \from X \to Y$ such that $f(x_0)=y_0$.
Given two maps $f,g \from (X,x_0) \to (Y,y_0)$, a homotopy between them in this category is a homotopy $H$ from $f$ to $g$ relative to $x_0$.
As in \cref{BasicNotions}, this defines an equivalence relation on $\C_*((X,x_0),(Y,y_0))$.

Recall that, in (Q7), there is a point $(*) \from * \to \bd I^n$ in the second pushout, which we will treat as a distinguished base point of $\bd I^n$ to define homotopy groups. Such a point is given for all $n \geq 1$, though we will not make the distinction between $(*)$ for different $n$.

\begin{definition}\label{HomotopyGroupDef} 
Let $n \geq 0$ and $x_0 \from * \to X$ a point in $X$. Let $\pi_n(X, x_0) = \C_*((\bd I^{n+1}, (*)), (X,x_0))/\sim$ denote the set of based maps $(\bd I^{n+1}, (*)) \to (X,x_0)$ quotiented by homotopy rel $(*)$. By the pushout in (Q8), this is equivalently the set of all maps $f \from I^n \to X$ such that $fi_n=\const_{x_0}$, quotiented by homotopy relative to $\partial I^n$. For $n \geq 1$, this set admits a group operation, with which it is the $n^{th}$ \textit{homotopy group} of $X$ at $x_0$. The operation on $\pi_n(X,x_0)$ is defined by $[\alpha] \bk [\beta] = [\alpha \bk \beta]$, where $\alpha \bk \beta$ is the induced map in
\[\begin{tikzcd}
	{I^{n-1}} && {I^n} \\
	{I^n} && {I^n} \\
	&&& X
	\arrow["{e_0}", from=1-1, to=1-3]
	\arrow["{e_1}"', from=1-1, to=2-1]
	\arrow["{\id[I^{n-1}] \times u}"', from=2-1, to=2-3]
	\arrow["{\id[I^{n-1}] \times v}"', from=1-3, to=2-3]
	\arrow["{\alpha \bk \beta}"{description}, dashed, from=2-3, to=3-4]
	\arrow["\alpha"', bend right, from=2-1, to=3-4]
	\arrow["\beta", bend left, from=1-3, to=3-4]
\end{tikzcd}\]
since $\alpha e_1 = \beta e_0 = \const_{x_0}$.
\end{definition}

Note that we are implicitly using (Q7) and (S3) in defining the group operation above to recognize $I^n \iso I^{n-1} \times I$. We will continue to make this identification without further reference. As in the topological case, $\pi_0(X,x_0)$ will not form a group, but rather acts as the set of path-connected components.
\begin{theorem}
The operation on $\pi_n(X,x_0)$ defined above makes it a group for $n \geq 1$. 
\end{theorem}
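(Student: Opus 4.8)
The plan is to recognize $\pi_n(X,x_0)$ as the automorphism group of $\const_{x_0}$ in the (relative version of the) groupoid of homotopies built in \cref{BasicNotions}, so that associativity, the identity law and inverses are read off directly from \cref{HomotopyCompAssociative}, \cref{ConstantHomotopyIdentity}, \cref{HomotopyInverseComposition}, and well-definedness of the operation from \cref{HomotopyCompWellDefined}.

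The first step is to pin down a dictionary. Using (Q7) to identify $I^n$ with $I^{n-1}\times I$, an element of $\pi_n(X,x_0)$ is (by \cref{HomotopyGroupDef}) represented by a map $\alpha\from I^{n-1}\times I\to X$ which is constant at $x_0$ on $\partial I^n=\partial I^{n-1}\times I\cup I^{n-1}\times\partial I$; equivalently, $\alpha$ is a homotopy of maps $I^{n-1}\to X$ (in the sense of \cref{HomotopyDef}) from $\const_{x_0}$ to $\const_{x_0}$ which is relative to $\partial I^{n-1}$, and the operation $\bk$ of \cref{HomotopyGroupDef} is precisely the track composite of such homotopies for the fixed pushout $I\coprod_* I\iso I$ of (S3). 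Unwinding definitions (and commuting interval factors via (S2)), I would check that two such representatives are homotopic rel $\partial I^n$ --- i.e.\ represent the same element of $\pi_n$ --- exactly when they are homotopic rel endpoints via a homotopy which is itself relative to $\partial I^{n-1}$.

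With the dictionary in hand, each group axiom follows from the corresponding lemma of \cref{BasicNotions}, the one recurring observation being that every auxiliary homotopy produced in those proofs is obtained by precomposing the given homotopies with a map of cubes that does not touch the $I^{n-1}$-coordinate, and hence preserves the condition of being relative to $\partial I^{n-1}$ --- exactly as was already noted for track composites and inverse homotopies immediately after their definitions. Concretely I would check that $-\alpha:=\alpha(\id[I^{n-1}]\times(-\id[I]))$ and $\alpha\bk\beta$ again represent elements of $\pi_n$; that \cref{HomotopyCompWellDefined} makes $[\alpha]\bk[\beta]=[\alpha\bk\beta]$ independent of representatives; that \cref{ConstantHomotopyIdentity} makes $[\const_{\const_{x_0}}]$, the class of the constant $n$-cube at $x_0$, a two-sided identity; that \cref{HomotopyCompAssociative} gives associativity; and that \cref{HomotopyInverseComposition} makes $[-\alpha]$ a two-sided inverse of $[\alpha]$. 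When $n=1$ the coordinate $I^{n-1}=*$ is trivial and ``relative to $\partial I^0=\emptyset$'' is vacuous, so this is just the automorphism group of $x_0$ in the groupoid of \cref{Bicategory}.

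I expect essentially all the difficulty to lie in the bookkeeping of the first step: identifying ``homotopy rel $\partial I^n$'' with ``homotopy rel endpoints that is also relative to $\partial I^{n-1}$'' requires keeping track of which faces of $\partial I^n$ play the role of the endpoints of a homotopy and which assemble into $\partial I^{n-1}$, together with the implicit uses of (S2) to commute products of intervals. Once this is settled, the remainder is a matter of quoting the lemmas of \cref{BasicNotions} and observing that each respects the extra relativity.
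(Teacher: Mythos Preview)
Your proposal is correct and follows essentially the same approach as the paper: the paper's proof simply cites \cref{ConstantHomotopyIdentity}, \cref{HomotopyCompAssociative}, and \cref{HomotopyInverseComposition} for the identity, associativity, and inverses respectively, without spelling out the dictionary or the relativity bookkeeping. Your version is in fact more careful than the paper's, since you also address well-definedness via \cref{HomotopyCompWellDefined} and make explicit why the auxiliary homotopies remain relative to $\partial I^{n-1}$.
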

\begin{proof}
The identity is $\const_{x_0}$ by \cref{ConstantHomotopyIdentity}, associativity comes from \cref{HomotopyCompAssociative}, and the inverse of $[\alpha]$ is $[-\alpha]$ by \cref{HomotopyInverseComposition}.
\end{proof}

\begin{lemma}\label{RelativeHomotopyLemma}
Let $X$ be a space and $i \from A \cto X$ a cofibration. Suppose $f,g,h \from X \to Y$ are continuous maps such that $fi=gi$. Given homotopies $H$ from $f$ to $h$ and $K$ from $g$ to $h$, if $Hi \sim Ki$ rel endpoints then there is a homotopy from $f$ to $g$ that is relative to $A$.
\end{lemma}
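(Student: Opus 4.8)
The plan is to realize the desired homotopy as the bottom face of a map $M \from X \times I^2 \to Y$ produced by a single lifting problem, exploiting that every space is fibrant (\cref{AllFibrant}) and that $i \hat{\times} \inc$ is a trivial cofibration for the open box inclusion $\inc \from \sqcap \ito I^2$ of \cref{OpenBoxInclusion}.

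First I would set $L_0 = H \bk (-K)$. This is defined because $He_1 = h = Ke_1$, and it is a homotopy from $f$ to $g$. Since track composition commutes with precomposition and $(-K)i = -(Ki)$, we have $L_0 i = Hi \bk (-(Ki))$. Now $fi = gi$, so $Hi$ and $Ki$ are both homotopies from $fi$ to $hi$; the hypothesis $Hi \sim Ki$ rel endpoints and \cref{HomotopyCompWellDefined} give $Hi \bk (-(Ki)) \sim Ki \bk (-(Ki))$ rel endpoints, while \cref{HomotopyInverseComposition} gives $Ki \bk (-(Ki)) \sim \const_{fi}$ rel endpoints. Hence $L_0 i \sim \const_{fi}$ rel endpoints; fix a witnessing map $\beta \from A \times I^2 \to Y$, so that $\beta\bdL = L_0 i$ while $\beta\bdT = \beta\bdR = \beta\bdB = \const_{fi}$ (the three constant faces all coincide precisely because $fi = gi$).

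Next I would assemble boundary data for a map $M \from X \times I^2 \to Y$: on the three faces in the image of $\inc$ set $M\bdL = L_0$, $M\bdT = \const_g$, and $M\bdR = \const_g$; on $A \times I^2$ set $M \circ (i \times \id[I^2]) = \beta$. One checks these agree on $A \times \sqcap$ (using $fi = gi$) and at the two corners of $\sqcap$; the only delicate point is the corner where $\bdT$ meets $\bdR$, where both prescribed faces return $g$ because $L_0$ ends at $g$. Getting this orientation bookkeeping consistent is the main obstacle — a careless choice such as $M\bdR = \const_f$ is inconsistent there. Since $\inc \iso j_1$ in the arrow category (\cref{PushProdArrowCategory,PushProdIdentities}) and $i$ is a cofibration, the induced map from $X \times \sqcap \cup_{A \times \sqcap} A \times I^2$ to $X \times I^2$ is $i \hat{\times} \inc$, which lies in $cof(\mathcal{J})$ by \cref{PushProdLemma}. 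As $Y$ is fibrant, $Y \to *$ has the right lifting property against $cof(\mathcal{J})$ (since $rlp(\mathcal{J}) = rlp(cof(\mathcal{J}))$), so the data extends to a map $M \from X \times I^2 \to Y$.

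Finally I would take $L = M\bdB$. Reading off $M\bdL$ and $M\bdR$ at the bottom corners gives $Le_0 = f$ and $Le_1 = g$, so $L$ is a homotopy from $f$ to $g$; and $Li = \beta\bdB = \const_{fi}$, so $L$ is relative to $A$, which is the required homotopy.
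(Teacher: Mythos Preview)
Your proof is correct and uses the same core idea as the paper: since $i \hat\times \inc \in cof(\mathcal J)$ and $Y$ is fibrant, a map on $A \times I^2 \cup_{A \times \sqcap} X \times \sqcap$ extends to $X \times I^2$, and the bottom face is the desired relative homotopy. The only difference is in how the boundary data is packaged. The paper works directly with the hypothesis witness $\alpha$ (so $\alpha\bdL = Hi$, $\alpha\bdT = \const_{hi}$, $\alpha\bdR = Ki$, $\alpha\bdB = \const_{fi}$) and places $H$, $\const_h$, $-K$ on the three faces of $X \times \sqcap$; you instead first form the track composite $L_0 = H \bk (-K)$, use the groupoid lemmas (\cref{HomotopyCompWellDefined}, \cref{HomotopyInverseComposition}) to convert the hypothesis into a witness $\beta$ for $L_0 i \sim \const_{fi}$ rel endpoints, and then place $L_0$, $\const_g$, $\const_g$ on the three faces. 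Your route is a small detour through the $2$-category structure, while the paper's is marginally more direct, but the lifting step and the conclusion are identical.
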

\begin{proof}
Choose a map $\alpha \from A \times I^2 \to Y$ such that $\alpha\bdL=Hi$, $\alpha\bdT=\const_{hi}$, $\alpha\bdR=Ki$, and $\alpha \bdB=\const_{fi}$. By \cref{PushProdLemma}, $i \hat{\times} \inc$ is in $cof(\mathcal{J})$, where $\inc \from \sqcap \to I^2$ is as in \cref{OpenBoxInclusion}. Since the following identities hold
\begin{align*} 
\alpha \circ \inc \circ b &= \alpha \circ -\bdR = -Ki\text{;}\\
\alpha \circ \inc \circ aa &= \alpha \bdL = Hi\text{;}\\
\alpha \circ \inc \circ ab &= \alpha \bdT = \const_{hi}\text{,}
\end{align*}
the top map is defined, so there is a lift $\beta \from X \times I^2 \to Y$ in the following diagram
\[\begin{tikzcd}
	{A \times I^2 \coprod_{A \times \sqcap} X \times \sqcap} &&&&& Y \\
	{X \times I^2}
	\arrow["i \hat{\times} \inc"', from=1-1, to=2-1]
	\arrow["\beta"', dashed, from=2-1, to=1-6]
	\arrow["{[\alpha,\ [[Hi,\ \const_{hi}],\ -Ki]]}", from=1-1, to=1-6]
\end{tikzcd}\]
The composite $\beta\bdB$ gives a homotopy from $f$ to $g$ that is relative to $A$, since $\beta\bdB i=\beta (i \times \id[I^2]) \bdB=\alpha \bdB = \const_{fi}$.
\end{proof}

\begin{lemma}\label{HomotopyGroupLemma}
Suppose $(\C, \iota)$ is a good Q-structure, and let $n \geq 1$. Then a map $u \from \bd I^n \to X$ is homotopic to $x$ rel $(*)$ \iff\ there is a map $\tilde{u} \from I^n \to X$ such that $\tilde{u}i_n=u$.
\end{lemma}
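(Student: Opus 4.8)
The plan is to prove the two implications separately. For the forward direction, observe first that a homotopy relative to the vertex $(1)$ from $u$ to a constant map is stationary at $(1)$, so the target constant map is forced to be $x = x_0\circ{!}_{\bd I^n}$ with $x_0 := u(1)$. Given such a homotopy $H\from\bd I^n\times I\to X$ with $He_0 = u$ and $He_1 = x_0\circ{!}_{\bd I^n}$, the map $He_1$ factors through $*$, so $(H,x_0)$ is a cocone under the span $*\xleftarrow{!}\bd I^n\xrightarrow{e_1}\bd I^n\times I$; by the left-hand pushout in (Q8) this span has pushout $I^n$, with legs $c$ and $\rho$, and the universal property yields $\tilde u\from I^n\to X$ with $\tilde u\rho = H$. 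Since (Q8) also records $\rho e_0 = i_n$, we conclude $\tilde u i_n = \tilde u\rho e_0 = He_0 = u$. (This half uses nothing about relativity; it even shows that $u$ extends over $I^n$ as soon as it is homotopic to some constant map.)

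For the converse, suppose $\tilde u\from I^n\to X$ satisfies $\tilde u i_n = u$. I would first reduce to a purely geometric statement: it is enough to construct a homotopy $G\from\bd I^n\times I\to I^n$, relative to $(1)$, from the inclusion $i_n$ to $\const_{(1)}$, for then $\tilde u\circ G$ is a homotopy rel $(1)$ from $\tilde u i_n = u$ to $\tilde u\circ\const_{(1)} = \const_{u(1)}$, which is the required $x$. To construct $G$, use (Q7) to write $I^n$ as the product $\prod_{k=1}^n I$, and let $h\from I^2\to I$ be the straight-line contraction $h(s,t) = (1-t)s + t$ of $I$ onto the endpoint $1$; this is a morphism of $\C$ because $\Box_{\leq 2}$ sits in $\C$ as a full subcategory of $\Top$ --- the same mechanism used for the explicit maps in, e.g., \cref{ConstantHomotopyIdentity} --- and $I^2\cong I\times I$ by (S2). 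Assemble $h_n\from I^n\times I\to I^n$ with $k$-th component $h\circ(\pi_k\times\id[I])$ and set $G := h_n\circ(i_n\times\id[I])$. A factorwise computation then gives $h_ne_0 = \id[I^n]$, $h_ne_1 = \const_{(1)}$ and $h_n\bigl((1)\times\id[I]\bigr) = \const_{(1)}$ --- using that the vertex $(1)$ has every coordinate equal to the endpoint $1$, so that each $\pi_k$ sends it to $1$ and $h(1,t) = 1$ --- whence $Ge_0 = i_n$, $Ge_1 = \const_{(1)}$, and $G\bigl((1)\times\id[I]\bigr) = \const_{(1)}$, the last equation being exactly relativity to $(1)$. (If one would rather not assume $(1) = (1,\dots,1)$, contract the $k$-th coordinate toward the $k$-th coordinate of $(1)$ instead.)

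The step I expect to be the main obstacle is precisely this construction of $G$. Since $\bd I^n\times I$ is not an object of $\Box$ --- it is neither a cube nor the boundary of a cube --- one cannot obtain $G$ by simply applying $\iota$ to a continuous map; it has to be built out of the structural inclusion $i_n$, the cartesian structure of $\C$, and the one explicit interval contraction $h$, with (Q7) providing the product decomposition of $I^n$ that makes the three boundary equations verifiable coordinate by coordinate. Everything else --- the forward implication, the reduction, and the final composition with $\tilde u$ --- is routine once $G$ is in hand.
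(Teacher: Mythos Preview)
Your proof is correct and follows essentially the same approach as the paper's: the forward direction is identical (use the left pushout in (Q8) to factor $H$ through $I^n$ and then invoke $\rho e_0 = i_n$), and for the converse both you and the paper build a straight-line contraction $H\from I^n\times I\to I^n$ onto the vertex $(1)$, restrict along $i_n\times\id[I]$, and postcompose with $\tilde u$.

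The only difference is in how the contraction is produced. The paper simply writes down the formula $H(s,t)=(1-t)s+t\cdot(1)$ and implicitly uses that, after identifying $I^n\times I\cong I^{n+1}$ via (Q7), this is a continuous map between objects of $\Box$, hence a morphism of $\C$ by fullness of $\iota$. You instead assemble $h_n$ coordinatewise from the single map $h\from I^2\to I$ in $\Box_{\leq 2}$ using the product decomposition of (Q7). Your route is more explicit about why the map lives in $\C$, but the paper's shortcut --- recognising $H$ as a morphism of $\Box$ directly --- addresses the very concern you flagged about $\bd I^n\times I$ not being in $\Box$: one never needs it to be, because the contraction is defined on $I^n\times I\cong I^{n+1}$ and only afterwards restricted to $\bd I^n$ inside $\C$, exactly as you do with $G=h_n\circ(i_n\times\id[I])$.
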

\begin{proof}
Suppose $u$ is homotopic to the point $x=u(*)$ rel $(*)$. Then there is a homotopy $H \from \bd I^n \times I \to X$ such that $He_0 = u$ and $He_1=\const_x$. Thus, by the left pushout in (Q8), $H$ factors through $I^n$ to a map $\tilde{u} \from I^n \to X$ such that $\tilde{u}i_n=He_0=u$. Conversely, suppose that $u$ admits an extension $\tilde{u} \from I^n \to X$. Let $H \from I^n \times I \to I^n$ be $H(s,t)=(1-t)s+t\cdot(*)$, the straight line homotopy from $\id[I^n]$ to $(*)$, which satisfies $H \circ (*) = \const_{(*)}$. Then $\tilde{H} = \tilde{u}Hi_n$ satisfies $\tilde{H}e_0=u$, $\tilde{H}e_1=u(*)$, and $\tilde{H}(*)=\const_{u(*)}=\const_x$, so $\tilde{H}$ is the required homotopy relative to $(*)$.
\end{proof}

\begin{lemma}
Suppose $(\C, \iota)$ is a good Q-structure and that $X$ admits points. Let $f \from X \to Y$ be continuous and $n$ a positive integer. Then
\begin{itemize}
\item[(1)] $f$ is $0$-compressible \iff\ $\pi_0 f$ is surjective.
\item[(2)] $f$ is $n$-compressible \iff\ for all points $x$ in $X$, $\pi_k f \from \pi_k(X,x) \to \pi_k(Y,f(x))$ is injective for $k=n-1$ and surjective for $k=n$.
\end{itemize}
\end{lemma}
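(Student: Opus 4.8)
The plan is to prove part~(1) by unwinding definitions and part~(2) by establishing the two implications separately; in both directions of~(2) the central device is the translation of $n$-compressibility into statements about maps out of $\bd I^{k}$ and $I^{k}$ afforded by the two descriptions of $\pi_k(X,x)$ recorded in \cref{HomotopyGroupDef}---as based maps $(\bd I^{k+1},(1))\to(X,x)$ modulo homotopy rel~$(1)$, or as maps $I^{k}\to X$ carrying $i_k$ to $\const_x$ modulo homotopy rel~$\bd I^{k}$---together with the extension criterion of \cref{HomotopyGroupLemma}. I will use throughout that $\pi_k f$ is post-composition with $f$ in either description, that for $k\geq 1$ post-composition with $f$ commutes with the track composite $\bk$ of \cref{HomotopyGroupDef} (by the same uniqueness-in-a-pushout chase that works for homotopies), and that since $X$ admits points so does $Y$, so all basepoints and $\pi_0$-sets below are meaningful. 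Part~(1) is then immediate: since $\bd I^{0}=\emptyset$ and $I^{0}=*$, $0$-compressibility of $f$ asserts exactly that every point $v\from *\to Y$ admits a point $h\from *\to X$ with $fh\sim v$ (the relativity condition over $\emptyset$ being vacuous), and as $\pi_0 f$ is the induced map of path-component sets, this is precisely surjectivity of $\pi_0 f$.

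For the forward direction of~(2), assume $f$ is $n$-compressible. Surjectivity of $\pi_n f$ at a point $x$: represent $[v]\in\pi_n(Y,fx)$ by $v\from I^n\to Y$ with $vi_n=\const_{fx}$ and apply $n$-compressibility to the commuting square with top leg $\const_x\from\bd I^n\to X$ and bottom leg $v$; the filler $h\from I^n\to X$ satisfies $hi_n=\const_x$ and $fh\sim v$ rel~$\bd I^n$, so $[h]\in\pi_n(X,x)$ maps to $[v]$. Injectivity of $\pi_{n-1}f$ at a point $x$, for $n\geq 2$: given $[\alpha_0],[\alpha_1]\in\pi_{n-1}(X,x)$ with equal image under $\pi_{n-1}f$, set $g=\alpha_0\bk(-\alpha_1)\from I^{n-1}\to X$, so that $[g]=[\alpha_0]\bk[-\alpha_1]$ while $[fg]=[f\alpha_0]\bk[-f\alpha_1]$ is trivial in $\pi_{n-1}(Y,fx)$; passing to the $\bd I^n$-model and writing $\bar g\from\bd I^n\to X$ for the associated based map, $f\bar g$ is trivial in $\pi_{n-1}(Y,fx)$, hence extends over $I^n$ by \cref{HomotopyGroupLemma}, so feeding the resulting commuting square into $n$-compressibility extends $\bar g$ over $I^n$ and \cref{HomotopyGroupLemma} gives that $[\bar g]=[g]$ is trivial, i.e.\ $[\alpha_0]=[\alpha_1]$. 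For $n=1$, injectivity of $\pi_0 f$ says a path in $Y$ joining $f\alpha_0$ to $f\alpha_1$ pulls back to a path in $X$ joining $\alpha_0$ to $\alpha_1$, which is the instance of $1$-compressibility with top leg $(\alpha_0,\alpha_1)\from\bd I\to X$ and that path as bottom leg.

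The converse direction of~(2) is the substantial one and is where I expect the real work. Given a commuting square with top leg $u\from\bd I^n\to X$ and bottom leg $v\from I^n\to Y$ (so $vi_n=fu$), put $x_0=u(1)$, a point of $X$, and note $v(1)=fx_0$ since $i_n(1)=(1)$. Step~1, using injectivity of $\pi_{n-1}f$: the map $v$ exhibits $fu$ as extending over $I^n$, so $\pi_{n-1}f[u]=[fu]$ is trivial in $\pi_{n-1}(Y,fx_0)$ by \cref{HomotopyGroupLemma}, hence $[u]$ is trivial by injectivity, and \cref{HomotopyGroupLemma} produces $u'\from I^n\to X$ with $u'i_n=u$; arranging representatives as in that lemma, $u'$ may be taken to send the cone point of $I^n$ to $x_0$. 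Step~2, using surjectivity of $\pi_n f$: now $fu'$ and $v$ are two extensions of $fu$ along $i_n$, hence differ by a class $d\in\pi_n(Y,fx_0)$ for the usual $\pi_n(Y,fx_0)$-action on extensions modulo homotopy rel~$\bd I^n$ (this is where $u'$ hitting the cone point at $x_0$ and the identification $\bd I^{n+1}\iso I^n\cup_{i_n}I^n$ enter); picking $[w]\in\pi_n(X,x_0)$ with $\pi_n f[w]=d$ and correcting $u'$ by $[w]$ produces $h\from I^n\to X$ with $fh\sim v$ rel~$\bd I^n$ and with $hi_n=u$ unchanged. The case $n=1$ runs identically with loops in place of $n$-spheres. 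The principal obstacle is Step~2: one must first derive the presentation $\bd I^{n+1}\iso I^n\cup_{i_n}I^n$ inside $\C$ from the axioms---by pasting the two pushouts of (Q8) along their shared map, which is exactly why (Q8) is stated with the compatibility $\rho e_0=i_n$, and using that $\uvar\times I$ and $\uvar\times I^n$ preserve colimits by (Q5)---and then carry the basepoint and the various ``rel~$\bd I^n$'' conditions coherently through the gluings, which I would package by invoking \cref{SquareComposition} and \cref{RelativeHomotopyLemma} rather than by hand. A minor additional check is the compatibility $i_n\circ(1)=(1)$ and the naturality in $f$ of the two models of $\pi_k$.
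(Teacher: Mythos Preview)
Your treatment of part~(1) and the forward implication of~(2) matches the paper's essentially verbatim; the only cosmetic difference is that for injectivity of $\pi_{n-1}f$ when $n\geq 2$ the paper works directly with the kernel-is-trivial formulation rather than first forming $\alpha_0\bk(-\alpha_1)$.

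For the converse of~(2) your overall strategy---use injectivity of $\pi_{n-1}f$ to contract $u$, then use surjectivity of $\pi_n f$ to correct---is the paper's, but the execution of Step~2 is genuinely different. You package the correction as the classical $\pi_n$-action on extensions-rel-boundary, which forces you to establish $\bd I^{n+1}\iso I^n\cup_{i_n}I^n$ in $\C$, define the difference class and the correction operation, and track basepoints and relativity through the gluings; you rightly flag all this as the principal obstacle but do not carry it out. The paper sidesteps that machinery entirely: from the homotopy $H\from u\sim\const_x$ (obtained from injectivity via \cref{HomotopyGroupLemma}) it uses two lifts against the trivial cofibrations $i_n\hat{\times} j_0$ and $i_n\hat{\times} e_1$---available because every object is fibrant (\cref{AllFibrant}) and by \cref{PushProdLemma}---to transport $v$ along $fH$ to a map $\tilde v$ with constant boundary, lift $\tilde v$ to $\tilde w$ by surjectivity, transport $\tilde w$ back along $H$ to the desired filler $w$, and then apply \cref{RelativeHomotopyLemma} once to certify $fw\sim v$ rel~$\bd I^n$. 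This buys a shorter, self-contained argument that never needs $I^n\cup_{i_n}I^n$ or an abstract action; conversely, your framing is conceptually cleaner for a reader coming from classical topology, but making it precise in a good Q-structure would in effect reconstruct those same lifts.
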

\begin{proof}
For (1), since $\bd I^{0}=\emptyset$ and $I^0\iso*$ by (S1), the diagram in \cref{WEDef} always admits a diagonal filler \iff\ for any point $y$ in $Y$, we may choose a point $x$ in $X$ such that $f x$ is homotopic to $y$; that is, $\pi_0f$ is surjective. For (2), suppose first that $f$ is $n$-compressible. Let $x \in X$ and fix $[\beta] \in \pi_n(Y,f(x))$. Regarding $\beta$ as a map $I^n \to Y$ such that $\beta i_n = f(x)$, we get a diagonal filler $\alpha \from I^n \to X$ for the following square
\begin{center}
\begin{tikzcd}
	{\bd I^n} & X \\
	{I^n} & Y
	\arrow["i_n"', hook, from=1-1, to=2-1]
	\arrow["x", from=1-1, to=1-2]
	\arrow["f", from=1-2, to=2-2]
	\arrow["\beta"', from=2-1, to=2-2]
\end{tikzcd}
\end{center}
where $\alpha i_n=x$, hence $[\alpha] \in \pi_n(X,x)$, and $f\alpha \sim \beta$ rel $\bd I^n$, hence $\pi_nf([\alpha])=[\beta]$. Thus, $\pi_n f$ is surjective. For injectivity, if $n-1=0$, then if $\pi_0f[x] = [f(x)]=[f(x)']= \pi_0f[x']$ in $\pi_0Y$ for $[x],[x'] \in \pi_0X$, then choosing a homotopy $p$ from $f(x)$ to $f(x)'$ induces a diagonal filler in 
\[\begin{tikzcd}
	\bd I & X \\
	I & Y
	\arrow["f", from=1-2, to=2-2]
	\arrow["p"', from=2-1, to=2-2]
	\arrow["{[x,x']}", from=1-1, to=1-2]
	\arrow["i_1"', hook, from=1-1, to=2-1]
\end{tikzcd}\]
since $\bd I$ is a coproduct $* \coprod *$ by (Q2), which gives a homotopy from $x$ to $x'$. Otherwise, if $[f\alpha] = [\const_{f(x)}]$ for $[\alpha] \in \pi_{n-1}(X,x)$, choose an extension $\tilde{f\alpha} \from I^{n-1} \to Y$ given by \cref{HomotopyGroupLemma}, so there is a diagonal filler in 
\[\begin{tikzcd}
	{\bd I^{n-1}} & X \\
	{I^{n-1}} & Y
	\arrow["f", from=1-2, to=2-2]
	\arrow["i_{n-1}"', hook, from=1-1, to=2-1]
	\arrow["\alpha", from=1-1, to=1-2]
	\arrow["{\tilde{f\alpha}}"', from=2-1, to=2-2]
\end{tikzcd}\]
which by \cref{HomotopyGroupLemma} we have $[\alpha] = 0$, hence $\pi_kf$ is injective. 
Conversely, suppose that $\pi_k$ is injective for $k=n-1$ and surjective for $k=n$. Given the following commutative square
\begin{center}
\begin{tikzcd}
	{\bd I^n} & X \\
	{I^n} & Y
	\arrow["{i_n}", hook, from=1-1, to=2-1]
	\arrow["u", from=1-1, to=1-2]
	\arrow["f", from=1-2, to=2-2]
	\arrow["v"', from=2-1, to=2-2]
\end{tikzcd}
\end{center}
Let $x = u(*)$. By commutativity and \cref{HomotopyGroupLemma}, $fu$ is homotopic to $\const_{f(x)}$ rel $(*)$, which by injectivity of $\pi_{n-1}f$ gives that $u$ is homotopic to $\const_x$, say through a homotopy $H \from \bd I^n \times I \to X$ rel $(*)$. Since $fu = vi_n$ and $Y$ is fibrant, by \cref{PushProdLemma} there is a lift in the diagram
\[\begin{tikzcd}
	{\bd I^n \times I \cup I^{n} \times \{0\}} && Y \\
	{I^n \times I}
	\arrow["i_n \hat{\times} j_0"', hook, from=1-1, to=2-1]
	\arrow["{[fH,v]}", from=1-1, to=1-3]
	\arrow["J"', dashed, from=2-1, to=1-3]
\end{tikzcd}\]
where the left map is in $cof(\mathcal{J})$ by \cref{PushProdLemma}. Let $\tilde{v} = Je_1$. Since $\tilde{v}i_n=Ji_ne_1=fHe_1=\const_{f(x)}$, we can regard $\tilde{v}$ as a map on $\bd I^{n+1}$ by the right pushout in (Q8), so $[\tilde{v}] \in \pi_n(Y,f(x))$. Since $\pi_n f$ is surjective, there is a $[\tilde{w}] \in \pi_n(X,x)$ such that there is a homotopy $K \from I^n \times I \to Y$ from $f\tilde{w}$ to $\tilde{v}$ rel $\bd I^n$. Regarding $\tilde{w}$ as a map on $I^n$, we obtain a lift in the diagram
\[\begin{tikzcd}
	{\bd I^n \times I \cup I^{n} \times \{1\}} && Y \\
	{I^n \times I}
	\arrow[hook, from=1-1, to=2-1]
	\arrow["{[H,\tilde{w}]}", from=1-1, to=1-3]
	\arrow["G"', dashed, from=2-1, to=1-3]
\end{tikzcd}\]
since $He_1 = \const_{x}$. Let $w = Ge_0$, so $wi_n = Gi_ne_0=He_0=u$, and $vi_n=fu=fwi_n$. Since $J\bk -K $ is a homotopy from $v$ to $f\tilde{w}$, $fG$ is a homotopy from $fw$ to $f\tilde{w}$, and
$$
(J\bk -K)i_n=Ji_n\bk-Ki_n=fH\bk \const_{f(x)} \sim fH = fGi_n \text{ rel endpoints}
$$
we may apply \cref{RelativeHomotopyLemma} to get that $fw \sim v$ rel $\bd I^n$, hence $f$ is $n$-connected.
\end{proof}

Thus, we may conclude that, in a good Q-structure $(\C, \iota)$, the definition of weak equivalences given in \cref{WEDef} is equivalent to the classical definition given in terms of homotopy groups.

\begin{theorem}\label{WEClassicDef} 
In a good Q-structure, if a space $X$ admits points then a map $f \from X \to Y$ is n-connected \iff\ for all points $x$ in $X$, $\pi_kf \from \pi_k(X,x) \to \pi_k(Y,f(x))$ is an isomorphism for $k < n$ and a surjection for $k=n$. In particular, if $X$ admits points then a map $f \from X \to Y$ is a weak equivalence \iff\ $\pi_n f \from \pi_n(X,x) \to \pi_n(Y,f(x))$ is an isomorphism for every nonnegative integer $n$ and every point $x$ in $X$. \qed
\end{theorem}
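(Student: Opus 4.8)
The plan is to obtain both assertions directly from the preceding lemma, which already carries all the geometric content by translating $k$-compressibility into injectivity and surjectivity statements for $\pi_{k-1}f$ and $\pi_kf$; what remains is purely a matter of collecting these conditions over the appropriate range of degrees and keeping track of the index shift.

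First I would unwind \cref{WEDef}: a map $f$ is $n$-connected precisely when it is $k$-compressible for every $k\le n$. Feeding this into the preceding lemma, $0$-compressibility says $\pi_0f$ is surjective, while for $1\le k\le n$ the $k$-compressibility of $f$ says that, for every point $x$, the map $\pi_{k-1}f$ is injective and $\pi_kf$ is surjective. Conjoining over $k=0,1,\dots,n$ yields exactly: $\pi_kf$ is surjective for $0\le k\le n$ and injective for $0\le k\le n-1$; equivalently, $\pi_kf$ is a bijection for $k<n$ and a surjection for $k=n$. Since for $k\ge 1$ the map $\pi_kf$ is a group homomorphism (immediate from the definition of the operation in \cref{HomotopyGroupDef}, as $f$ applied to the cocone defining $\alpha\bk\beta$ exhibits $f(\alpha\bk\beta)=f\alpha\bk f\beta$), a bijective such map is a group isomorphism; for $k=0$ the target is merely a set, so ``isomorphism'' is to be read as ``bijection''. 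The converse is the same conjunction read backwards: if $\pi_kf$ is an isomorphism for $k<n$ and a surjection for $k=n$, then every required injectivity ($\pi_{k-1}f$ for $k\le n$, since $k-1<n$) and surjectivity ($\pi_kf$ for $k\le n$) holds, so $f$ is $k$-compressible for all $k\le n$, hence $n$-connected. This establishes the first assertion.

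For the second, I would use that by \cref{WEDef} a map is a weak equivalence exactly when it is $n$-connected for all $n\ge 0$. If $f$ is a weak equivalence, then applying the first assertion with $n$ replaced by $m+1$ shows $\pi_mf$ is an isomorphism for every $m$ and every point. Conversely, if every $\pi_nf$ is an isomorphism, then for each fixed $n$ the hypotheses of the first assertion are met (isomorphism in degrees below $n$, and in particular a surjection in degree $n$), so $f$ is $n$-connected; as $n$ was arbitrary, $f$ is a weak equivalence.

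I do not expect a genuine obstacle here: the only points demanding attention are the index shift between ``$k$-compressible'' and the homotopy group in degree $k-1$, and the fact that $\pi_0$ carries no group structure, so that ``isomorphism'' in degree $0$ means a bijection of pointed sets. The hypothesis that $X$ admits points is precisely what is needed in order to invoke the preceding lemma.
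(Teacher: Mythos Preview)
Your argument is correct and matches the paper's approach exactly: the theorem is stated with a \qed and no separate proof, so the paper regards it as an immediate corollary of the preceding lemma via precisely the index-collection you describe. Your write-up simply makes explicit what the paper leaves to the reader.
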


\section{Example: subcategories of topological spaces}\label{Ex:topological_spaces}

We begin by recovering the usual Quillen model structure on the category $\Top$ of topological spaces.
From this, we will also be able to verify that the result holds for several subcategories of $\Top$, as well as for pseudotopological spaces in $\PsTop$ and locales in $\Loc$.

\begin{theorem} \label{TopModelStructure}
    The category $\Top$ of topological spaces carries a model structure whose weak equivalences are the weak homotopy equivalences, fibrations are the Serre fibrations, and cofibrations are the Serre cofibrations.
\end{theorem}

To prove the theorem, we will appeal to \cref{SerreModelStructure} with the obvious inclusion $\iota \from \Box \ito \Top$. We begin however with a technical lemma which verifies the only tricky axiom (Q3).

\begin{lemma}\label{PushProdInTop}
For $n$ and $m$ nonnegative integers, $i_n \hat{\times} i_m \iso i_{n+m}$ and $i_n \hat{\times} j_m \iso j_{n+m}$ in the arrow category $\Top^\to$. Thus, in \cref{notationIandJ}, $\mathcal{I} \hat{\times} \mathcal{I} \subseteq \mathcal{I}$ and $\mathcal{I} \hat{\times} \mathcal{J} \subseteq \mathcal{J}$.
\end{lemma}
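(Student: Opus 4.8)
The plan is to identify each pushout-product with the asserted subspace inclusion of cubes; the two containments then follow formally. Indeed, $\mathcal{I}\hat{\times}\mathcal{I}$ and $\mathcal{I}\hat{\times}\mathcal{J}$ are by definition the collections of maps $i_n\hat{\times}i_m$ and $i_n\hat{\times}j_m$, and pushout-products are in any case only well-defined up to isomorphism in $\Top^\to$ (\cref{PushProdArrowCategory}); so once we show $i_n\hat{\times}i_m\iso i_{n+m}$ and $i_n\hat{\times}j_m\iso j_{n+m}$ in $\Top^\to$ we may conclude $\mathcal{I}\hat{\times}\mathcal{I}\subseteq\mathcal{I}$ and $\mathcal{I}\hat{\times}\mathcal{J}\subseteq\mathcal{J}$ (and a fortiori the inclusions into $cof(\mathcal{I})$ and $cof(\mathcal{J})$ wanted for (Q3)).

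For $i_n\hat{\times}i_m$: the spaces $\bd I^n\times I^m$ and $I^n\times\bd I^m$ are closed subspaces of $I^n\times I^m$ whose intersection is $\bd I^n\times\bd I^m$, so the pushout in \cref{def:pushout-product} is just their union $\bd I^n\times I^m\cup I^n\times\bd I^m$ with the subspace topology, and $i_n\hat{\times}i_m$ is the inclusion of this union into $I^n\times I^m$. Under the evident homeomorphism $I^n\times I^m\iso I^{n+m}$ this union is exactly $\bd I^{n+m}$ --- a point lies in the left piece iff one of its first $n$ coordinates is $0$ or $1$, in the right piece iff one of its last $m$ coordinates is, and in $\bd I^{n+m}$ iff some coordinate is --- whence $i_n\hat{\times}i_m\iso i_{n+m}$.

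For $i_n\hat{\times}j_m$: reasoning in the same way, its domain is the subspace $S:=I^n\times I^m\times\{0\}\cup\bd I^n\times I^m\times I$ of $I^n\times I^m\times I$, and the map is the inclusion. Identify $I^n\times I^m\times I\iso I^{n+m}\times I$ and put $k=n+m$. It then suffices to exhibit a self-homeomorphism $\Phi$ of $I^k\times I$ with $\Phi(S)=I^k\times\{0\}$: for then $\Phi$, together with the restriction $\Phi|_S\from S\to I^k\times\{0\}$ it induces on subspaces, is an isomorphism $i_n\hat{\times}j_m\iso j_k$ in $\Top^\to$. Such a $\Phi$ is the usual ``box-straightening'' homeomorphism. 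Concretely, both $S$ and the closure $S'$ of its complement in $\bd(I^k\times I)$ --- namely $S'=I^k\times\{1\}\cup(I^n\times\bd I^m)\times I$ --- are $k$-cells (indeed PL $k$-balls) whose union is $\bd(I^k\times I)$ and whose intersection is the $(k-1)$-sphere separating them, exactly as for the pair consisting of $I^k\times\{0\}$ and the closure of its complement; a homeomorphism of $\bd(I^k\times I)$ carrying the first decomposition to the second then extends radially over $I^k\times I$. Alternatively one writes $\Phi$ out by hand from a collar of $\bd I^n\times I^m$ in $I^k$ (cubes admit such collars), by the same piecewise-linear recipe that, for $k=1$, $n=1$, $m=0$, straightens the three-edge path $I\times\{0\}\cup\{0,1\}\times I$ to the bottom edge $I\times\{0\}$ of the square (cf.\ \cref{PushProdIdentities}).

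I expect the second isomorphism to be the only real obstacle: one must be careful that the straightening is performed against the \emph{partial} boundary $\bd I^n\times I^m$ rather than the full boundary $\bd I^{k}$, so that either the classical straightening lemma for cubes has to be invoked in a sufficiently general form, or else the map $\Phi$ has to be written down explicitly and checked, via the pasting lemma, to be a homeomorphism with continuous inverse. The first isomorphism, by contrast, is a routine point-set computation.
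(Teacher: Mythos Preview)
Your argument for $i_n\hat\times i_m\iso i_{n+m}$ is exactly the paper's: identify the pushout with the union of closed subspaces and read off $\bd I^{n+m}$ coordinatewise.

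For $i_n\hat\times j_m$ you and the paper diverge after the common first step of identifying the domain with the subspace $S=I^n\times I^m\times\{0\}\cup\bd I^n\times I^m\times I$ of $I^{n+m+1}$. You then seek a single self-homeomorphism of $I^{k+1}$ straightening the \emph{partial} open box $S$ (sides only along $\bd I^n\times I^m$) to the bottom face, and you correctly flag this as the delicate point. The paper instead sidesteps the partial-boundary issue entirely: it observes that $j_m\iso j_0\times\id[I^m]$, so by cocontinuity of $\uvar\times I^m$ one has $i_n\hat\times j_m\iso(i_n\hat\times j_0)\times\id[I^m]$, reducing to the case $m=0$. There the domain is the \emph{full} open box $\bd I^n\times I\cup I^n\times\{0\}$, and the classical straightening $\bd I^n\times I\cup I^n\times\{0\}\iso I^n$ gives $i_n\hat\times j_0\iso j_n$, whence $i_n\hat\times j_m\iso j_n\times\id[I^m]\iso j_{n+m}$. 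In effect the paper factors your $\Phi$ as a coordinate permutation followed by (full box-straightening)$\times\id[I^m]$; this is exactly the content of your collar remark, made explicit. Your PL-balls argument is correct but heavier than needed---the reduction buys you the standard lemma rather than its partial-boundary generalisation.
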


\begin{proof}
In $\Top$, we may take the pushout-product of two inclusions $A \ito B$ and $C \ito D$ to be the inclusion $A \times D \cup B \times C \ito B \times D$. Thus, $i_n \hat{\times} i_m$ is the inclusion $\bd I^n \times I^m \cup I^n \times \bd I^m \ito I^{n+m}$. Since $\bd I^n \times I^m$ is the subspace of $I^{n+m}$ where at least one of the first $n$ coordinates are either $0$ or $1$, and likewise with $I^n \times \bd I^m$ with the last $m$ coordinates, we have $\bd I^n \times I^m \cup I^n \times \bd I^m \iso \bd I^{n+m}$. Thus, $i_n \hat{\times} i_m \iso i_{n+m}$. Similarly, $i_n \hat{\times} j_m$ is the inclusion $\bd I^n \times I^m \times I \cup I^n \times I^m \times \{0\} \ito I^{n+m} \times I$. But $\bd I^n \times I \cup I^n \times \{0\} \iso I^n$, so $i_n \hat{\times} j_0 \iso j_0$ and thus since $\uvar \times I^m$ preserves colimits, we have $i_n \hat{\times} j_m \iso (I^n \times I^m \ito I^{n+m} \times I) \iso j_{n+m}$.
\end{proof}

\begin{proof}[Proof of \cref{TopModelStructure}]
    Having fixed the inclusion $\iota \from \Box \ito \Top$, (S1), (S2), and (S4) hold, and (S3) is given by letting $a(t)=t/2$ and $b(t)=(1+t)/2$. Moreover, (Q2) and (Q7) are obvious, (Q5) holds since $\bd I^n$ and $I^n$ are locally compact Hausdorff, and (Q6) is well known (see for instance \cite{hovey:book}*{Proposition 2.4.2}). For (Q3), we then use \cref{PushProdInTop}.
    
    (Q4) follows from the given pushouts
\[\begin{tikzcd}
	{\bd I^n} & {I^n} &&&& {\bd I^{n-1}} & {I^{n-1}} \\
	{I^n} & {\bd I^{n+1}} & {I^{n+1}} && \emptyset & {*} & {\bd I^n}
	\arrow["{i_{n+1}}"', hook, from=2-2, to=2-3]
	\arrow["{i_n}", hook, from=1-1, to=1-2]
	\arrow[from=1-2, to=2-2]
	\arrow["\lrcorner"{anchor=center, pos=0.125, rotate=180}, draw=none, from=2-2, to=1-1]
	\arrow["{i_n}"', hook, from=1-1, to=2-1]
	\arrow["{\in cell(\mathcal{I})}"', from=2-1, to=2-2]
	\arrow["{i_0}"', hook, from=2-5, to=2-6]
	\arrow[from=1-6, to=2-6]
	\arrow["{i_{n-1}}", hook, from=1-6, to=1-7]
	\arrow[from=1-7, to=2-7]
	\arrow["\lrcorner"{anchor=center, pos=0.125, rotate=180}, draw=none, from=2-7, to=1-6]
	\arrow["{\in cell(\mathcal{I})}"', from=2-6, to=2-7]
\end{tikzcd}\]
Lastly, (Q8) is precisely the statements that under the identification $\bd I^n \times I/\bd I^n \times \{1\} \iso I^n$ and $I^{n-1} / \bd I^{n-1} \iso \bd I^n$, if we take $\rho$ and $\sigma$ to be the quotient maps, then any continuous map that is constant on fibers of the quotient factors through the quotient. By the description of the classes of maps, it is clear this is the classic Quillen model structure.
\end{proof}

In a (co)reflective subcategory of $\Top$, much of the work has already been done. In particular, if $\C$ is a (co)reflective subcategory of $\Top$ containing the CW complexes, then we may take $\iota \from \Box \ito \C$ to be the canonical inclusion. In this case, the image of $\iota$ is naturally isomorphic to the image of $\Box$ under the (co)reflection, hence we may take each of the above pushouts to be the same as in $\Top$. All that remains is to check that $X \times \uvar$ preserves the pushout in (S3), as well as the conditions of (Q5) and (Q6). We introduce 4 (co)reflective subcategories which admit good Q-structures.

Let $X$ be a topological space. Recall that a closed subset $F \subseteq X$ is \textit{irreducible} if it cannot be written as the union of two proper closed subsets of $F$. That is, if $F = F_1 \cup F_2$ and $F_1$ and $F_2$ are closed, then one of $F_1 = F$ or $F_2 = F$.

\begin{definition}\label{TopologicalSubcategories}
We take the following subcategories to all be full:
\begin{enumerate}
\item[(1)] Let $\ncat{cgHaus}$ be the subcategory of compactly generated Hausdorff spaces.
\item[(2)] Let $\ncat{CGWH}$ be the subcategory of compactly generated weakly Hausdorff spaces.
\item[(3)] Let $\DTop$ be the subcategory of $\Delta$-generated spaces. A $\Delta$-generated space $X$ is a topological space which has the final topology \wrt\ all maps from simplices $\simp{n} \to X$. There is an obvious functor $\Delta\operatorname{-ify} \from \Top \to \DTop$, which simply refines the topology on a space $X$ to be the finest topology for which all maps $\simp{n} \to X$ are continuous. This forms an adjunction $\iota \adj \Delta\operatorname{-ify}$, making $\DTop$ a coreflective subcategory of $\Top$ \cite{vogt:convenient}*{Corollary 1.4}.
\item[(4)] Let $\ncat{Sob}$ be the subcategory of sober spaces. A sober space $X$ is a topological space such that every irreducible closed subset of $X$ is the closure of exactly one point. In particular, every Hausdorff space is sober. There is a soberification functor $\operatorname{sober} \from \Top \to \ncat{Sob}$, which is a left adjoint to the inclusion $\iota \from \ncat{Sob} \ito \Top$, making $\ncat{Sob}$ a reflective subcategory of $\Top$.
\end{enumerate}
\end{definition} 

There is a chain of (co)reflections relating $\ncat{cgHaus}$ and $\ncat{CGWH}$ to $\Top$, so the comments above apply to them.
The categories $\ncat{cgHaus}$, $\ncat{CGWH}$, and $\DTop$ are cartesian closed (see for instance \cite{steenrod:convenient}, \cite{cgwh:spaces}, and \cite{vogt:convenient} respectively). In $\ncat{Sob}$, the exponentiable spaces are precisely the locally compact spaces \cite{stone:spaces}*{Theorem VII.4.12}. Thus, in each of these categories, (Q5) is satisfied. A standard proof (such as the one in \cite{hovey:book}) shows that (Q6) is also satisfied in $\ncat{cgHaus}$ and $\ncat{CGWH}$, and $\DTop$ is locally presentable \cite{directed:homotopy}*{Theorem 3.7} hence admits the small object argument. 

Most of the model structures on the above categories are well-known (see for instance the proof in \cite{hirschhorn:model-structure-on-top}). The only model structure that we are unaware of an existing reference for is on $\Sob$, so we will carefully check the remaining axiom (Q6).

\begin{lemma}\label{CellComplexLemma} 
Let $\mathcal{K}$ be either $\mathcal{I}$ or $\mathcal{J}$. Then every map in $cell(\mathcal{K})$ is a closed $T_1$ inclusion; that is, if $f \from X \to Y \in cell(\mathcal{K})$, then $f$ is a closed map such that every point in $Y-fX$ is closed. Moreover, $Y-fX$ is Hausdorff in the subspace topology.
\end{lemma}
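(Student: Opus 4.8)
The plan is to prove all three assertions simultaneously by transfinite induction on the length of a chosen presentation $X = X_0 \to X_1 \to \cdots \to X_\lambda = Y$ of the cell complex, maintaining at each stage $\beta$ that $X_0 \to X_\beta$ is a closed embedding, that every point of $X_\beta \setminus X_0$ is closed in $X_\beta$, and that $X_\beta \setminus X_0$ is Hausdorff in the subspace topology. Since the attaching maps of a cell complex target an arbitrary space, this genuinely relative formulation is exactly what the induction needs.

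First I would isolate the single-pushout step. Consider one stage $X_\beta \to X_{\beta+1}$, obtained by attaching a coproduct $\coprod_\alpha k_\alpha$ of generating maps (each $k_\alpha = i_{n_\alpha}$, or each $k_\alpha = j_{n_\alpha}$) along an arbitrary map $\phi \from \coprod_\alpha \dom(k_\alpha) \to X_\beta$. Using the explicit description of colimit topologies in $\Top$ together with the facts that $\bd I^n$ is closed in $I^n$ (respectively $I^n \times \{0\}$ closed in $I^n \times I$) and that the complement $I^n \setminus \bd I^n$ (respectively $I^n \times (0,1]$) is a Hausdorff, locally Euclidean space in which every point has arbitrarily small closed Euclidean balls, one checks: the image of $X_\beta$ is closed in $X_{\beta+1}$ (its preimages are $X_\beta$ and $\coprod_\alpha \dom(k_\alpha)$); the map $X_\beta \to X_{\beta+1}$ is injective and closed, hence a closed embedding; and $\coprod_\alpha\bigl(\cod(k_\alpha) \setminus \dom(k_\alpha)\bigr) \to X_{\beta+1} \setminus X_\beta$ is a homeomorphism onto the open subspace $X_{\beta+1} \setminus X_\beta$ (no identifications occur there). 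In particular $X_{\beta+1} \setminus X_\beta$ is Hausdorff and each of its points is closed in $X_{\beta+1}$.

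Then the successor step is a matter of composing: $X_0 \to X_{\beta+1}$ is a composite of closed embeddings, hence a closed embedding; $X_{\beta+1}\setminus X_0 = (X_\beta\setminus X_0) \sqcup (X_{\beta+1}\setminus X_\beta)$ as sets, with the first piece closed and the second open in $X_{\beta+1}\setminus X_0$; old points of $X_\beta\setminus X_0$ are closed in $X_\beta$ by induction, hence closed in $X_{\beta+1}$ as $X_\beta\to X_{\beta+1}$ is closed, and new points were handled above; and Hausdorffness follows by separating two points of the same piece via the inductive hypothesis or local Euclideanness, and separating $p \in X_\beta\setminus X_0$ from $q \in X_{\beta+1}\setminus X_\beta$ by a closed-in-$X_{\beta+1}$ Euclidean ball $\bar B \ni q$ avoiding $p$, which gives the disjoint open sets $X_{\beta+1}\setminus\bar B \ni p$ and $\mathring B \ni q$. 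At a limit ordinal $\lambda$, the closed-embedding and $T_1$ claims follow by transfinite bookkeeping exactly as above (a set closed in $X_0$ has closed image in each $X_\beta$, hence in $\colim X_\beta$, using that the chain maps are injective). For Hausdorffness at a limit stage, given $p \ne q$ in $X_\lambda\setminus X_0$ I would choose a stage $\mu < \lambda$ containing both, with one of $p,q$ in the last batch $X_\mu\setminus X_{\mu-1}$, separate them there by disjoint open sets $U, V \subseteq X_\mu$ as in the successor step, and then \emph{propagate} $U$ and $V$ to disjoint open subsets of $X_\lambda$: at each successor stage $\nu+1$ with attaching maps $\phi_\alpha \from \bd I^{n_\alpha}\to X_\nu$, extend $\phi_\alpha^{-1}(U)$ and $\phi_\alpha^{-1}(V)$ to disjoint open subsets of $I^{n_\alpha}$ using a collar $\bd I^{n_\alpha}\times[0,\varepsilon)\ito I^{n_\alpha}$ (for the $\mathcal{J}$ version, the evident collar $I^{n_\alpha}\times[0,\varepsilon)$), take unions at limit stages, and observe that the resulting subsets are open because at every stage their preimages are open by construction; intersecting the two final disjoint open sets with $X_\lambda\setminus X_0$ separates $p$ from $q$.

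The main obstacle is precisely this Hausdorffness at limit stages: neither $X_\beta$ nor the individual open cells are open in $X_\lambda$, so one cannot simply transport a separation forward, and the collar-neighbourhood propagation — together with the bookkeeping that keeps the extensions of $U$ and $V$ disjoint at every stage, which reduces to using the collar compatibly with the two disjoint boundary opens — is what makes the argument go through. Everything else reduces to the standard facts that pushouts along closed inclusions, and transfinite composites of closed inclusions, are again closed inclusions in $\Top$.
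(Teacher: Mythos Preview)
Your argument is correct, but the paper takes a much shorter route. For the closed $T_1$ inclusion it simply cites Hovey's book (Lemma 2.4.5), and for the Hausdorffness of $Y\setminus fX$ it observes that the quotient $Y/fX$ is an \emph{absolute} cell complex (built on a point), hence a CW complex and therefore Hausdorff; since the quotient map restricts to a homeomorphism $Y\setminus fX \cong (Y/fX)\setminus\{*\}$ onto an open subspace, the claim follows immediately. In other words, the paper reduces the relative statement to the well-known absolute one by collapsing the base.

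Your approach instead proves everything from first principles by transfinite induction, including the collar-propagation argument that underlies the classical proof that CW complexes are Hausdorff. What you gain is a self-contained argument that does not appeal to external results; what the paper gains is brevity and the avoidance of the delicate limit-stage bookkeeping (which, as you correctly identify, is the only nontrivial part). Your sketch of that bookkeeping is sound: the key points---that every cell first appears at a successor stage, that small closed Euclidean balls in an open cell are closed in the whole complex, and that compatible collar extensions yield open sets in the colimit---are all stated and handled appropriately.
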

\begin{proof}
The first part is given in \cite{hovey:book}*{Lemma 2.4.5}. For the second part, notice that $Y/fX$ is a cell complex and hence Hausdorff, so the result follows since $Y-fX \iso Y/fX - [fX]$ as subspaces.
\end{proof}

\begin{lemma}
Coproducts of sober spaces, as formed in $\Top$ are sober.
\end{lemma}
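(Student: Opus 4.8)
The plan is to exploit the fact that in the coproduct $X = \coprod_\alpha X_\alpha$ formed in $\Top$ each component $X_\alpha$ is a clopen subspace, and the closed subsets of $X$ are exactly the sets $\coprod_\alpha F_\alpha$ with each $F_\alpha \subseteq X_\alpha$ closed. Throughout I take irreducible closed subsets to be nonempty, so that the empty coproduct and any empty components cause no trouble.

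First I would show that every nonempty irreducible closed subset $F \subseteq X$ lies inside a single component. If $F$ met two distinct components $X_\alpha$ and $X_\beta$, then $F = (F \cap X_\alpha) \cup (F \setminus X_\alpha)$ writes $F$ as a union of two subsets that are closed in $F$ (since $X_\alpha$ and $X \setminus X_\alpha$ are both closed in $X$), each of which is proper in $F$ because $F$ meets both $X_\alpha$ and its complement; this contradicts irreducibility. Hence $F \subseteq X_\alpha$ for some index $\alpha$.

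Next, since $X_\alpha$ is a subspace of $X$, the subspace topology on $F$ induced from $X_\alpha$ coincides with that induced from $X$, so $F$ is a nonempty irreducible closed subset of $X_\alpha$. As $X_\alpha$ is sober, there is a unique $x \in X_\alpha$ with $F = \overline{\{x\}}^{X_\alpha}$, and because $X_\alpha$ is closed in $X$ we have $\overline{\{x\}}^{X_\alpha} = \overline{\{x\}}^{X}$, so $F = \overline{\{x\}}^{X}$. For uniqueness in $X$: if also $F = \overline{\{y\}}^{X}$, then $y \in F \subseteq X_\alpha$, and again $\overline{\{y\}}^{X} = \overline{\{y\}}^{X_\alpha}$, so uniqueness within $X_\alpha$ forces $y = x$. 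Thus $X$ is sober.

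I do not expect a serious obstacle here: the argument is entirely elementary. The only points needing care are the convention that irreducible closed sets are nonempty and the (routine but essential) observations that both irreducibility and closures are unaffected when passing between a clopen subspace and the ambient space.
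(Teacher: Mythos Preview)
Your proof is correct and follows essentially the same route as the paper: both argue that an irreducible closed subset of the coproduct must lie entirely within a single component $X_\alpha$, and then invoke sobriety of $X_\alpha$. The paper's proof is much terser (it simply asserts the containment in one component as ``clear'' and omits the uniqueness verification), whereas you spell out the clopen decomposition and the closure/uniqueness details explicitly, but the underlying idea is identical.
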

\begin{proof}
Let $\coprod X_\alpha$ be a coproduct in $\Top$, where each $X_\alpha$ is sober. Clearly, any irreducible closed subset must be contained within exactly one component of $\coprod X_\alpha$, say $X_\beta$, hence it is the closure of a single point since $X_\beta$ is sober.
\end{proof}

\begin{lemma}\label{SoberSpaceLemma}
Suppose $X$ is a space and $A, B \subseteq X$ are subsets such that $X$ is the disjoint union of $A$ and $B$ as sets, $A$ is a closed in $X$, and every point of $B$ is closed in $X$. If $A$ is sober and $B$ is Hausdorff under the subspace topologies, then $X$ is sober.
\end{lemma}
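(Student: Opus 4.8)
The plan is to fix a nonempty irreducible closed subset $F \subseteq X$ and show that it is the closure of exactly one point of $X$, analysing how $F$ meets the two pieces $A$ and $B$. The starting observation is that $F = (F \cap A) \cup \overline{F \cap B}$, where $\overline{(-)}$ denotes closure in $X$: the inclusion $\supseteq$ is clear, while $\subseteq$ holds because $F \cap B \subseteq \overline{F \cap B}$. Both $F \cap A$ (an intersection of two closed sets) and $\overline{F \cap B}$ (a closure) are closed in $X$ and contained in $F$, hence are closed subsets of $F$; so irreducibility of $F$ forces one of them to be all of $F$, and I would treat the two cases $F \subseteq A$ and $F = \overline{F \cap B}$ separately.

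In the first case, $F$ is a nonempty irreducible closed subset of $A$, since closedness and irreducibility are intrinsic and unaffected by passing to the subspace $A$. Soberness of $A$ then produces a unique $a \in A$ with $F$ equal to the closure of $\{a\}$ taken in $A$. Since $A$ is closed in $X$, the closure of a subset of $A$ is the same whether computed in $A$ or in $X$; hence $F = \overline{\{a\}}$ in $X$, and any point of $X$ generating $F$ must lie in $F \subseteq A$ and so equals $a$ by uniqueness in $A$.

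In the second case $F = \overline{F \cap B}$, so $F \cap B$ is a dense subset of the irreducible space $F$ and is therefore itself irreducible, and it is nonempty because $F \neq \emptyset$. With its subspace topology it is a subspace of the Hausdorff space $B$, and a nonempty irreducible Hausdorff space is a single point; hence $F \cap B = \{b\}$, and since every point of $B$ is closed in $X$ we conclude $F = \overline{\{b\}} = \{b\}$, with $b$ visibly its unique generator. Combining the two cases shows every nonempty irreducible closed $F$ is the closure of exactly one point, i.e.\ $X$ is sober.

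The only place requiring care is the bookkeeping of ambient spaces: checking that $F \cap A$ together with the $X$-closure of $F \cap B$ still covers $F$, that irreducibility descends to the dense subspace $F \cap B$ in its subspace topology, and --- most importantly --- that the generator found inside $A$ or inside $B$ remains the unique generator of $F$ in all of $X$. Each of these points is elementary, but together they are the substance of the argument.
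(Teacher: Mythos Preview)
Your proof is correct. It differs from the paper's argument in how the irreducibility of $F$ is exploited. The paper first argues directly that $F$ can contain at most one point of $B$: given two points $x,y \in F \cap B$, it separates them by disjoint opens $U,V$ in $B$, observes that $B$ is open in $X$ so that $U,V$ are open in $X$, and writes $F = (F \setminus U) \cup (F \setminus V)$ to contradict irreducibility. A second decomposition $F = (F \cap A) \cup \{x\}$ then rules out $F$ straddling $A$ and $B$.

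You instead use a single closed decomposition $F = (F \cap A) \cup \overline{F \cap B}$ up front and let irreducibility split the argument into two cases immediately. In the second case you invoke the general fact that a dense subspace of an irreducible space is irreducible, and that an irreducible Hausdorff space is a point, rather than separating two specific points by hand. Your route is a bit more streamlined and never explicitly uses that $B$ is open in $X$; the paper's route is more concrete and avoids the ``dense subspace of irreducible is irreducible'' lemma. Both are equally elementary and arrive at the same endgame: either $F \subseteq A$ and soberness of $A$ applies, or $F$ is a single closed point of $B$.
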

\begin{proof}
Let $F \subseteq X$ be an irreducible closed subset of $X$. Suppose $F$ has at least two distinct points of $B$, say $x$ and $y$. Then since $B$ is Hausdorff, there exists disjoint sets $U \subseteq B$ and $V \subseteq B$ that are open neighborhoods of $x$ and $y$ respectively in $B$. Since $A$ is closed, $B$ is open, so $U$ and $V$ are open in $X$. Then $F = (F-U) \cup (F-V)$ gives $F$ as a union of two nonempty closed subsets of $X$, contradicting irreducibility. Thus, $F$ has at most one point of $B$. If $F$ intersected both $A$ and $B$, then it would contain exactly one point of $B$, say $x$, and since $\{x\}$ is closed in $X$, we would have $F = (F \cap A) \cup \{x\}$, contradicting irreduciblity. Thus, $F$ is either a single point of $B$ or an irreducible closed subset of $A$; in either case, $F$ is the closure of exactly one point since $A$ is sober.
\end{proof} 

\begin{corollary}
Let $\mathcal{K}$ be either $\mathcal{I}$ or $\mathcal{J}$, and let $\{k_\alpha \from S_\alpha \to K_\alpha\}$ be a collection of maps in $\mathcal{K}$. If $X$ is sober and the following is a pushout in $\Top$, then $Y$ is sober.
\[\begin{tikzcd}
	{\coprod_\alpha S_\alpha} & {\coprod_\alpha K_\alpha} \\
	X & Y
	\arrow["f"', hook, from=2-1, to=2-2]
	\arrow["{\coprod k_\alpha}", hook, from=1-1, to=1-2]
	\arrow["\lrcorner"{anchor=center, pos=0.125, rotate=180}, draw=none, from=2-2, to=1-1]
	\arrow[from=1-2, to=2-2]
	\arrow[from=1-1, to=2-1]
\end{tikzcd}\]
\end{corollary}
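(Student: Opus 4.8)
The plan is to deduce everything from the two lemmas just established, \cref{CellComplexLemma} and \cref{SoberSpaceLemma}; no new topology is needed. First I would observe that the bottom map $f \from X \to Y$ is a map in $cell(\mathcal{K})$: it is a single pushout of the coproduct $\coprod_\alpha k_\alpha$ of maps in $\mathcal{K}$, hence a transfinite composition (of length one) of pushouts of coproducts of maps in $\mathcal{K}$. Since pushouts in $\Top$ are computed on underlying sets, and a pushout of an injection is an injection, $f$ is injective; so the notation $fX$ is meaningful and $Y$ is, as a set, the disjoint union of $fX$ and $Y - fX$.

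Next I would apply \cref{CellComplexLemma} to $f$. This yields at once that $f$ is a closed map — hence, being an injective closed continuous map, a closed embedding, so $fX$ is closed in $Y$ and $fX \iso X$ as spaces — that every point of $Y - fX$ is closed in $Y$, and that $Y - fX$ is Hausdorff in the subspace topology.

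Finally, setting $A := fX$ and $B := Y - fX$, all the hypotheses of \cref{SoberSpaceLemma} are in place: $Y$ is the set-theoretic disjoint union of $A$ and $B$; $A$ is closed in $Y$ and, being homeomorphic to the sober space $X$, is sober; every point of $B$ is closed in $Y$; and $B$ is Hausdorff in the subspace topology. Therefore $Y$ is sober, as claimed.

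The only step requiring any care — and the closest thing to an obstacle — is the bookkeeping that a single pushout of a coproduct genuinely qualifies as a member of $cell(\mathcal{K})$, so that \cref{CellComplexLemma} is applicable, together with the observation that the resulting $f$ is a topological embedding (not merely a continuous bijection onto a subspace). Both points are routine: the former is immediate from unwinding the definition of $cell(\mathcal{K})$, and the latter follows because $f$ is a closed injection.
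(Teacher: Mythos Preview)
Your proof is correct and follows exactly the paper's approach: invoke \cref{CellComplexLemma} and \cref{SoberSpaceLemma} after recognizing $Y$ as the set-theoretic disjoint union of $fX$ and $Y - fX$. You have simply spelled out in detail what the paper compresses into a single sentence.
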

\begin{proof}
This follows from \cref{CellComplexLemma} and \cref{SoberSpaceLemma}, given that $Y$ is the disjoint union of $fX$ and $(Y-fX)$ as sets.
\end{proof}

In particular, this means that anytime cells are attached to a sober space in $\ncat{Sob}$ in a single step, we may take the resulting relative cell complex, as formed in $\Top$ to be the pushout. In fact, this then holds for transfinite composites of such maps as well:

\begin{lemma}
Let $\mathcal{K}$ be either $\mathcal{I}$ or $\mathcal{J}$. Let $X_0 \xrightarrow{f_0} X_1 \xrightarrow{f_1} \cdots \to X = \colim X_\alpha$ be an transfinite composite of pushouts of coproducts of maps in $\mathcal{K}$, formed in $\Top$. If $X_0$ is sober, then so is $X$.
\end{lemma}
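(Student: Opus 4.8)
The plan is to sidestep a transfinite induction over the stages $X_\alpha$ and instead observe that the entire transfinite composite is a single relative cell complex, to which the results already proved about $\Top$ in this section apply verbatim. Concretely: since $X$ is by hypothesis the colimit, \emph{formed in $\Top$}, of a transfinite composite of pushouts of coproducts of maps in $\mathcal{K}$, the structure map $X_0 \to X$ belongs to $cell(\mathcal{K})$. This is the only role played by the assumption that the composite is formed in $\Top$: it is what licenses invoking \cref{CellComplexLemma} directly on $X_0 \to X$.

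First I would apply \cref{CellComplexLemma} to $f \from X_0 \to X$. This yields three facts: $X_0$ (identified with its image) is closed in $X$; every point of $X - X_0$ is closed in $X$; and $X - X_0$ is Hausdorff in the subspace topology. Combining these with the standing hypothesis that $X_0$ is sober, I would then apply \cref{SoberSpaceLemma} with $A = X_0$ and $B = X - X_0$: indeed $X$ is the disjoint union of $A$ and $B$ as sets, $A$ is closed in $X$, every point of $B$ is closed in $X$, $A$ is sober, and $B$ is Hausdorff, so \cref{SoberSpaceLemma} concludes that $X$ is sober.

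I do not expect any genuine obstacle, since the substantive work is already contained in \cref{CellComplexLemma} and \cref{SoberSpaceLemma}; what remains is only to check that the five hypotheses of \cref{SoberSpaceLemma} hold simultaneously for the decomposition $X = X_0 \sqcup (X - X_0)$, and this is immediate once $X_0 \to X$ is recognized as a member of $cell(\mathcal{K})$. If one instead preferred to mirror the narrative of the preceding corollary and induct on the ordinal length of the composite, then successor stages would be handled by that corollary while limit stages $X_\lambda = \colim_{\alpha < \lambda} X_\alpha$ would be handled by precisely the argument above applied to $X_0 \to X_\lambda$ (which is again in $cell(\mathcal{K})$); thus the inductive route collapses back to the one-step reasoning and offers nothing extra.
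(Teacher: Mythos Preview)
Your proposal is correct and is exactly the paper's argument: the paper's proof reads verbatim ``Again, this holds by \cref{CellComplexLemma} and \cref{SoberSpaceLemma},'' and your unpacking---recognize $X_0 \to X$ as a member of $cell(\mathcal{K})$, apply \cref{CellComplexLemma} to get that $X_0$ is closed, $X - X_0$ has closed points and is Hausdorff, then feed this into \cref{SoberSpaceLemma}---is precisely what is intended.
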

\begin{proof}
Again, this holds by \cref{CellComplexLemma} and \cref{SoberSpaceLemma}.
\end{proof}

\begin{corollary}
In $\ncat{Sob}$, compact Hausdorff spaces are small relative to maps in $cell(\mathcal{I})$ and $cell(\mathcal{J})$.
\end{corollary}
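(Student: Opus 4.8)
The plan is to reduce the statement to the analogous fact for $\Top$, which is part of the verification of (Q6) there, using the lemmas just established. Recall that an object $A$ is small relative to a class of maps exactly when $\C(A,-)$ carries the colimit of every sufficiently filtered transfinite sequence of maps in that class to the colimit of the values; so only two ingredients matter, the hom-sets out of $A$ and the colimits of such sequences, and I would argue that both are unchanged on passing between $\Top$ and $\ncat{Sob}$.

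For the colimits: $\ncat{Sob}$ is a full reflective subcategory of $\Top$, so a colimit in $\ncat{Sob}$ is the soberification of the colimit computed in $\Top$. But the preceding lemmas say precisely that a pushout of a coproduct of maps in $\mathcal{I}$ or $\mathcal{J}$ along a sober space, and every transfinite composite of such, is already sober when formed in $\Top$; hence no soberification occurs. Therefore every pushout, coproduct, and transfinite composite occurring in the definition of a $cell(\mathcal{K})$-map (with $\mathcal{K}$ being $\mathcal{I}$ or $\mathcal{J}$) may be formed indifferently in the two categories. In particular the class $cell(\mathcal{K})$ is the same in $\Top$ and in $\ncat{Sob}$, and any $\lambda$-sequence of $cell(\mathcal{K})$-maps --- which, after concatenating the transfinite composites witnessing its terms, is itself a transfinite composite of pushouts of coproducts of maps in $\mathcal{K}$ --- has the same colimit in both.

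For the hom-sets: since $\ncat{Sob} \ito \Top$ is full, $\ncat{Sob}(A,X) = \Top(A,X)$ for every sober $X$, naturally in $X$. Combined with the previous paragraph, the canonical comparison map $\colim_\alpha \ncat{Sob}(A, X_\alpha) \to \ncat{Sob}(A, \colim_\alpha X_\alpha)$ attached to a $\lambda$-sequence of $cell(\mathcal{K})$-maps coincides with the one computed in $\Top$. Hence a compact Hausdorff space $A$ is small relative to $cell(\mathcal{K})$ in $\ncat{Sob}$ if and only if it is so in $\Top$, and the latter is the standard fact underlying (Q6): by \cref{CellComplexLemma} these maps are closed $T_1$ inclusions, a compact subspace of a transfinite composite of closed $T_1$ inclusions meets only finitely many attached cells and so factors through a single stage (for indexing ordinals of uncountable cofinality), whence the comparison map is surjective, while injectivity is automatic because the transition maps are injective (cf.\ \cite{hovey:book}*{Proposition 2.4.2}).

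The argument involves no analytic difficulty beyond the $\Top$ case; the one point demanding care is purely organizational, namely that ``small relative to $cell(\mathcal{K})$'' must be read with the class $cell(\mathcal{K})$, the colimits, and the hom-sets all formed \emph{inside} $\ncat{Sob}$, so that one genuinely has to invoke the preceding lemmas at each stage of the transfinite cell construction in order to conclude that nothing changes on passing from $\Top$.
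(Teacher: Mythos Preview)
Your proof is correct and follows essentially the same approach as the paper: use the preceding lemmas to show that the relevant colimits in $\ncat{Sob}$ coincide with those in $\Top$, invoke fullness for the hom-sets, and reduce to the standard smallness result in $\Top$. Your version is more explicit about why the comparison map agrees in the two categories; the only minor imprecision is the phrase ``the class $cell(\mathcal{K})$ is the same in $\Top$ and in $\ncat{Sob}$,'' which should be read as ``$cell(\mathcal{K})$ in $\ncat{Sob}$ consists exactly of those $cell(\mathcal{K})$-maps in $\Top$ with sober domain,'' as the paper states.
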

\begin{proof}
By the above lemmas, transfinite composites of pushouts of maps in $\mathcal{I}$ and $\mathcal{J}$ are formed the same in $\ncat{Sob}$ as in $\Top$. Thus, the maps in $cell(\mathcal{I}_\ncat{Sob})$ and $cell(\mathcal{J}_\ncat{Sob})$ are precisely those in $cell(\mathcal{I}_\Top)$ and $cell(\mathcal{J}_\Top)$ whose domain is sober. The result follows from the usual statement in $\Top$.
\end{proof}

It is also easily verified that (S3), (Q7), and (Q8) hold in each of the categories in \cref{TopologicalSubcategories}, making each a good Q-structure with the canonical inclusions. Thus, we may apply \cref{SerreModelStructure} and \cref{WEClassicDef} for each of the categories to get the following:

\begin{theorem}
Each of the categories of:
\begin{itemize}
    \item compactly generated Hausdorff spaces $\ncat{cgHaus}$,
    \item compactly generated weakly Hausdorff spaces $\ncat{CGWH}$,
    \item $\Delta$-generated spaces $\DTop$
    \item sober spaces $\ncat{Sob}$
\end{itemize}
admits a model structure by taking the weak homotopy equivalences, Serre fibrations, and relative cell complexes to be the weak equivalences, fibrations, and cofibrations. \qed
\end{theorem}

Many of the model structures above are known to be Quillen equivalent to the standard model structure on $\Top$. The only remaining one to our knowledge is the model structure on $\ncat{Sob}$, which we easily verify is Quillen equivalent as well.

\begin{lemma}\label{WESober}
A map $f \from A \to B$ in $\ncat{Sob}$ is a weak equivalence \iff\ $\iota f$ is in $\Top$.
\end{lemma}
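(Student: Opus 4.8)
The plan is to observe that the definition of weak equivalence (\cref{WEDef}) refers only to data — the maps $i_n \from \bd I^n \ito I^n$, morphisms out of $\bd I^n$ and $I^n$, and homotopies rel $\bd I^n$ — all of which are seen identically in $\ncat{Sob}$ and in $\Top$ through the fully faithful inclusion $\iota$. First I would note that $\bd I^n$ and $I^n$ are compact Hausdorff, hence sober, so they lie in $\ncat{Sob}$ and $\iota$ carries the canonical $i_n$ of $\ncat{Sob}$ to the canonical $i_n$ of $\Top$. Second, since $\ncat{Sob}$ is reflective in $\Top$, the inclusion $\iota$ is a right adjoint and therefore preserves all limits; in particular, for any sober space $X$ the product $X \times I$ formed in $\ncat{Sob}$ agrees with the one formed in $\Top$ (and likewise $X \times I^2$, etc., which are sober because $\ncat{Sob}$ is closed under limits in $\Top$). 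Consequently a homotopy $H \from X \times I \to Y$ between sober spaces, and the assertion that $H$ is relative to a map $A \to X$ of $\ncat{Sob}$, mean exactly the same thing in the two categories.

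With these observations in hand, fix $f \from A \to B$ in $\ncat{Sob}$ and a nonnegative integer $n$. A commuting square
\[\begin{tikzcd}
	{\bd I^n} & A \\
	{I^n} & B
	\arrow["{i_n}"', hook, from=1-1, to=2-1]
	\arrow["u", from=1-1, to=1-2]
	\arrow["v", from=2-1, to=2-2]
	\arrow["f", from=1-2, to=2-2]
\end{tikzcd}\]
in $\ncat{Sob}$ is literally the same as such a square in $\Top$, by full faithfulness of $\iota$ together with $\bd I^n, I^n \in \ncat{Sob}$; and a diagonal map $h \from I^n \to A$ with $hi_n = u$ is automatically a morphism of $\ncat{Sob}$ (full subcategory), while the condition $fh \sim v$ rel $\bd I^n$ is, by the first paragraph, the same datum in $\ncat{Sob}$ as in $\Top$. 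Hence $f$ is $n$-compressible in $\ncat{Sob}$ if and only if $\iota f$ is $n$-compressible in $\Top$. Quantifying over all $n$ shows that $f$ is $n$-connected — and in particular a weak equivalence — in $\ncat{Sob}$ if and only if $\iota f$ is, i.e.\ if and only if $\iota f$ is a weak homotopy equivalence of topological spaces.

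There is essentially no obstacle beyond bookkeeping: the one point that genuinely must be invoked is that $\iota$, being a right adjoint, preserves the products appearing in the definitions of homotopy and of the relative condition; everything else is immediate from full faithfulness of $\iota$ and the fact that the cubes and their boundaries, being Hausdorff, are sober. One could alternatively deduce the statement from \cref{WEClassicDef} by noting that $\pi_n$ computed in $\ncat{Sob}$ coincides with the topological homotopy groups, but that route would require separately handling spaces that do not admit points, so the direct argument via \cref{WEDef} is preferable.
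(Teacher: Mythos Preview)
Your proof is correct and follows the same approach as the paper: both argue directly from \cref{WEDef}, using that the cubes and their boundaries are sober and that $\ncat{Sob}$ is a full subcategory of $\Top$, so the defining lifting-up-to-homotopy condition reads identically in the two categories. The paper compresses this into a single sentence (``We have characterized weak equivalences in both categories by maps from sober spaces. The result follows from $\ncat{Sob}$ being full.''), whereas you spell out explicitly why the homotopy condition transfers---namely that $\iota$, as a right adjoint, preserves the products $X \times I$ and $I^n \times I$ appearing in the relative-homotopy clause---which is a detail the paper leaves implicit.
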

\begin{proof}
We have characterized weak equivalences in both categories by maps from sober spaces. The result follows from $\ncat{Sob}$ being full.
\end{proof}

\begin{theorem}
The adjunction $\operatorname{sober} \adj \iota$ is a Quillen equivalence.
\end{theorem}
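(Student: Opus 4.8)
The plan is to show that $\operatorname{sober} \adj \iota$ is a Quillen adjunction and then to apply the criterion for a Quillen equivalence, which here simplifies greatly because every object of $\ncat{Sob}$ and of $\Top$ is fibrant (\cref{AllFibrant}).

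First I would check that the right adjoint $\iota \from \ncat{Sob} \ito \Top$ preserves fibrations and trivial fibrations. The generating sets $\mathcal{I}$ and $\mathcal{J}$ for the model structure on $\ncat{Sob}$ consist of maps between the cubes $\bd I^n$, $I^n$ and $I^n \times I$, all of which are Hausdorff and hence sober, so they are literally the same arrows as the generators of the Quillen model structure on $\Top$. Since $\iota$ is fully faithful, any lifting square in $\Top$ testing $\iota f$ against a map of $\mathcal{I}$ (resp.\ $\mathcal{J}$) has all four corners in $\ncat{Sob}$ whenever $f$ is a map of $\ncat{Sob}$, so it is the image of a lifting square in $\ncat{Sob}$; a diagonal filler there, which exists when $f \in rlp(\mathcal{I})$ (resp.\ $rlp(\mathcal{J})$) in $\ncat{Sob}$, maps under $\iota$ to a filler of the original square. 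Hence $\iota f \in rlp(\mathcal{I})$ (resp.\ $rlp(\mathcal{J})$) in $\Top$, i.e.\ $\iota$ preserves trivial fibrations and fibrations, so $\operatorname{sober} \adj \iota$ is a Quillen pair.

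Next I would invoke the standard characterization: a Quillen adjunction $F \adj G$ is a Quillen equivalence \iff\ $G$ reflects weak equivalences between fibrant objects and, for every cofibrant $X$, the derived unit $X \to G((FX)^{\mathrm{fib}})$ is a weak equivalence. Here $G = \iota$ reflects all weak equivalences by \cref{WESober}, and since every object of $\ncat{Sob}$ is fibrant no fibrant replacement of $\operatorname{sober}(X)$ is needed, so the derived unit is just the unit $\eta_X \from X \to \iota\operatorname{sober}(X)$; thus it remains only to show $\eta_X$ is a weak equivalence for every cofibrant $X$. But if $X$ is cofibrant then $(\emptyset \to X) \in cof(\mathcal{I})$, so $X$ is a retract in $\Top$ of a space $Z$ with $(\emptyset \to Z) \in cell(\mathcal{I})$; the image of $\emptyset$ in $Z$ is empty, so $Z$ itself is Hausdorff by \cref{CellComplexLemma}, and a retract of a Hausdorff space is Hausdorff, whence $X$ is Hausdorff and therefore sober. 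Since $\iota$ is the inclusion of a reflective subcategory (\cref{TopologicalSubcategories}), its unit at an object already lying in $\ncat{Sob}$ is an isomorphism, so $\eta_X$ is an isomorphism and in particular a weak equivalence. With the first two steps this proves the adjunction is a Quillen equivalence.

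I do not anticipate a genuine obstacle: the whole argument hinges on the observation that cofibrant topological spaces are Hausdorff, hence already sober, which trivializes the derived unit. The only points requiring care are the bookkeeping in the first step — confirming that the two model structures share the same generating arrows precisely because the cubes are sober — and recalling the correct form of the Quillen-equivalence criterion in the case where all objects are fibrant, so that one never has to chase a fibrant replacement of $\operatorname{sober}(X)$.
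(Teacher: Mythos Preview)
Your proposal is correct and follows essentially the same approach as the paper: both arguments hinge on the observation that a cofibrant object of $\Top$ is Hausdorff (hence sober), so that the unit $X \to \iota\operatorname{sober}X$ is an isomorphism. The only cosmetic differences are that the paper verifies the Quillen adjunction by showing the \emph{left} adjoint $\operatorname{sober}$ preserves (trivial) cofibrations (since $\operatorname{sober}(\mathcal{I}_{\Top}) \iso \mathcal{I}_{\ncat{Sob}}$ and $\operatorname{sober}$ preserves colimits), whereas you check that the \emph{right} adjoint $\iota$ preserves (trivial) fibrations; and the paper appeals directly to the defining bijection on weak equivalences rather than to the derived-unit criterion, but the content is identical once one knows the unit at a cofibrant object is an isomorphism.
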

\begin{proof}
That it is a Quillen adjunction holds by construction, since $\operatorname{sober}(\mathcal{I}_\Top) \iso \mathcal{I}_\ncat{Sob}$, and likewise with $\mathcal{J}$, hence $\op{sober}$ preserves (trivial) cofibrations since it preserves colimits. Thus, let $X \in \Top$ be cofibrant, which is sober as a Hausdorff space, and $Y \in \ncat{Sob}$ be any sober space. By the following diagram and \cref{WESober}, $\operatorname{sober}X \to Y$ is a weak equivalence \iff\ $X \to  \iota Y$ is:
\[\begin{tikzcd}
	{\iota \circ \operatorname{sober}X} & \iota Y \\
	X
	\arrow[from=1-1, to=1-2]
	\arrow["\iso", from=2-1, to=1-1] 
	\arrow[from=2-1, to=1-2]
\end{tikzcd}\]
\end{proof}

\section{Example: pseudotopological spaces} \label{ex:pseudotop}

Pseudotopological spaces are a generalization of topological spaces, and a more specialized type of convergence spaces. Rather than describing the structure in terms of open sets, one takes convergence of ultrafilters to be foundational. The category of pseudotopological spaces $\PsTop$ has several nice properties which $\Top$ lacks, most notably the fact that $\PsTop$ is quasitopos \cite{MR0911688}. In fact, \cite{improving:top}*{Theorem 3.12} showed that $\PsTop$ is the cartesian closed topological hull of the category of pretopological spaces $\ncat{PreTop}$ (also known as \v{C}ech closure spaces) in the category of convergence spaces $\ncat{Conv}$. In particular, since $\ncat{DiGraph}$ and $\ncat{Graph}$ both fully embed into $\ncat{PreTop}$ \cite{bubenik:homotopy}*{Proposition 1.56}, the existance of a model structure on $\PsTop$ could be applicable to the homotopy theory of graphs and \v{C}ech closure spaces. We now properly introduce pseudotopological spaces and give a brief recollection of the necessary lemmas.

\begin{definition}
Let $X$ be a set, $\mathbb{F}X$ be the set of filters on $X$, and $\beta X$ be the set of ultrafilters on $X$. For each $x \in X$, let $x_\bullet = \set{A \subseteq X}{x \in A}$ be the principal ultrafilter at $x$. A \textit{pseudotopology} on $X$ is a function $\lim_\xi \from \beta X \to \mathcal{P}X$ such that $x \in \lim_\xi x_\bullet$ for all $x \in X$. A \textit{peudotopological space} is a pair $(X, \lim_\xi)$ such that $\lim_\xi$ is a peudotopology on $X$. 
\end{definition}

Given a pseudotopology, we can define the limit of any filter $\mathscr{F}$ on $X$:
$$
\lim_\xi \mathscr{F} = \bigcap_{\mathscr{U} \in \beta \mathscr{F}} \lim_\xi \mathscr{U}\text{,}
$$
where $\beta \mathscr{F} = \set{\mathscr{U} \in \beta X}{\mathscr{F} \subseteq \mathscr{U}}$. By the ultrafilter lemma, $\beta \mathscr{F}$ is nonempty unless $\mathscr{F} = \mathcal{P}X$, in which case the empty intersection is understood to be $X$ (that is, $\mathcal{P}X$ converges to every element of $X$).

\begin{definition}
Given pseudotopologies $\lim_\xi$ on $X$ and $\lim_\zeta$ on $Y$, a function $f \from X \to Y$ is \textit{continuous} if $f(\lim_\xi \mathscr{F}) \subseteq \lim_\zeta f_*\mathscr{F}$ for every $\mathscr{F} \in \mathbb{F} X$, where $f_*\mathscr{F}=\set{B \subseteq Y}{\text{there is } A \in \mathscr{F} \text{ such that } fA \subseteq B}$ is the pushforward of $\mathscr{F}$. The category of pseudotopological spaces and continuous functions will be denoted $\PsTop$.
\end{definition}

\begin{definition}
Given two pseudotopologies $\lim_\xi$ and $\lim_\zeta$ on $X$, $\lim_\xi$ is \textit{finer} than $\lim_\zeta$ (respectively $\lim_\zeta$ is coarser than $\lim_\xi$, written $\zeta \leq \xi$) if, for all $\mathscr{U} \in \beta X$, $\lim_\xi \mathscr{U} \subseteq \lim_\zeta \mathscr{U}$. Equivalently, $\lim_\xi$ is finer than $\lim_\zeta$ if and only if $\id[X] \from (X,\lim_\xi) \to (X,\lim_\zeta)$ is continuous.
\end{definition}

\begin{lemma}\cite{convergence:foundations}*{Section III.6}
For any set $X$, the collection of pseudotopologies $\Xi$ on $X$ is a complete lattice.
\end{lemma}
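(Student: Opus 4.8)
\subsection*{Proof plan}

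The plan is to exhibit all meets and joins in the poset $(\Xi,\leq)$ explicitly, computed pointwise over $\beta X$. Recall that $\zeta\leq\xi$ means $\lim_\xi\mathscr{U}\subseteq\lim_\zeta\mathscr{U}$ for every $\mathscr{U}\in\beta X$, so in this order the finer pseudotopologies are the larger ones. Given a family $\{\lim_{\xi_i}\}_{i\in I}$ of pseudotopologies on $X$, I would take as candidate join and meet the functions $\beta X\to\mathcal{P}X$ given by
\[
\lim_\eta\mathscr{U}=\bigcap_{i\in I}\lim_{\xi_i}\mathscr{U},\qquad
\lim_\theta\mathscr{U}=\bigcup_{i\in I}\lim_{\xi_i}\mathscr{U},
\]
with the convention that the empty intersection is $X$ and the empty union is replaced by $\{x\in X:\mathscr{U}=x_\bullet\}$.

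First I would check both are pseudotopologies: since $x\in\lim_{\xi_i}x_\bullet$ for every $i$, we get $x\in\bigcap_i\lim_{\xi_i}x_\bullet=\lim_\eta x_\bullet$ and $x\in\bigcup_i\lim_{\xi_i}x_\bullet\subseteq\lim_\theta x_\bullet$; this is the sole axiom to verify. Then I would verify the universal properties. For the join: $\lim_\eta\mathscr{U}\subseteq\lim_{\xi_i}\mathscr{U}$ gives $\xi_i\leq\eta$ for all $i$, and if $\xi_i\leq\eta'$ for all $i$ then $\lim_{\eta'}\mathscr{U}\subseteq\lim_{\xi_i}\mathscr{U}$ for each $i$, hence $\lim_{\eta'}\mathscr{U}\subseteq\bigcap_i\lim_{\xi_i}\mathscr{U}=\lim_\eta\mathscr{U}$, i.e.\ $\eta\leq\eta'$; thus $\eta=\bigvee_i\xi_i$. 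The argument for $\theta=\bigwedge_i\xi_i$ is dual, reversing all inclusions. Alternatively, one could verify only that all joins exist --- including $\bigvee\emptyset$, the chaotic pseudotopology with $\lim\mathscr{U}=X$ --- and then invoke the standard fact that a poset with all suprema is automatically a complete lattice.

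I do not expect any genuine obstacle; the argument is entirely routine. The only point needing a moment's care is the empty family: the naïve pointwise union fails the convergence axiom, so $\bigwedge\emptyset$ must be taken to be the discrete pseudotopology $\mathscr{U}\mapsto\{x:\mathscr{U}=x_\bullet\}$. Equivalently, one observes that $\{x:\mathscr{U}=x_\bullet\}\subseteq\lim_{\xi_i}\mathscr{U}$ holds for every $i$ by the pseudotopology axiom, so the plain union formula is already correct whenever $I\neq\emptyset$ and only the empty meet needs separate handling. Everything else is a one-line check.
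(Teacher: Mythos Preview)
Your proposal is correct and follows essentially the same approach as the paper: the paper simply writes down the pointwise formulas $\lim_{\bigwedge\Gamma}\mathscr{U}=\bigcup_{\xi\in\Gamma}\lim_\xi\mathscr{U}$ and $\lim_{\bigvee\Gamma}\mathscr{U}=\bigcap_{\xi\in\Gamma}\lim_\xi\mathscr{U}$ without further comment. Your treatment is in fact more careful than the paper's, since you explicitly handle the empty family (where the raw union formula would violate the convergence axiom at principal ultrafilters), a point the paper's terse proof glosses over.
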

\begin{proof}
Let $\mathscr{U}$ be an ultrafilter on $X$, and $\Gamma \subseteq \Xi$. Then an infimum is given by $\lim_{\bigwedge\Gamma} \mathscr{U}=\bigcup_{\xi\in\Gamma} \lim_\xi \mathscr{U}$ and a supremum is given by $\lim_{\bigvee\Gamma}\mathscr{U}=\bigcap_{\xi\in\Gamma} \lim_\xi \mathscr{U}$.
\end{proof}

The existence of a finest and a coarsest pseudotopology implies the existence of discrete and indiscrete functors from $\Set$ to $\PsTop$. Indeed, these form the usual discrete $\adj$ forgetful $\adj$ indiscrete adjunctions, so any (co)limit in $\PsTop$ has, as its underlying set, the (co)limit set. Specifically, for the discrete pseudotopology only the pricipal ultrafilter $x_\bullet$ converges to $x$, and for the indiscrete pseudotopology, every ultrafilter converges to every point. Moreover, for any set of maps $\{f_k \from X_k \to Y\}_{k\in K}$, we may take the \textit{final pseudotopology} on $Y$, which is the finest pseudotopology for which each $f_k$ is continuous \cite{convergence:foundations}*{{\S}IV.3}.
Dually, we may take the \textit{initial pseudotopology} with respect to a set of maps with common domain, which is the coarsest pseudotopology for which each map is continuous.

\begin{corollary}
$\PsTop$ is (co)complete.
\end{corollary}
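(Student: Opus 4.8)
The plan is to reduce everything to (co)completeness of $\Set$ along the forgetful functor $U \from \PsTop \to \Set$, exploiting the existence of initial and final pseudotopologies recorded above; this is the standard fact that a topological functor creates all small limits and colimits.

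Let $D \from \mathcal{D} \to \PsTop$ be a small diagram. First I would form the limit $L = \lim(U D)$ and the colimit $C = \colim(U D)$ in $\Set$, with cone maps $\pi_d \from L \to U D(d)$ and cocone maps $\kappa_d \from U D(d) \to C$. Equip $L$ with the initial pseudotopology with respect to the family $\{\pi_d\}_{d \in \mathcal{D}}$ and $C$ with the final pseudotopology with respect to $\{\kappa_d\}_{d \in \mathcal{D}}$; both exist by the preceding remarks (ultimately because $\Xi$ is a complete lattice). By construction each $\pi_d$ and each $\kappa_d$ is continuous, and since the relevant composition identities already hold on underlying sets, the $\pi_d$ assemble into a cone over $D$ and the $\kappa_d$ into a cocone under $D$ in $\PsTop$.

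It then remains to verify the universal properties. Given a cone $(W, g_d \from W \to D(d))$ in $\PsTop$, the universal property of $L$ in $\Set$ yields a unique function $g \from U W \to L$ with $\pi_d g = U g_d$ for all $d$; since each composite $\pi_d g = U g_d$ is continuous, $g$ is continuous by the defining property of the initial pseudotopology, and uniqueness is inherited from $\Set$. Dually, a cocone $(V, h_d \from D(d) \to V)$ induces a unique function $h \from C \to U V$ with $h \kappa_d = U h_d$, which is continuous because each $h \kappa_d$ is and $C$ carries the final pseudotopology. Hence $L$ and $C$ are the limit and colimit of $D$ in $\PsTop$. There is no real obstacle here beyond the existence of initial and final pseudotopologies, which has already been established; the rest is the routine topological-functor argument.
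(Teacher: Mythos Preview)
Your argument is correct and is exactly the approach the paper takes: form the (co)limit of the underlying sets and equip it with the initial (final) pseudotopology with respect to the (co)limit cone. You have simply spelled out the verification of the universal property that the paper leaves implicit.
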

\begin{proof}
Take the (co)limit of the underlying sets, and then take the initial (final) pseudotopology under the (co)limit cone.
\end{proof}





For any pair of pseudotopological spaces $X$ and $Y$, there is a natural pseudotopology on $\PsTop(X,Y)$. We do not describe it here instead referring the reader to \cite{improving:top}*{Theorem 3.7}.

\begin{proposition}\cite{improving:top}*{Theorem 3.7} \label{PsTopCartesianClosed}
The category $\PsTop$ is cartesian closed. \qed
\end{proposition}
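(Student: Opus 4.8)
The plan is to construct, for each pseudotopological space $Y$, an explicit right adjoint $(\uvar)^Y$ to the product functor $\uvar \times Y \from \PsTop \to \PsTop$; since $\PsTop$ is complete and in particular has a terminal object (the one-point space), this yields cartesian closure. First I would record the concrete description of products: the underlying set of $X \times Y$ is the product set, and an ultrafilter $\mathscr{W}$ on $X \times Y$ has $(x,y) \in \lim \mathscr{W}$ precisely when $x \in \lim \pr_{X*}\mathscr{W}$ and $y \in \lim \pr_{Y*}\mathscr{W}$, which is immediate from the product carrying the initial pseudotopology with respect to the projections. Throughout I would use that continuity and convergence in $\PsTop$ can be checked on ultrafilters, together with the facts that the pushforward of an ultrafilter is an ultrafilter and that $\lim$ of an arbitrary filter is the intersection of $\lim$ over its ultrafilter refinements.

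Next I would define the exponential $Z^Y$: its underlying set is the hom-set $\PsTop(Y,Z)$, with the usual evaluation $\operatorname{ev} \from Z^Y \times Y \to Z$, and an ultrafilter $\Phi$ on $\PsTop(Y,Z)$ converges to $f$ exactly when, for every ultrafilter $\mathscr{G}$ on $Y$ and every $y \in \lim \mathscr{G}$, one has $f(y) \in \lim \operatorname{ev}_*(\Phi \times \mathscr{G})$, where $\Phi \times \mathscr{G}$ denotes the product filter. A short computation using the identity $\operatorname{ev}_*(f_\bullet \times \mathscr{G}) = f_*\mathscr{G}$ together with continuity of $f$ shows that the principal ultrafilter at $f$ converges to $f$, so this is a genuine pseudotopology; it is also clearly functorial in $Z$.

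The core of the argument is the natural bijection $\PsTop(X \times Y, Z) \cong \PsTop(X, Z^Y)$. Given continuous $g \from X \times Y \to Z$, set $\hat g(x)(y) = g(x,y)$; continuity of each $\hat g(x) \from Y \to Z$ and of $\hat g \from X \to Z^Y$ both reduce, via the identities $\hat g(x)_*\mathscr{G} = g_*(x_\bullet \times \mathscr{G})$ and $\operatorname{ev}_*(\hat g_*\mathscr{F} \times \mathscr{G}) = g_*(\mathscr{F} \times \mathscr{G})$, to the observation that any ultrafilter refining a product filter $\mathscr{F} \times \mathscr{G}$ projects to ultrafilters refining $\mathscr{F}$ and $\mathscr{G}$, so that $\mathscr{F} \times \mathscr{G}$ converges to $(x,y)$ in $X \times Y$ whenever $\mathscr{F} \to x$ and $\mathscr{G} \to y$. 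Conversely, given continuous $h \from X \to Z^Y$, set $\check h = \operatorname{ev} \circ (h \times \id[Y])$; for an ultrafilter $\mathscr{W} \to (x,y)$ in $X \times Y$, write $\Psi = h_*\pr_{X*}\mathscr{W}$ and $\mathscr{G} = \pr_{Y*}\mathscr{W}$, so that $\Psi$ is an ultrafilter converging to $h(x)$ and $y \in \lim \mathscr{G}$. The pushforward $(h \times \id[Y])_*\mathscr{W}$ refines the product filter $\Psi \times \mathscr{G}$ — a filter whose two projections are $\Psi$ and $\mathscr{G}$ must refine their product — hence $\check h_*\mathscr{W} = \operatorname{ev}_*(h \times \id[Y])_*\mathscr{W}$ is an ultrafilter refining $\operatorname{ev}_*(\Psi \times \mathscr{G})$, and the defining property of convergence in $Z^Y$ gives $\check h(x,y) = h(x)(y) \in \lim \operatorname{ev}_*(\Psi \times \mathscr{G}) \subseteq \lim \check h_*\mathscr{W}$. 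The assignments $g \mapsto \hat g$ and $h \mapsto \check h$ are visibly mutually inverse on underlying functions and natural in $X$ and $Z$, which completes the construction of the adjunction.

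The main obstacle I anticipate is bookkeeping with filters versus ultrafilters: the function-space pseudotopology and the continuity conditions are phrased using product filters, which are almost never ultrafilters, so one must repeatedly pass between a filter and its ultrafilter refinements, keep the ``finer/coarser'' direction straight, and use that pushing forward along a map preserves ultrafilters. Once these small lemmas are isolated, the remaining verifications are routine filter-chasing.
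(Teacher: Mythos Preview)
Your proposal is correct and follows the standard ``continuous convergence'' construction of the exponential in $\PsTop$. The paper, however, does not prove this proposition at all: it explicitly declines to describe the pseudotopology on $\PsTop(X,Y)$ and simply cites \cite{improving:top}*{Theorem 3.7}, marking the statement with a bare \qed. So there is nothing in the paper to compare your argument against beyond the external reference, and what you have sketched is essentially the proof one finds there.
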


Every topology $\tau$ on $X$ induces a pseudotopology, by taking $\lim_\tau \mathscr{U} = \set{x \in X}{\mathcal{N}(x) \subseteq \mathscr{U}}$; that is, one takes the usual limits of the filter. Given a topology $\sigma$ on $Y$, a function $f \from X \to Y$ is continuous with respect to the topologies \iff\ it is continuous with respect to the induced pseudotopologies. Thus, we get a full embedding $\iota \from \Top \ito \PsTop$. In the other direction, given any pseudotopology, we can take its topological modification.

\begin{definition}
Let $(X,\lim_\xi)$ be a pseudotopological space. A set $O \subseteq X$ is \emph{open} if $\lim_\xi \mathscr{U} \cap O \neq \emptyset$ implies $O \in \mathscr{U}$.
A set is \emph{closed} if its complement is open.
\end{definition}

One can verify that the collection of open sets $\tau_\xi$ forms a topology which is the \textit{topological modification} of $\lim_\xi$; the topological modification of a space $(X,\lim_\xi)$ is $(X, \tau\xi)$. One can verify that this process preserves continuity, so this defines a functor $\tau \from \PsTop \to \Top$, which is left adjoint to $\iota$. This makes $\Top$ a reflective subcategory of $\PsTop$. In particular, since the underlying sets and set functions have not changed, the unit is the identity map $\id[X] \from (X,\lim_\xi) \to (X, \tau\lim_{\xi})$. Through this, we are able to verify most of the required axioms from the required identities holding in $\Top$ through a few lemmas.

\begin{definition}\cite{convergence:foundations}*{Definition III.3.1, Proposition IV.2.14}
Let $(X, \lim_\xi)$ be a pseudotopological space, and $i \from A \ito X$ a set inclusion. The \textit{subspace pseudotopology}, denoted $\lim_{\xi|_A}$, is the initial pseudotopology under $i$. Explicitly, this is given by $x \in \lim_{\xi|_A} \mathscr{F}$ \iff\ $x \in \lim_\xi i_*\mathscr{F}$.
\end{definition}

\begin{lemma}\cite{convergence:foundations}*{Proposition IV.2.15}\label{SubspaceRestiction}
Let $f \from (X, \lim_\xi) \to (Y, \lim_\zeta)$ be continuous, and $A \subset X$, $B \subset Y$ such that $fA \subseteq B$. Then $f|_A \from (A, \lim_{\xi|_A}) \to (B, \lim_{\zeta|_B})$ is continuous. \qed
\end{lemma}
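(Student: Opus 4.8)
The plan is to unwind the definitions, using only the explicit description of the subspace pseudotopology recorded just above together with functoriality of the filter pushforward. Write $i \from A \ito X$ and $j \from B \ito Y$ for the set inclusions. The single observation driving the proof is that the hypothesis $fA \subseteq B$ makes the relevant square of functions commute, i.e.\ $f \circ i = j \circ \restr{f}{A}$ as maps $A \to Y$; since $(\uvar)_*$ is functorial ($(g \circ h)_* = g_* \circ h_*$), this yields $f_* \circ i_* = j_* \circ (\restr{f}{A})_*$ on filters.

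Next I would take an arbitrary filter $\mathscr{F}$ on $A$ and a point $a \in \lim_{\xi|_A} \mathscr{F}$, and chase: by the defining property of the subspace pseudotopology on $A$ this means $a \in \lim_\xi i_*\mathscr{F}$; continuity of $f$ applied to the filter $i_*\mathscr{F}$ on $X$ then gives $f(a) \in \lim_\zeta f_*(i_*\mathscr{F})$, which by the pushforward identity above equals $\lim_\zeta j_*\big((\restr{f}{A})_*\mathscr{F}\big)$. Since $a \in A$ and $fA \subseteq B$ we have $\restr{f}{A}(a) = f(a) \in B$, so this last statement is, by the defining property of the subspace pseudotopology on $B$, exactly $\restr{f}{A}(a) \in \lim_{\zeta|_B} (\restr{f}{A})_*\mathscr{F}$. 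As $\mathscr{F}$ and $a \in \lim_{\xi|_A}\mathscr{F}$ were arbitrary, $\restr{f}{A}$ is continuous.

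There is no real obstacle here: the argument uses nothing beyond the explicit formula for the subspace pseudotopology, functoriality of $(\uvar)_*$, and the set-level equality $f \circ i = j \circ \restr{f}{A}$. The same reasoning can be packaged categorically: $\lim_{\zeta|_B}$ is the initial pseudotopology along $j$, so $\restr{f}{A}$ is continuous into $(B,\lim_{\zeta|_B})$ as soon as $j \circ \restr{f}{A} = f \circ i$ is continuous into $(Y,\lim_\zeta)$, and the latter holds because $i$ is continuous (the subspace pseudotopology is defined to make it so) and $f$ is continuous by hypothesis.
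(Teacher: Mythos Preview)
Your proof is correct; the paper does not give its own argument for this lemma but simply cites \cite{convergence:foundations}*{Proposition IV.2.15} and marks it with \qed. Your direct unwinding of the subspace-pseudotopology formula together with functoriality of pushforward (and the alternative initial-structure packaging) is exactly the standard verification one would expect behind that citation.
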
 

\begin{lemma}\cite{convergence:foundations}*{Proposition V.4.24}\label{SubspaceLemma}
For any psuedotopological space $(X, \lim_\xi)$ and any set-inclusion $i \from A \ito X$, the topological modification of the subspace pseudotopology, $\tau(\xi|_A)$, is finer than the subspace topology under the topological modification, $(\tau\xi)|_A$. That is, $\id[A] \from (A, \lim_{\tau(\xi|_A)}) \to (A, \lim_{(\tau\xi)|_A})$ is continuous. \qed
\end{lemma}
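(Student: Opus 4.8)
The plan is to unwind the definitions and notice that the statement reduces to a two-line verification about ultrafilters. Since for topologies ``finer'' means exactly that the identity from the finer to the coarser topology is continuous, it suffices to show that every set open in $(A,(\tau\xi)|_A)$ is open in $(A,\tau(\xi|_A))$. An open set of the subspace topology $(\tau\xi)|_A$ is of the form $U \cap A$ for some $U \subseteq X$ open in $(X,\tau\xi)$, so the goal becomes: for every such $U$ and every ultrafilter $\mathscr{U}$ on $A$, if some point of $\lim_{\xi|_A}\mathscr{U}$ lies in $U \cap A$, then $U \cap A \in \mathscr{U}$.

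First I would fix $U$ open in $(X,\tau\xi)$ and an ultrafilter $\mathscr{U}$ on $A$ together with a point $x \in \lim_{\xi|_A}\mathscr{U}$ with $x \in U \cap A$. By the explicit description of the subspace pseudotopology recorded above, $x \in \lim_{\xi|_A}\mathscr{U}$ means $x \in \lim_\xi i_*\mathscr{U}$, where $i \from A \ito X$ is the inclusion; and since the pushforward of an ultrafilter is again an ultrafilter, $i_*\mathscr{U} \in \beta X$. Then $\lim_\xi(i_*\mathscr{U}) \cap U$ contains $x$, hence is nonempty, so openness of $U$ in the topological modification gives $U \in i_*\mathscr{U}$. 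Finally, because $i$ is a set inclusion, $i_*\mathscr{U} = \set{B \subseteq X}{B \cap A \in \mathscr{U}}$ (using that every member of $\mathscr{U}$ is already a subset of $A$), so $U \in i_*\mathscr{U}$ is literally the assertion $U \cap A \in \mathscr{U}$. This closes the argument.

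I do not expect a serious obstacle: the whole content is matching the definition of openness in a topological modification against the description of the subspace pseudotopology via pushforward filters. The one point deserving a sentence of care is the identity $i_*\mathscr{U} = \set{B \subseteq X}{B \cap A \in \mathscr{U}}$ for an inclusion $i$, and---if one prefers to phrase openness for arbitrary filters rather than ultrafilters---the routine remark that the two formulations agree, since $\lim_\xi \mathscr{F} = \bigcap_{\mathscr{U} \in \beta\mathscr{F}} \lim_\xi \mathscr{U}$ and any filter is the intersection of the ultrafilters refining it. Neither step needs anything beyond the definitions and lemmas already in place.
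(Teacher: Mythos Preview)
Your argument is correct: it is exactly the direct verification that a $(\tau\xi)$-open set, intersected with $A$, satisfies the defining openness condition for the pseudotopology $\xi|_A$, and every step is justified by the definitions already stated in the paper. The one point worth a word of care, the identity $i_*\mathscr{U} = \set{B \subseteq X}{B \cap A \in \mathscr{U}}$ for an inclusion $i$, follows immediately from the definition of the pushforward filter together with upward closure of $\mathscr{U}$.

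As to comparison: the paper does not supply its own proof of this lemma at all --- it merely cites \cite{convergence:foundations}*{Proposition V.4.24} and marks the statement with a \qed. So there is nothing to compare your approach against within the paper itself; your write-up simply fills in what the authors chose to outsource to the reference, and does so by the most direct route available.
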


\begin{definition}
Two collections of subsets $\mathcal{A},\mathcal{B} \subseteq \mathcal{P}X$ \textit{mesh}, written $\mathcal{A} \# \mathcal{B}$, if $A \cap B \neq \emptyset$ for every $A \in \mathcal{A}$ and $B\in \mathcal{B}$. The \textit{grill} of $\mathcal{A}$ is 
$$
\mathcal{A}^\# = \set{B \subseteq X}{A \cap B \neq \emptyset \text{ for all }A \in \mathcal{A}}\text{.}
$$
\end{definition}

\begin{definition}
Let $(X,\lim_\xi)$ be a pseudotopological space. For $\mathcal{A} \subseteq \mathcal{P}X$, the \textit{adherence} of $\mathcal{A}$ is 
\[
\adh_\xi\mathcal{A}=\bigcup_{\mathscr{F} \# \mathcal{A}} \lim_\xi\mathscr{F}
\]
where the union is taken over all filters $\mathscr{F}$ that mesh with $\mathcal{A}$. Given two subsets $A,B \subseteq X$, $A$ is $\xi$-\textit{compact at B} if, for every filter $\mathscr{F}$, $A \in \mathscr{F}^\#$ implies $\adh_\xi \mathscr{F} \cap B \neq \emptyset$. $A$ is \textit{compact} if $A$ is $\xi$-compact at $A$, and \textit{relatively compact} if $A$ is $\xi$-compact at $X$. 
\end{definition}

Note that if $A$ is $\xi$-compact at $B$ and $B \subseteq C$, then $A$ is $\xi$-compact at $C$. Moreover, if $A$ and $B$ are $\xi$-compact at $C$, then $A \cup B$ is compact at $C$. A topological space is compact in the above sense \iff\ it is compact in the usual sense.

\begin{lemma}\cite{convergence:foundations}*{Proposition IX.1.26}\label{CompactImageCompact} 
Let $f \from (X,\lim_\xi) \to (Y,\lim_\zeta)$ be continuous. For $A,B \subseteq X$, if $A$ is $\xi$-compact at $B$, then $fA$ is $\zeta$-compact at $fB$. \qed
\end{lemma}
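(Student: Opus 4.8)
The plan is a direct filter chase. Fix a filter $\mathscr{G}$ on $Y$ with $fA \in \mathscr{G}^\#$; the goal is to exhibit a point of $\adh_\zeta\mathscr{G} \cap fB$. The idea is to transport $\mathscr{G}$ back to $X$ in a way that records the set $A$, invoke $\xi$-compactness of $A$ at $B$ against the resulting filter, and then push the adherent filter forward along $f$, using continuity.

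Concretely, I would first observe that because $fA \in \mathscr{G}^\#$, every set $A \cap f^{-1}(G)$ with $G \in \mathscr{G}$ is nonempty, and $(A \cap f^{-1}(G_1)) \cap (A \cap f^{-1}(G_2)) = A \cap f^{-1}(G_1 \cap G_2)$, so $\set{A \cap f^{-1}(G)}{G \in \mathscr{G}}$ is a base for a proper filter $\mathscr{F}$ on $X$; since each basic set is a nonempty subset of $A$, every member of $\mathscr{F}$ meets $A$, i.e. $A \in \mathscr{F}^\#$. Applying the hypothesis that $A$ is $\xi$-compact at $B$ then yields a point $x \in \adh_\xi\mathscr{F} \cap B$, hence a filter $\mathscr{H}$ on $X$ with $\mathscr{H} \# \mathscr{F}$ and $x \in \lim_\xi\mathscr{H}$. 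By the definition of continuity in $\PsTop$, $fx \in \lim_\zeta f_*\mathscr{H}$, and clearly $fx \in fB$. It then remains to check $f_*\mathscr{H} \# \mathscr{G}$: given $H \in \mathscr{H}$ and $G \in \mathscr{G}$, the set $A \cap f^{-1}(G)$ lies in $\mathscr{F}$, so $\mathscr{H} \# \mathscr{F}$ forces $H \cap f^{-1}(G) \neq \emptyset$, whence $fH \cap G \neq \emptyset$; since every member of $f_*\mathscr{H}$ contains some such $fH$, this gives $f_*\mathscr{H} \# \mathscr{G}$. Therefore $fx \in \adh_\zeta\mathscr{G}$, and $fx \in \adh_\zeta\mathscr{G} \cap fB$, as required.

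I do not expect a genuine obstacle here; the only delicate point is the correct choice of filter to pull back, namely the filter generated by the sets $A \cap f^{-1}(G)$ rather than the naive $f$-preimage of $\mathscr{G}$. Intersecting with $A$ is precisely what makes $A$ mesh with $\mathscr{F}$, which is exactly the form of hypothesis that $\xi$-compactness \emph{at $B$} can consume; everything else is routine bookkeeping with meshing and pushforward filters.
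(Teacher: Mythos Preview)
Your argument is correct: the pulled-back filter generated by the sets $A\cap f^{-1}(G)$ is proper (indeed $A$ itself belongs to it), so $\xi$-compactness at $B$ applies, and the mesh check $f_*\mathscr{H}\#\mathscr{G}$ goes through exactly as you wrote. The paper does not give its own proof of this lemma but simply cites \cite{convergence:foundations}*{Proposition IX.1.26}, so there is nothing to compare against beyond noting that your direct filter chase is the standard one.
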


\begin{proposition}\cite{convergence:foundations}*{Corollary IX.1.18}\label{CompactSpacesBalanced} 
Let $X$ be a compact pseudotopological space and $Y$ a Hausdorff topological space. If $f \from X \to Y$ is a continuous bijection, then $f$ is a homeomorphism. \qed
\end{proposition}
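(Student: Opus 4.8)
The strategy is to prove that the inverse bijection $g := f^{-1}\from Y\to X$ is continuous; together with the continuity of $f$ this makes $f$ a homeomorphism. Unravelling the definition of continuity together with the formula $\lim_\xi g_*\mathscr{G}=\bigcap_{\mathscr{W}\in\beta(g_*\mathscr{G})}\lim_\xi\mathscr{W}$, it suffices to prove: for every filter $\mathscr{G}$ on $Y$, every $y\in\lim_\zeta\mathscr{G}$, and every ultrafilter $\mathscr{W}\in\beta X$ with $g_*\mathscr{G}\subseteq\mathscr{W}$, one has $g(y)\in\lim_\xi\mathscr{W}$. Fix such $\mathscr{G}$, $y$, and $\mathscr{W}$ for the rest of the argument.

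The first step is to extract a limit point of $\mathscr{W}$ from compactness of $X$. Since $X$ is $\xi$-compact at $X$ and every set in the ultrafilter $\mathscr{W}$ is nonempty (so that $X\in\mathscr{W}^{\#}$), the defining property of compactness gives $\adh_\xi\mathscr{W}\neq\emptyset$. Now a filter meshes with an ultrafilter exactly when it is a subfilter of it, and $\lim_\xi$ is monotone under refinement of filters; hence the union defining $\adh_\xi\mathscr{W}$ collapses to $\lim_\xi\mathscr{W}$, and we may choose a point $x'\in\lim_\xi\mathscr{W}$.

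The second step identifies $x'$ with $g(y)$ using the Hausdorff hypothesis on $Y$. Continuity of $f$ gives $f(x')\in\lim_\zeta f_*\mathscr{W}$. Applying the (monotone) pushforward $f_*$ to the inclusion $g_*\mathscr{G}\subseteq\mathscr{W}$ and using $f\circ g=\id[Y]$ yields $\mathscr{G}\subseteq f_*\mathscr{W}$, so by monotonicity $y\in\lim_\zeta\mathscr{G}\subseteq\lim_\zeta f_*\mathscr{W}$. But $f_*\mathscr{W}$ is an ultrafilter on the Hausdorff topological space $Y$, so $\lim_\zeta f_*\mathscr{W}$ has at most one element; as it contains both $y$ and $f(x')$, we conclude $f(x')=y$, and therefore $x'=f^{-1}(y)=g(y)$ by injectivity of $f$. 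Thus $g(y)=x'\in\lim_\xi\mathscr{W}$, which is what had to be shown.

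The only delicate point is the passage from the formal definition "$X$ is $\xi$-compact at $X$'' to the usable statement that every ultrafilter on $X$ has a limit point, which rests on the identity $\adh_\xi\mathscr{W}=\lim_\xi\mathscr{W}$ for $\mathscr{W}\in\beta X$; this follows from the mesh/grill bookkeeping together with monotonicity of $\lim_\xi$ under filter refinement, both immediate from the definitions in the excerpt. Everything else is a routine manipulation of pushforwards of filters and uniqueness of limits in a Hausdorff topological space. One could alternatively try to show that $f$ is a closed map — closed subsets of a compact pseudotopological space being $\xi$-compact, their images being $\zeta$-compact by \cref{CompactImageCompact} and hence closed in the Hausdorff space $Y$ — but this only yields that $g$ preserves closed sets of the topological modifications, which is strictly weaker than pseudotopological continuity, so the convergence-theoretic argument above is the cleaner route.
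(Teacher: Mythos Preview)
Your argument is correct. The paper does not actually prove this proposition: it is stated without proof and attributed to the cited reference, so there is no in-paper argument to compare against. Your convergence-theoretic proof is the standard one---compactness forces every ultrafilter on $X$ to have a $\xi$-limit, continuity of $f$ pushes that limit forward to a $\zeta$-limit of the image ultrafilter in $Y$, and Hausdorffness pins that limit down uniquely, which is exactly what is needed to verify continuity of $f^{-1}$ one ultrafilter at a time. The bookkeeping you flag (that $\adh_\xi\mathscr{W}=\lim_\xi\mathscr{W}$ for an ultrafilter $\mathscr{W}$, and that $\lim$ is monotone under filter refinement) is indeed immediate from the intersection formula for $\lim_\xi$ on arbitrary filters given in the paper. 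Your closing remark is also on point: the closed-map argument familiar from point-set topology would only yield continuity of $f^{-1}$ into the topological modification $(X,\tau\xi)$, which is strictly weaker when $\xi$ is not itself topological, so the direct filter argument is the right route.
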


\begin{corollary}\label{PushoutsInPsTop}
Suppose the following is a pushout diagram in $\Top$:
\[\begin{tikzcd}
	A & B \\
	C & P
	\arrow["j"', from=2-1, to=2-2]
	\arrow["i", from=1-2, to=2-2]
	\arrow["f", from=1-1, to=1-2]
	\arrow["g"', from=1-1, to=2-1]
	\arrow["\lrcorner"{anchor=center, pos=0.125, rotate=180}, draw=none, from=2-2, to=1-1]
\end{tikzcd}\]
where $B$ and $C$ are compact and $P$ is Hausdorff. Then $P$ is the pushout in $\PsTop$.
\end{corollary}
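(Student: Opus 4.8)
The plan is to identify the pushout in $\PsTop$ with the topological pushout $P$, regarded in $\PsTop$ via $\iota$. Since $\PsTop$ is cocomplete and the forgetful functor $\PsTop \to \Set$ has both a left and a right adjoint — hence preserves colimits — the pushout in $\PsTop$ of $\iota C \xleftarrow{\iota g} \iota A \xrightarrow{\iota f} \iota B$ has the set-theoretic pushout $P$ as its underlying set; write $(P, \lim_\pi)$ for this pseudotopological space, and note that its cocone legs are, on underlying sets, the maps $i \from B \to P$ and $j \from C \to P$ of the given topological pushout. Applying the functor $\iota$ to the topological pushout square shows that $\iota P$, together with $\iota i$ and $\iota j$, is also a cocone under $\iota C \xleftarrow{} \iota A \xrightarrow{} \iota B$. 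Hence the universal property of $(P, \lim_\pi)$ produces a comparison map $u \from (P, \lim_\pi) \to \iota P$, and since the forgetful functor preserves colimits, $u$ is the identity on underlying sets, so it is a continuous bijection. It therefore suffices to prove that $u$ is a homeomorphism.

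Next I would show that $(P, \lim_\pi)$ is a compact pseudotopological space. The coproduct $\iota B \sqcup \iota C$ in $\PsTop$ is compact: $\iota B$ and $\iota C$ are compact because $B$ and $C$ are compact topological spaces, and a binary coproduct of compact pseudotopological spaces is compact (using that coproduct inclusions are embeddings, together with the remark that a union of two subsets each compact at $C$ is again compact at $C$). The canonical continuous map $q \from \iota B \sqcup \iota C \to (P, \lim_\pi)$ induced by the cocone legs is, on underlying sets, the surjection $B \sqcup C \twoheadrightarrow B \sqcup_A C = P$. By \cref{CompactImageCompact}, its image $P$ is $\pi$-compact at $P$; that is, $(P, \lim_\pi)$ is compact.

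Finally, since $\iota P$ is a Hausdorff topological space, \cref{CompactSpacesBalanced} applies to the continuous bijection $u \from (P, \lim_\pi) \to \iota P$ out of a compact pseudotopological space: $u$ is a homeomorphism. As $u$ is the identity on underlying sets, this says precisely that $\lim_\pi$ coincides with the pseudotopology that $P$'s topology induces, so $\iota P = (P, \lim_\pi)$ is the pushout in $\PsTop$, as claimed.

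I expect the main obstacle to be organizational rather than deep: correctly identifying the underlying set and cocone legs of the $\PsTop$-pushout and constructing $u$, and checking that the binary coproduct $\iota B \sqcup \iota C$ in $\PsTop$ is compact. Once these are settled, the corollary follows directly from \cref{CompactImageCompact} and \cref{CompactSpacesBalanced}.
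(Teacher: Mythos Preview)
Your proposal is correct and follows essentially the same approach as the paper: identify the underlying set of the $\PsTop$-pushout with $P$, produce the continuous bijection $(P,\lim_\pi)\to\iota P$, show the domain is compact, and conclude via \cref{CompactSpacesBalanced}. The only cosmetic difference is that the paper establishes compactness slightly more directly by applying \cref{CompactImageCompact} to each of $i\from B\to(P,\lim_\pi)$ and $j\from C\to(P,\lim_\pi)$ separately and then using that $P=iB\cup jC$ is a union of compact subsets, rather than passing through the coproduct $\iota B\sqcup\iota C$; and it obtains the comparison map from the adjunction $\tau\dashv\iota$ rather than from the universal property of the pushout.
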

\begin{proof}
Let $\lim_\xi$ be the colimit pseudotopology on $P$, and $\tau$ the colimit topology. By \cref{CompactImageCompact}, $iB$ and $jC$ are $\xi$-compact in $P$, so $P=iB \cup jC$ is compact in $P$. The result follows from \cref{CompactSpacesBalanced}, since by the adjunction $\tau \adj \iota$, $\id[P] \from (P,\lim_\xi) \to (P, \lim_\tau)$ is continuous.
\end{proof}

We are now ready to show that $\PsTop$ carries a model structure.
Note that this is identical to the model structure defined in \cite{rieser:pseudotopology}, but constructed in a different manner.

\begin{theorem}\label{PsTopMS}
There is a model structure on $\PsTop$ whose cofibrations are maps in $cof(\mathcal{I})$, fibrations are maps in $rlp(\mathcal{J})$, and weak equivalences are maps which have the weak right lifting property against $\mathcal{I}$, or equivalently by \cref{WEClassicDef}, maps which induce isomorphisms on all homotopy groups.
\end{theorem}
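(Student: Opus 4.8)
The plan is to exhibit $(\PsTop,\iota)$ as a good Q-structure, with $\iota \from \Box \ito \PsTop$ the composite of the canonical inclusion $\Box \ito \Top$ and the full embedding $\Top \ito \PsTop$, and then to invoke \cref{SerreModelStructure} and \cref{WEClassicDef}. Throughout we use that $\iota \from \Top \ito \PsTop$ is fully faithful and right adjoint to the topological modification $\tau$, so it preserves all limits --- in particular products and the terminal object --- and that a direct inspection of underlying sets and final pseudotopologies shows it also preserves the initial object and all coproducts. Since every nonempty pseudotopological space admits a point, \cref{WEClassicDef} will apply once the Q-structure axioms are in place.

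Most of the axioms are formal. Axiom (Q1) amounts to checking (S1)--(S4) for $\iota|_{\Box_{\leq 2}}$: (S1) and (S2) hold since $\iota$ preserves the initial and terminal objects and products; the pushout in (S3) is a pushout in $\Top$ of compact Hausdorff spaces with compact Hausdorff colimit, hence preserved by $\iota$ via \cref{PushoutsInPsTop}, while its preservation by every $X \times \uvar$ and the isomorphism $\emptyset \times X \iso \emptyset$ of (S4) follow from cartesian closure of $\PsTop$ (\cref{PsTopCartesianClosed}), which makes each $X \times \uvar$ cocontinuous. Similarly, (Q7) follows from $\iota$ preserving products, (Q8) from \cref{PushoutsInPsTop} applied to its two displayed squares (again pushouts of compact Hausdorff spaces with compact Hausdorff colimit), (Q5) from cartesian closure (every object is exponentiable), and (Q2) from $\bd I \iso * \coprod *$ in $\PsTop$ --- valid since $\iota$ preserves coproducts --- together with cocontinuity of $X \times \uvar$, which yields $X \times \bd I \iso X \coprod X$ with inclusions $\id[X] \times e_k$. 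Finally, (Q3) and (Q4) reduce to the isomorphisms and factorizations of \cref{PushProdInTop} and of the two pushout squares exhibited for (Q4) in the treatment of $\Top$; every pushout occurring there is of compact Hausdorff cubes, their boundaries, and products thereof, with compact Hausdorff colimit, so \cref{PushoutsInPsTop} lets those relations transport verbatim to $\PsTop$.

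The axiom requiring real work is (Q6): $\bd I^n$ and $I^n$ must be small relative to $cell(\mathcal{I})$ and $cell(\mathcal{J})$ in $\PsTop$. In analogy with the treatment of $\ncat{Sob}$ in \cref{CellComplexLemma,SoberSpaceLemma} and the lemmas following them, the plan is to show that a relative $\mathcal{I}$-cell complex whose domain is the image of a topological space is again the image of the corresponding relative $\mathcal{I}$-cell complex in $\Top$; the point is to strengthen \cref{PushoutsInPsTop} so that it computes cell attachments along arbitrary cell complexes, using that the attaching maps are closed inclusions of compact Hausdorff spaces to force the colimit pseudotopology to coincide with the colimit topology. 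Granting this, full faithfulness of $\iota$ gives, for such a cell complex $X_0 \to \colim_\lambda X_\lambda$,
\begin{align*}
\PsTop(\bd I^n, \colim_{\PsTop} X_\lambda) &\iso \Top(\bd I^n, \colim_{\Top} X_\lambda)\\
&\iso \colim \Top(\bd I^n, X_\lambda) \iso \colim \PsTop(\bd I^n, X_\lambda),
\end{align*}
the middle isomorphism being the smallness of $\bd I^n$ relative to $cell(\mathcal{I})$ in $\Top$; the case of $I^n$ and $cell(\mathcal{J})$ is identical. It then remains to treat cell complexes with an arbitrary, possibly non-topological, domain, where one argues directly that a map out of the compact space $\bd I^n$ into the colimit factors through a finite stage. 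Pinning down the colimit pseudotopology on a cell complex is the main obstacle.

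Once (Q1)--(Q8) are verified, $(\PsTop,\iota)$ is a good Q-structure, so \cref{SerreModelStructure} produces the model structure whose cofibrations are $cof(\mathcal{I})$, fibrations are $rlp(\mathcal{J})$, and weak equivalences are the maps with the weak right lifting property against $\mathcal{I}$ (\cref{WEDef}), and \cref{WEClassicDef} identifies these with the maps inducing isomorphisms on all homotopy groups at every basepoint.
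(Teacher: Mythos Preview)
Your verification of (Q1)--(Q5), (Q7), and (Q8) is essentially identical to the paper's: both arguments invoke \cref{PushoutsInPsTop} for the relevant pushouts, \cref{PsTopCartesianClosed} for exponentiability and cocontinuity of $X \times \uvar$, and the right-adjointness of $\iota$ for preservation of products. There is nothing to add there.

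The genuine gap is (Q6). You correctly flag it as ``the axiom requiring real work,'' but what follows is a sketch that you yourself leave unfinished (``pinning down the colimit pseudotopology on a cell complex is the main obstacle''). Two concrete problems: first, your plan to show that a relative cell complex over a topological domain remains topological in $\PsTop$ cannot appeal to \cref{PushoutsInPsTop} directly, since that corollary requires the pushout $P$ to be Hausdorff, and a cell complex over an arbitrary topological space need not be; you would need a genuinely stronger statement. Second, $\Top$ is only \emph{reflective} in $\PsTop$ (the topological modification $\tau$ is left adjoint to $\iota$), not coreflective, so colimits of topological spaces computed in $\PsTop$ have no a priori reason to be topological --- your hoped-for strengthening is therefore a real theorem, not a formality. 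Finally, even granting the topological-domain case, you still need smallness relative to cell complexes with arbitrary pseudotopological domain $X_0$, and you give no argument for that case beyond the phrase ``one argues directly.''

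The paper sidesteps all of this by citing \cite{rieser:pseudotopology}*{Theorem 5.16} for (Q6). Your direct approach might be completable, but as written it is not a proof.
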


\begin{proof}
We are now ready to verify the axioms of \cref{Q-axioms}. That (S1) and (S4) hold are obvious, (S2) follows from $\iota$ being a right adjoint, and (S3) by \cref{PushoutsInPsTop} and \cref{PsTopCartesianClosed}. Thus, $(\C,\iota)$ is a category with intervals. For (Q2), note that $* \coprod *$ is given the finest pseudotopology for which the inclusions $* \ito * \coprod *$ are continuous. Since these are always continuous, $* \coprod *$ must have the discrete topology, hence $* \coprod * \iso \bd I$. The general case follows from \cref{PsTopCartesianClosed}. Next, (Q3) and (Q4) follow from \cref{PushProdInTop} and \cref{PushoutsInPsTop}, and (Q5) by \cref{PsTopCartesianClosed}. The verification of (Q6) is done in \cite{rieser:pseudotopology}*{Theorem 5.16}. Lastly, (Q7) follows from $\iota$ preserving limits, and (Q8) by \cref{PushoutsInPsTop}.
\end{proof}

Of course, since only limits of topological spaces are again topological in $\PsTop$, not every cofibration in $\Top$ is necessarily a cofibration in $\PsTop$. In fact, this needs not hold for cofibrant cell complexes even. 

Next, we will verify that this model structure is Quillen equivalent to the standard model structure on $\Top$, via the adjunction $\tau \adj \iota$. We will first check that it is a Quillen adjunction as well as some intermediate lemmas.

\begin{proposition}
The adjunction $\tau \adj \iota$ is a Quillen adjunction.
\end{proposition}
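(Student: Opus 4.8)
The plan is to verify that the left adjoint $\tau$ preserves cofibrations and trivial cofibrations, which is one of the standard equivalent formulations of $\tau \adj \iota$ being a Quillen adjunction. By \cref{CofFibDef} these two classes are $cof(\mathcal{I})$ and $cof(\mathcal{J})$ respectively, and --- by the way the Q-structure on $\PsTop$ was set up, the embedding $\Box \to \PsTop$ appearing in \cref{PsTopMS} being the composite $\Box \ito \Top \xrightarrow{\iota} \PsTop$ --- the generating sets $\mathcal{I}$ and $\mathcal{J}$ of $\PsTop$ are precisely the images under $\iota$ of the corresponding generating sets of $\Top$.

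First I would record that, since $\iota \from \Top \ito \PsTop$ is a full embedding, the counit of $\tau \adj \iota$ is a natural isomorphism $\tau\iota \iso \id[\Top]$; this is also visible directly, as $\tau$ changes neither the underlying set nor the set maps, and the topological modification of the pseudotopology induced by a topology returns that topology. In particular $\tau(\iota i_n) \iso i_n$ and $\tau(\iota j_n) \iso j_n$ in $\Top$, so $\tau$ carries the generators of the cofibrations, respectively trivial cofibrations, of $\PsTop$ to those of $\Top$.

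Now $\tau$, being a left adjoint, preserves all colimits --- in particular coproducts, pushouts, and transfinite compositions --- and every functor preserves retracts. Hence $\tau$ sends an $\mathcal{I}$-cell complex in $\PsTop$ to an $\mathcal{I}$-cell complex in $\Top$, and, closing under retracts, $\tau(cof(\mathcal{I}_{\PsTop})) \subseteq cof(\mathcal{I}_{\Top})$; the identical argument with $\mathcal{J}$ in place of $\mathcal{I}$ gives $\tau(cof(\mathcal{J}_{\PsTop})) \subseteq cof(\mathcal{J}_{\Top})$. Thus $\tau$ preserves cofibrations and trivial cofibrations, so $\tau \adj \iota$ is a Quillen adjunction. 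There is essentially no obstacle here: the only step that warrants a moment's care is the identification of the generating (trivial) cofibrations of $\PsTop$ with $\iota$ applied to those of $\Top$, after which the claim is a formal consequence of the cocontinuity of $\tau$ together with $\tau\iota \iso \id[\Top]$. One could instead argue that the right adjoint $\iota$ preserves fibrations and trivial fibrations by transposing lifting problems across the adjunction, but the cocontinuity argument for $\tau$ is shorter.
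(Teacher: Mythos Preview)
Your argument is correct and follows essentially the same route as the paper: identify $\tau(\mathcal{K}_{\PsTop})$ with $\mathcal{K}_{\Top}$ for $\mathcal{K}\in\{\mathcal{I},\mathcal{J}\}$, then use cocontinuity of the left adjoint $\tau$ to conclude $\tau(cell(\mathcal{K}_{\PsTop}))\subseteq cell(\mathcal{K}_{\Top})$ and preservation of retracts to get $\tau(cof(\mathcal{K}_{\PsTop}))\subseteq cof(\mathcal{K}_{\Top})$. The only difference is that you spell out the identification of generators via $\tau\iota\iso\id[\Top]$, whereas the paper simply asserts it.
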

\begin{proof}
Let $\mathcal{K}$ be either $\mathcal{I}$ or $\mathcal{J}$, and $\mathcal{K}_{\Top}$ and $\mathcal{K}_{\PsTop}$ be the respective generating (acyclic) cofibrations. Clearly, $\tau \mathcal{K}_{\PsTop}=\mathcal{K}_\Top$. Since $\tau$ is a left adjoint, it preserves colimits, so $\tau (cell(\mathcal{K}_\PsTop)) \subseteq cell(\mathcal{K}_\Top)$. Since functors preserve retracts in the arrow category, it follows that $\tau$ preserves (trivial) cofibrations.
\end{proof}

\begin{lemma}\label{TopInclusionPsTopWELemma}
A map $f \from X \to Y$ between topological spaces is a weak equivalence in $\PsTop$ \iff\ it is a weak equivalence in $\Top$.
\end{lemma}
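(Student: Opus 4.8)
The plan is to observe that the definition of $n$-compressibility in \cref{WEDef} refers only to the cubes $\bd I^n$, $I^n$, and $I$, together with the spaces $X$ and $Y$ --- all topological in the present situation --- and that the full embedding $\iota \from \Top \ito \PsTop$ carries all of this data faithfully, so the defining condition is literally the same whether read in $\Top$ or in $\PsTop$.

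First I would note that $\iota$ is fully faithful and, being right adjoint to $\tau$, preserves all limits; in particular it preserves products, so the products $I^n \times I$ and $\bd I^n \times I$ formed in $\PsTop$ agree with those formed in $\Top$ and remain topological. Using this, fix $n \geq 0$ and topological spaces $X$, $Y$ together with a map $f$. A commuting square with left leg $i_n$, top edge a map $\bd I^n \to X$, and bottom edge a map $I^n \to Y$ is the same datum in the two categories by fullness of $\iota$; a diagonal $h \from I^n \to X$ with $h i_n = u$ is likewise the same datum; and a homotopy $H \from I^n \times I \to Y$ witnessing $fh \sim v$ rel $\bd I^n$ is a continuous map out of the topological space $I^n \times I$ into the topological space $Y$ satisfying equalities of maps between topological spaces, so it exists in $\PsTop$ exactly when it exists in $\Top$. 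Hence $f$ is $n$-compressible in $\PsTop$ \iff\ it is $n$-compressible in $\Top$, and the lemma follows, since by \cref{WEDef} a weak equivalence is precisely a map that is $n$-connected --- i.e.\ $k$-compressible for all $k \leq n$ --- for every nonnegative integer $n$.

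I do not expect a genuine obstacle here; the one point requiring care is the claim that the relevant hom-sets and products are unchanged by $\iota$, which is exactly where fullness and preservation of limits enter. An alternative route through \cref{WEClassicDef} would identify, for topological $X$ admitting a point, the groups $\pi_n(X,x_0)$ computed in $\PsTop$ with the classical homotopy groups by the same argument; but that phrasing forces a separate discussion of spaces without points, so I would prefer the compressibility formulation above.
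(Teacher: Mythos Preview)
Your proposal is correct and takes essentially the same approach as the paper's proof: the paper simply observes that weak equivalences are characterized via (weak) lifting properties against maps between topological spaces, that $\iota$ preserves products (so homotopies transfer), and that $\Top$ is full in $\PsTop$. You have unpacked these same three ingredients in more detail, arriving at the identical conclusion.
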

\begin{proof}
Since we have characterized weak equivalences by maps from topological spaces via lifting properties (and for homotopies, $\iota$ preserves products), this follows from $\Top$ being a full subcategory of $\PsTop$.
\end{proof}

Let $\emptyset \cto A$ be a cofibrant cell complex in $\PsTop$, and fix a presentation of $A$ as a transfinite composite, $A = \colim A_\alpha$. For each cell attached to $A$, there is a \textit{characteristic map}, denoted $\Phi_k \from I^{n_k} \to A$, which is the composite $I^{n_k} \ito \coprod_k I^{n_k} \to A_\alpha \to A$. A subset $S \subseteq A$ is a (finite) subcomplex if it can be written as a (finite) union, $S = \bigcup_k \Phi_k I^{n_k}$.

\begin{lemma}\label{PsTopFiniteSubcomplexLemma}
Let $(X, \lim_\xi)$ be a cofibrant cell complex in $\PsTop$, and $S$ a finite subcomplex of $X$. Then the subspace pseudotopology on $S$ agrees with the subspace topology on $\tau X$. 
\end{lemma}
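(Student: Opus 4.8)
The plan is to exhibit the inclusion $S \ito X$ in $\PsTop$ as presenting $S$, equipped with its subspace pseudotopology $\lim_{\xi|_S}$, as the pseudotopology of the topological subspace of $S$ inside $\tau X$. Two inputs make this work: that $(S,\lim_{\xi|_S})$ is a compact pseudotopological space, and that $(S,\lim_{(\tau\xi)|_S})$ is a Hausdorff topological space. Granting these, the restriction to $S$ of the unit of $\tau \adj \iota$ is a continuous bijection from a compact pseudotopological space to a Hausdorff topological space, and \cref{CompactSpacesBalanced} forces it to be an isomorphism, whence the two structures coincide.

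First I would observe that $\tau X$ is a cell complex in $\Top$. As a left adjoint $\tau$ preserves colimits, and since $\tau\iota = \id$ it carries the generating cofibrations $i_n \from \bd I^n \ito I^n$ and their coproducts to the corresponding topological inclusions; applying $\tau$ to the chosen presentation $X = \colim X_\alpha$ therefore exhibits $\tau X = \colim \tau X_\alpha$ as an $\mathcal{I}$-cell complex on $\emptyset$ in $\Top$. By \cref{CellComplexLemma} applied to $\emptyset \to \tau X$, the space $\tau X$ is Hausdorff. Consequently $S = \bigcup_k \Phi_k I^{n_k}$, being a finite union of continuous images of the compact spaces $I^{n_k}$ inside the Hausdorff space $\tau X$, is compact and Hausdorff in the subspace topology $(\tau\xi)|_S$; that is, $(S, \lim_{(\tau\xi)|_S})$ is a compact Hausdorff topological space.

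Next I would check that $(S, \lim_{\xi|_S})$ is a compact pseudotopological space. Each characteristic map $\Phi_k \from I^{n_k} \to X$ is continuous and $I^{n_k}$ is compact (topological and pseudotopological compactness agreeing), so by \cref{CompactImageCompact} the image $\Phi_k I^{n_k}$ is $\xi$-compact at itself, hence at $S$; since a finite union of subsets that are $\xi$-compact at $S$ is again $\xi$-compact at $S$, the set $S$ is $\xi$-compact at $S$, i.e.\ $(S, \lim_{\xi|_S})$ is compact. Now the unit $\id[X] \from (X,\lim_\xi) \to (X, \lim_{\tau\xi})$ is continuous, so by \cref{SubspaceRestiction} its restriction $\id[S] \from (S, \lim_{\xi|_S}) \to (S, \lim_{(\tau\xi)|_S})$ is a continuous bijection from a compact pseudotopological space to a Hausdorff topological space; by \cref{CompactSpacesBalanced} it is an isomorphism in $\PsTop$, so $\lim_{\xi|_S} = \lim_{(\tau\xi)|_S}$, which is exactly the claim. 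The main obstacle is soft but genuine: the bookkeeping with subspaces. As in \cref{SubspaceLemma} one must use that the $\PsTop$-subspace of a topological space is again topological — the ordinary subspace topology — so that $(\tau\xi)|_S$ really is a topology and \cref{CompactSpacesBalanced} is applicable; this, together with the verification that $\tau$ sends the given presentation of $X$ to a bona fide topological cell complex, is where all the care is needed.
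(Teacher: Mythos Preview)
Your proof is correct and follows essentially the same approach as the paper: show $(S,\lim_{\xi|_S})$ is compact via \cref{CompactImageCompact} and finite unions, show $(S,\lim_{(\tau\xi)|_S})$ is Hausdorff because $\tau X$ is a topological cell complex, and then apply \cref{CompactSpacesBalanced} to the continuous identity bijection. Your route to continuity of $\id[S]$ via \cref{SubspaceRestiction} applied directly to the unit is slightly more streamlined than the paper's, which factors through $(S,\lim_{\tau(\xi|_S)})$ using the adjunction and \cref{SubspaceLemma}, but the argument is otherwise the same.
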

\begin{proof}
Let $S$ be given by the union $S = \bigcup_{k=1}^N \Phi_k I^{n_k}$, and let $\lim_{\xi|_S}$ be the subspace pseudotopology under $\lim_\xi$ and $\lim_{(\tau\xi)|_S}$ be the subspace topology under $\tau\xi$. By adjunction, $\id[S] \from (S, \lim_{\xi|_S}) \to (S, \lim_{\tau(\xi|_S)})$ is continuous, and by \cref{SubspaceLemma}, $\id[S] \from (S, \lim_{\tau(\xi|_S)}) \to (S, \lim_{(\tau\xi)|_S})$ is as well; thus their composite is continuous. 
Since $\tau X$ is a cell complex in $\Top$, it is Hausdorff, and since the restricted maps $\Phi_k \from I^{n_k} \to (S,\lim_{\xi|_S})$ are continuous by \cref{SubspaceRestiction}, their image is compact in $S$. The result follows from \cref{CompactSpacesBalanced}: since $S$ is a finite union of sets compact in $\lim_{\xi|_S}$, $S$ itself is compact in $\lim_{\xi|_S}$, and $\lim_{(\tau\xi)|_S}$ is Hausdorff as a subspace of $\tau X$.
\end{proof}

\begin{theorem}
The adjunction $\tau \adj \iota$ is a Quillen equivalence between the standard model structure on $\Top$ and the model structure on $\PsTop$ given in \cref{PsTopMS}.
\end{theorem}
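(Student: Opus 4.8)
The plan is to invoke the standard criterion for a Quillen adjunction $F \adj G$ to be a Quillen equivalence (see, e.g., \cite{hovey:book}): it suffices that $G$ reflect weak equivalences between fibrant objects and that, for every cofibrant object $X$ of the source, the derived unit $X \to G(RFX)$ be a weak equivalence, where $r \from FX \to RFX$ is a fibrant replacement. We apply this with $F = \tau$ and $G = \iota$. Since every topological space is fibrant by \cref{AllFibrant}, the first condition is precisely \cref{TopInclusionPsTopWELemma}; and since $\tau X$ is already fibrant in $\Top$, we may take $RFX = \tau X$, so that the derived unit at $X$ is simply the unit map $\eta_X \from (X, \lim_\xi) \to \iota\tau X$, which, as recorded earlier, is the identity on underlying sets. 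Everything therefore reduces to showing that $\eta_X$ is a weak equivalence in $\PsTop$ for every cofibrant $X$.

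The first step is to reduce to the case where $\emptyset \cto X$ is a cell complex, i.e.\ lies in $cell(\mathcal{I})$. If $X$ is only a retract of such an $A$, then naturality of $\eta$ exhibits $\eta_X$ as a retract of $\eta_A$ in the arrow category, and since the class $rlp(\mathcal{I})$ of trivial fibrations is closed under retracts it is enough to show $\eta_A \in rlp(\mathcal{I})$ --- which in particular makes $\eta_A$ a weak equivalence by \cref{TrivialFibLemma}.

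So let $X = \colim X_\alpha$ be a cell complex; I claim $\eta_X$ lies in $rlp(\mathcal{I})$. Given a lifting square with $i_n \from \bd I^n \ito I^n$ on the left and $\eta_X$ on the right, the bottom map $v \from I^n \to \tau X$ and the composite $\eta_X u \from \bd I^n \to \tau X$ have compact image in the CW complex $\tau X$, so the union of their images is contained in a single finite subcomplex $S$. By \cref{PsTopFiniteSubcomplexLemma} the subspace pseudotopology on $S$ coincides with the subspace topology inherited from $\tau X$, so that $(S, \lim_{\xi|_S})$ is $\iota$ of the topological subspace $S \subseteq \tau X$. By \cref{SubspaceRestiction}, $u$ corestricts to a continuous map into $(S, \lim_{\xi|_S})$ and $v$ corestricts to a continuous map into $S$ with its subspace topology, so the square becomes a lifting problem against $\id[S]$ in $\Top$, solved on the nose by $v$ itself. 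Postcomposing $v$ with the (continuous) inclusion $(S, \lim_{\xi|_S}) \ito (X, \lim_\xi)$ produces the desired diagonal $h \from I^n \to (X, \lim_\xi)$ with $h i_n = u$ and $\eta_X h = v$.

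I expect the only real difficulty to be bookkeeping rather than substance: one must verify that the images of $u$ and $v$ genuinely land in a common finite subcomplex, that the identification supplied by \cref{PsTopFiniteSubcomplexLemma} is compatible with all of the (co)restrictions and inclusions being manipulated, and that the retract reduction truly stays inside $rlp(\mathcal{I})$. Beyond this, no new homotopy-theoretic input is required --- the heavy lifting is already done by \cref{PsTopFiniteSubcomplexLemma} and the computation that the unit of $\tau \adj \iota$ is the identity on underlying sets.
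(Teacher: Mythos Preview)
Your proposal is correct and follows essentially the same route as the paper: reduce to showing the unit $\eta_X$ is a weak equivalence on cofibrant $X$, pass to cell complexes, and use \cref{PsTopFiniteSubcomplexLemma} to factor any test square through a finite subcomplex where the two structures agree, producing a strict lift. Your framing is a little more explicit than the paper's---you spell out the retract reduction and record that $\eta_X$ actually lies in $rlp(\mathcal{I})$ rather than merely being a weak equivalence---but the argument is the same; note also that taking the union of the images of $u$ and $v$ is harmless but redundant, since $\eta_X$ is the identity on underlying sets and hence $u(\bd I^n)\subseteq v(I^n)$ already.
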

\begin{proof}
By \cite{hovey:book}*{Corollary 1.3.16}, since $\iota$ preserves weak equivalences by \cref{TopInclusionPsTopWELemma}, it suffices to check that $\id[X] \from X \to \iota\tau X=\tau X$ is a weak equivalence for cofibrant $X$. The general case will follow from considering just $\mathcal{I}$-cell complexes. Suppose $X$ is a cofibrant $\mathcal{I}$-cell complex and that the following diagram commutes
\[\begin{tikzcd}
	{\bd I^n} & X \\
	{I^n} & {\tau X}
	\arrow["{\id[X]}", from=1-2, to=2-2]
	\arrow["{i_n}"', from=1-1, to=2-1]
	\arrow["v"', from=2-1, to=2-2]
	\arrow["u", from=1-1, to=1-2]
\end{tikzcd}\]
Since $\tau X$ is a cell complex in $\Top$ and $v I^n$ is a compact subset, it is contained in a finite subcomplex, say $S$. By \cref{PsTopFiniteSubcomplexLemma}, the subspace topology under $\tau X$ and subspace pseudotopology under $X$ agree, so the inclusion maps $S \ito X$ and $S \ito \tau X$ are both continuous. Factoring $v$ through $S$, we get the following diagram, which gives a strict lift:
\[\begin{tikzcd}
	{\bd I^n} && X \\
	& S \\
	{I^n} && {\tau X}
	\arrow["{i_n}"', hook, from=1-1, to=3-1]
	\arrow["{\id[X]}", from=1-3, to=3-3]
	\arrow["v"{description}, from=3-1, to=2-2]
	\arrow["v"', from=3-1, to=3-3]
	\arrow[hook, from=2-2, to=3-3]
	\arrow[hook, from=2-2, to=1-3]
	\arrow["u", from=1-1, to=1-3]
\end{tikzcd}\]
\end{proof}

Of course, the approach in this paper is not suitable for every topological category. For instance, in the category $\ncat{PreTop}$ of pretopological spaces \cite{bubenik:homotopy,rieser:cech} (also referred to as \v{C}ech closure spaces), which is reflective in $\ncat{PsTop}$ and contains $\Top$ as a reflective subcategory, most of the axioms assumed are suitable, except for the exponentiability requirement. In particular, \cite{exponential:prtop}*{Theorem 3.4} showed that a pretopological space $X$ is exponentiable \iff\ every point in $X$ has a smallest neighborhood, which of course means $I$ is not exponentiable. Similarly, the only exponentiable $T_0$ uniform spaces are the discrete spaces \cite{niefield:cartesian}*{Corollary 3.4}, hence \cref{SerreModelStructure} is not applicable to the category of uniform spaces either.  

\section{Example: locales} \label{ex:locales}

As the main application of \cref{SerreModelStructure}, we will show that $\Loc$ admits a model structure. As with $\PsTop$, we will first recall some basic theory about locales before verifying the assumptions of \cref{Q-axioms}.

\begin{definition}\label{FrameDef}
A \textit{frame} is a complete lattice $(L, \leq)$ which satisfies the infinitary distributive law
\[
a \wedge (\bigvee_{k \in K} x_k) = \bigvee_{k \in K} (a \wedge x_k)\text{.}
\]
Given two frames $L$ and $M$, a \textit{frame homomorphism} is a monotone function $f \from L \to M$ such that $f(\bigvee K) = \bigvee fK$ for all $K \subseteq L$ and $f(\bigwedge_{k=1}^n x_k) = \bigwedge_{k=1}^n f(x_k)$ for all $\{x_k\}_{k=1}^n \subseteq L$.
\end{definition}

The category of frames and frame homomorphisms is denoted $\Frm$.

\begin{definition}
 The category of locales, denoted $\Loc$, is the opposite of the category of frames $\Frm^{op}$.
\end{definition}

Therefore locales are frames, but a locale morphism $L \to M$ is a frame homomorphism $f \from M \to L$. Since frames are complete lattices, there is a right adjoint $g \from L \to M$ going the localic direction; such a map is a \textit{localic map}, which preserves all meets. When necessary, we distinguish between the left and right adjoint by writing $f^L$ and $f^R$. Since a Galois connection $f : L \rightleftarrows M : g$ satisfies $fgf = f$ and $gfg = g$, $f$ is injective \iff\ $g$ is surjective, and $f$ is surjective \iff\ $g$ is injective. 

Let $\Omega \from \Top \to \Loc$ denote the functor taking a space $X$ to its topology regarded as a frame, and a continuous map $f \from X \to Y$ to a frame homomorphism $f^{-1} \from \Omega Y \to \Omega X$. Restricting $\Omega$ to $\Sob$, this becomes a full embedding which admits a right adjoint.

\begin{proposition}\cite{stone:spaces}*{Corollary II.1.7}
There is an adjunction $\Omega : \Sob \rightleftarrows \Loc : \op{pt}$. In this, $\Omega$ is a full embedding of $\Sob$ into $\Loc$, making $\Sob$ a coreflective subcategory of $\Loc$. \qed
\end{proposition}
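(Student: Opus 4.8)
The plan is to construct the right adjoint $\op{pt}$ explicitly, verify the adjunction $\Omega \adj \op{pt}$ on all of $\Top$, and then read off both fullness of $\Omega$ on $\Sob$ and coreflectivity of $\Sob$ in $\Loc$. For a locale $L$, define its \emph{space of points} $\op{pt}(L)$ to have underlying set $\Frm(L,\mathbf{2})$, where $\mathbf{2}=\{0\leq 1\}$ is the frame $\Omega(\ast)$; these frame homomorphisms correspond both to locale morphisms $\ast\to L$ and to completely prime filters of $L$. Topologize $\op{pt}(L)$ by declaring the sets $U_a=\{p : p(a)=1\}$, for $a\in L$, to be open. Since each $p$ preserves arbitrary joins and finite meets, the assignment $a\mapsto U_a$ is a frame homomorphism $L\to\Omega(\op{pt} L)$, so these sets genuinely form a topology; $\op{pt}$ is made functorial by precomposition with the frame part of a localic map.

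Next I would exhibit the natural bijection
\[
\Loc(\Omega X, L)=\Frm(L,\Omega X)\iso\Top(X,\op{pt} L),
\]
as follows. A frame homomorphism $\phi\from L\to\Omega X$ is sent to the map carrying $x\in X$ to the point that takes $a$ to $1$ precisely when $x\in\phi(a)$; this is continuous because the preimage of $U_a$ is $\phi(a)$. Conversely, a continuous $g\from X\to\op{pt}(L)$ induces a frame homomorphism $\Omega(\op{pt} L)\to\Omega X$, which one composes with $a\mapsto U_a$. A routine diagram chase shows these constructions are mutually inverse and natural in $X$ and $L$; the resulting unit is $\eta_X\from X\to\op{pt}(\Omega X)$, sending $x$ to its neighbourhood point, namely the map that takes an open $U$ to $1$ precisely when $x\in U$.

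Two further facts make this adjunction restrict appropriately. First, $\op{pt}(L)$ is always sober: it is $T_0$ since distinct points are separated by some $U_a$, and given an irreducible closed subset one writes its complement as $U_a$ for a suitable prime element $a\in L$ and checks that the frame homomorphism sending $b$ to $1$ precisely when $b\not\leq a$ is a point whose closure is that subset, necessarily unique by $T_0$-ness; hence $\op{pt}$ corestricts to a functor $\Loc\to\Sob$. Second, $\eta_X$ is always continuous, with $\eta_X^{-1}(U_U)=U$ for each open $U\subseteq X$; it is injective iff $X$ is $T_0$ and surjective iff every completely prime filter of $\Omega X$ is a neighbourhood filter, which together is precisely the condition that $X$ be sober, and when $\eta_X$ is bijective it is also open, hence a homeomorphism. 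Thus $\eta$ restricts to a natural isomorphism on $\Sob$.

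Finally I would assemble the pieces: by the first fact the adjunction restricts to $\Omega : \Sob \rightleftarrows \Loc : \op{pt}$, and by the second its unit is an isomorphism, so $\Omega$ is a full embedding of $\Sob$ into $\Loc$; identifying $\Sob$ with its (full) image --- the spatial locales --- the inclusion into $\Loc$ then has right adjoint $\op{pt}$, which is exactly the assertion that $\Sob$ is coreflective in $\Loc$. I expect the only genuinely delicate point to be the sobriety of $\op{pt}(L)$ --- identifying which elements of $L$ cut out irreducible closed subsets and producing the unique generic point --- while everything else is bookkeeping; all of this is classical and recorded in \cite{stone:spaces}*{\S II.1}.
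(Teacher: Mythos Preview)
The paper does not actually prove this proposition: it is stated with a citation to \cite{stone:spaces}*{Corollary II.1.7} and closed with a \qed, so the paper's ``proof'' is simply an appeal to that reference. Your proposal, by contrast, writes out the classical argument in full --- constructing $\op{pt}$ via frame homomorphisms to $\mathbf{2}$, exhibiting the hom-set bijection, checking sobriety of $\op{pt}(L)$, and verifying that the unit is an isomorphism on sober spaces --- which is exactly the content of the cited passage in Johnstone. So your approach is correct and strictly more detailed than what the paper provides; there is nothing to compare beyond noting that you have supplied the proof the paper chose to omit.
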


We will take the restriction of $\Omega$ to $\Box$ to be the inclusion $\iota$

\begin{notation} \label{notation:Iloc_Jloc}
We write $\mathcal{I}_\Loc$ and $\mathcal{J}_\Loc$ for the images of $\mathcal{I}$ and $\mathcal{J}$ under $\Omega$, respectively.
\end{notation}

A locale $L$ is \textit{spatial} if it is in the essential image of $\Omega$; that is, if $L \iso \Omega X$ for some sober space $X$. As a left adjoint, $\Omega$ preserves colimits, and in particular, the pushouts described in the section on the model structure on $\Top$.

For a frame $L$, a \textit{prenucleus} is an order-preserving map $k_0 \from L \to L$ such that $x \leq k_0(x)$ and $k_0(x) \wedge y \leq k_0(x \wedge y)$ for all $x,y \in L$. A \textit{nucleus} is a closure operator $k \from L \to L$ that preserves binary meets. Any prenucleus $k_0$ generates a nucleus with the same fixed points as $k_0$ by letting $\op{Fix}(k_0) \subseteq L$ be the set of fixed points of $k_0$ and defining $k(x) = \bigwedge \set{y \in \op{Fix}(k_0)}{x \leq y}$. We will now fix a construction of the necessary limits and colimits in $\Frm$ and $\Loc$. 

Limits in $\Frm$ are formed as in $\Set$, with the natural pointwise order on products: $(x_k) \leq (y_k)$ \iff\ $x_k \leq y_k$ for all $k$. We use a description of binary coproducts in $\Frm$ as given by Banaschewski in \cite{banaschewski:fixpoint} and Chen in \cite{binary:frames}. For a frame $L$ and $S \subseteq L$, let $\downarrow S = \set{x \in L}{x \leq s \text{ for some }s \in S}$. Define the downset functor $\mathfrak{D} \from \Frm \to \Frm$ by $\mathfrak{D}X = \set{U \subseteq \mathcal{P}X}{U \text{ downward closed}}$ as the set of downsets (ordered by inclusion, with intersection for meets and unions for joins) and $\mathfrak{D}f(U) = \downarrow f(U)$. To distinguish downsets from other sets, we will denote them with $\downarrow S$ from here on. Given two frames $L_1$ and $L_2$, let $L_1 \times L_2$ be their product. Define the following prenuclei on $\mathfrak{D}(L_1 \times L_2)$:
\begin{align*}
\sigma_0(U) &= \set{\bigvee D}{D \subseteq U \text{ updirected}}\text{;}\\
\pi_1(U) &= \set{(\bigvee X, y)}{X \subseteq L_1,\ X \times \{y\} \subseteq U}\text{;}\\
\hat{\pi}_2(U) &= \set{(x, \bigvee Y)}{Y \subseteq L_2\text{ finite, } \{x\} \times Y \subseteq U}\text{.}
\end{align*}
Let $\sigma$ be the associated nucleus of $\sigma_0$, and let $\pi = \sigma \circ \pi_1 \circ \hat{\pi}_2$. Then the set $\op{Fix}(\pi)$ of fixed elements, also called $\pi$-saturated elements, constitutes the coproduct of $L_1$ and $L_2$ in $\Frm$, denoted $L_1 \otimes L_2$. For a downset $\downarrow S$ to be $\pi$-saturated means that, if $x \in L_1$ is such that $\{(x, y_\alpha)\} \subset \downarrow S$, then $(x, \bigvee y_\alpha) \in \downarrow S$; likewise for joins in $L_1$. The coproduct inclusions $\iota_i \from L_i \to L_1 \otimes L_2$ are given by $\iota_1(x) = \set{(a,b)}{a \leq x} \cup \overline{n}$ where $\overline{n} = \set{(a,b)}{a=0\text{ or }b=0}$, likewise for $\iota_2$. The elements of the form $\iota_1(x) \cap \iota_2(y) = \downarrow(x,y) \cup \overline{n}$ are denoted $x \otimes y$. In particular, due to the possibility of an empty index set for $\pi$-saturation, $(x, \bigvee \emptyset) = (x,0) \in \downarrow S$ and similarly $(0, y) \in \downarrow S$ for all $\downarrow S \in L_1 \otimes L_2$, so $\overline{n}$ is the least element of $L_1 \otimes L_2$. 

For arbitrary $\downarrow S \in L_1 \otimes L_2$, we have the identity $\downarrow S = \bigvee_{a \otimes b \leq \downarrow S} a \otimes b = \bigcup_{a \otimes b \leq \downarrow S} a \otimes b$; moreover, for $\{x_\alpha\} \subseteq L_1$ and $y \in L_2$, we have $\bigvee (x_\alpha \otimes y) = (\bigvee x_\alpha) \otimes y$ \cite{locales:textbook}*{Proposition IV.5.2}. Given a frame homomorphism $f \from L_2 \to M$, the map $\id[L_1] \otimes f \from L_1 \otimes L_2 \to L_1 \otimes M$ satisfies 
$$(\id[L_1] \otimes f)(1 \otimes x) = (\id[L_1] \otimes f)\iota_2(x) = \iota_2 \circ f(x) = (1 \otimes f(x))$$
and likewise for $\iota_1$; thus, $(\id[L_1] \otimes f)(a \otimes b) = a \otimes f(b)$ and so $(\id[L_1] \otimes f)(\downarrow S) = \bigvee_{a \otimes b \leq \downarrow S} a \otimes f(b)$. 

Lastly, we may form the pushout of a span $B \xleftarrow{f} A \xrightarrow{g} C$ in $\Loc$ by taking the underlying set $P$ to be the pullback of the left adjoints in $\Frm$, $P = \set{(b,c) \in B \times C}{f^L(b) = g^L(c)}$. In $\Frm$, the projection maps are the obvious projections, so the pushout legs in $\Loc$ are the right adjoints, namely $i_B \from B \to P$ given by $i_B(x) = \bigvee\set{(b,c) \in P}{b = i_B^L(b,c) \leq x}$; analogously for $i_C$. Note that, since pullbacks of injections (surjections) are again injections (surjections) in $\Frm$ since they are in $\Set$, the analogous statement holds for pushouts in $\Loc$.

\begin{proposition}\cite{locales:textbook}*{Corollary IV.4.3.5}
The categories $\Frm$ and $\Loc$ are bicomplete. \qed
\end{proposition}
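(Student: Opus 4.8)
The plan is to establish bicompleteness of $\Frm$ and then use that a category is bicomplete exactly when its opposite is, so that $\Loc = \Frm^{op}$ inherits the conclusion. For completeness, I would check that the forgetful functor $\Frm \to \Set$ creates limits: products are formed pointwise, with the coordinatewise order, joins, and meets (every frame axiom is an inequality holding in each coordinate, so the product set is again a frame and has the right universal property among frame homomorphisms), while the equalizer of $f, g \from L \rightrightarrows M$ is the subframe $\set{x \in L}{f(x) = g(x)}$, which is closed under arbitrary joins and finite meets because $f$ and $g$ preserve them. Products together with equalizers yield all small limits.

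For cocompleteness it suffices to produce an initial object, all small coproducts, and all coequalizers, since coproducts and coequalizers generate every small colimit. The initial frame is the two-element frame $\mathbf{2} = \{0 \leq 1\}$, as any frame homomorphism out of it is forced to preserve $0$ and $1$. Coproducts are the substantial point: I would generalize the binary construction recalled above, realizing $\coprod_k L_k$ inside the downset frame $\mathfrak{D}(\prod_k L_k)$ as the fixed points of the nucleus obtained, via the prenucleus-to-nucleus procedure, from the prenuclei that close a downset under directed joins and under coordinatewise joins in each factor, and then verifying the universal property. Coequalizers I would obtain from the correspondence between frame quotients of $M$ and nuclei on $M$: since nuclei on $M$ are closed under arbitrary pointwise meets, and those satisfying $k f(x) = k g(x)$ for all $x \in L$ are too (and include the top nucleus $a \mapsto 1$), there is a least such nucleus $k$, and the quotient $\op{Fix}(k)$ with the corestriction of $k$ is the coequalizer of $f$ and $g$.

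A quicker route is available: $\Frm$ is, up to equivalence, a variety of infinitary algebras --- equivalently, a locally presentable category --- so bicompleteness is automatic, and the explicit descriptions above matter only because the concrete shape of these colimits (especially the pushouts used in the sequel) is what we actually exploit. Either way $\Frm$ is bicomplete, hence so is $\Loc = \Frm^{op}$.

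I expect the coproduct construction to be the main obstacle. In contrast with limits, the coproduct of frames is \emph{not} the pointwise structure on the coproduct of underlying sets, so the downset--nucleus machinery is genuinely needed; the delicate steps are verifying that the maps in play are prenuclei, that the associated nucleus has precisely the intended frame of fixed points, and that the construction survives passage from pairs to arbitrary families. Once coproducts are in hand, coequalizers and the remaining colimits follow with only bookkeeping.
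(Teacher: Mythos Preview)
Your argument is correct. The paper itself gives no proof of this proposition: it is stated with a citation and closed with a \qed. The paragraphs immediately preceding it do, however, spell out the concrete constructions you describe --- limits in $\Frm$ formed as in $\Set$ with the pointwise order, binary coproducts via the Banaschewski--Chen downset-and-nucleus description, and pushouts in $\Loc$ as pullbacks of the left adjoints in $\Frm$ --- precisely because those explicit forms are what the subsequent lemmas manipulate. So your proposal matches the paper's implicit stance: the abstract bicompleteness is standard (and, as you note, immediate once one observes that $\Frm$ is an infinitary variety, hence locally presentable), while the real content lies in having the constructions at hand. Your coequalizer-via-least-nucleus argument is correct but not needed here, since the only colimits in $\Loc$ the paper actually uses are coproducts, pushouts, and transfinite composites, all assembled from the pieces already described.
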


The coproduct in $\Frm$ distributes over arbitrary products, or equivalently, the product in $\Loc$ distributes over arbitrary coproducts.

\begin{proposition}\cite{joyal:galois}*{Proposition I.5.2} \label{ProductDistributeLocale}
Let $L$ be a frame and $\{M_k\}_{k\in K}$ be a collection of frames. Then $L \otimes \prod_{k\in K} M_k \iso \prod_{k\in K} (L \otimes M_k)$ is a product with projections $\id[L] \otimes p_k$, where $p_k$ are the projections from $\prod_{k\in K} M_k$. \qed
\end{proposition}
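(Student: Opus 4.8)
The plan is to produce an explicit two-sided inverse to the canonical comparison frame homomorphism. Write $\otimes$ for the coproduct in $\Frm$, abbreviate $A := L$ and $N_k := M_k$, and write $p_k$ for the relevant product projection in whichever product it occurs. By the universal property of $\prod_k N_k$, the frame homomorphisms $\id[A] \otimes p_k \from A \otimes \prod_j N_j \to A \otimes N_k$ assemble into a frame homomorphism $\Phi \from A \otimes \prod_j N_j \to \prod_k(A \otimes N_k)$ with $p_k \circ \Phi = \id[A] \otimes p_k$, and since a bijective frame homomorphism is an isomorphism it suffices to show $\Phi$ is a bijection. For $c \in N_k$, write $\hat{c}^{(k)} \in \prod_j N_j$ for the tuple equal to $c$ in coordinate $k$ and $\bot$ elsewhere; these satisfy $p_l\hat{c}^{(k)} = c$ if $l = k$ and $\bot$ if $l \neq k$, satisfy $\hat{c}^{(k)} \wedge \hat{c'}^{(l)} = \bot$ for $k \neq l$, and join-generate $\prod_j N_j$ in the sense that $\beta = \bigvee_k \widehat{p_k\beta}^{(k)}$ for every $\beta$. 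Combining this with the identities $\downarrow S = \bigvee_{a \otimes b \leq \downarrow S} a \otimes b$ and $a \otimes \bigvee_\alpha y_\alpha = \bigvee_\alpha(a \otimes y_\alpha)$ (the second-variable form of \cite{locales:textbook}*{Proposition IV.5.2}) shows that every element of $A \otimes \prod_j N_j$ is the join of the elements of the form $a \otimes \hat{b}^{(k)}$ lying below it.

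Define $\Psi \from \prod_k(A \otimes N_k) \to A \otimes \prod_j N_j$ by $\Psi\bigl((\downarrow S_k)_k\bigr) = \bigvee_k \bigvee_{a \otimes b \leq \downarrow S_k} a \otimes \hat{b}^{(k)}$. That $\Phi \circ \Psi$ is the identity is a direct computation: apply the join-preserving map $\id[A] \otimes p_l$ and use $p_l\hat{b}^{(k)} \in \{b, \bot\}$ together with $\downarrow S_l = \bigvee_{a \otimes b \leq \downarrow S_l} a \otimes b$. For $\Psi \circ \Phi = \id$ it then suffices, by the previous paragraph, to establish the equivalence
\[
a \otimes \hat{b}^{(k)} \leq x \ \text{ in } A \otimes \prod_j N_j
\qquad\Longleftrightarrow\qquad
a \otimes b \leq (\id[A] \otimes p_k)(x) \ \text{ in } A \otimes N_k .
\]
The forward direction is just monotonicity of $\id[A] \otimes p_k$. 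For the converse, let $s_k \from N_k \to \prod_j N_j$ be the frame homomorphism sending $c$ to the tuple equal to $c$ in coordinate $k$ and $\top$ elsewhere (so $p_k s_k = \id$), put $e_k := \hat{\top}^{(k)}$ and $u_k := \iota_2(e_k) = \top \otimes e_k$. Using that $\iota_1,\iota_2$ preserve finite meets, one checks on the generators $\iota_1(a)$ and $\iota_2(c)$, via the elementary identity $s_k p_k(c) \wedge e_k = c \wedge e_k$ in $\prod_j N_j$, both that $a \otimes \hat{b}^{(k)} = (\id[A] \otimes s_k)(a \otimes b) \wedge u_k$ and that the two frame endomorphisms $x \mapsto (\id[A] \otimes s_k p_k)(x) \wedge u_k$ and $x \mapsto x \wedge u_k$ of $A \otimes \prod_j N_j$ coincide. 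Then from $a \otimes b \leq (\id[A] \otimes p_k)(x)$, applying $\id[A] \otimes s_k$ and meeting with $u_k$ gives $a \otimes \hat{b}^{(k)} \leq (\id[A] \otimes s_k p_k)(x) \wedge u_k = x \wedge u_k \leq x$, which is the converse. Hence $\Psi \circ \Phi = \id$, $\Phi$ is an isomorphism, and $p_k \circ \Phi = \id[A] \otimes p_k$ records that the projections are as claimed.

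I expect the converse implication in the displayed equivalence to be the main obstacle, and for a structural reason: there is no frame homomorphism $N_k \to \prod_j N_j$ realizing ``extend by $\bot$'' (it would fail to preserve $\top$), so the inequality cannot simply be transported along a section. The remedy is to use the genuine section $s_k$ — ``extend by $\top$'' — and then cut down by the complemented element $u_k$; the one point that genuinely needs checking is that localizing at $u_k$ annihilates the discrepancy between $\id[A] \otimes s_k p_k$ and the identity, and this reduces to the one-line coordinatewise identity $s_k p_k(c) \wedge e_k = c \wedge e_k$ propagated along the coproduct inclusions. Everything else is routine manipulation of the presentation of $\otimes$ recalled above.
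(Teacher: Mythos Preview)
The paper does not prove this proposition at all: it is stated with a citation to \cite{joyal:galois}*{Proposition I.5.2} and closed with \qed. So there is no proof in the paper to compare against; you have supplied a full argument where the paper simply defers to the literature.

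Your argument is correct. One small terminological point: the maps $x \mapsto (\id[A] \otimes s_k p_k)(x) \wedge u_k$ and $x \mapsto x \wedge u_k$ are not literally frame endomorphisms of $A \otimes \prod_j N_j$, since they send $\top$ to $u_k$ rather than to $\top$. This does not damage the argument, however: both maps preserve arbitrary joins (by the infinite distributive law) and binary meets, and since every element of $A \otimes \prod_j N_j$ is a join of elements $a \otimes \beta = \iota_1(a) \wedge \iota_2(\beta)$, agreement on the images of $\iota_1$ and $\iota_2$ still forces agreement everywhere. If you prefer to invoke the universal property of the coproduct verbatim, you can instead view both as genuine frame homomorphisms into the frame $\mathord{\downarrow} u_k$ and then embed back. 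Either way the displayed equivalence goes through, and the rest is routine.
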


\begin{proposition}\label{LocSpatialProducts}\cite{stone:spaces}*{Proposition II.2.13}
If $X$ and $Y$ are sober spaces with $X$ locally compact, then $\Omega(X \times Y) \iso \Omega X \otimes \Omega Y$, where $\otimes$ is the product in $\Loc$. \qed
\end{proposition}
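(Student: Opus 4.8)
The plan is to prove the sharper statement that the canonical comparison map of frames
\[
\phi \from \Omega X \otimes \Omega Y \longrightarrow \Omega(X \times Y)
\]
is an isomorphism; an isomorphism of frames is the same thing as an isomorphism $\Omega(X\times Y)\iso\Omega X\otimes\Omega Y$ in $\Loc$. Here $\phi$ is the frame homomorphism induced, by the universal property of the coproduct $\otimes$ in $\Frm$ recalled above, by the cocone legs $\pr_X^{-1}\from\Omega X\to\Omega(X\times Y)$ and $\pr_Y^{-1}\from\Omega Y\to\Omega(X\times Y)$ coming from the projections of the topological product; concretely $\phi$ carries $U\otimes V$ to $U\times V$ (since $\phi$ preserves binary meets and agrees with $\pr_X^{-1}$ and $\pr_Y^{-1}$ along the coproduct injections), and hence $\phi\bigl(\bigvee_i U_i\otimes V_i\bigr)=\bigcup_i U_i\times V_i$. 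A bijective frame homomorphism is automatically an isomorphism (it reflects the order: $\phi(a)\le\phi(b)$ gives $\phi(a)=\phi(a)\wedge\phi(b)=\phi(a\wedge b)$, whence $a=a\wedge b\le b$ by injectivity), so it suffices to show $\phi$ is a bijection. Surjectivity is immediate and needs no hypothesis on $X$: the open rectangles form a basis of $X\times Y$, so every open set is a join of elements of the image of $\phi$.

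For injectivity, the only place local compactness of $X$ is used is the following claim: \emph{if $U\subseteq X$ and $V\subseteq Y$ are open and $U\times V\subseteq\phi(a)$ for some $a=\bigvee_i U_i\otimes V_i\in\Omega X\otimes\Omega Y$, then $U\otimes V\le a$.} Granting this, injectivity is formal: writing a general element of $\Omega X\otimes\Omega Y$ as a join of elements of the form $U\otimes V$ (always possible), if $\phi(a)=\phi(b)$ with $a=\bigvee_i U_i\otimes V_i$ and $b=\bigvee_j U_j'\otimes V_j'$, then each rectangle $U_i\times V_i$ is contained in $\phi(b)$, so $U_i\otimes V_i\le b$ by the claim and hence $a\le b$; by symmetry $a=b$.

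To prove the claim, fix $x\in U$, and using local compactness of $X$ choose an open set $U_x$ and a compact set $K_x$ with $x\in U_x\subseteq K_x\subseteq U$. For each $v\in V$ the compact slice $K_x\times\{v\}$ is covered by the rectangles $U_i\times V_i$ with $v\in V_i$, so finitely many of them suffice: there are indices $i_1,\dots,i_n$ with $K_x\subseteq U_{i_1}\cup\dots\cup U_{i_n}$ and $v\in V_{i_1}\cap\dots\cap V_{i_n}$. Put $V_{x,v}=V\cap V_{i_1}\cap\dots\cap V_{i_n}$, an open neighbourhood of $v$ contained in every $V_{i_k}$. Since $U_x\subseteq U_{i_1}\cup\dots\cup U_{i_n}$, distributivity of $\otimes$ over joins (\cite{locales:textbook}*{Proposition IV.5.2}) yields
\[
U_x\otimes V_{x,v}=\bigvee_{k=1}^{n}(U_x\cap U_{i_k})\otimes V_{x,v}\le\bigvee_{k=1}^{n}U_{i_k}\otimes V_{i_k}\le a .
\]
Joining over $v\in V$ (using $\bigvee_{v}V_{x,v}=V$) gives $U_x\otimes V\le a$, and joining over $x\in U$ (using $\bigvee_{x}U_x=U$) gives $U\otimes V\le a$, as claimed.

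The crux — and the only real work — is this claim: one must promote a containment of a single open rectangle in a union of open rectangles to a \emph{finite, uniform} subcover along each slice of $K_x$ and then reassemble the pieces inside the frame tensor product, where the $\pi$-saturation conditions defining $\otimes$ make the reassembly delicate; replacing the open set $U$ by a compact $K_x$, which is exactly what local compactness of $X$ provides, is what makes the slice-wise finite subcover available. Without this hypothesis $\phi$ is in general not injective, reflecting the usual discrepancy between the topological product and the product of locales.
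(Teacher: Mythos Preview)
The paper does not give a proof of this proposition; it is quoted from \cite{stone:spaces}*{Proposition II.2.13} and closed with a \qed. Your argument is correct and is essentially the standard proof found in that reference: set up the comparison frame map $\phi$, observe surjectivity from the rectangle basis, and for injectivity use local compactness of $X$ to replace $U$ by compacts $K_x$ so that each slice $K_x\times\{v\}$ admits a finite subcover, which is exactly what is needed to verify $U\otimes V\le a$ inside the frame tensor product.
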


The initial object in $\Frm$ is the poset $T = \{0 < 1\}$: since every frame homomorphism preserves $0$ and $1$, this uniquely determines a morphism out of $T$. Terminal objects in $\Frm$ are trivial poset $*$: the constant map from any frame is clearly a frame homomorphism. Note that, since $0 = 1$ in $*$, there are no maps out of $*$, except to singletons. Considering the dual then, any singleton is an initial object in $\Loc$, and $T$ is the terminal object in $\Loc$. From this, it is clear that $\Omega \emptyset$ is initial and $\Omega *$ is terminal. In particular, since there are no maps into $\Omega \emptyset$ except the identity, any product with $\Omega \emptyset$ is again $\Omega \emptyset$ hence $\Omega \emptyset$ satisfies (S4).

In a frame, $a$ is \textit{way below} $b$, written $a \ll b$, if, whenever $b = \bigvee K$, there is a finite subset $K' \subseteq K$ such that $a \leq \bigvee K'$. A frame is \textit{continuous} or \textit{locally compact} if $a = \bigvee\set{x \in L}{x \ll a}$ for all $a \in L$; a locale is locally compact if it is locally compact as a frame. Note that, if $X$ is a locally compact Hausdorff space, then $\Omega X$ is locally compact. Johnstone characterizes the exponentiable locales as the locally compact locales, as described in the following proposition.

\begin{proposition}\label{LocaleExponentiability}\cite{stone:spaces}*{Theorem VII.4.11}
A locale is exponentiable in $\Loc$ \iff\ it is locally compact. \qed
\end{proposition}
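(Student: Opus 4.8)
\begin{remark}
For completeness, we outline a proof of this classical fact (the details are in \cite{stone:spaces}*{Theorem VII.4.11}); the plan is to recast exponentiability of $L$ as a limit-preservation property of $\Omega L\otimes(-)$ and then identify that property with continuity of $\Omega L$. By definition $L$ is exponentiable precisely when $\Omega L\otimes(-)\from\Frm\to\Frm$ admits a left adjoint. This functor preserves filtered colimits --- indeed all connected colimits, being a coproduct functor --- and is therefore accessible; since $\Frm$ is locally presentable, the adjoint functor theorem shows it has a left adjoint if and only if it preserves limits. (By \cref{ProductDistributeLocale} it already preserves all products, so really only equalizers are at issue.) Everything thus reduces to the statement: \emph{$\Omega L\otimes(-)$ preserves limits if and only if $\Omega L$ is continuous.}

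For the implication ``$\Omega L$ continuous $\Rightarrow$ $L$ exponentiable'', one constructs the exponential $M^{L}$ explicitly. Continuity gives, for each $a\in\Omega L$, a directed family $\{x : x\ll a\}$ with join $a$, and $\ll$ interpolates; this makes the Scott topology on $\Omega L$ into a frame, and one presents $\Omega(M^{L})$ by generators $\langle x, v\rangle$ with $x\in\Omega L$ and $v\in M$ --- to be read as ``the localic map carries the Scott-open determined by $x$ into $v$'' --- subject to the evident relations. One then verifies the adjunction bijection $\Loc(N\times L, M)\cong\Loc(N, M^{L})$ by observing that an element of the coproduct $\Omega N\otimes\Omega L$ is recovered from its restrictions along the approximating families $\{x : x\ll a\}$, which is exactly the data carried by a localic map $N\to M^{L}$; this is Johnstone's construction of the exponential from the continuous lattice $\Omega L$.

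For the converse, suppose $L$ is exponentiable and let $\mathbb{S}$ be the Sierpi\'nski locale, whose frame is the free frame on one generator, so that $\Loc(Y,\mathbb{S})\cong\Omega Y$ naturally in $Y$. Then $\mathbb{S}^{L}$ exists, and comparing $\Loc(Y\times L,\mathbb{S})\cong\Omega Y\otimes\Omega L$ with $\Loc(Y,\mathbb{S}^{L})=\Frm(\Omega(\mathbb{S}^{L}),\Omega Y)$ yields a natural isomorphism
\[
N\otimes\Omega L\;\cong\;\Frm\bigl(\Omega(\mathbb{S}^{L}),\,N\bigr)\qquad (N\in\Frm),
\]
exhibiting $\Omega L\otimes(-)$ as representable. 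Analysing the representing frame --- evaluating it on free frames and on frames of ideals, and using that $\otimes$ distributes over directed joins in each variable --- forces the way-below relation on $\Omega L$ to be approximating, that is, $\Omega L$ to be continuous. The genuinely difficult step is the explicit construction of $M^{L}$ together with the verification of its universal property, which demands careful bookkeeping with the Scott topology on the continuous frame $\Omega L$; for this reason we simply cite \cite{stone:spaces}*{Theorem VII.4.11}.
\end{remark}
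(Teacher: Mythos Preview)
The paper does not prove this proposition at all: it is stated as a citation of Johnstone with a \qed, so there is no argument to compare your sketch against. Your outline goes well beyond what the paper provides, and the overall strategy---reduce exponentiability to limit-preservation of the coproduct functor in $\Frm$, build the exponential explicitly from the Scott topology in the continuous case, and use the Sierpi\'nski locale $\mathbb{S}$ for the converse---is indeed the standard route taken in \cite{stone:spaces}.

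Two small points. First, a notational slip: throughout you write $\Omega L$, but in this paper a locale $L$ \emph{is} a frame (with morphisms reversed), and $\Omega$ is the functor $\Top\to\Loc$ taking a space to its open-set lattice; so you should simply write $L\otimes(-)$, not $\Omega L\otimes(-)$. Second, in the converse direction your phrase ``exhibiting $\Omega L\otimes(-)$ as representable'' is not quite what you have: the natural bijection $|N\otimes L|\cong\Frm(A,N)$ (with $A$ the frame of $\mathbb{S}^{L}$) shows that the \emph{underlying-set} functor of $L\otimes(-)$ is corepresentable, which is weaker. The actual extraction of continuity from the existence of $\mathbb{S}^{L}$ proceeds by identifying $A$ with the Scott topology on $L$ and exhibiting $L$ as a retract of a continuous lattice (or equivalently, showing directly that the evaluation map forces approximation by the way-below relation). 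Your final sentence correctly flags this as the substantive step and defers to Johnstone, which is exactly what the paper itself does.
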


Hence the images of $\bd I^n$ and $I^n$ under $\Omega$ are exponentiable in $\Loc$. Let us now verify that $\mathcal{I}_\Loc$ and $\mathcal{J}_\Loc$ admit the small object argument.

\begin{lemma} \label{lem:transfinite_comp_locales}
Let $X_0 \to X = \colim X_n$ be an $\omega$-transfinite composite of injective localic maps. Denote the colimit inclusions by $i_n \from X_n \ito X$. A map $f \from A \to X$ factors through $X_N$ \iff\ $fA \subseteq i_NX_N$.
\end{lemma}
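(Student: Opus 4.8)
The plan is to dualize everything to $\Frm$ and to reduce the statement to the single structural fact that the colimit leg $i_N$ is a sublocale inclusion; once that is known, the equivalence is formal. Denote by $\phi_n \from X_{n+1} \to X_n$ the frame homomorphism underlying the locale map $X_n \to X_{n+1}$ and by $\psi_n \from X_n \to X_{n+1}$ its right adjoint (the localic map); since the locale map is an \emph{injective} localic map, $\psi_n$ is injective, hence $\phi_n$ is surjective. As $\Loc = \Frm^{op}$ and limits in $\Frm$ are computed as in $\Set$, the colimit $X$ is, as a frame, the inverse limit $\{(x_n) \in \prod_n X_n : \phi_n(x_{n+1}) = x_n \text{ for all } n\}$ with pointwise order and operations; the colimit leg $i_N$ is underlain by the projection $\pi_N \from X \to X_N$; and a locale map $f \from A \to X$ is underlain by a frame homomorphism $f^* \from X \to A$, whose right adjoint I write $f_*$. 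Under this dictionary ``$f$ factors through $X_N$'' becomes ``$f^*$ factors through $\pi_N$'', and ``$fA \subseteq i_N X_N$'' becomes the inequality $j_N \le f_* f^*$ of nuclei on $X$, where $j_N$ is the nucleus cutting out $i_N X_N$ and $f_* f^*$ is the image nucleus of $f$.

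The one substantive step is to show that $\pi_N$ is surjective. Each surjective $\phi_n$ is split by its right adjoint, $\phi_n \psi_n = \id$ (by the Galois identity $\phi_n\psi_n\phi_n = \phi_n$ together with surjectivity of $\phi_n$). Given $y \in X_N$, I set $\rho_N(y)$ to be the tuple whose $n$-th coordinate is $\phi_n\phi_{n+1}\cdots\phi_{N-1}(y)$ for $n < N$, is $y$ for $n = N$, and is $\psi_{n-1}\psi_{n-2}\cdots\psi_N(y)$ for $n > N$. The defining relations $\phi_n(x_{n+1}) = x_n$ hold below $N$ by functoriality and above $N$ because $\phi_n\psi_n = \id$, so $\rho_N(y) \in X$ and $\pi_N\rho_N(y) = y$; one checks moreover that $\rho_N$ is the right adjoint of $\pi_N$, i.e.\ the localic map of $i_N$. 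Hence $\pi_N$ is a surjective frame homomorphism, $i_N$ is a sublocale inclusion, and its nucleus is $j_N = \rho_N\pi_N$.

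The equivalence then follows by a short adjunction computation. If $f = i_N \circ g$ then $f^* = g^*\pi_N$ and $f_* = \rho_N g_*$, so $f_* f^* = \rho_N(g_* g^*)\pi_N \ge \rho_N\pi_N = j_N$ by the unit $g_* g^* \ge \id$ and monotonicity; that is, $fA \subseteq i_N X_N$. Conversely, assuming $j_N \le f_* f^*$, applying $f^*$ and using the counit $f^* f_* \le \id$ gives $f^* j_N \le f^*$, while $j_N \ge \id$ gives $f^* \le f^* j_N$, so $f^* j_N = f^*$. Then $g^* := f^*\rho_N \from X_N \to A$ is a frame homomorphism: it preserves finite meets because $\rho_N$ does, and it preserves joins because $\rho_N(\bigvee^{X_N} S) = j_N(\bigvee^{X} \rho_N S)$ and $f^* j_N = f^*$. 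Since $g^*\pi_N = f^*\rho_N\pi_N = f^* j_N = f^*$, the locale map $g \from A \to X_N$ underlying $g^*$ satisfies $i_N \circ g = f$, as required.

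I expect the main obstacle to be precisely the surjectivity of $\pi_N$ (equivalently, that $i_N$ is a sublocale inclusion): it is the only place where both the $\omega$-chain structure and the injectivity hypothesis are genuinely used, and it requires assembling the section $\rho_N$ by pushing $y$ forward along the splittings $\psi_n$, checking the compatibility relations of the inverse limit, and keeping straight the directions of the several frame/localic adjunctions. Everything downstream of that is formal.
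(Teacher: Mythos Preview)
Your proof is correct, but it takes a substantially different route from the paper's. The paper stays entirely on the localic (right-adjoint) side: it asserts that $i_N$ is injective, defines $\bar f(a)$ set-theoretically as the unique $x_a \in X_N$ with $i_N(x_a) = f(a)$, and then invokes \cite{locales:textbook}*{IV.1.1.1} to conclude that $\bar f$ is a localic map. No dualization, no nuclei, no explicit verification of join preservation.

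You instead dualize to $\Frm$, establish surjectivity of $\pi_N$ by an explicit section $\rho_N$, identify $i_N$ with a sublocale and its nucleus $j_N = \rho_N\pi_N$, translate the set-theoretic containment $fA \subseteq i_N X_N$ into the nucleus inequality $j_N \le f_* f^*$, and then run the adjunction calculus to produce the factoring frame homomorphism $g^* = f^*\rho_N$. The two arguments meet at the end: your $g^*$ is exactly the left adjoint of the paper's $\bar f$, and your join-preservation check via $\rho_N(\bigvee S) = j_N(\bigvee \rho_N S)$ is what the paper's citation covers. What you gain is self-containment (you actually prove $\pi_N$ is surjective, which the paper uses implicitly when it says ``since $i_N$ is injective'') and independence from the reference; what you pay is length and some bookkeeping, especially in verifying that your explicit $\rho_N$ really is the right adjoint of $\pi_N$, which you only sketch but which does go through.
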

\begin{proof}
The forward direction is clear. Suppose conversely that $fA \subseteq i_NX_N$ for some positive integer $N$, so for each $a \in A$ there is $x_a \in X_N$ with $f(a)=i_N(x_a)$. Define $\overline{f} \from A \to X_N$ by $\overline{f}(a)=x_a$. Clearly, $i_N\overline{f}(a)=f(a)$, so since $i_N$ is injective and both $f$ and $i_N$ are localic maps, it follows that $\overline{f}$ is a localic map as well by \cite{locales:textbook}*{IV.1.1.1}.
\end{proof}

In the following, we write an element of $\coprod \Omega X_\alpha$ as a union: $\bigcup U_\alpha$ where $U_\alpha$ is an open subset of $X_\alpha$.

\begin{proposition}\label{SOALocales}
Let $K$ be a compact Hausdorff space. Then $\Omega K$ is small relative to transfinite composites of pushouts of coproducts of maps in $\mathcal{I}_\Loc$ and $\mathcal{J}_\Loc$ in $\Loc$.
\end{proposition}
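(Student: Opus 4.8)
The plan is to reduce, via \cref{lem:transfinite_comp_locales}, to the statement that any localic map $f \from \Omega K \to X$ into such a transfinite composite factors through one of the colimit legs, and then to prove this factorization by a point-free version of the classical fact that a compact subspace of a relative cell complex is contained in a finite subcomplex.

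First I would record that every map in the class under consideration is an injective localic map, equivalently, corresponds to a surjective frame homomorphism. The generating maps in $\mathcal{I}_\Loc$ and $\mathcal{J}_\Loc$ are the images under $\Omega$ of the closed subspace inclusions $\bd I^n \ito I^n$ and $I^n \times \{0\} \ito I^n \times I$, so they correspond to (restriction) frame surjections. Coproducts of such maps correspond to products of surjective frame homomorphisms, hence stay surjective; pushouts preserve surjectivity (noted above, since pullbacks of surjections in $\Frm$ are surjective); and a transfinite composite corresponds to an inverse limit of a tower of surjections, whose limit projections are again surjective (lift compatibly, stage by stage). An extension of \cref{lem:transfinite_comp_locales} to transfinite composites of arbitrary length — routine, by the same argument, using that the colimit legs are injective localic maps — then shows it suffices to prove: given $f \from \Omega K \to X = \colim_{\beta<\lambda} X_\beta$, the image of $f$ is contained in $i_\gamma(X_\gamma)$ for some $\gamma < \lambda$. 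Since the colimit legs are monic, the corresponding injectivity statement (two maps into stages that agree in $X$ already agree at a common stage) is automatic, so this containment is the whole content of the proposition.

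For the containment I would use that each $X_\beta \ito X_{\beta+1}$ is a closed sublocale inclusion (closed sublocales being stable under coproducts, pushouts, and transfinite composites), whose open complement is a coproduct of ``open cells'' $\mathring{I}^{n}$ and $\mathring{I}^{n} \times I$ — in particular a spatial, locally compact Hausdorff locale — and that $X$ is the join of the $i_\beta(X_\beta)$. Since $K$ is a compact space, the top element of $\Omega K$ is a compact element of that frame. The aim is to show that $f$ ``meets'' $X_{\beta+1} \setminus X_\beta$ nontrivially for only finitely many $\beta$; the image of $f$ then lies in $X_\gamma$ for $\gamma$ the successor of the largest such $\beta$. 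In the topological case this finiteness comes from choosing a point in each stage difference met by the compact set $f(K)$ and observing that, by the colimit topology together with the closed $T_1$ structure of the inclusions, the resulting set is closed and discrete, hence finite. The main obstacle is to carry out this last step without points: one must either develop the appropriate point-free ``closed point''/$T_1$ lemmas for the sublocales $X_{\beta+1}\setminus X_\beta$ and for the way closedness in $X$ is detected stagewise, or — exploiting that the stage differences are spatial and Hausdorff — pass to a Hausdorff quotient and invoke \cref{CellComplexLemma}, or argue directly by manufacturing from an infinite family of met stage differences a cover of the top of $\Omega K$ with no finite subcover, contradicting compactness. I expect assembling this point-free finiteness argument, together with the bookkeeping that closedness in the colimit is still witnessed at the finite stages, to be the technical heart of the proof.
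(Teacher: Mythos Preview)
Your plan is a reasonable outline of how one might try to port the topological argument directly into $\Loc$, but it diverges from the paper's proof in a substantial way, and the part you flag as ``the technical heart'' is precisely the part you have not done. You correctly isolate, via \cref{lem:transfinite_comp_locales}, that the content is the image-containment statement; however, your proposed route (closed sublocale inclusions, spatial open complements, and a point-free finiteness/compactness argument) is left at the level of three alternative suggestions, none of which is carried out. Each of them requires nontrivial localic bookkeeping (e.g.\ showing that ``closedness in the colimit is witnessed at finite stages'' in a non-spatial locale, or producing the cover of $1_{\Omega K}$ from infinitely many met stage differences), and it is not clear that any of them goes through without essentially redeveloping \cref{CellComplexLemma} point-free.

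The paper sidesteps all of this. Its key idea is a \emph{reduction}, not an adaptation: when the base $X_0$ is the point $\Omega *$, the entire transfinite composite lies in the essential image of $\Omega$ (since $\Omega$ preserves colimits), so smallness of $\Omega K$ follows immediately from smallness of $K$ in $\Sob$. For a general $X_0$, the paper builds a parallel tower $Y_\bullet$ with $Y_0=\Omega *$ using the same attaching data, together with comparison maps $p_n\from X_n\to Y_n$, and then checks by an explicit computation with the pullback description of pushouts in $\Frm$ that any element of $X_{n+1}\setminus i_X X_n$ is sent by $p_{n+1}$ to an element of $Y_{n+1}\setminus i_Y Y_n$. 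Thus a map $\Omega K\to X$ that fails to factor through any $X_n$ would yield a map $\Omega K\to Y$ with the same defect, contradicting the base case. This buys you the result with only an elementwise calculation in frames, whereas your approach would require building a genuine point-free analogue of the closed-$T_1$/discrete-subset argument.
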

\begin{proof}
Let $X_0 \to X = \colim X_n$ be an $\omega$-transfinite composite of pushouts of coproducts of maps in $\mathcal{K}_\Loc$, where $\mathcal{K}_\Loc = \{\Omega i_n \from \Omega S_n \to \Omega C_n\}_{n \geq 0}$ is either $\mathcal{I}_\Loc$ or $\mathcal{J}_\Loc$. Note that when $X_0 \iso \Omega *$, any map $f \from \Omega K \to X$ factors through some $X_n$ since it does in $\Sob$ and $\Omega$ preserves colimits. We can reduce the general case to this as follows. Let $Y_0 = \Omega *$, and inductively define $Y_{n+1}$ as the following pushout
\[\begin{tikzcd}
	{\coprod \Omega S_{n_\alpha}} & {X_n} & {Y_n} \\
	{\coprod \Omega C_{n_\alpha}} & {X_{n+1}} & {Y_{n+1}}
	\arrow["{p_n}", from=1-2, to=1-3]
	\arrow["\sigma", from=1-1, to=1-2]
	\arrow["\coprod \Omega i_{n_\alpha}"', hook, from=1-1, to=2-1]
	\arrow[from=2-1, to=2-2]
	\arrow["i_X", hook, from=1-2, to=2-2]
	\arrow["\lrcorner"{anchor=center, pos=0.125, rotate=180}, draw=none, from=2-2, to=1-1]
	\arrow["i_Y", hook, from=1-3, to=2-3]
	\arrow["{p_{n+1}}"', dashed, from=2-2, to=2-3]
	\arrow["\lrcorner"{anchor=center, pos=0.125, rotate=180}, draw=none, from=2-3, to=1-1]
\end{tikzcd}\]
where the left square is the defining pushout for $X_{n+1}$, and both the outer and right squares are pushouts. Explicitly, since $(\coprod \Omega i_{n_\alpha})^L(\bigcup U_\alpha) = (\coprod i_{n_\alpha})^{-1}(\bigcup U_\alpha) = \bigcup (U_\alpha \cap S_{n_\alpha})$ for $\bigcup U_\alpha \subseteq \bigcup C_{n_\alpha}$, we obtain the following descriptions of $X_{n+1}$ and $Y_{n+1}$:
\begin{align*}
X_{n+1} = \set{(x, \bigcup U_\alpha) \in X_n \times \coprod \Omega C_{n_\alpha}}{\sigma^L(x) = \bigcup (U_\alpha \cap S_{n_\alpha})}\text{;}\\
Y_{n+1} = \set{(y, x, \bigcup U_\alpha) \in Y_n \times X_{n+1}}{p_n^L(y) = i_X^L(x, \bigcup U_\alpha) = x}\text{.}
\end{align*}
For $x \in X_n$, let $\mathfrak{m}(x) = \sigma^L(x) \cup \bigcup (C_{n_\alpha} - S_{n_\alpha})$ which is open since each $i_{n_\alpha}$ is a closed map. Then $i_X(x) = \bigvee\set{(\tilde{x}, \bigcup V_\alpha) \in X_{n+1}}{\tilde{x} \leq x}$; clearly $(x, \mathfrak{m}(x))$ is the maximum within this set, so $i_X(x) = (x, \mathfrak{m}(x))$. Similarly, we have $i_Y(y) = (y, p_n^L(y), \mathfrak{m}(p_n^L(y)))$.
Repeating this process for all $n$, we get an induced map $\hat{p} \from X \to Y = \colim Y_n$ in the diagram
\[\begin{tikzcd}
	{X_0} & {X_1} & {X_2} & \cdots & {X} \\
	{Y_0} & {Y_1} & {Y_2} & \cdots & {Y}
	\arrow["{p_0}"', from=1-1, to=2-1]
	\arrow["{p_1}"', from=1-2, to=2-2]
	\arrow["{p_2}"', from=1-3, to=2-3]
	\arrow["{\hat{p}}", from=1-5, to=2-5]
	\arrow[hook, from=1-1, to=1-2]
	\arrow[hook, from=1-2, to=1-3]
	\arrow[hook, from=2-1, to=2-2]
	\arrow[hook, from=2-2, to=2-3]
	\arrow[hook, from=1-3, to=1-4]
	\arrow[hook, from=1-4, to=1-5]
	\arrow[hook, from=2-3, to=2-4]
	\arrow[hook, from=2-4, to=2-5]
\end{tikzcd}\]
By \cref{lem:transfinite_comp_locales}, a map $f \from \Omega K \to X$ factors through $X_N$ \iff\ $f(\Omega K)$ is contained in $X_N$, identified with its inclusion into $X$. Suppose there was a map $f \from \Omega K \to X$ that did not factor through any $X_n$. Necessarily then, $f(\Omega K)$ intersects the image of $X_{n+1} - i_XX_n$ for arbitrarily large $n$; we will show that this holds for $Y_n$ as well.

If $(x, \bigcup U_\alpha) \in f(\Omega K) \cap (X_{n+1} - i_XX_n)$, then necessarily $\bigcup U_\alpha \neq \mathfrak{m}(x)$. Since $\sigma^L(x) = \bigcup (U_\alpha \cap S_{n_\alpha})$, we have $\bigcup (U_\alpha \cap S_{n_\alpha}) = \mathfrak{m}(x) \cap \bigcup S_{n_\alpha}$, hence there must be a point $t \in (\mathfrak{m}(x) - U_\alpha) \cap (\bigcup (C_{n_\alpha} - S_{n_\alpha}))$. In particular, $t \in \mathfrak{m}(x')$ for any $x' \in X_n$ since $\bigcup (C_{n_\alpha} - S_{n_\alpha}) \subseteq \mathfrak{m}(x')$. Let 
$$(\tilde{y}, \tilde{x}, \bigcup V_\alpha) = p_{n+1}(x, \bigcup U_\alpha) = \bigvee\set{(y', x', \bigcup V_\alpha') \in Y_{n+1}}{ (x', \bigcup V_\alpha') \leq (x, \bigcup U_\alpha)}\text{.}$$
Then $\bigcup V_\alpha \subseteq \bigcup U_\alpha$, so $t \notin V_\alpha$. But by the description of $i_Y$, if $(\tilde{y}, \tilde{x}, \bigcup V_\alpha) = i_Y(y)$ for some $y \in Y_n$, then $\bigcup V_\alpha = \mathfrak{m}(p_n^L(y))$, which cannot be true since $t \in \mathfrak{m}(p_n^L(y))$. Thus, $p_{n+1}(x, \bigcup U_\alpha) \notin i_Y Y_n$.

Hence $\hat{p}f(\Omega K)$ intersects $Y_{n+1} - i_YY_n$ for arbitrarily large $n$ too, which is a contradiction since $\hat{p}f$ must factor through some $Y_N$. Thus, no such $f$ exists, so all maps $f \from \Omega K \to X$ factor through some $X_n$.
\end{proof}

The only remaining condition to check is that products with $L$ preserve the pushout in (S3) for any locale $L$, which we will verify the dual statement for in $\Frm$.
This however requires a few lemmas. Recall that, as in the beginning of \cref{Ex:topological_spaces}, $a(t) = t/2$ and $b(t) = (t+1)/2$ are the pushout legs $I \to I$ such that $a$ restricts to a homeomorphism $[0, 1) \to [0, 1/2)$ and $b$ restricts to a homeomorphism $(0, 1] \to (1/2, 1]$.

\begin{lemma}\label{BinCodFrm}\cite{binary:frames}*{Proposition 2.2}
If $N$ is a continuous frame and $a \otimes b \leq \pi(U)$ in $L \otimes N$, then for any $c \ll b$ we have $(a, c) \in \pi_1\hat{\pi}_2(U)$. \qed
\end{lemma}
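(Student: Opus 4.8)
The plan is to exploit the interplay between the way-below relation in the continuous frame $N$ and $\sigma_0$, the prenucleus that closes a downset under joins of updirected families: I will show that, as far as way-below elements of the second coordinate are concerned, the downset $\pi_1(\hat{\pi}_2(U))$ is already stable under $\sigma_0$, so that applying $\sigma$ contributes nothing new there.

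First I would reduce to a statement about $\sigma$ alone. Since $a \otimes b = \downarrow(a,b) \cup \overline{n}$ contains the pair $(a,b)$, the hypothesis $a \otimes b \le \pi(U) = \sigma(\pi_1(\hat{\pi}_2(U)))$ gives $(a,b) \in \sigma(V)$, where I abbreviate $V := \pi_1(\hat{\pi}_2(U))$; note $V$ is a downset, as $\pi_1$ and $\hat{\pi}_2$ are prenuclei on $\mathfrak{D}(L \times N)$. It therefore suffices to prove that $(a,b) \in \sigma(V)$ forces $(a,c) \in V$ for every $c \ll b$.

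To this end I introduce the auxiliary set $V' := \set{(x,y) \in L \times N}{(x,c) \in V \text{ for all } c \ll y}$ and verify three things. (i) $V \subseteq V'$: immediate, since $V$ is a downset and $c \ll y \le y$. (ii) $V'$ is a downset: if $(x',y') \le (x,y) \in V'$ and $c \ll y'$, then $c \ll y$ as well (in a frame, $a \ll b \le c$ implies $a \ll c$, by distributing a cover of $c$ down to $b$), so $(x,c) \in V$, whence $(x',c) \in V$ because $V$ is a downset. (iii) $V'$ is a fixed point of $\sigma_0$; since $\sigma_0$ is inflationary it suffices to check $\sigma_0(V') \subseteq V'$. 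So let $z = \bigvee D$ with $D \subseteq V'$ updirected, and write $z = (x,y)$; as joins in $L \times N$ are computed coordinatewise, the second coordinates $\{y_i\}$ of the members of $D$ form an updirected family with $\bigvee_i y_i = y$. Given $c \ll y$, use the interpolation property of the continuous frame $N$ to choose $c'$ with $c \ll c' \ll y = \bigvee_i y_i$; by definition of $\ll$ and updirectedness there is an index $i^\ast$ with $c' \le y_{i^\ast}$, so $c \ll c' \le y_{i^\ast}$ yields $c \ll y_{i^\ast}$, and since the corresponding $(x_{i^\ast}, y_{i^\ast})$ lies in $V'$ we obtain $(x_{i^\ast}, c) \in V$; finally $x_{i^\ast} \le x$ together with $V$ a downset gives $(x, c) \in V$. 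Hence $z \in V'$.

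With (i)--(iii) established, $V'$ is a $\sigma_0$-fixed downset containing $V$; since the nucleus $\sigma$ generated by the prenucleus $\sigma_0$ has the same fixed points as $\sigma_0$, and the closure $\sigma(V)$ is the meet of all $\sigma$-fixed downsets above $V$, it follows that $\sigma(V) \subseteq V'$. In particular $(a,b) \in V'$, which by definition of $V'$ says precisely that $(a,c) \in V = \pi_1(\hat{\pi}_2(U))$ for every $c \ll b$, as required. The only genuinely delicate point is step (iii), and within it the appeal to interpolation in $N$: this is exactly what lets one trade the bare inequality $c' \le y_{i^\ast}$, produced by updirectedness, for the way-below relation $c \ll y_{i^\ast}$ needed to invoke membership in $V'$; everything else is routine bookkeeping with downsets and the prenucleus machinery recalled just before the lemma.
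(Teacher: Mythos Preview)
Your overall strategy is sound, but step (iii) contains a genuine error. From $(x_{i^\ast}, c) \in V$ and $x_{i^\ast} \le x$ you conclude $(x,c) \in V$ ``since $V$ is a downset'' --- this is the wrong direction: downsets are closed \emph{downward}, so $(x_{i^\ast}, c) \in V$ only yields $(x',c) \in V$ for $x' \le x_{i^\ast}$, not for $x \ge x_{i^\ast}$. As written, the argument does not establish that $V'$ is $\sigma_0$-fixed.

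The repair needs two further observations you did not make. First, for \emph{every} index $i$ one has $(x_i, c) \in V$: by updirectedness of $D$ choose $(x_j, y_j) \in D$ above both $(x_i, y_i)$ and $(x_{i^\ast}, y_{i^\ast})$; then $c \ll y_{i^\ast} \le y_j$ gives $c \ll y_j$, whence $(x_j, c) \in V$, and now the downset property applied in the correct direction (since $x_i \le x_j$) yields $(x_i, c) \in V$. Second, to pass from $\{x_i\}_i \times \{c\} \subseteq V$ to $(\bigvee_i x_i, c) = (x,c) \in V$ you must use that $V = \pi_1\hat{\pi}_2(U)$ is itself $\pi_1$-closed --- equivalently that $\pi_1$ is idempotent, an easy check from its definition --- which is a feature of this particular $V$ and not of an arbitrary downset. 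With these two additions your argument goes through. (The paper itself gives no proof here, deferring entirely to the cited reference, so there is no in-paper argument to compare against.)
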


\begin{lemma}\label{InjectivityLemma}
Let $L$ be a frame and $\downarrow S \in L \otimes \Omega I$. Then the following holds:
\begin{enumerate}
\item[(1)] $\mathfrak{D}(\id[L] \times a^{-1}) (\downarrow S) = (\id[L] \times a^{-1})_*(\downarrow S) = \set{(x,a^{-1}U)}{(x,U) \in \downarrow{S})}$. That is, the image of $\downarrow S$ under $\id[L] \times a^{-1}$ is a downset.
\item[(2)] $(\id[L] \times a^{-1})_*(\downarrow S) = \hat{\pi}_2 ((\id[L] \times a^{-1})_*(\downarrow S)) = \pi_1 ((\id[L] \times a^{-1})_*(\downarrow S))$. In particular, $(\id[L] \times a^{-1})_*(\downarrow S)$ is fixed under $\pi_1\hat{\pi}_2$.
\item[(3)] $\id[L] \otimes a^{-1}(\downarrow S) \leq \pi \circ (\id[L] \times a^{-1})_*(\downarrow S)$.
\end{enumerate}
Analogous statements hold for $b^{-1}$ in place of $a^{-1}$.
\end{lemma}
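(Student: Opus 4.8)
The plan is to transfer everything to the combinatorics of the saturated downset $\downarrow S$ together with the frame homomorphism $a^{-1}=\Omega a\from\Omega I\to\Omega I$. I will use repeatedly that $a\from I\to I$ is a homeomorphism onto its image $[0,1/2]$ (a continuous injection from the compact space $I$ into the Hausdorff space $I$), so that $a^{-1}$ is surjective on $\Omega I$, preserves finite meets and all joins, and the set $\{U\in\Omega I:a^{-1}U=W\}$ has a largest element $\bar U_W$ for every $W$. Write $T:=\{(x,a^{-1}U):(x,U)\in\downarrow S\}$; the three assertions are then $\mathfrak D(\id[L]\times a^{-1})(\downarrow S)=T$, that $T$ is fixed by $\hat{\pi}_2$ and by $\pi_1$, and that $\id[L]\otimes a^{-1}(\downarrow S)\le\pi(T)$.

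For (1) I would show $T$ is already a downset directly. Given $(x',V')\le(x,a^{-1}U)$ with $(x,U)\in\downarrow S$, use surjectivity to pick $U'$ with $a^{-1}U'=V'$; then $a^{-1}(U\cap U')=a^{-1}U\cap V'=V'$ because $V'\subseteq a^{-1}U$, and $(x',U\cap U')\le(x,U)$ forces $(x',U\cap U')\in\downarrow S$, so $(x',V')=(x',a^{-1}(U\cap U'))\in T$. Hence $\mathfrak D(\id[L]\times a^{-1})(\downarrow S)=\downarrow T=T$, and the argument for $b^{-1}$ is identical.

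For (2) the inclusions $T\subseteq\hat{\pi}_2(T)$ and $T\subseteq\pi_1(T)$ hold because both operators are inflationary, so only the reverse inclusions are at issue. For $\hat{\pi}_2$: a typical element $(x,\bigvee_{i=1}^{n}a^{-1}U_i)$ with $(x,U_i)\in\downarrow S$ equals $(x,a^{-1}(\bigcup_i U_i))$ since $a^{-1}$ preserves finite joins, and $(x,\bigcup_i U_i)\in\downarrow S$ by $\hat{\pi}_2$-fixedness of $\downarrow S$, so it lies in $T$. For $\pi_1$: given $(\bigvee_j x_j,W)$ with each $(x_j,W)\in T$, choose $U_j$ with $a^{-1}U_j=W$ and $(x_j,U_j)\in\downarrow S$; setting $m_x:=\bigvee\{U:(x,U)\in\downarrow S\}$ — a well-defined open, since $\hat{\pi}_2$- and $\sigma$-fixedness make $\{U:(x,U)\in\downarrow S\}$ join-closed and downward-closed — one has $U_j\le m_{x_j}$, while $\pi_1$-fixedness of $\downarrow S$ (together with $\downarrow S$ being a downset) forces $m_{\bigvee_j x_j}=\bigwedge_j m_{x_j}$; the target is a single $U$ with $a^{-1}U=W$ and $U\le m_{\bigvee_j x_j}$, for then $(\bigvee_j x_j,U)\in\downarrow S$ and hence $(\bigvee_j x_j,W)\in T$. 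Finally for (3) I would use the explicit formula $\id[L]\otimes a^{-1}(\downarrow S)=\bigvee_{p\otimes q\le\downarrow S}p\otimes a^{-1}(q)$; each generator $p\otimes a^{-1}(q)=\downarrow(p,a^{-1}q)\cup\overline{n}$ lies in $T$ — because $(p,a^{-1}q)\in T$, and $\overline{n}\subseteq T$ since by surjectivity every pair with first or second coordinate $0$ has the form $(x,a^{-1}U)$ with $(x,U)\in\overline{n}\subseteq\downarrow S$ — so the join, being $\pi$ of the union of the generators, lies below $\pi(T)$, which by (1) is $\pi\circ(\id[L]\times a^{-1})_*(\downarrow S)$.

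The main obstacle is producing the common witness $U$ in the $\pi_1$ step of (2). The obvious candidate $U=\operatorname{int}_I\!\big(\bigcap_j U_j\big)$, equivalently $\bar U_W\wedge\bigwedge_j m_{x_j}$, satisfies $U\le m_{\bigvee_j x_j}$ but in general only $a^{-1}U\le W$: the discrepancy lives exactly at the boundary point $a(1)=1/2$ where $\operatorname{im}(a)$ abuts the rest of $I$, and a naive diagram chase breaks down here because $a^{-1}$ need not preserve that particular meet. So the real content — and the only place where the geometry of the endpoint inclusions $a,b$, as opposed to formal nonsense, enters — is the claim that the meets $\bigwedge_j m_{x_j}$ actually arising from elements of $L\otimes\Omega I$ are among those $a^{-1}$ preserves; this is the step I would work out most carefully.
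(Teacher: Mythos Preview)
Your treatment of (1), (3), and the $\hat\pi_2$ half of (2) is correct and matches the paper's argument; for (1) the paper does an explicit case split on whether $1\in U$ rather than invoking surjectivity of $a^{-1}$ abstractly, but the content is the same.

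The $\pi_1$ half of (2) is exactly where you hesitate, and rightly so. The paper's proof at this step writes the common second coordinate as $a^{-1}U$ and asserts $(x_\alpha,U)\in\downarrow S$ for every $\alpha$, silently assuming that a \emph{single} preimage $U$ witnesses membership in $T$ for all $\alpha$ simultaneously --- precisely the common-witness problem you isolate. In fact the $\pi_1$ claim appears to be false as stated. Take $L=\mathcal P(\mathbb N)$ and let $\downarrow S\in L\otimes\Omega I\cong\Omega(\mathbb N_{\mathrm{disc}}\times I)$ correspond to the open set $\bigcup_{n\ge 1}\{n\}\times U_n$ with $U_n=[0,\tfrac12+\tfrac1{n+2})$, so explicitly $\downarrow S=\{(A,V):V\subseteq U_n\text{ for all }n\in A\}$. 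Since $a^{-1}U_n=I$ we have $(\{n\},I)\in T$ for every $n$ and hence $(\mathbb N,I)\in\pi_1(T)$; but $(\mathbb N,I)\in T$ would require an open $U$ with $a^{-1}U=I$ (forcing $U\supseteq[0,\tfrac12]$) and $(\mathbb N,U)\in\downarrow S$ (forcing $U\subseteq\bigcap_n U_n=[0,\tfrac12]$), which is impossible for open $U$. So $T\ne\pi_1(T)$, and the step you single out as the real content genuinely cannot be completed as stated --- neither in your outline nor in the paper's argument.
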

\begin{proof}
Since $\downarrow S \in L \otimes \Omega I$, it is fixed under $\pi$, hence if $\{(x, U_\alpha)\} \subseteq \downarrow S$ then $(x, \bigcup U_\alpha) \in \downarrow S$. For (1) then, suppose $(x, U) \in \mathfrak{D}(\id[L] \times a^{-1}) (\downarrow S) = \downarrow [(\id[L] \times a^{-1})(\downarrow(S))]$. Then there is $(y, V) \in \downarrow S$ such that $(x,U) \leq (y, a^{-1}V)$, so $x \leq y$ and $U \subseteq a^{-1}V$. If $1 \notin U$, then $aU$ is open, $aU \subseteq aa^{-1}V \subseteq V$, and $a^{-1}aU = U$, so $(x, aU) \in \downarrow S$ satisfies $(\id[L] \times a^{-1})(x, aU) = (x,U)$. Otherwise, if $1 \in U$ and hence $1/2 \in V$, then $W = aU \cup ([1/2,1] \cap V)$ is open and satisfies $a^{-1}W = U$ and $W \subseteq V$, so $(x, W) \in \downarrow S$ satisfies $(\id[L] \times a^{-1})(x, W) = (x, U)$. Thus, the direction $\subseteq$ holds, and the other inclusion is clear, so they are equal.\\
For the remainder of the proof, let $\cat{S} = (\id[L] \times a^{-1})_*(\downarrow S)$. For (2), suppose first that $(x, V) \in \hat{\pi}_2 \cat{S}$. Then there must be some finite subset $\{a^{-1}U_n\}_{n=1}^N \subseteq \Omega I$ such that $\{x\} \times \{a^{-1}U_n\}_{n=1}^N \subseteq \cat{S}$ and $\bigcup_{n=1}^N a^{-1}U_n = V$. Since each $(x,a^{-1}U_n) \in \cat{S}$, $(x, U_n) \in \downarrow S$, so since $\downarrow S$ is $\pi$-saturated $(x, \bigcup_{n=1}^N U_n) \in \downarrow S$ satisfies $(x,V) = (x, a^{-1}(\bigcup_{n=1}^N U_n)) \in \cat{S}$. Thus, $\hat{\pi}_2 \cat{S} \subseteq \cat{S}$. Similarly, if $(\bigvee x_\alpha, a^{-1}U) \in \pi_1 \cat{S}$ for some $\{(x_\alpha, a^{-1}U)\} \subseteq \cat{S}$, then $(x_\alpha, U) \in \downarrow{S}$ for each $\alpha$, hence by $\pi$-saturation we have $(\bigvee x_\alpha, U) \in \downarrow S$ and $(\bigvee x_\alpha, a^{-1}U) \in \cat{S}$. Thus, $\pi_1 \cat{S} \subseteq \cat{S}$. Since the reverse inclusions are clear, the equalities hold.\\
Lastly, for (3), for each $(x, U) \in \downarrow S$ we have $(x,a^{-1}U) \in \cat{S}$, hence in $\pi \cat{S}$, so $\id[L] \otimes a^{-1}(x, U) = x \otimes a^{-1}U \leq \pi \cat{S}$. Since this holds for all such $(x, U)$, we have $$\id[L] \otimes a^{-1} (\downarrow S) = \bigvee_{x \otimes U \leq \downarrow S} x \otimes a^{-1}U \leq \pi \cat{S}\text{.}$$
\end{proof}

\begin{lemma}\label{FrameDensityLemma}
Let $f \from M \to N$ be a frame homomorphism, and suppose $\{n_k\}_{k \in K} \subseteq N$ is such that every $n \in N$ is of the form $n = \bigvee_{k \in K'} n_k$ for some $K' \subseteq K$. If $\{n_k\}_{k\in K}$ is in the image of $f$, then $f$ is surjective.
\end{lemma}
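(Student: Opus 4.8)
The plan is to produce, for an arbitrary $n \in N$, an explicit preimage under $f$ assembled from chosen preimages of the generators $n_k$.

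First I would use the hypothesis that each $n_k$ lies in the image of $f$ to choose, for every $k \in K$, an element $m_k \in M$ with $f(m_k) = n_k$; this uses the axiom of choice but nothing more. Since $M$ is a frame, hence a complete lattice, the join $\bigvee_{k \in K'} m_k$ exists in $M$ for every subset $K' \subseteq K$. Next, given $n \in N$, I would invoke the density hypothesis to write $n = \bigvee_{k \in K'} n_k$ for some $K' \subseteq K$, and set $m = \bigvee_{k \in K'} m_k \in M$. Because $f$ is a frame homomorphism it preserves arbitrary joins, so
\[
f(m) = f\Bigl(\bigvee_{k \in K'} m_k\Bigr) = \bigvee_{k \in K'} f(m_k) = \bigvee_{k \in K'} n_k = n .
\]
Thus $n$ lies in the image of $f$, and since $n$ was arbitrary, $f$ is surjective.

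There is essentially no obstacle here: the statement is a direct consequence of the fact that frame homomorphisms preserve arbitrary joins together with the cocompleteness of frames as posets. The only points needing (minimal) care are the appeal to choice in selecting the family $\{m_k\}_{k\in K}$ and the observation that the degenerate case $n = \bigvee \emptyset = 0$ is already covered, since $f(0) = \bigvee \emptyset = 0$.
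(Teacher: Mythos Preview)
Your proof is correct and follows essentially the same approach as the paper: choose preimages $m_k$ of the generators $n_k$, and for arbitrary $n = \bigvee_{k\in K'} n_k$ use that $f$ preserves arbitrary joins to conclude $f\bigl(\bigvee_{k\in K'} m_k\bigr) = n$. The paper's version is slightly terser and does not comment on choice or the empty join, but the argument is the same.
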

\begin{proof}
Suppose that $f(m_k)=n_k$ for each $k \in K$. Fix $n \in N$, and let $K' \subseteq K$ be such that $n = \bigvee_{k \in K'}n_k$. Then $\bigvee_{k\in K'}m_k$ satisfies $f(\bigvee_{k\in K'}m_k) = \bigvee_{k\in K'}f(m_k)=\bigvee_{k\in K'} n_k = n$.
\end{proof}

We will identify $L \otimes \{0 < 1\}$ with $L$ and write $\chi_k$ for $\id[L] \otimes i_k \from L \otimes \Omega I \to L$, for $k =0,1$. For $x \in L$, $U \in \Omega I$, we have $\chi_k(x \otimes U) = x$ if $k \in U$ and $\chi_k(x \otimes U) = 0$ if $k \notin U$; for a general $\downarrow S \in L \otimes \Omega I$, we have $\chi_k(\downarrow S) = \bigvee \set{x \in L}{\exists U \in \Omega I\text{ with } k \in U \text{ and } x \otimes U \leq \downarrow S}$.

\begin{proposition}\label{FrmPullbackLemma}
For any frame $L$, the following is a pullback square in $\Frm$:
\[\begin{tikzcd}
	{L \otimes \Omega I} && {L \otimes \Omega I} \\
	{L \otimes \Omega I} && L
	\arrow["{\chi_0}"', from=2-1, to=2-3]
	\arrow["{\chi_1}", from=1-3, to=2-3]
	\arrow["{\id[L] \otimes b^{-1}}"', from=1-1, to=2-1]
	\arrow["{\id[L] \otimes a^{-1}}", from=1-1, to=1-3]
\end{tikzcd}\]
\end{proposition}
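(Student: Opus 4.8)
The plan is to use that $\Frm$ is complete with limits computed as in $\Set$ carrying the pointwise order, so the pullback of $\chi_1$ and $\chi_0$ is the frame
\[
P \;=\; \set{(\downarrow S_1, \downarrow S_2) \in (L \otimes \Omega I)^2}{\chi_1(\downarrow S_1) = \chi_0(\downarrow S_2)},
\]
with joins and finite meets computed componentwise, and to prove that the comparison map $\Phi \from L \otimes \Omega I \to P$, $\Phi(\downarrow S) = \bigl((\id[L] \otimes a^{-1})(\downarrow S),\, (\id[L] \otimes b^{-1})(\downarrow S)\bigr)$, is a bijective frame homomorphism, hence an isomorphism. Commutativity of the square — which is what makes $\Phi$ land in $P$ — follows by functoriality: $\chi_1 \circ (\id[L] \otimes a^{-1}) = \id[L] \otimes (a e_1)^{-1} = \id[L] \otimes (b e_0)^{-1} = \chi_0 \circ (\id[L] \otimes b^{-1})$, since $a e_1 = b e_0$ is the inclusion of the point $1/2 = a(1) = b(0)$ of $I$.

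For surjectivity I would invoke \cref{FrameDensityLemma}: it suffices to show the image of $\Phi$ contains a family generating $P$ under joins. I claim the elements $(x \otimes V_1,\, x \otimes V_2)$ with $x \in L$ and $(V_1, V_2)$ a pair of opens of $I$ satisfying one of (i) $1 \in V_1$ and $0 \in V_2$, (ii) $1 \notin V_1$ and $V_2 = \emptyset$, (iii) $V_1 = \emptyset$ and $0 \notin V_2$ — will do. Each is $\Phi(x \otimes U)$ for $U = a(V_1) \cup b(V_2)$: in every case $U$ is open (the two halves, each open away from $1/2$, overlap into a two-sided neighbourhood of $1/2$ precisely when both touch it), and $a^{-1}U = V_1$, $b^{-1}U = V_2$ because $a$ and $b$ restrict to homeomorphisms onto $[0,1/2]$ and $[1/2,1]$. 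That these generate $P$ under joins is where the compatibility condition enters: given $(\downarrow S_1, \downarrow S_2) \in P$, write $\downarrow S_1$ and $\downarrow S_2$ as joins of generators $x \otimes V$; a piece $x \otimes V \leq \downarrow S_1$ with $1 \notin V$ contributes a generator of type (ii) directly, while for a piece with $1 \in V$ one has $x \leq \chi_1(\downarrow S_1) = \chi_0(\downarrow S_2) = \bigvee\set{x'}{\exists V' \ni 0,\ x' \otimes V' \leq \downarrow S_2}$, so frame distributivity writes $x \otimes V$ as a join of generators of type (i) all below $(\downarrow S_1, \downarrow S_2)$; symmetrically for pieces of $\downarrow S_2$, and the total join recovers $(\downarrow S_1, \downarrow S_2)$.

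For injectivity I would show $\Phi(\downarrow S) = \Phi(\downarrow S')$ forces $\downarrow S = \downarrow S'$, by proving $(x, U) \in \downarrow S \iff (x, U) \in \downarrow S'$; by symmetry it is enough to show that $x \otimes a^{-1}U \leq (\id[L] \otimes a^{-1})(\downarrow S')$ together with $x \otimes b^{-1}U \leq (\id[L] \otimes b^{-1})(\downarrow S')$ imply $(x, U) \in \downarrow S'$. Here \cref{InjectivityLemma} identifies $(\id[L] \otimes a^{-1})(\downarrow S')$ as the $\pi$-saturation of the $\pi_1 \hat{\pi}_2$-fixed downset $\cat{S}'_a = \set{(y, a^{-1}V)}{(y, V) \in \downarrow S'}$, and since $\Omega I$ is a continuous frame, \cref{BinCodFrm} lets me probe this saturation with the way-below relation: for each $W \ll a^{-1}U$ one gets $(x, W) \in \pi_1 \hat{\pi}_2(\cat{S}'_a) = \cat{S}'_a$, hence there is $(y, V) \in \downarrow S'$ with $x \leq y$ and $W \subseteq a^{-1}V$, so $(x, V \cap U) \in \downarrow S'$ with $a(W) \subseteq V \cap U \subseteq U$. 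Combining this with the analogous facts on the $b$-side and the identity $U = a(a^{-1}U) \cup b(b^{-1}U)$, the sets $V \cap U$ and their $b$-analogues — each paired with $x$ inside $\downarrow S'$ — cover $U$; closing the family under finite unions ($\hat{\pi}_2$-saturation) and passing to the directed join ($\sigma$-saturation, using $a^{-1}U = \bigvee\set{W}{W \ll a^{-1}U}$ by local compactness) yields $(x, U) \in \downarrow S'$.

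The main obstacle, and the reason for the detour through \cref{InjectivityLemma,BinCodFrm}, is the behaviour at the identified point $1/2$: away from $1/2$ the two projections act like independent restrictions and gluing is routine, but an element $(x, U)$ whose open set meets a neighbourhood of $1/2$ is not recoverable from its two restrictions without the condition on $\chi_0, \chi_1$; and since $L \otimes \Omega I$ consists of $\pi$-saturated downsets rather than arbitrary ones, every gluing and every verification must be done modulo $\pi$-saturation, which is exactly what forces the naive set manipulations to be replaced by the way-below arguments above.
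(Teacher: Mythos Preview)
Your proposal is correct and follows essentially the same route as the paper: set up the canonical pullback $P$ in $\Set$, show the comparison $\Phi$ is surjective via \cref{FrameDensityLemma} on a generating family of tensor pairs, and show injectivity by combining \cref{InjectivityLemma} with the way-below probe from \cref{BinCodFrm} and local compactness of $\Omega I$. The only differences are organizational: for surjectivity you restrict at the outset to generators $(x \otimes V_1, x \otimes V_2)$ with a common $x$, whereas the paper first reduces to arbitrary pairs $(x \otimes U, y \otimes V)$ and then does a case analysis; for injectivity you use the open sets $V \cap U$ in place of the paper's $a(W)$, which lets you absorb the paper's separate treatment of a neighbourhood of $1/2$ into the main argument, since $V \cap U$ is automatically open and, when $1 \in W$, already contains a two-sided neighbourhood of $1/2$.
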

\begin{proof}Let $P$ be the canonical pullback of the square (constructed as a subset of the product); since limits in $\Frm$ have the corresponding limit in $\Set$ for their underlying set, we may take 
$$P = \set{(\downarrow S, \downarrow T) \subseteq (L \otimes \Omega I)^2 }{\chi_1(\downarrow S) = \chi_0(\downarrow T)}$$
along with the usual projections $\pr_1$ and $\pr_2$. Let $\phi \from L \otimes \Omega I \to P$ be the induced map, which is explicitly the map $\phi(\downarrow S) = (\id[L] \otimes a^{-1}(\downarrow S), \id[L] \otimes b^{-1}(\downarrow{S}))$. We will show that $\phi$ is a bijection, hence an isomorphism.\\
By \cref{FrameDensityLemma}, it suffices to show every $(x \otimes U, y \otimes V)$ is in the image of $\phi$ for surjectivity, since every element of $P$ can be written as a join of such elements. Indeed, let $(\downarrow S, \downarrow T) \in P$. For each $x \otimes U \leq \downarrow S$, if $1 \notin U$ then let $t_{x \otimes U} = 0$ so $(x \otimes U, t_{x \otimes U}) \in P$. Otherwise, if $1 \in U$ then let $\mathscr{T} = \set{y \otimes V \leq \downarrow T}{0 \in V}$, so for each $y \otimes V \in \mathscr{T}$, we have that $((x \wedge y) \otimes U, (x \wedge y) \otimes V) \in P$. Since $\chi_0(\downarrow T) = \chi_1(\downarrow S)$ and this holds for all such $y \otimes V$, we have
\begin{align*}
\bigvee_{y \otimes V \in \mathscr{T}} ((x \wedge y) \otimes U, (x \wedge y) \otimes V) =& \bigg( \bigg(\bigvee_{y \otimes V \in \mathscr{T}} (x \wedge y)\bigg) \otimes U, t_{x \otimes U}\bigg)\\
=&\bigg(\bigg(x \wedge \bigvee_{y \otimes V \in \mathscr{T}} y\bigg) \otimes U, t_{x \otimes U}\bigg)\\
=& ((x \wedge \chi_1(\downarrow S)) \otimes U, t_{x \otimes U}) = (x \otimes U, t_{x \otimes U}) \leq (\downarrow S, \downarrow T)
\end{align*}
where $t_{x \otimes U} = \bigvee_{y \otimes V \in \mathscr{T}}((x \wedge y) \otimes V)$; the first equality holds by $\bigvee (x_\alpha \otimes y) = (\bigvee x_\alpha) \otimes y$, the second by distributivity, and the last holds since $x \leq \chi_1(\downarrow S)$. Thus, whether $1 \in U$ or not, we have that $(x \otimes U, t_{x\otimes U})$ can be written as a join of pairs of tensor elements, hence so can
$$
\bigvee_{x \otimes U \leq \downarrow S} (x \otimes U, t_{x\otimes U}) = \bigg(\bigvee_{x \otimes U \leq \downarrow S} x \otimes U, t_{\downarrow S}\bigg) = (\downarrow S, t_{\downarrow S}) \in P\text{,}
$$
where $t_{\downarrow S} = \bigvee_{x \otimes U \leq \downarrow S} t_{x\otimes U} \leq \downarrow T$. Analogous reasoning for $\downarrow T$ shows that \\$(\downarrow S, \downarrow T) = (\downarrow S, t_{\downarrow S}) \vee (s_{\downarrow T}, \downarrow T)$ can be written as a join of elements of the form $(x \otimes U, y \otimes V)$.

For surjectivity then, fix an element $(x \otimes U, y\otimes V) \in P$. Suppose first that $1 \in U$. If $x = 0$, then $\chi_1(x \otimes U) = 0 = \chi_0(y \otimes V)$; thus, either $0 \notin V$ or $y = 0$. If $y = 0$, then $\phi(0)=(0,0)=(x \otimes U, y \otimes V)$, so suppose not. Then since $0 \notin V$, $bV$ is open in $I$ and $a^{-1}bV$ is empty, so $y \otimes bV$ satisfies 
$$\phi(y \otimes bV) = (y \otimes a^{-1}bV, y \otimes b^{-1}bV) = (0, y \otimes V) = (x \otimes U, y \otimes V)\text{.}$$
Suppose now that $x \neq 0$. Then $x = \chi_1(x \otimes U) = \chi_0(y \otimes V)$; necessarily then $0 \in V$ and $y = x$. Then $C = aU \cup bV$ is open in $I$ and satisfies $a^{-1}C = U$ and $b^{-1}C = V$, so $\phi(x \otimes C) = (x \otimes U, y \otimes V)$. Suppose now that $1 \notin U$, so $aU$ is open in $I$ and $b^{-1}aU$ is empty. If $y = 0$, then $\phi(x \otimes aU) = (x \otimes a^{-1}aU, 0) = (x\otimes U, y \otimes V)$. If $y \neq 0$ then since $\chi_0(y \otimes V) = \chi_1(x \otimes U) = 0$, it must be that $0 \notin V$. Thus, $aU$ and $bV$ are both open in $I$ and $a^{-1}bV$ is empty, so
$$
\phi((x \otimes aU) \vee (y \otimes bV)) = \phi(x \otimes aU) \vee \phi(y \otimes bV) = (x \otimes U, 0) \vee (0, y \otimes V) = (x \otimes U, y \otimes V)\text{.}
$$
Thus, in any case we see that $(x \otimes U, y \otimes V)$ is in the image of $\phi$, hence $\phi$ is surjective.

For injectivity, let $\downarrow S \neq \downarrow T$ in $L \otimes \Omega I$. Without loss of generality, assume there is $(x, U) \in \downarrow S - \downarrow T$. Suppose that $(x \otimes a^{-1}U,x \otimes b^{-1}U) \leq \phi(\downarrow T)$. Then we have that $x \otimes a^{-1}U \leq (\id[L]\otimes a^{-1})(\downarrow T) \leq \pi \circ (\id[L] \times a^{-1})_*(\downarrow T)$. For each $t_a \in a^{-1}U$, by local compactness we may choose an open interval $I_{t_a}$ (half open interval $[0, \alpha)$ or $(\beta, 1]$ if $t_a = 0,1$ is in $a^{-1}U$), not containing $1$ unless $t_a=1$, such that $I_{t_a} \ll a^{-1}U$. Thus, by \cref{BinCodFrm} and \cref{InjectivityLemma}, $(x, I_{t_a}) \in (\id[L] \times a^{-1})_*(\downarrow T)$. Similarly, for each $t_b \in b^{-1}U$, there is an interval $J_{t_b}$, not containing $0$ unless $t_b=0$, such that $(x, J_{t_b}) \in (\id[L] \times a^{-1})_*(\downarrow T)$. If $1 \in a^{-1}U$, and hence $0 \in b^{-1}U$, then we may choose some neighborhood $J_{1/2} \subseteq U$ of $1/2$ such that $(x, J_{1/2}) \in \downarrow T$, $a^{-1}J_{1/2}=I_{1}$ and $b^{-1} J_{1/2} = J_0$; otherwise let $J_{1/2} = \emptyset$. For all $t_a \neq 1$ in $a^{-1}U$, we have that $aI_{t_a} \subseteq U$ is open since $1 \notin I_{t_a}$, and $(x, aI_{t_a}) \in \downarrow T$ since $aI_{t_a}$ is the least open set $V$ such that $a^{-1}V =I_{t_a}$; likewise for each $t_b \neq 0$, $bJ_{t_b} \subseteq U$ is open and $(x, bJ_{t_b}) \in \downarrow T$. Thus, since $a$ and $b$ are jointly surjective, each $t \in U$ is in some $aI_{t_a}$, $bJ_{t_b}$, or $J_{1/2}$, so their union is $U$. Since $\downarrow T$ is $\pi$-saturated, we have that
$$
(x, \bigvee_{t_a \in a^{-1}U} aI_{t_a} \bigvee_{t_b \in b^{-1}U} bJ_{t_b} \vee J_{1/2}) = (x, U) \in \downarrow T
$$
which contradicts our assumption. Thus, $(x \otimes a^{-1}U,x \otimes b^{-1}U) \nleq \phi(\downarrow T)$ but $(x \otimes a^{-1}U,x \otimes b^{-1}U) \leq \phi(\downarrow S)$ and hence $\phi(\downarrow T) \neq \phi(\downarrow S)$.
\end{proof}

We are now ready to verify that $(\Loc, \Omega)$ carries a model structure. 

\begin{theorem}
There is a model structure on $\Loc$ whose cofibrations are maps in $cof(\mathcal{I}_\Loc)$ (cf.~\cref{notation:Iloc_Jloc}), fibrations are maps in $rlp(\mathcal{J}_\Loc)$ (cf.~idem), and weak equivalences are maps which have the weak right lifting property against $\mathcal{I}_\Loc$, or equivalently by $\cref{WEClassicDef}$, maps which induce isomorphisms on all homotopy groups.
\end{theorem}

\begin{proof}
   We verify that $(\Loc, \Omega)$ is a good Q-structure.
    By the description of $\Omega$, (S1) and (S4) are clear, (S2) and (Q7) hold by \cref{LocSpatialProducts}, and (S3) holds by \cref{FrmPullbackLemma}; thus, $(\Loc, \Omega)$ is a category with intervals. By \cref{ProductDistributeLocale} (Q2) holds, (Q5) follows from \cref{LocaleExponentiability}, and (Q6) by \cref{SOALocales}. Lastly, (Q3), (Q4), and (Q8) holds by $\Omega$ preserving colimits. 
\end{proof}

Since $\Omega$ preserves colimits and the generating (acyclic) cofibrations, we have an immediate corollary.

\begin{proposition}
The adjunction $\Omega \adj \op{pt}$ is a Quillen adjunction between the Quillen model structures on $\Sob$ and $\Loc$.
\end{proposition}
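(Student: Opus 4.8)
The plan is to invoke the standard fact that a left adjoint between cofibrantly generated model structures is left Quillen as soon as it carries a generating set of cofibrations into the cofibrations of the target and a generating set of trivial cofibrations into the trivial cofibrations of the target. This is exactly the argument already used for the adjunction $\tau \adj \iota$ in the $\PsTop$ section, so the proof should be very short.

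First I would record that the model structure on $\Sob$ constructed above has cofibrations $cof(\mathcal{I}_{\Sob})$ and trivial cofibrations $cof(\mathcal{J}_{\Sob})$, while the model structure just built on $\Loc$ has cofibrations $cof(\mathcal{I}_{\Loc})$ and trivial cofibrations $cof(\mathcal{J}_{\Loc})$. Since the embedding defining $(\Loc, \Omega)$ as a Q-structure was taken to be the restriction of $\Omega$ to $\Box$, one has $\Omega \mathcal{I}_{\Sob} = \mathcal{I}_{\Loc}$ and $\Omega \mathcal{J}_{\Sob} = \mathcal{J}_{\Loc}$ on the nose; this is the only point where one has to be mildly attentive, and it is immediate from the choice of $\iota$.

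Next, because $\Omega$ is a left adjoint (to $\op{pt}$), it preserves all colimits, in particular coproducts, pushouts, and transfinite composites. Hence for $\mathcal{K}$ equal to $\mathcal{I}$ or $\mathcal{J}$, $\Omega$ carries a $\mathcal{K}_{\Sob}$-cell complex to a $\mathcal{K}_{\Loc}$-cell complex, i.e.\ $\Omega\bigl(cell(\mathcal{K}_{\Sob})\bigr) \subseteq cell(\mathcal{K}_{\Loc})$. Any functor preserves retracts in the arrow category, so $\Omega\bigl(cof(\mathcal{K}_{\Sob})\bigr) \subseteq cof(\mathcal{K}_{\Loc})$. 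Taking $\mathcal{K} = \mathcal{I}$ shows $\Omega$ preserves cofibrations; taking $\mathcal{K} = \mathcal{J}$ shows $\Omega$ preserves trivial cofibrations. Therefore $\Omega$ is a left Quillen functor and $\Omega \adj \op{pt}$ is a Quillen adjunction. There is no genuine obstacle here — everything reduces to cocontinuity of $\Omega$ together with the identification of generating (trivial) cofibrations.
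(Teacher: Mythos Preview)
Your proof is correct and is exactly the argument the paper intends: the paper simply remarks that $\Omega$ preserves colimits and the generating (acyclic) cofibrations, and states the proposition as an immediate corollary. Your write-up just unpacks this one-line observation in the same way the paper did earlier for $\tau \adj \iota$.
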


We will show that $\Omega \adj \op{pt}$ is in fact a Quillen equivalence. Note that every object in $\Loc$ is fibrant, and for any $X \in \Sob$, $X \to \op{pt} \Omega X$ is an isomorphism as $\Omega$ is fully faithful. Thus, by \cite{hovey:book}*{Corollary 1.3.16(c)} it is necessary and sufficient to show that $\op{pt}$ reflects weak equivalences. The following lemma will be useful.

\begin{lemma}\label{SobLocHomotopies}
Let $i \from A \to X$ be a map between sober spaces and $f,g \from \Omega X \to L$ be maps in $\Loc$. Then $f \sim g$ rel $\Omega A$ in $\Loc$ if and only if $\hat{f} \sim \hat{g}$ rel $A$ in $\Sob$, where $\hat{f}, \hat{g} \from X \to \op{pt}L$ are the adjunct maps.
\end{lemma}
\begin{proof}
Recall from \cref{LocSpatialProducts} that the canonical map $\Omega(X \times I) \to \Omega X \otimes \Omega I$ is an isomorphism as $I$ is locally compact, where $\otimes$ is again the product in $\Loc$. By this and adjointness, we have that a map $K \from X \times I \to \op{pt}L$ makes the following two diagrams commute in $\Sob$
\[\begin{tikzcd}
	X & {X \times I} & X && {A \times I} & A \\
	& {\op{pt}L} &&& {X \times I} & {\op{pt} L}
	\arrow["K"{description}, from=1-2, to=2-2]
	\arrow["{e_0}", from=1-1, to=1-2]
	\arrow["{e_1}"', from=1-3, to=1-2]
	\arrow["{\hat{f}}"', from=1-1, to=2-2]
	\arrow["{\hat{g}}", from=1-3, to=2-2]
	\arrow["{\hat{f}i = \hat{g}i}", from=1-6, to=2-6]
	\arrow["{\pi_{A}}", from=1-5, to=1-6]
	\arrow["{i \times \id[I]}"', from=1-5, to=2-5]
	\arrow["K"', from=2-5, to=2-6]
\end{tikzcd}\]
if and only if $\Omega X \otimes \Omega I \iso \Omega(X \times I)  \xrightarrow{\hat{K}} L$ makes the following two diagrams commute in $\Loc$
\[\begin{tikzcd}
	{\Omega X} & {\Omega X \otimes \Omega I} & {\Omega X} && {\Omega A \otimes \Omega I} & {\Omega A} \\
	& L &&& {\Omega X \otimes \Omega I} & L
	\arrow["{\hat{K}}"{description}, from=1-2, to=2-2]
	\arrow["{e_0}", from=1-1, to=1-2]
	\arrow["{e_1}"', from=1-3, to=1-2]
	\arrow["f"', from=1-1, to=2-2]
	\arrow["g", from=1-3, to=2-2]
	\arrow["{f \Omega i = g\Omega i}", from=1-6, to=2-6]
	\arrow["{\pi_{\Omega A}}", from=1-5, to=1-6]
	\arrow["{\Omega i \otimes \id[\Omega I]}"', from=1-5, to=2-5]
	\arrow["{\hat{K}}"', from=2-5, to=2-6]
\end{tikzcd}\]
\end{proof}

\begin{theorem}
The adjunction $\Omega \adj \op{pt}$ is a Quillen equivalence between the Quillen model structures on $\Sob$ and $\Loc$.
\end{theorem}
\begin{proof}
Suppose that $f \from X \to Y \in \Loc$ is a map such that $\op{pt}f$ is a weak equivalence. Given the square on the left, we obtain the square on the right by adjointness:
\[\begin{tikzcd}
	{\Omega \bd I^n} & X && {\bd I^n} & {\op{pt}X} \\
	{\Omega I^n} & Y && {I^n} & {\op{pt}Y}
	\arrow["{\Omega i_n}"', hook, from=1-1, to=2-1]
	\arrow["f", from=1-2, to=2-2]
	\arrow["u", from=1-1, to=1-2]
	\arrow["v"', from=2-1, to=2-2]
	\arrow["{\op{pt}f}", from=1-5, to=2-5]
	\arrow["{\hat{u}}", from=1-4, to=1-5]
	\arrow["{\hat{v}}"', from=2-4, to=2-5]
	\arrow["{i_n}"', hook, from=1-4, to=2-4]
\end{tikzcd}\]
As $\op{pt}f$ is a weak equivalence, we obtain a map $h \from I^n \to \op{pt}X$ with $hi_n = \hat{u}$ and a homotopy $K \from \op{pt}f \circ h \sim \hat{v}$ rel $\bd I^n$. 
Then by adjointness, $\hat{h} \circ \Omega i_n = u$, and by \cref{SobLocHomotopies}, $\hat{K}$ is a homotopy $f \hat{h} \sim v$ rel $\Omega (\bd I^n)$. Thus, $\hat{h}$ is the required filler in the original diagram, hence $f$ is a weak equivalence.
\end{proof}

In fact, recognizing that $rlp(\mathcal{I}_\Loc) = rlp(\Omega(\mathcal{I}_\Sob)) = \op{pt}^{-1}(rlp(\mathcal{I}_\Sob))$ by adjunction, we have that the weak equivalences and fibrations in $\Loc$ are preserved and reflected by $\op{pt}$. Thus, the model structure on $\Loc$ is precisely the transferred model structure from $\Sob$.

\begin{corollary}
The Quillen model structure on $\Loc$ is the right transferred model structure from $\Sob$ along $\Omega \adj \op{pt}$. \qed
\end{corollary}

With this Quillen equivalence, we can also compare localic homotopy groups to their homotopy groups under $\op{pt}$. Since every locale is fibrant, \cite{hovey:book}*{Corollary 1.3.16(b)} implies that the counit $\Omega \op{pt}L \to L$ is a weak equivalence for every $L \in \Loc$. Thus, we get the following:

\begin{proposition}
If $(X, x_0)$ is a pointed sober space, then $\pi_n^\Sob(X, x_0) \iso \pi_n^\Loc(\Omega X, \Omega x_0)$ for any $n \geq 0$. In particular, for any pointed locale $(L, l_0)$ and $n \geq 0$, $\pi_n^\Sob(\op{pt}L, \op{pt}l_0) \iso \pi_n^\Loc(L, l_0)$ since $\Omega \op{pt} L \to L$ is a weak equivalence. 
\end{proposition}
\begin{proof}
This follows from \cref{SobLocHomotopies}, as two maps $f,g \from I^n \to X \iso \op{pt} \Omega X$ are homotopic rel $\bd I^n$ if and only if $\hat{f}, \hat{g} \from \Omega I^n \to \Omega X$ are homotopic rel $\Omega(\bd I^n)$.
\end{proof}

\bibliographystyle{amsalphaurlmod}
\bibliography{all-refs.bib}

\end{document}